\documentclass[11pt]{article}

\usepackage{amsmath,amsthm,amsfonts,amssymb}
\usepackage{graphicx}
\usepackage{mathrsfs,bm,wasysym}
\usepackage{comment}
\usepackage{graphicx}
\usepackage{centernot}
\usepackage{enumerate}
\usepackage{comment}
\usepackage{thmtools}
\usepackage{float}
\usepackage{caption}
\usepackage{subcaption}
\usepackage{textcomp}
\usepackage[normalem]{ulem}
\usepackage{hyperref}
\usepackage{titlesec}
\usepackage{cleveref}

\topmargin 0in
\oddsidemargin .01in
\textwidth 6.5in
\textheight 9in
\evensidemargin 1in
\addtolength{\voffset}{-.6in}
\addtolength{\textheight}{0.22in}
\parskip \medskipamount
\parindent	0pt

\declaretheoremstyle[
qed={//}
]{defstyle}

\numberwithin{equation}{section}

\theoremstyle{plain}
\newtheorem{theorem}{Theorem}[section]
\newtheorem{lemma}[theorem]{Lemma}
\newtheorem{proposition}[theorem]{Proposition}
\newtheorem{corollary}[theorem]{Corollary}

\newtheorem{maintheorem}{Theorem}

\theoremstyle{remark}

\newtheorem{example}[theorem]{Example}
\newtheorem{definition}[theorem]{Definition}

\newcommand{\pr}[1]{\mathbb{P}\left(#1\right)}
\newcommand{\prcond}[2]{\mathbb{P}\left(#1\;\middle\vert\;#2\right)}
\newcommand{\prstart}[2]{\mathbb{P}_{#2}\left(#1\right)}
\newcommand{\E}[1]{\mathbb{E}\left[#1\right]}
\newcommand{\econd}[2]{\mathbb{E}\left[#1\;\middle\vert\;#2\right]}
\newcommand{\estart}[2]{\mathbb{E}_{#2}\left[#1\right]}
\newcommand{\Var}[1]{\mathrm{Var}\left(#1\right)}
\newcommand{\varcond}[2]{\mathrm{Var}\left(#1\;\middle\vert\;#2\right)}
\newcommand{\Cov}[2]{\mathrm{Cov}\left(#1,#2\right)}
\newcommand{\covcond}[3]{\mathrm{Cov}\left(#1,#2\;\middle\vert\;#3\right)}

\newcommand{\BCap}[1]{\mathrm{BCap}\left(#1\right)}
\newcommand{\RWCap}[1]{\mathrm{Cap}\left(#1\right)}
\newcommand{\1}[1]{{\text{\Large $\mathfrak 1$}}_{#1}}
\newcommand{\til}[1]{\widetilde{#1}}
\newcommand{\what}[1]{\widehat{#1}}
\newcommand{\Unif}[1]{\mathrm{Unif}\!\left(#1\right)}

\newcommand{\Geomnonneg}[1]{\mathrm{Geom}_{\ge0}\!\left(#1\right)}

\newcommand{\T}{\mathcal{T}}
\newcommand{\Scal}{\mathcal{S}}
\newcommand{\Xtil}{\til{X}}
\newcommand{\A}{\mathcal{A}}
\newcommand{\B}{\mathcal{B}}
\newcommand{\Ell}{\mathcal{L}}
\newcommand{\Ecal}{\mathcal{E}}
\newcommand{\xiln}{\xi^{\ell}_n}
\newcommand{\xirn}{\xi^{r}_n}

\newcommand{\eqdist}{\stackrel{\mathrm{d}}{=}}

\newcommand{\Z}{\mathbb{Z}}
\newcommand{\Zpos}{\Z_{\ge1}}
\newcommand{\Znonneg}{\Z_{\ge0}}

\newcommand{\GW}{\mathrm{GW}}
\newcommand{\LW}{\mathrm{LW}}
\newcommand{\LH}{\mathrm{LH}}

\newcommand{\eps}{\varepsilon}

\begin{document}

\title{Intersections of branching random walks on $\Z^8$}
\date{}
\author{Zsuzsanna Baran
\thanks{University of Cambridge, Cambridge, UK. {zb251@cam.ac.uk}}}

\maketitle

\begin{abstract}
We consider random walks on $\Z^8$ indexed by the infinite invariant tree, which consists of an infinite spine and finite random trees attached to it on both sides.
We establish the precise order of the non-intersection probability between one walk indexed by one side of the tree, and an independent one indexed by both sides of an independent tree. This is analogous to the result by Lawler from the '90s for two independent simple random walks on $\Z^4$. We also prove a weak law of large numbers for the branching capacity of the range of a branching random walk.
\end{abstract}

\section{Introduction}\label{sec:intro}

In this work we study the intersections of two branching random walks on $\Z^8$. To put this problem into a broader context and provide some motivation, we start by briefly discussing the analogous question for two simple random walks.

Let $X$ and $\til{X}$ be two independent simple random walks on $\Z^d$, starting from $0$, and let us consider the number of intersections $\sum_{i=0}^{n}\sum_{j=1}^{\infty}\1{X_i=\til{X}_j}$ between the ranges $X[0,n]$ and $\til{X}(0,\infty)$. It is known that in $d>2$ dimensions the Green's function of a simple random walk decays as $g(x)\asymp\frac{1}{||x||^{d-2}+1}$ (where $a\asymp b$ means that $a$ and $b$ are of the same order). If we make the rough approximation of replacing $X[0,n]$ with the part of the range of $X$ falling into the ball $B\left(0,\sqrt{n}\right)$, then a quick calculation suggests that the expected number of intersections is
\begin{align*}
\approx\:\E{\sum_{i=0}^{\infty}\sum_{j=1}^{\infty}\1{X_i=\til{X}_j\in B(0,\sqrt{n})}} \asymp\sum_{x\in B(0,\sqrt{n})}\left(\frac{1}{||x||^{d-2}+1}\right)^2\:\asymp\:
\begin{cases}
n^{\frac{4-d}{2}} &\text{ for }d<4,\\
\log n &\text{ for }d=4,\\
1&\text{ for }d>4.
\end{cases}
\end{align*}
A more careful calculation confirms that the expected number of intersections is indeed of this order (see e.g.~\cite[Proposition 3.2.3]{intersections_of_RWs}). We see that for this question the `critical dimension' is 4, since the expected number of intersections grows polynomially for $d<4$, logarithmically for $d=4$ and is of constant order for $d>4$. It was conjectured that the non-intersection probability of two independent walks shows a similar behaviour, it decays polynomially in $n$ for $d<4$, as a power of $\log n$ for $d=4$, and is of constant order for $d>4$.

In 1991 Lawler~\cite[first edition]{intersections_of_RWs} found the exact exponent of the logarithm and the exact constant factor for the case of a one-sided and a two-sided walk in 4 dimensions.
He proved that for two independent walks we have $\pr{X[-n,n]\cap\til{X}(0,\infty)=\emptyset}=\left(1+o(1)\right)\frac{\pi^2}{8}\frac{1}{\log n}$. In 1992~\cite{slowly_rec_sets} he also extended the result for two one-sided walks and proved that $\pr{X[0,n]\cap\til{X}(0,\infty)=\emptyset}=\left(1+o(1)\right)\frac{c}{\sqrt{\log n}}$.

We now move on to the setting of two branching random walks. We start by defining the infinite invariant tree, which is a random infinite tree that will be indexing the branching random walks.

\begin{definition}[Infinite invariant tree]\label{def:infinite_tree}
Let $\mu$ be a distribution on $\Znonneg$ with mean 1 and variance $\sigma^2\in(0,\infty)$. We consider a random infinite spatial tree $\T$ as follows.
\begin{itemize}
\item The tree has an infinite sequence of \emph{spine vertices} $(u_i)_{i\ge0}$ where $u_0$ is called the \emph{root}, and $u_i$ is an offspring of $u_{i-1}$ for all $i\ge1$.
\item The root has $d^+_0\sim\mu$ offsprings on the right side of the spine, and no offsprings on the left side.
\item For $i\ge1$, the spine vertex $u_i$ has $d^+_i$ offsprings on the right side of the spine and $d^-_i$ offsprings on the left side of the spine, where $\pr{d^+_i=k,\:d^-_i=\ell}=\mu(k+\ell+1)$, and the pairs $(d^{+}_i,d^-_i)$ are independent for different $i$, and also independent of $d^+_0$.
\item Each non-spine vertex has $\sim\mu$ offsprings, independently of each other and the above $d^{\pm}_i$. (In other words, given the offsprings of the spine vertices on each side of the spine, we attach independent $\GW(\mu)$ trees to each of them.)
\end{itemize}
We index the vertices of $\T$ by $\Z$ in the depth-first order in the anticlockwise direction, assigning index $0$ to the root. (See \Cref{fig:T} for an illustration.) We sometimes refer to the vertices with nonnegative labels as the \emph{future} of the tree, and to vertices with negative labels as the \emph{past} of the tree. We write $\T_j$ to mean the vertex with index $j$, write $\T[a,b]$ for the vertices with indices in $\{a,a+1,...,b\}$, $\T(a,b]$ for vertices with indices in $\{a+1,...,b\}$, etc.
\end{definition}

\begin{figure}
\centering
\includegraphics[width=100mm]{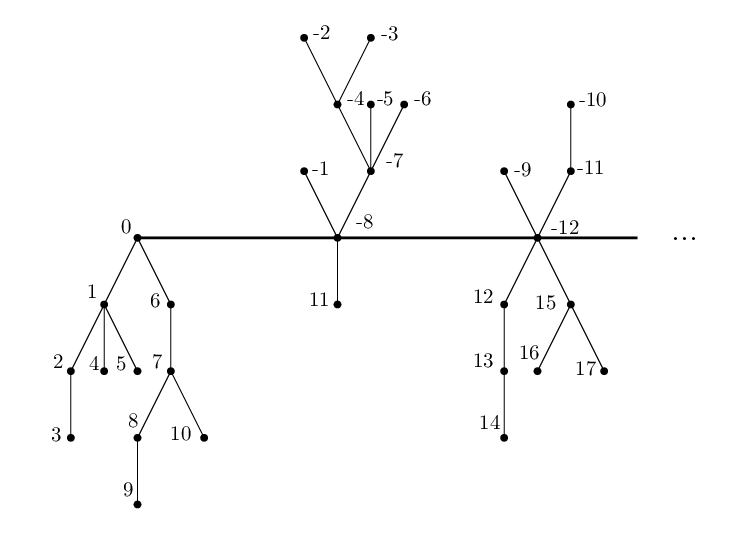}
\caption{Illustration of $\T$ and the indexing of its vertices.}
\label{fig:T}
\end{figure}

\begin{definition}[Branching random walk]\label{def:RW_on_tree}
Given a tree $T$, we define a random walk $(S_v)_{v\in T}$ indexed by the vertices of this tree as follows. Let $(D_v)_{v\in T}$ be iid random variables according to a given distribution, and for each vertex $v\in T$ let $S_v=\sum_{u}D_u$ where the sum ranges over the ancestors of $v$, not including the root, but including $v$ itself (unless it is the root).

In what follows, we assume that $(D_v)$ are distributed uniformly on the $2d$ neighbours of $0$ in $\Z^d$, and we consider a random walk $S$ on a tree as in \Cref{def:infinite_tree}. We call such a walk a \emph{branching random walk} (BRW).
\end{definition}

A closely related object, a random walk indexed by a critical Galton-Watson tree conditioned to survive, was considered by Kesten~\cite{Kesten1986} in 1986 as a simpler model to study alongside the incipient infinite cluster of a critical bond percolation. A branching random walk indexed by the past side of $\T$ from \Cref{def:infinite_tree} was introduced by Le Gall and Shen~\cite{range_of_tree-indexed_RW} in 2016, and later, in 2022 Bai and Wan~\cite{Cap_of_BRW} considered a branching random walk indexed by the whole of $\T$. This tree only differs from Kesten's one in the offspring distribution of the root vertex, but it has the useful property of shift-invariance (see \Cref{lem:shift_invariance}) which makes it more amenable to analyse.

In what follows, we let $\T$ be a tree as in \Cref{def:infinite_tree} and $\Scal$ the corresponding random walk as in \Cref{def:RW_on_tree}. To simplify notation, we will usually write $\T=(\T,\Scal)$ to denote both the tree and the walk on it.

We are interested in the non-intersection probability of two independent branching random walks. We can see that for $d>4$ the Green's function of a BRW decays as $G(x)\asymp\frac{1}{||x||^{d-4}+1}$ (see \Cref{lem:Greens_function}). It is known (see \Cref{lem:GW_tail_bound}) that the size of a critical Galton-Watson tree satisfies $\pr{|\GW|>k}\asymp\frac{1}{\sqrt{k}}$, therefore the overall size of the trees descending from the first $k$ spine vertices is typically $\asymp k^2$. Also, the walk along the spine typically travels distance $\asymp\sqrt{k}$ in the first $k$ steps.
Based on this, we may roughly approximate $\T[0,n]$ by $\T[0,\infty)\cap B\left(0,n^{\frac14}\right)$, and estimate the expected number of intersections $\sum_{i=0}^{n}\sum_{j=1}^{\infty}\1{\T_i=\til{\T}_j} $ between $\T[0,n]$ and $\til{\T}(0,\infty)$ as
\[\asymp\sum_{x\in B\left(0,n^{\frac14}\right)} \left(\frac{1}{||x||^{d-4}+1}\right)^2\quad\asymp\quad
\begin{cases}
n^{\frac{8-d}{4}} &\text{ for }d<8,\\
\log n &\text{ for }d=8,\\
1&\text{ for }d>8.
\end{cases}\]
This indicates that the critical dimension for this question is 8, and the expected number of intersections shows a similar behaviour to the case of two simple random walks. It also suggests that the non-intersection probability in the critical dimension might behave similarly to the case of two SRWs, decaying as a power of $\log n$.

For the case of a one-sided and a two-sided BRW we confirm this, as follows.

\begin{maintheorem}\label{thm:Ttiln_Tminusnn_prob}
Let $\mu$ be a distribution on $\Znonneg$ with mean 1 and variance $\sigma^2\in(0,\infty)$. Let $\T$ and $\til{\T}$ be two independent branching random walks on $\Z^8$ with offspring distribution $\mu$, as in \Cref{def:RW_on_tree}. Then for all sufficiently large $n$ we have
\[\frac{c}{\log n}\quad\le\quad\pr{\til{\T}(-\infty,0)\cap\T[-n,n]=\emptyset}\quad\le\quad\frac{C}{\log n},\]
where $c$ and $C$ are positive constants depending only on $\sigma^2$.

The same result also holds with $\til{\T}[-n,0)$ instead of $\til{\T}(-\infty,0)$.
\end{maintheorem}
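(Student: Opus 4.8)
The plan is to reduce \Cref{thm:Ttiln_Tminusnn_prob} to a law-of-large-numbers estimate for the \emph{branching capacity} of the range, $\BCap(\T[-n,n])\asymp n/\log n$ (a two-sided version of the weak law announced in the abstract), together with a last-exit identity expressing the non-intersection probability through this capacity. This parallels Lawler's treatment of the $\Z^4$ question, whose engine is $\RWCap(X[0,n])\asymp n/\log n$; as there, the critical-dimension logarithm reflects the self-similar structure of the range at dimension $8$, and because this logarithm is generated by a self-improving recursion whose solutions are automatically of order $1/\log n$, only the order --- not a sharp constant --- survives, which is exactly what the theorem asks for.

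\emph{Reduction to branching capacity.} Conditionally on $\T$, write $S=\T[-n,n]$ as a subset of $\Z^8$. Since $\Ttil$ is rooted at the origin and $\Ttil(-\infty,0)$ is the set of positions of its negative-index (left-subtree) vertices, the walk indexed by $\Ttil(-\infty,0)$ is a branching random walk issued from $0$ and indexed by a one-sided infinite invariant tree --- the object underlying the definition of $\BCap$. Hence $\prcond{\Ttil(-\infty,0)\cap S=\emptyset}{\T}=\mathrm{Es}_S(0)$, the branching escape probability of $0$ from $S$ (the starting position itself not being counted), and summing over the starting point gives $\BCap(S)=\sum_{x\in S}\mathrm{Es}_S(x)$. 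Taking expectations, $\pr{\Ttil(-\infty,0)\cap\T[-n,n]=\emptyset}=\E{\mathrm{Es}_{\T[-n,n]}(0)}$, so it remains to show that the root is a \emph{typical} vertex of $\T[-n,n]$, namely
\[\E{\mathrm{Es}_{\T[-n,n]}(0)}\;\asymp\;\frac1n\,\E{\BCap(\T[-n,n])}.\]
To establish this I would use the shift-invariance of the infinite invariant tree (\Cref{lem:shift_invariance}): re-rooting $(\T,\Scal)$ at $\T_j$ does not change its law, so $\E{\mathrm{Es}_{\T[j-m,j+m]}(\T_j)}=\E{\mathrm{Es}_{\T[-m,m]}(0)}$ for all $j,m$. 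Combining this with the monotonicity $\mathrm{Es}_{S'}(\cdot)\le\mathrm{Es}_S(\cdot)$ for $S\subseteq S'$, the nesting $\T[-n,n]\subseteq\T[j-2n,j+2n]$ (valid for $|j|\le n$), and averaging over such $j$, the left-hand side becomes $\frac1{2n+1}\E\big[\sum_x m_x\,\mathrm{Es}_{\T[-n,n]}(x)\big]$ with $m_x=\#\{|j|\le n:\T_j=x\}$; since $d=8>4$ the walk is transient ($G(0)<\infty$), the range is essentially as large as the tree, the multiplicities $m_x$ are typically $O(1)$, and the points carrying large $m_x$ carry little escape probability, so this is $\asymp\frac1n\E{\BCap(\T[-n,n])}$, giving the upper bound. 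The matching lower bound comes from a complementary sandwiching that additionally uses that $\BCap$ grows slowly enough under enlargement of the index window ($\BCap(\T[-2n,2n])\asymp\BCap(\T[-n,n])$) and that a positive fraction of $\BCap(\T[-2n,2n])$ is already supported on $\T[-n,n]$. Inserting $\BCap(\T[-n,n])\asymp n/\log n$ then completes the argument (replacing $2n$ by $n$ costs nothing since $\log(2n)\asymp\log n$).

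\emph{Main obstacle and the remaining variant.} The crux is the capacity estimate $\BCap(\T[-n,n])\asymp n/\log n$ itself. Its upper bound requires controlling the spatial extent of $\T[-n,n]$ (contained in a ball of radius $\asymp n^{1/4}$ with overwhelming probability, via \Cref{lem:GW_tail_bound} and a spine-displacement bound) and the overlap structure of the range through the branching Green's function $G(x)\asymp(\|x\|^{4}+1)^{-1}$ of \Cref{lem:Greens_function}; at the critical dimension I expect this to proceed through a Lawler-type self-improving recursion of the schematic form $a_n-a_{2n}\asymp a_n^2$ (forcing $a_n\asymp1/\log n$), together with a matching second-moment / no-anomalously-dense-region bound for the lower direction. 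A secondary difficulty is making the ``typical vertex'' comparison fully rigorous --- handling the visit multiplicities $m_x$ and their favourable correlation with escape probabilities, the comparability of capacities of nested ranges, and the passage from expectations of ratios to ratios of expectations, the last of which draws on the concentration furnished by the weak law. Finally, the version with $\Ttil[-n,0)$ in place of $\Ttil(-\infty,0)$: the lower bound is immediate by monotonicity, and for the upper bound one observes that the part of $\Ttil$ able to reach the ball of radius $\asymp n^{1/4}$ containing $\T[-n,n]$ already lies within the first $O(\sqrt n)$ spine vertices of $\Ttil$, hence among its vertices of index $O(n)$, so truncating at index $-n$ (or $-Cn$) does not affect the order.
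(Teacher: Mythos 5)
Your proposal inverts the paper's logical structure: you want to prove $\BCap(\T[-n,n])\asymp n/\log n$ first and deduce the non-intersection probability from it, whereas the paper does the opposite (it proves the non-intersection asymptotics of Theorems~\ref{thm:prob_nocap_nozero} and~\ref{thm:prob_nocap_nozero_n} via a last-exit ``magic formula'' and a second-moment concentration argument for the intersection count $U_n$, and only afterwards reads off the branching-capacity law of large numbers in Section~\ref{sec:BCap_asymp}). The inversion per se is not a flaw, but it shifts all of the work onto the claim $\BCap(\T[-n,n])\asymp n/\log n$, and the mechanism you invoke for it is exactly the one that fails here.

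The crux you identify — a Lawler-type self-improving recursion of the schematic form $a_n - a_{2n}\asymp a_n^2$ — relies on decoupling disjoint time-windows of the process via the Markov property, and BRWs have none: the paper singles this out as \emph{the} structural difficulty (``a BRW does not have a Markov property like a SRW does'', Overview). Cutting $\T[-n,n]$ into $\T[-n/2,n/2]$ and the remainder does not give conditional independence given any boundary datum, because the subtrees hanging off the spine in the inner window are tangled with those in the outer window through their common spine and the depth-first ordering. This is precisely why the paper replaces the recursion by the magic formula (Lemma~\ref{lem:magic_formula}), whose proof needs only the shift-invariance of Lemma~\ref{lem:shift_invariance} and the last-exit decomposition~\eqref{eq:last_exit_decomp}, and then extracts the $\asymp 1/\log n$ order from the concentration of $U_n$ proved through the depth-first-queue calculus of Lemma~\ref{lem:GW_sums} (again engineered to avoid a Markov decomposition). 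As written, your proposal has a genuine gap at this central step.

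Two further points. First, your ``typical vertex'' comparison $\E{\mathrm{Es}_{\T[-n,n]}(0)}\asymp\frac1n\E{\BCap(\T[-n,n])}$ is the right shape of statement (and shift-invariance is indeed the right tool), but passing from $\sum_j \mathrm{Es}_{\T[-n,n]}(\T_j)$ to $\BCap(\T[-n,n])$ needs genuine control of the multiplicities $m_x$ weighted against escape probabilities; this is not a triviality of transience but essentially the content of the paper's Lemma~\ref{lem:E_noncap_count_zeroes}, which spends most of the effort on exactly the term $\#(0\text{ in }\T_-^{\cdot})$ and its correlation with non-intersection, split across four cases depending on the spine location and return counts. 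Second, for the variant with $\Ttil[-n,0)$, your argument is correct in outline and indeed matches the paper's short argument at the end of Section~\ref{sec:Ttiln_Tminusnn_prob} (truncating $\Ttil$ and showing the far part cannot reach the ball containing $\T[-n^{1/17},n^{1/17}]$), so that part is fine.

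In summary: the sandwich reduction to a capacity LLN is a plausible alternative packaging, and the $\Ttil[-n,0)$ variant is handled correctly, but the proposal leaves unproved the capacity estimate it hinges on, and the tool it proposes for it (a Lawler-type window recursion) is not available in the two-BRW setting for exactly the reason the paper emphasizes. To make this route work you would, in effect, need to reintroduce the magic-formula/concentration machinery to supply the $\BCap$ asymptotics, at which point Theorem~\ref{thm:prob_nocap_nozero_n} is already in hand and the remaining work is to remove the event $0\notin\T(0,n]$ — the content of Lemmas~\ref{lem:Ttilminus_Tminusnn_prob_as_E} and~\ref{lem:E_noncap_count_zeroes} — which your proposal does not address.
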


If we also include the event that the future side of $\T$ does not hit 0, we can establish the precise asymptotics for the non-intersection probability as follows.

\begin{maintheorem}\label{thm:prob_nocap_nozero_n}
Let $\T$ and $\til{\T}$ be as in \Cref{thm:Ttiln_Tminusnn_prob}. Then we have
\[\pr{\til{\T}(-\infty,0)\cap\T[-n,n]=\emptyset,\:0\not\in\T(0,n]}\quad=\quad\left(1+o(1)\right)\frac{c_8}{\log n}\qquad\text{as }n\to\infty,\]
where $c_8$ is a positive constant depending only on $\sigma^2$.
\end{maintheorem}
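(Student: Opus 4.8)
The plan is to upgrade the two-sided bound of \Cref{thm:Ttiln_Tminusnn_prob} into a sharp asymptotic by running a renewal / recursion argument in the variable $\log n$, exactly in the spirit of Lawler's treatment of two simple random walks on $\Z^4$ in \cite{intersections_of_RWs}. Write $q(n) = \pr{\til{\T}(-\infty,0)\cap\T[-n,n]=\emptyset,\:0\not\in\T(0,n]}$. The first step is to establish an approximate multiplicativity (submultiplicativity up to lower order terms): if one reveals $\T$ and $\til{\T}$ restricted to a ball of radius $n^{1/4}$ and finds no intersection, then the conditional law of the ``outside'' pieces is, up to small corrections, again that of two fresh branching random walks, so that $q(nm)$ is comparable to $q(n)\,q(m)$ with multiplicative error $1 + o(1)$. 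This is where the shift-invariance (\Cref{lem:shift_invariance}), the Green's function estimate $G(x)\asymp \|x\|^{-4}$ in $d=8$ (\Cref{lem:Greens_function}), and the Galton–Watson tail bound (\Cref{lem:GW_tail_bound}) all get used, to control the ``overlap region'' at scale $n^{1/4}$ and show its contribution is negligible. Combined with the two-sided bound $q(n)\asymp 1/\log n$ already available, approximate multiplicativity of $1/q$ forces $\log n \cdot q(n)$ to converge; the constant $c_8$ is then defined as this limit.

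More concretely, I would set $f(t) = 1/q(e^t)$ and show $f(t+s) = f(t) + f(s) + o(f(t)+f(s))$ as $t,s\to\infty$, i.e. $f$ is asymptotically linear. Given the a priori two-sided bound $f(t)\asymp t$, a standard Fekete-type / subadditivity-with-errors lemma yields $f(t)/t \to 1/c_8$ for some $c_8\in(0,\infty)$, which is \Cref{thm:prob_nocap_nozero_n}. The inclusion of the extra event $\{0\notin\T(0,n]\}$ is precisely what makes the ``decoupling'' clean: it removes the dependence coming from the origin being a common point, so that after conditioning on non-intersection inside the ball, restarting the outer walks from the boundary gives (approximately) the same functional $q$ evaluated at the remaining scale, with no boundary-term defect of constant order. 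Without this event one only gets $q\asymp 1/\log n$ without an exact constant, which is consistent with \Cref{thm:Ttiln_Tminusnn_prob} being stated only up to constants.

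The key steps, in order: (1) localization — show that with probability $1-o(q(n))$ the relevant intersections can only occur inside $B(0, n^{1/4+\eps})$, using that $\T[0,n]$ typically lives in $B(0,n^{1/4})$ (the heuristic already given in the introduction) together with a union bound over the at-most-polynomially-many sites of one range against the Green's function of the other; (2) decoupling at scale $n^{1/4}$ — reveal both trees and walks up to that scale, and show that conditionally on the event inside the ball, the outside parts are a small perturbation (in total variation, or via a coupling) of two independent fresh BRWs started at points on the sphere, so that the outside non-intersection probability is $(1+o(1))\,q(m)$ for the appropriate residual scale $m$; (3) combine to get $q(n)\,$-multiplicativity with $1+o(1)$ errors; (4) feed this plus the $\asymp 1/\log n$ bound into the renewal/Fekete lemma to extract the limit $c_8 := \lim \log n\cdot q(n)$.

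The main obstacle will be step (2): controlling the conditional law of the branching random walks outside the ball given non-intersection inside it. The difficulty is that conditioning on a global non-intersection event is not a local conditioning — it biases the tree structure (favouring trees that wander away from the other walk) and the walk increments simultaneously, and the tree itself is infinite with heavy-tailed component sizes, so one must rule out that a small but non-negligible fraction of the conditioning ``mass'' comes from atypical large components straddling the ball boundary. I expect this to require a careful second-moment / Green's-function estimate showing that the expected number of intersections contributed by the straddling components is $o(1)$ relative to $\log n$, plus a coupling argument (sprinkling, or a change-of-measure bound via the Radon–Nikodym derivative of the conditioned walk on the sphere) to replace the conditioned outer configuration by an unconditioned one at the cost of a $1+o(1)$ factor. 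Everything else — the localization in step (1) and the analytic extraction in step (4) — is comparatively routine given the estimates cited in the excerpt and the already-established order-of-magnitude bound.
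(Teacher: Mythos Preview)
Your proposal has a genuine structural flaw and is \emph{not} Lawler's approach, despite the claim. The heart of the issue is the multiplicativity you posit: you write that decoupling at scale $n^{1/4}$ gives ``$q(nm)$ comparable to $q(n)\,q(m)$ with multiplicative error $1+o(1)$'', and then that $f(t)=1/q(e^t)$ satisfies $f(t+s)=f(t)+f(s)+o(\cdots)$. These two claims are mutually inconsistent, and the first one is simply false. If $q(n)\sim c_8/\log n$ then $q(nm)\sim c_8/(\log n+\log m)$ while $q(n)q(m)\sim c_8^2/(\log n\cdot\log m)$; for $m=n$ these differ by a factor $\asymp\log n$. A decoupling of the type you describe (reveal inside a ball, find non-intersection, then outside is approximately fresh) would, if it worked, give a \emph{product} $q(\text{inside})\cdot q(\text{outside})$, i.e.\ multiplicativity of $q$, which is the wrong relation. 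The correct relation, additivity of $1/q$, does not come out of an independence/decoupling picture at all; it reflects additivity of the \emph{expected number of intersections}, which is what the paper exploits directly. There is also a circularity: you take the two-sided bound $q(n)\asymp 1/\log n$ as ``already available'', but in the paper's logical order the lower bound of \Cref{thm:Ttiln_Tminusnn_prob} is \emph{deduced from} \Cref{thm:prob_nocap_nozero_n}, not the other way around.

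For comparison, the paper (and Lawler, in the SRW case) does not use any Fekete/renewal argument. Instead it proves the geometric-time version \Cref{thm:prob_nocap_nozero} via the magic formula $\E{\1{\A_n}\1{\B_n}U_n}=1$ (\Cref{cor:magic_formula}), where $U_n$ is essentially the number of intersections of $\Ttil[0,\infty)$ with $\T[-\xiln,\xirn]$. One shows $\E{U_n}\sim(\log n)/c_8$ and $\Var{U_n}\lesssim\log n$ (Propositions~\ref{pro:E_Un} and~\ref{pro:Var_Un}); Chebyshev then gives the upper bound $\pr{\A_n,\B_n}\lesssim 1/\log n$, and a decorrelation argument (\Cref{pro:limsup_prob_ABD}, splitting $\T$ and $\Ttil$ at spatial scales and using a Harnack-type estimate to factor the hitting points) shows that on $\A_n\cap\B_n$ the variable $U_n$ is still concentrated near $\E{U_n}$, so $\pr{\A_n,\B_n}\sim 1/\E{U_n}\sim c_8/\log n$. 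Finally, \Cref{thm:prob_nocap_nozero_n} follows by sandwiching deterministic $n$ between two geometric parameters $n(\log n)^{\pm2}$ and using concentration of $\xiln,\xirn$. The hard step is indeed a decoupling, but it is used to decorrelate the \emph{indicator} $\1{\A_n\cap\B_n}$ from the \emph{count} $U_n$ inside one expectation, not to factor $q$ across scales.
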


Another question related to the intersections of two branching random walks is about the branching capacity of a branching random walk range.

Branching capacity was introduced by Zhu~\cite{BRWs_I} in 2017 as an analogue of the capacity $\RWCap{\cdot}$ defined for simple random walks. It is a notion of size based on the escape probabilities of a branching random walk, defined as follows.

\begin{definition}[Branching capacity]\label{def:BCap}
For a finite set $A\subseteq\Z^d$ we define its \emph{branching capacity} as
\[\BCap{A}:=\quad\sum_{x\in A}\pr{(x+\T(-\infty,0))\cap A=\emptyset},\]
where $\T$ is a branching random walk on $\Z^d$ as in \Cref{def:RW_on_tree}, and $x+\T(-\infty,0)$ denotes the Minkowski sum $\{x+y:\:y\in\T(-\infty,0)\}$.
\end{definition}

We obtain a weak law of large numbers for the branching capacity of a branching random walk range as follows.

\begin{maintheorem}\label{thm:BCap_asymp}
Let $\mu$ be a distribution on $\Znonneg$ with mean 1 and variance $\sigma^2\in(0,\infty)$. Let $\T$ be a branching random walk on $\Z^8$ with offspring distribution $\mu$, as in \Cref{def:RW_on_tree}, and consider the branching capacity with offspring distribution $\mu$ as in \Cref{def:BCap}. Then we have
\[\frac{\log n}{n}\:\BCap{\T[0,n]}\quad\to\quad c_8\qquad\text{in probability}\quad\text{as }n\to\infty,\]
where $c_8$ is the positive constant from \Cref{thm:prob_nocap_nozero_n}, depending only on $\sigma^2$.
\end{maintheorem}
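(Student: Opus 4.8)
The plan is to reduce the statement to Theorem 1.3 via a first-moment and second-moment argument. First I would expand the expectation: by Definition 1.5 and linearity,
\[
\E{\BCap{\T[0,n]}}=\sum_{x\in\Z^8}\E{\1{x\in\T[0,n]}\:\pr{(x+\Ttil(-\infty,0))\cap\T[0,n]=\emptyset\mid\T}},
\]
where $\Ttil$ is an independent BRW. Rewriting this by translating $x$ to the origin and using that $x$ ranges over the (random) range of $\T$, the key observation is that a uniformly chosen point of $\T[0,n]$, together with the rest of the tree viewed from that point, looks asymptotically like the root of a two-sided invariant tree $\T$ — this is where shift-invariance (Lemma 1.2, referenced as \texttt{lem:shift\_invariance}) enters, together with the fact that $|\T[0,n]|$ concentrates around its mean of order $n$ (from the Galton–Watson tail bound $\pr{|\GW|>k}\asymp k^{-1/2}$ quoted in the excerpt, the range has size $\Theta(n)$ up to the walk being injective on enough of it, which in $d=8>4$ it essentially is since the BRW is transient). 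Thus $\E{\BCap{\T[0,n]}}$ should be, up to lower-order corrections, $|\T[0,n]|$ times the probability that $\Ttil(-\infty,0)$ avoids $\T[-m,n-m]$ for a typical split $m$, and Theorem 1.3 identifies this probability as $(1+o(1))\frac{c_8}{\log n}$ uniformly over the relevant range of $m$. Combining, $\frac{\log n}{n}\E{\BCap{\T[0,n]}}\to c_8$.

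Next I would establish concentration, i.e. that $\frac{\log n}{n}\BCap{\T[0,n]}\to c_8$ in probability and not merely in mean. The natural route is a second-moment bound: show $\Var{\BCap{\T[0,n]}}=o\!\left((n/\log n)^2\right)$, then Chebyshev finishes. Expanding the variance produces a double sum over pairs $x,y\in\T[0,n]$ of covariances of the two escape events; the diagonal-ish contribution (pairs with $\|x-y\|$ small, or indices close along the tree) is controlled by the Green's function estimate $G(x)\asymp(\|x\|^{d-4}+1)^{-1}$ with $d=8$, which makes $\sum_x G(x)^2$ convergent — this is exactly the ``$d>8$ would be constant order'' heuristic in the introduction kicking in one power below criticality for the correlation decay. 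For pairs that are far apart, the escape events become nearly independent and each contributes $\asymp(\log n)^{-1}$, but one must show the correlation between ``$x+\Ttil(-\infty,0)$ avoids $\T[0,n]$'' and ``$y+\Ttil(-\infty,0)$ avoids $\T[0,n]$'' decays fast enough; since both events are decreasing in the range of $\T$ and in the range of $\Ttil$, an FKG-type / conditional-independence decomposition should show the covariance is at most of order $(\log n)^{-2}$ times a summable-in-$\|x-y\|$ factor coming from the probability that the two independent past-trees $x+\Ttil(-\infty,0)$ and $y+\Ttil'(-\infty,0)$ (or the shared dependence through $\T$) interact. Summing over the $\asymp n^2$ pairs then gives $\Var=O(n^2(\log n)^{-3})$ or so, which is $o((n/\log n)^2)$.

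The main obstacle I expect is not the first moment (which is essentially Theorem 1.3 plus bookkeeping about the random split point $m$ and the size of the range) but the variance bound: specifically, controlling the covariance between the two non-intersection events for well-separated $x,y$, because the dependence is routed through the single tree $\T[0,n]$ that both past-trees must avoid, and one needs that conditioning on $\Ttil$ avoiding $\T$ near $x$ does not substantially change the conditional probability of avoiding near $y$. I would handle this by a decoupling argument: condition on $\T$, split $\Ttil$'s contribution into the ``near $x$'' and ``near $y$'' parts, bound their interaction using the Green's function, and use that the unconditioned one-point probability is $\asymp(\log n)^{-1}$ by Theorem 1.3 (and in particular bounded below by $c/\log n$), so that the conditional probabilities are comparable to the unconditional ones. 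A secondary technical point is that $\BCap{\cdot}$ is defined only for finite sets and the range $\T[0,n]$ is automatically finite, so no truncation of $\T$ itself is needed, but one should double-check that the tree sizes attached to the first $n$ spine vertices are not atypically large — this is a routine consequence of the $k^{-1/2}$ Galton–Watson tail and a union bound, and I would relegate it to a lemma.
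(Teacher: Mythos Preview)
Your first-moment plan shares the core idea with the paper (shift-invariance turns a uniformly chosen index of $\T[0,n]$ into the root of a fresh two-sided tree), but you are overcomplicating the ``injectivity'' issue. The sum defining $\BCap{\T[0,n]}$ is over distinct points of the range; rewriting it as a sum over indices $j\in[0,n]$ with the last-visit indicator $\1{\T_j\notin\T(j,n]}$ is \emph{exact}, and after shifting this indicator becomes precisely $\1{0\notin\T(0,n-j]}$, i.e.\ the event $\B_n$ in Theorem~\ref{thm:prob_nocap_nozero_n}. No approximate-injectivity or transience argument is needed. The paper exploits this more cleanly than you do: using the geometric randomisation $\xiln,\xirn$ and the fact that $\left(\xiln\mid\xiln+\xirn=m\right)\sim\Unif{\{0,\dots,m\}}$, one obtains the identity
\[
\E{\frac{\BCap{\T[0,\xiln+\xirn]}}{\xiln+\xirn+1}}\;=\;\pr{\A_n,\B_n},
\]
so Theorem~\ref{thm:prob_nocap_nozero} (the randomised version, not Theorem~\ref{thm:prob_nocap_nozero_n}) gives the right-hand side directly. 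To pass to deterministic $n$, the paper uses subadditivity of $\BCap{\cdot}$ plus shift-invariance to get the monotonicity $\E{\BCap{\T[0,k\ell]}}/(k\ell)\le\E{\BCap{\T[0,k]}}/k$, and then sandwiches between geometric parameters $n\log n$ and $n/\log n$. Your deterministic route would also work but requires controlling the probability uniformly over asymmetric windows $\T[-j,n-j]$, which is more bookkeeping than you acknowledge.

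For the variance, the paper gives no self-contained argument: it simply states that the proof of \cite[Proposition~4.11]{Cap_of_BRW} carries over to yield $\E{\BCap{\T[0,n]}^2}\sim\E{\BCap{\T[0,n]}}^2$. Your covariance-expansion sketch is a reasonable alternative, and you correctly identify that the dependence between the two escape events is routed entirely through $\T$ (the two copies of $\Ttil$ are independent by definition). However, the ``FKG-type'' suggestion is misplaced: there is no natural product structure with monotone events here, so drop that and rely on the direct decoupling via Green-function bounds that you also mention.
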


An analogous result about the capacity of a simple random walk range in 4 dimensions was established by Asselah, Schapira and Sousi~\cite{Cap_of_SRW} in 2019. They proved that $\frac{\log n}{n}\RWCap{X[0,n]}\to\frac{\pi^2}{8}$ a.s..

So far we discussed previous results regarding two SRWs, but problems concerning the intersection of a SRW with a BRW have also been studied. From calculations analogous to the earlier ones we can see that the critical dimension for this case is 6. In 2022 Bai and Wan~\cite{Cap_of_BRW} studied the capacity of the range of a BRW and showed that in 6 dimensions it satisfies $\frac{\log n}{n}\RWCap{\T[0,n]}\to c$ in $L^2$ and in probability. They also showed that for an independent SRW and BRW on $\Z^6$ we have $\pr{\til{X}(0,\infty)\cap\T[-n,n]=\emptyset,\:0\not\in\T(0,n]}=\left(1+o(1)\right)\frac{c}{\log n}$ as $n\to\infty$. \footnote{For both of these results they also assumed that $\mu$ has a finite fifth moment.} In 2023 Schapira~\cite{BCap_of_RW_range} studied the branching capacity of the range of a SRW and showed that in 6 dimensions it satisfies $\frac{\log n}{n}\BCap{X[0,n]}\to c'$ in $L^2$ and a.s.. He also proved that for an independent SRW and BRW on $\Z^6$ we have $\pr{\til{\T}(-\infty,0)\cap X[-n,n]=\emptyset,\:0\not\in X(0,n]}=\left(1+o(1)\right)\frac{2\pi^3}{27\sigma^2}\frac{1}{\log n}$.

We focus on the critical dimension, but the above works also concern some of the other dimensions. Other recent developments for non-critical dimensions include~\cite{Cap_of_SRW_Z5}, \cite{Cap_of_BRW_low_dimensions} and~\cite{BCap_of_SRW_Z5}.

\subsection{Further directions}

Our overall goal is to gain a better understanding of the geometry of the range of a branching random walk in 8 dimensions. In order to do that we would like to understand how two BRW ranges intersect, which also helps understanding how one BRW range intersects itself.

In this paper we take a first step towards that by establishing the order of the non-intersection probability of a one-sided and a two-sided BRW started from the same point. In subsequent works we are also hoping to understand the non-intersection probability between two one-sided or two two-sided ranges started from the same point, and the related question of the intersection probability between two BRW ranges started from further apart. These could also be a helpful step towards understanding e.g.\ how the effective resistance on the graph induced by the range of a BRW grows, generalising the result of Shiraishi~\cite{RW_resistance} about the effective resistance on the range of a SRW on $\Z^4$.

\subsection{Overview}

In this section we explain the main ideas of the proofs, emphasising the new difficulties that arise compared to previous works. We follow the same outline as~\cite{Cap_of_BRW} and~\cite{BCap_of_RW_range}, which both build on the work of Lawler, but there are significant new ideas in how we can handle each step of the proof.

One of the main sources of difficulty is that a BRW does not have a Markov property like a SRW does. In previous results where at least one of the two objects concerned was a SRW, the Markov property of that walk was utilised, but for the case of two BRWs we are not able to use that. 

The following result will be a major step in the proof of Theorems~\ref{thm:Ttiln_Tminusnn_prob}, \ref{thm:prob_nocap_nozero_n} and~\ref{thm:BCap_asymp}.
\begin{maintheorem}\label{thm:prob_nocap_nozero}
Let $\T$ and $\til{\T}$ be as in \Cref{thm:Ttiln_Tminusnn_prob}, and let $\xiln$ and $\xirn$ be independent $\Geomnonneg{\frac1n}$ random variables. Then we have
\[\pr{\til{\T}(-\infty,0)\cap\T[-\xiln,\xirn]=\emptyset,\:0\not\in\T(0,\xirn]}\quad=\quad\left(1+o(1)\right)\frac{c_8}{\log n}\qquad\text{as }n\to\infty,\]
where $c_8$ is a positive constant depending only on $\sigma^2$.
\end{maintheorem}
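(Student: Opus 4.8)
The plan is to follow the Lawler-type "one-parameter martingale" strategy adapted to the branching setting, as in \cite{Cap_of_BRW,BCap_of_RW_range}, but carrying the geometric cutoffs $\xiln,\xirn$ through each step so that the memoryless property of the geometric distribution can substitute for the Markov property that a BRW lacks. Write $p_n$ for the probability in the statement. The first step is to set up a \emph{renewal/decomposition} identity relating $p_n$ to $p_{2n}$ (or more generally $p_{\lambda n}$) by splitting the spine of $\T$ at the first spine vertex after time $\xirn^{(1)}$ and restarting; because $\xirn$ is $\Geomnonneg{1-\frac1n}$, conditioning on $\{\xirn > k\}$ leaves an independent $\Geomnonneg{1-\frac1n}$ copy, and similarly on the past side. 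The target is an approximate recursion of the shape
\[
\frac{1}{p_{\lambda n}} \;=\; \frac{1}{p_n} \;+\; \log\lambda \;+\; o(1),
\]
which immediately forces $p_n = (1+o(1))\,c_8/\log n$ once one knows $p_n\to 0$ at the right rough rate (the $1/\log n$ upper and lower bounds, which in this paper presumably come out of the machinery feeding \Cref{thm:Ttiln_Tminusnn_prob}, or are established in parallel). The constant $c_8$ is then \emph{defined} by this argument as the limit of $1/(p_n\log n)$; its positivity and finiteness come from the two-sided bound.

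The heart of the matter is justifying the additive "$\log\lambda$" increment. Here one conditions on the one-sided walk $\T$ (range plus the geometric lengths) and studies the probability that the past side $\Ttil(-\infty,0)$ of an independent BRW avoids $\T[-\xiln,\xirn]$. The key analytic input is the Green's function estimate $G(x)\asymp (\|x\|^{d-4}+1)^{-1}$ (\Cref{lem:Greens_function}) together with the branching-capacity formalism of \cite{BRWs_I}: the avoidance probability should be expressed, up to lower-order terms, through $\BCap{\T[-\xiln,\xirn]}$ and a last-exit / hitting decomposition for the branching random walk $\Ttil$. One shows that $\BCap{\T[-\xiln,\xirn]}$ grows like $\,\mathrm{(length)}/\log(\mathrm{length})\,$ with small fluctuations — essentially a self-consistent bootstrap with \Cref{thm:BCap_asymp}, though here it suffices to have it for the geometric-length range, where the memorylessness again simplifies the second-moment computation. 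Combining, the ratio $p_{\lambda n}/p_n$ picks up exactly the logarithmic factor $\log(\lambda n)/\log n \to 1$ at leading order but a genuine $\log\lambda$ at the level of reciprocals, which is what the display above records.

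The main obstacle — and the place where real new work beyond \cite{Cap_of_BRW,BCap_of_RW_range} is needed — is that \emph{both} objects are branching random walks, so neither range is a Markov path and the "restart after a cutoff" step must be done for the tree itself. Concretely: after conditioning on $\{\xirn>k\}$ and on the spine vertex $u_k$, the future of $\T$ beyond $u_k$ is again a fresh infinite-invariant-tree future, but it is attached to the already-explored past, and the subtrees hanging off the first $k$ spine vertices interact with $\Ttil$ in a way that is not independent of the restarted piece. Controlling this requires (i) the shift-invariance of $\T$ (\Cref{lem:shift_invariance}) to align the two pieces, (ii) a quantitative decoupling showing that the contribution of the "overlap" region is $o(1/\log n)$ — this is where the Green's function decay and the Galton--Watson tail bound $\pr{|\GW|>k}\asymp k^{-1/2}$ (\Cref{lem:GW_tail_bound}) get used to bound the expected number of bad intersections — and (iii) a variance estimate for the relevant capacity-type functional so that the conditional avoidance probability concentrates. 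I expect step (iii), the second-moment bound for $\BCap{\T[-\xiln,\xirn]}$ (or for the associated sum of escape probabilities of $\Ttil$), to be the most delicate, since without a Markov property the natural decomposition of the second moment involves pairs of spine/subtree vertices whose joint law must be handled by hand; the geometric cutoffs are precisely the device that keeps these sums summable.
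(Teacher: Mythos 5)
Your route differs from the paper's and leaves a genuine gap. The paper does not relate $p_n$ to $p_{\lambda n}$ by a restart-recursion; it proves the exact identity $\E{\1{\A_n}\1{\B_n}U_n}=1$ (\Cref{cor:magic_formula}) by a last-exit decomposition plus the shift-invariance of $\T$, where $U_n$ is the conditional expected number of intersections of $\Ttil[0,\infty)$ with $\T[-\xiln,\xirn]$. It then establishes $\E{U_n}\sim\log n/c_8$ with an explicit formula for $c_8$ in terms of Green's-function constants (\Cref{sec:E_Gn}) and $\Var{U_n}\lesssim\log n$ (\Cref{sec:Var_Gn}), and finally peels the indicator off the identity via a four-way near/far splitting of \emph{both} trees combined with a Harnack-type decoupling (\Cref{sec:prob_nocap_nozero}). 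In particular $c_8$ is computed, not defined a posteriori as a limit of $1/(p_n\log n)$.

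The substantive gap in your sketch is the restart step. Conditioning on $\{\xirn>k\}$ does leave a fresh geometric remainder, but the object required to avoid both pieces of $\T$ is the \emph{same} random set $\Ttil(-\infty,0)$, so the two avoidance events are correlated, and the target recursion $1/p_{\lambda n}=1/p_n+\mathrm{const}\cdot\log\lambda+o(1)$ does not follow from geometric memorylessness alone; quantifying that correlation correction is exactly the hard decoupling work that the paper carries out in Lemmas~\ref{lem:decorrelate_D}--\ref{lem:Ttilsmall_probs_bound}. You name the obstacle, but your proposed resolution is logically circular: \Cref{thm:BCap_asymp} is in the paper derived \emph{from} the present theorem (\Cref{sec:BCap_asymp}), so a bootstrap against it would need to be set up with great care and would still require the same quantitative decoupling; and the $\lesssim 1/\log n$ a priori bound you invoke comes from inside this very proof (\Cref{lem:prob_AnBn_upper_bound}, itself an application of the magic formula and the concentration of $U_n$), not from machinery feeding \Cref{thm:Ttiln_Tminusnn_prob}.
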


Similarly to previous works, our starting point is a `magic formula' which states that
\begin{align}\label{eq:magic_formula_description}
\E{\1{\til{\T}(-\infty,0)\cap\T[-\xiln,\xirn]=\emptyset}\:\1{0\not\in\T(0,\xirn]}\:\#\left(\til{\T}[0,\infty)\cap\T[-\xiln,\xirn]\right)}\quad=\quad1.
\end{align}
We can see that the indicators correspond to the event in \Cref{thm:prob_nocap_nozero}, while the expectation of the counting term is $\asymp\log n$. Then our proof proceeds as follows.

\subsubsection*{Step 1: proving concentration of the number of intersections}
Firstly, we show that $\#\left(\til{\T}[0,\infty)\cap\T[-\xiln,\xirn]\right)$ is concentrated, namely we prove that
\begin{align*}
\E{\#\left(\til{\T}[0,\infty)\cap\T[-\xiln,\xirn]\right)}\sim\frac{\log n}{c_8},\qquad\Var{\#\left(\til{\T}[0,\infty)\cap\T[-\xiln,\xirn]\right)}\lesssim\log n,
\end{align*}
where $\sim$ and $\lesssim$ are defined as in \Cref{sec:other_notation}.

The main difficulty for the expectation is that it is not clear what the Green's function associated to the first given number of vertices in $\T$ behaves like, while the main difficulty for the variance is that we are not able to use the strong Markov property when considering the probability of $\T$ hitting two given vertices.

To estimate the expectation and the variance, we consider the so-called depth-first queue process encoding Galton-Watson trees, and use this to express the expected number of vertices in $\T[-\xiln,\xirn]$ at a given distance from the root, and the expected number of pairs in $\T[-\xiln,\xirn]$ at a given distance from the root and each other. Then using properties of the depth-first queue process we are able to reinterpret and simplify these expressions, and get sums that concern SRWs up to a geometric number of steps. (See \Cref{lem:GW_sums} for some of the most technical calculations.)

Then we use that considering the Green's function of a SRW up to $\Geomnonneg{\alpha}$ steps is similar to considering the Green's function for the whole walk, but only keeping the values at sites $x$ with $||x||\le\frac{C}{\sqrt{\alpha}}$. We also present the bounds we use to justify this approximation. (See \Cref{lem:g_alpha_x_bound}.)

\subsubsection*{Step 2: decorrelating the indicators and the number of intersections}

As a next step, we use the concentration of $\#\left(\til{\T}[0,\infty)\cap\T[-\xiln,\xirn]\right)$ to `pull it out' from the expectation in~\eqref{eq:magic_formula_description}. We can quickly get
\begin{align}\label{eq:prob_upper_bound}
\pr{{\til{\T}(-\infty,0)\cap\T[-\xiln,\xirn]=\emptyset},\:{0\not\in\T(0,\xirn]}}\quad\lesssim\quad\frac{1}{\log n},
\end{align}
but we also need a lower bound, and a more precise estimate for the upper bound.

We can see that the terms $\1{\til{\T}(-\infty,0)\cap\T[-\xiln,\xirn]=\emptyset}\1{0\not\in\T(0,\xirn]}$ and $\#\left(\til{\T}[0,\infty)\cap\T[-\xiln,\xirn]\right)$ have a dependence via $\T$ and via the spine of $\til{\T}$.

Inspired by~\cite[Section 3.3]{BCap_of_RW_range}, we split $\T$ into two parts, $\T(\le C)$ that is the part of the tree up to the first spine vertex where the walk hits a given distance $C$ from 0, and $\T(\ge C)$ that consists of the rest of the tree. We also split $\til{\T}$ into $\til{\T}(\le\til{C})$ and $\til{\T}(\ge\til{C})$ similarly.

Then we consider the event that (i) $\#\left(\T(\ge C)\cap\til{\T}(\le\til{C})\right)$ deviates a lot from its mean, (ii) $\T\left(\le\frac14C\right)$ does not intersect the negative side of $\til{\T}(\ge4\til{C})$, and (iii) the positive side of $\T\left(\le\frac14C\right)$ does not hit 0. We bound its probability by decorrelating the parts $\left(\T(\ge C),\til{\T}(\le\til{C})\right)$ from the parts $\left(\T\left(\le\frac14C\right),\til{\T}(\ge4\til{C})\right)$, and bounding the probability of (i) and (ii)$\cap$(iii) separately.

We also bound the probabilities of analogous events with other combinations of the parts $\T(\le C)$, $\T(\ge C)$, $\til{\T}(\le\til{C})$ and $\til{\T}(\ge\til{C})$.

This finishes the proof of \Cref{thm:prob_nocap_nozero}, and using the concentration of $\xiln$ and $\xirn$, we also get \Cref{thm:prob_nocap_nozero_n}. It also quickly implies asymptotics on $\E{\BCap{\T[0,n]}}$, and similarly to~\cite[Proposition 4.11]{Cap_of_BRW} we can bound $\Var{\BCap{\T[0,n]}}$ to finish the proof of \Cref{thm:BCap_asymp}.

\subsubsection*{Step 3: removing the $0\not\in\T\left(0,n\right]$ term}

For proving \Cref{thm:Ttiln_Tminusnn_prob} we have to remove the $0\not\in\T\left(0,n\right]$ condition from the probability in \Cref{thm:prob_nocap_nozero_n}.

In~\cite[Lemma 3.2]{BCap_of_RW_range}, where in place of $\T$ we have a SRW, this is done using the following idea. We can compare a SRW and a SRW that is conditioned not to hit 0 at positive times by considering a SRW and removing the part between time 0 and its last visit to 0. Since we do not have a strong Markov property for BRWs, we are not able to use this trick in our case.

Instead, we show that a BRW can be compared to a BRW that does not hit 0 at positive times and is weighted by the number of zeroes at negative times. More precisely, we show (in \Cref{lem:Ttilminus_Tminusnn_prob_as_E}) that
\begin{align*}
\pr{\til{\T}(-\infty,0)\cap\T[-n,n]=\emptyset}
\sim
\E{\1{\til{\T}(-\infty,0)\cap\T[-n,n]=\emptyset}\1{0\not\in\T(0,\infty)}\#\left(0\text{ in }\T(-\infty,0]\right)}+o\left(\frac{1}{\log n}\right).
\end{align*}
While we know that the expectation of the indicator term is of the desired order $\asymp\frac{1}{\log n}$, and the expectation of the counting term is $\asymp1$, it still requires considerable work to decorrelate these. We split the counting term based on the trees descending from different spine vertices, and use different estimates based on the value of the walk at the given spine vertex and the number of times this value was visited by earlier spine vertices. 

When making these estimates, we lose a constant factor, this is why in \Cref{thm:Ttiln_Tminusnn_prob} we can only estimate the probabilty up to a constant factor, and do not get exact asymptotics.

\subsection{Organisation}\label{sec:organisation}

In \Cref{sec:prelim} we introduce some notation and establish some preliminary results. In particular we state and prove the `magic formula', and discuss depth-first queue processes.

In \Cref{sec:E_and_Var_of_U} we prove the concentration of the number of intersections between the two trees. The calculations manipulating depth-first queue processes are found in \Cref{sec:GW_sums}.

In \Cref{sec:prob_nocap_nozero} we prove Theorems~\ref{thm:prob_nocap_nozero} and~\ref{thm:prob_nocap_nozero_n}, in \Cref{sec:Ttiln_Tminusnn_prob} we prove \Cref{thm:Ttiln_Tminusnn_prob}, and in \Cref{sec:BCap_asymp} we prove \Cref{thm:BCap_asymp}.

In \Cref{sec:mu} we briefly comment on the choice of $\mu$ in the results.

The statements and proofs of some auxiliary results are deferred to the Appendix.

\section{Some notation and preliminary results}\label{sec:prelim}

In this section we introduce some notation and present some preliminary results that will be used throughout the paper.

\subsection{Further notation regarding BRWs} \label{sec:further_notation_BRW}

As mentioned in \Cref{sec:intro}, we let $\T=(\T,\Scal)$ denote a random walk as in \Cref{def:RW_on_tree} indexed by a tree as in \Cref{def:infinite_tree}. We write $X$ for the simple random walk (SRW) along the spine of $\T$.

Unless specified otherwise, $\til{\T}=(\til{\T},\til{\Scal})$ denotes a branching random walk independent of $\T$, and we denote the associated quantities analogously as $\til{X}$, $\til{d}^{\pm}_i$, etc.

In some of the calculations we will also use SRWs and random walks indexed by a Galton-Watson tree. Unless specified otherwise, $T=(T,S)$ denotes a $\GW(\mu)$ tree and a random walk indexed by it, and $X$, $\til{X}$, and $\what{X}$ denote SRWs independent of each other and $T$.

We use the $L^2$-distance on $\Z^d$ and write $B(x,r)=\{y\in\Z^d:\:||y-x||\le r\}$, $\partial B(x,r)=\{y\in\Z^d:\:\lfloor||y-x||\rfloor=r\}$. In what follows we drop the $\lfloor\cdot\rfloor$ from the notation. We also write $\tau_r=\inf\{k\ge0:\:||X_k||=r\}$ and $\tau_x=\inf\{k\ge0:\:X_k=x\}$.

We write $\lambda=1-\frac1n$, and as in \Cref{thm:prob_nocap_nozero}, we let $\xirn$ and $\xiln$ be $\Geomnonneg{1-\lambda}$ random variables, independent of all SRWs and BRWs concerned.

\subsection{Green's functions}\label{sec:Greens_functions}

Let $g$ denote the Green's function of a simple random walk on $\Z^d$ (which is also the Green's function of a walk indexed by a critical Galton-Watson tree), and let $G$ denote the Green's function of the future of a branching random walk on $\Z^d$, i.e.\ let $G(x)=\sum_{k\ge0}\pr{\T_k=x}$. For two functions $f$ and $h$ on $\Z^d$ let $(f\star h)(z)$ denote the convolution $\sum_{x\in\Z^d}f(x)h(z-x)$. Then the following are known.

\begin{lemma}\label{lem:Greens_function}
For $d>2$ and any $\eps>0$ we have 
\begin{align}\label{eq:asymp_g}
g(z)\quad=\quad c_g\frac{1}{||z||^{d-2}}+o\left(\frac{1}{||z||^{d-\eps}}\right)\qquad\text{as }||z||\to\infty,
\end{align}
where $c_g$ is a constant depending on the dimension.

For $d>4$ and any $\eps>0$ we have
\begin{align}\label{eq:asymp_G}
G(z)\quad&=\quad\frac{\sigma^2}{2}(g\star g)(z) +\left(1-\frac{\sigma^2}{2}\right)g(z)-\frac{\sigma^2}{2}\E{g(X_1+z)}\\
\nonumber
\quad&=\quad\frac{\sigma^2}{2}(g\star g)(z)+O\left(g(z)\right)\quad=\quad c_G\frac{1}{||z||^{d-4}}+o\left(\frac{1}{||z||^{d-2-\eps}}\right)\quad\text{as }||z||\to\infty,
\end{align}
where $c_G$ is a constant depending on the dimension and $\sigma^2$.

For $d>6$ and any $\eps>0$ we have
\begin{align}\label{eq:asymp_G_star_g}
(G\star g)(z)\quad&=\quad\frac{\sigma^2}{2}(g\star g\star g)(z)+O((g\star g)(z))\quad\\
\nonumber
&=\quad c_{G\star g}\frac{1}{||z||^{d-6}}+o\left(\frac{1}{||z||^{d-4-\eps}}\right)&\qquad\text{as }||z||\to\infty,
\end{align}
where $c_{G\star g}$ is a constant depending on the dimension and $\sigma^2$.
\end{lemma}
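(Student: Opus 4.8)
The estimate \eqref{eq:asymp_g} for the simple random walk Green's function $g$ is classical (it follows from the local central limit theorem together with standard error bounds, and is recorded e.g.\ in \cite{intersections_of_RWs}), so the plan is to focus on $G$ and $G\star g$: first derive the displayed identity for $G$, then read off the asymptotics.

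For the identity I would decompose the future $\T[0,\infty)$ into the spine $(u_i)_{i\ge0}$ and the $\GW(\mu)$ subtrees hanging off it. The key elementary fact is that a walk indexed by a $\GW(\mu)$ tree $T$ rooted at $o$ (with value $0$ at $o$) satisfies $\E{\#\{v\in T:\ S_v=x\}}=\sum_{k\ge0}\E{Z_k}\pr{X_k=x}=g(x)$, where $Z_k$ counts the vertices of $T$ at depth $k$ and $\E{Z_k}=1$ by criticality. Hence, conditionally on the spine walk $X$, a subtree hanging off $u_i$ contributes expected number of visits to $z$ equal to $\E{g(z-X_i-D)}=g(z-X_i)-\delta_{X_i}(z)$, where $D$ is the uniform step from $u_i$ to the root of that subtree and I used $(p\star g)=g-\delta_0$ with $p$ the step law. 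The number of such subtrees is $d_0^+$ at the root, with $\E{d_0^+}=1$, and $d_i^+$ at $u_i$ for $i\ge1$, with $\E{d_i^+}=\tfrac12\sum_k k(k-1)\mu(k)=\sigma^2/2$, and both counts are independent of $X$. Adding the spine contribution $\sum_{i\ge0}\pr{X_i=z}=g(z)$ to the subtree contributions and using $\sum_{i\ge0}\pr{X_i=y}=g(y)$ to identify $\sum_{i\ge0}\E{g(z-X_i)}=(g\star g)(z)$ and $\sum_{i\ge1}\pr{X_i=z}=g(z)-\delta_0(z)$, a short computation produces the first equality in \eqref{eq:asymp_G} (using also $\E{g(X_1+z)}=g(z)-\delta_0(z)$). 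Since $\E{g(X_1+z)}=O(g(z))$, this already gives $G=\tfrac{\sigma^2}{2}(g\star g)+O(g)$.

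To read off the asymptotics I need the behaviour of convolutions of power laws: for $d>4$ a standard estimate gives $(g\star g)(z)=c_{g\star g}\|z\|^{4-d}+o(\|z\|^{-(d-2-\eps)})$, and for $d>6$, $(g\star g\star g)(z)=c_{g\star g\star g}\|z\|^{6-d}+o(\|z\|^{-(d-4-\eps)})$. Each is obtained by splitting $\Z^d$ into the region near $0$, the region near $z$ (and, in the triple case, near the two intermediate junction points), and the complementary far region: near $0$ one factor is essentially its value at $z$ while the partial sum of the other factor over that region is $\asymp\|z\|^2$ (the precise constant coming from convergence of the Riemann sum to $\|z\|^{4-d}\int_{\R^d}\|w\|^{-(d-2)}\|e-w\|^{-(d-2)}\,dw$ for a unit vector $e$, an integral finite exactly when $d>4$), symmetrically near $z$, while the far region is an absolutely convergent tail of lower order. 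Feeding $(g\star g)$ into $G=\tfrac{\sigma^2}{2}(g\star g)+O(g)$ gives \eqref{eq:asymp_G} with $c_G=\tfrac{\sigma^2}{2}c_{g\star g}$, since $O(g(z))=O(\|z\|^{-(d-2)})$ is of smaller order; convolving that same identity with $g$ gives $G\star g=\tfrac{\sigma^2}{2}(g\star g\star g)+O(g\star g)$, hence \eqref{eq:asymp_G_star_g} with $c_{G\star g}=\tfrac{\sigma^2}{2}c_{g\star g\star g}$, since $O((g\star g)(z))=O(\|z\|^{4-d})$ is again of smaller order.

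The argument is conceptually routine, and I expect the obstacles to be bookkeeping ones. First, one must pin down the depth-first indexing so that $\T[0,\infty)$ is precisely the spine together with the subtrees on its future side --- with the root exceptional, contributing $\E{d_0^+}=1$ rather than $\sigma^2/2$ --- so that the identity for $G$, including its exact lower-order terms, comes out right. Second, one must make the convolution estimates quantitative enough to yield the stated $o(\cdot)$ errors rather than mere $\asymp$-bounds, which requires uniform control over the intermediate scales $1\ll\|x\|\ll\|z\|$ and tracking how the $\eps$-loss compounds when the estimate is passed through a second convolution.
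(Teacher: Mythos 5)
Your proposal is correct and reconstructs precisely the "short calculation" the paper alludes to but does not spell out: the paper only cites \cite[Theorem 1.5.4]{intersections_of_RWs} for \eqref{eq:asymp_g} and then says that \eqref{eq:asymp_G} and \eqref{eq:asymp_G_star_g} follow from a short calculation using $\E{d_i^+}=\tfrac{\sigma^2}{2}$. Your decomposition of $\T[0,\infty)$ into the spine walk $X$ plus the $\GW(\mu)$ subtrees rooted at each $u_i$ --- with $\E{d_0^+}=1$ distinguished from $\E{d_i^+}=\tfrac{\sigma^2}{2}$ for $i\ge1$, the subtree Green's function identity $\E{\#\{v\in T:\,S_v=x\}}=g(x)$, and $(p\star g)=g-\delta_0$ --- is exactly what the paper has in mind, and your algebra reproduces the displayed identity for $G$ (equivalently, $\frac{\sigma^2}{2}(g\star g)+(2-\sigma^2)g-(1-\frac{\sigma^2}{2})\delta_0$, which matches the paper's form since $\E{g(X_1+z)}=g(z)-\delta_0(z)$). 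The passage to the asymptotics of $(g\star g)$ and $(g\star g\star g)$ by a scaling/Riemann-sum argument with the convergence conditions $d>4$, $d>6$ is the standard route. The one detail worth pinning down in a write-up is the quantitative $o(\|z\|^{-(d-2-\eps)})$ error in $(g\star g)$: rather than re-deriving it by hand, it is cleanest to feed the $o(\|z\|^{-(d-\eps)})$ error from \eqref{eq:asymp_g} into the split, since the cross terms and error convolutions then come out directly.
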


See e.g.\ \cite[Theorem 1.5.4]{intersections_of_RWs} for~\eqref{eq:asymp_g}. Then~\eqref{eq:asymp_G} follows from a short calculation, using that $\E{d_i^+}=\frac{\sigma^2}{2}$ for $i\ge1$, $\E{d_0^+}=1$ and that $\GW(\mu)$ has the same Green's function as a SRW, and~\eqref{eq:asymp_G_star_g} follows from a short calculation using \eqref{eq:asymp_g} and \eqref{eq:asymp_G}.

For $\alpha\in(0,1)$ write $g_{\alpha}$ for the Green's function of a SRW ran for $\Geomnonneg{\alpha}$ steps, i.e.\ let $g_{\alpha}(x):=\sum_{k\ge0}(1-\alpha)^k\pr{X_k=x}$.

\subsection{Shift invariance of BRWs and the magic formula}\label{sec:shift_invariance_magic_formula}

A very important property of BRWs is the following. 

\begin{lemma} \label{lem:shift_invariance}
Let $\T=(\T,\Scal)$ be a BRW as above, let $(\til{\T}_{i})_{i\in\Z}=(\T_{i+1})_{i\in\Z}$ be obtained by re-rooting the tree $\T$ at $\T_1$, and let $\til{\Scal}_i=\Scal_{i+1}-\Scal_1$ be obtained by shifting the walk $\Scal$. Then we have $(\til{\T},\til{\Scal})\eqdist(\T,\Scal)$.
\end{lemma}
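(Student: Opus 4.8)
The statement is Proposition~2 of~\cite{range_of_tree-indexed_RW}; here is how I would organize a proof. The plan is to first strip off the spatial part by a symmetry argument, reducing to the invariance of the underlying random \emph{ordered} tree, and then to prove that combinatorial statement from the size-biased Galton--Watson structure along the spine. For the reduction: it suffices to show that $\Ttil$ and $\T$ agree in law as two-sided-indexed ordered trees, forgetting the embedding. Conditionally on the ordered tree shape, the walk is built by attaching i.i.d.\ increments $(D_v)$ to the non-root vertices and summing along ancestral lines. The vertex $\T_1$ is always a child of the root $\T_0$ (it is the first vertex visited after the root in any depth-first traversal), so re-rooting $\T$ at $\T_1$ flips the parent/child orientation of only the single edge $\{\T_0,\T_1\}$: a quick check of the telescoping $\til\Scal_i=\Scal_{i+1}-\Scal_1$ shows that every increment is unchanged except $D_{\T_0}^{\mathrm{new}}=-D_{\T_1}$. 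Since the increment law is invariant under $x\mapsto -x$, the joint law of the increments given the unrooted tree is unchanged, so $\Ttil\eqdist\T$ would imply $(\Ttil,\til\Scal)\eqdist(\T,\Scal)$.

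For the invariance of the tree itself I see two routes. The direct one is a case analysis on $\T_1$: depending on whether the root carries a finite decoration visited before the spine continues, $\T_1$ is either the first spine vertex $u_1$, or the root of the first $\GW(\mu)$ tree hanging off $u_0$; in either case re-rooting at $\T_1$ produces a new infinite spine through $u_0$, and one reads off from the planar ordering the new per-vertex left/right offspring counts along that spine and records which finite decorations get re-attached on which side. Using the independence of all attached $\GW(\mu)$ trees and of the pairs $(d_i^+,d_i^-)$, one then checks that the law of $\Ttil$ factorizes exactly as in \Cref{def:infinite_tree}: a $\mu$-distributed one-sided offspring count at the new root, the law $\pr{d^+=k,\;d^-=\ell}=\mu(k+\ell+1)$ at each new interior spine vertex, and fresh $\GW(\mu)$ trees everywhere else. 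The conceptually cleaner route is to encode $\T$ by a depth-first queue process (a \L ukasiewicz-type path) running along $\dots,\T_{-1},\T_0,\T_1,\dots$: the size-biasing on the spine is precisely what makes this encoding a \emph{stationary} sequence, and re-rooting at $\T_1$ is exactly the unit shift of that sequence, so the law of the tree is preserved. (This is also the encoding that \Cref{sec:prelim} sets up for later use.)

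The main obstacle, in either route, is the bookkeeping at the ``seam'' near the root: verifying that re-rooting reproduces simultaneously the special $\mu$ offspring law at the new root and the size-biased law at the new second spine vertex, keeping track of which side of the new spine each moved decoration lands on. The algebraic facts that make this work are that an interior spine vertex has size-biased total offspring with the spine-child placed uniformly among its children, the telescoping identity $\sum_{\ell\ge0}\mu(k+\ell+1)$ that governs the marginals of $(d^+,d^-)$, and the fact that $\mu$ has mean $1$ so the size-biasing normalizes correctly. I expect this combinatorial verification --- equivalently, the proof that the queue encoding is genuinely stationary --- to be the only real difficulty; the spatial part, as noted above, is immediate from the symmetry of the step distribution.
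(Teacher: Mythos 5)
The paper does not prove this lemma; it is stated as a citation of Proposition~2 of Le Gall--Shen, so there is no in-paper argument to compare your proposal against. Assessed on its own merits, your plan is sound and matches the standard route. The spatial reduction is correct: $\T_1$ is indeed always a child of $\T_0$ (it is the first vertex after the root in the depth-first order), re-rooting reverses exactly the edge $\{\T_0,\T_1\}$, and the telescoping computation you describe really does show that the only new increment is $D^{\mathrm{new}}_{\T_0}=-D_{\T_1}$, which has the same law by the symmetry of the step distribution and remains independent of the rest; so the spatial part reduces cleanly to the invariance in law of the shifted random ordered tree $\Ttil$. For the tree invariance, you correctly identify that the content of the proof is the bookkeeping at the seam near the root --- verifying that after re-rooting one still reads off a $\mu$-distributed one-sided offspring count at the new root, the size-biased joint law $\pr{d^+=k,d^-=\ell}=\mu(k+\ell+1)$ at each new interior spine vertex, and fresh independent $\GW(\mu)$ decorations, with the correct left/right placement. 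Your two suggested routes (the case analysis at $\T_1$ versus stationarity of the two-sided \L ukasiewicz encoding) are in fact the same computation dressed differently: since the tree is \emph{defined} via the spine decomposition rather than via the path encoding, establishing stationarity of the encoding requires exactly the same seam verification as the direct approach. So the ``conceptually cleaner'' route does not actually shortcut the work, but it is a perfectly valid way to organize it. One small caution: when you write that re-rooting ``produces a new infinite spine through $u_0$,'' this is only true in the case $\T_1\ne u_1$; if $\T_1=u_1$ then the new spine is $u_1,u_2,\dots$ and $u_0$ becomes a non-spine vertex hanging off the new root. Your case analysis covers both situations, but the phrasing should be adjusted to avoid suggesting $u_0$ is always on the new spine.
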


An analogous shift-invariance property for $\T(-\infty,0]$ was stated in \cite[Proposition 2]{range_of_tree-indexed_RW} and the exact statement of \Cref{lem:shift_invariance} appears in \cite[after (2.2)]{Cap_of_BRW}.

By using a last exit decomposition and the translation invariance of $\T$ and $\til{\T}$, we can obtain a `magic formula' similar to~\cite[Theorem 2.2]{RWs_random_sets}, featuring the probability in \Cref{thm:prob_nocap_nozero}.

Let
\begin{align*}
\A_n:=&\quad\left\{\til{\T}(-\infty,0)\cap\T[-\xiln,\xirn]=\emptyset\right\},\qquad
\B_n:=\quad\left\{0\not\in\T(0,\xirn]\right\},\\
\Ell_n:=&\quad\sum_{k=-\xiln}^{\xirn}\Ell^{+}\left(\T_k\right)\quad\text{where}\quad\Ell^{+}(y)=\sum_{j=0}^{\infty}\1{\til{\T}_j=y},\\
U_n:=&\quad\sum_{i=0}^{\infty}\sum_{j=-\xiln}^{\xirn}\til{d}^+_i g(\Xtil_i,\T_j)
-\sum_{i=0}^{\infty}\sum_{j=-\xiln}^{\xirn}\til{d}^+_i\1{\Xtil_i=\T_j}.
\end{align*}

\begin{lemma}\label{lem:magic_formula}
We have
\[\E{\1{\A_n}\1{\B_n}\Ell_n}\quad=\quad1.\]
\end{lemma}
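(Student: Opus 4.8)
The plan is to prove the identity by a last-exit decomposition for the walk $\til\T$ relative to the set $\T[-\xiln,\xirn]$, conditioning on $(\T,\xiln,\xirn)$ throughout. Fix a realization of $\T$ and of $\xiln,\xirn$, write $R=\T[-\xiln,\xirn]$, and expand $\Ell_n=\sum_{y\in R}\sum_{j\ge0}\1{\til\T_j=y}$ (note the spatial multiplicities of $R$ automatically get counted, which is what we want). The key combinatorial step is: for each index $j\ge0$ such that $\til\T_j\in R$, look at the \emph{last} index $i\le j$ on the ancestral line (in $\til\T$) from the root to $\til\T_j$ whose image lies in $R$ --- more precisely, decompose the path in the tree $\til\T$ from its root to the vertex $\til\T_j$ at the last point where it meets $R$. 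Above that last meeting point the remaining path to $\til\T_j$ avoids $R$ except at its endpoint. Summing over all $j$ and grouping by this last-exit vertex, one rewrites $\1{\A_n}\Ell_n$ as a sum, over vertices $v$ on the spine of $\til\T$ (here is where $\A_n$ forces the relevant last exit to occur \emph{on the spine}: since the past of $\til\T$ must miss $R$, any occupied site of $R$ in $\til\T$ is reached through the spine), of a contribution counting occupied sites of $R$ in the subtree hanging off $v$ on its right, weighted and shifted appropriately. Here $\B_n=\{0\notin\T(0,\xirn]\}$ enters so that the term $j=0$ (the root of $\til\T$, sitting at $0$) is handled consistently with the rest.

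Next I would take expectations over $\til\T$ only (still conditioning on $\T,\xiln,\xirn$). The spine of $\til\T$ is, by \Cref{def:infinite_tree}, a simple random walk $\til X$ with offspring counts $\til d^+_i$ on the right at spine vertex $u_i$, and attached to each of those $\til d^+_i$ right-offsprings is an independent $\GW(\mu)$ tree carrying an independent walk. Using \Cref{def:RW_on_tree}, the expected number of occupied sites of $R$ in one such $\GW(\mu)$-indexed walk started from a given point $z$ is exactly $\sum_{y\in R} g(y-z)=\sum_{j}g(\til X_i,\T_j)$ where $g$ is the SRW/critical-GW Green's function (as recorded just before \Cref{lem:Greens_function}); subtracting the correction for the event that the very first step of that subtree already revisits $R$ gives precisely the two sums defining $U_n$. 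So the last-exit decomposition yields $\econd{\1{\A_n}\1{\B_n}\Ell_n}{\T,\xiln,\xirn}=\econd{\1{\A_n}\1{\B_n}\,U_n}{\T,\xiln,\xirn}$, i.e.\ the ``magic'' rewriting $\E{\1{\A_n}\1{\B_n}\Ell_n}=\E{\1{\A_n}\1{\B_n}U_n}$, and one then checks that in fact $\E{\1{\A_n}\1{\B_n}U_n}$ telescopes: the $g$-term counts the expected number of last-exit pairs regardless of whether $\A_n$ holds, and the constant is normalized to $1$ because summing $\pr{\cdot}$ over the last-exit vertex is a sum of disjoint events covering a full-probability event. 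Concretely, I expect the cleanest route is to establish directly, by the same decomposition applied to the quantity $1=\E{\1{\text{some last exit exists}}}$ (which is forced once we track the root at $0$), that $\E{\1{\A_n}\1{\B_n}U_n}=1$, so the two halves combine.

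The main obstacle, and the step that needs genuine care rather than bookkeeping, is getting the boundary/root term exactly right and ensuring the decomposition is a true bijection on (path, last-exit vertex) pairs --- in particular handling the fact that $R=\T[-\xiln,\xirn]$ is a multiset of lattice sites (a site can be hit by $\T$ several times), making sure $\Ell^+$ and the $\1{\Xtil_i=\T_j}$ correction count with the right multiplicities, and checking that under $\A_n\cap\B_n$ every occupied site of $R$ in $\til\T$ really is accessed via a last-exit vertex lying on the spine of $\til\T$ with $0$ playing the role of the trivial last exit at the root. The shift-invariance \Cref{lem:shift_invariance} is what guarantees the spine of $\til\T$ behaves like a genuine two-sided SRW so that the ``past avoids $R$'' constraint interacts cleanly with the spine; I would invoke it to justify re-rooting arguments if the decomposition is phrased via re-rooting $\til\T$ at its last-exit vertex. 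Everything else --- computing the per-subtree expected occupation as a Green's-function sum, and the telescoping --- is routine given \Cref{def:RW_on_tree} and the definition of $g$.
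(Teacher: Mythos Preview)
Your proposal has a genuine gap in the core decomposition step. You propose to decompose by the last ancestor of $\til\T_j$ (in the tree $\til\T$) whose spatial position lies in $R$, and then claim that under $\A_n$ this last-exit vertex must lie on the spine of $\til\T$. This is false. The event $\A_n=\{\til\T(-\infty,0)\cap R=\emptyset\}$ constrains only the \emph{past} side of $\til\T$, i.e.\ the left-side subtrees hanging off the spine (those with negative depth-first index). But for $j\ge0$, the ancestral line from the root to $\til\T_j$ consists of spine vertices $u_0,\dots,u_k$ followed by a path inside a \emph{right}-side (future) subtree of $u_k$; none of these ancestors lie in the past of $\til\T$. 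So $\A_n$ says nothing about whether the last ancestor in $R$ is a spine vertex, and your grouping by spine vertex does not rewrite $\1{\A_n}\Ell_n$ as claimed. Consequently the link you draw between this decomposition and the definition of $U_n$ does not go through, and the final ``telescopes to $1$'' step is unsubstantiated.

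The paper's argument is structurally different. It performs a last-exit decomposition not on the ancestral tree-path but on the \emph{depth-first index} in the past $\til\T(-\infty,0]$: for a fixed finite sequence $(y_0,\dots,y_m)$, it considers $\sup\{\ell\ge0:\til\T_{-\ell}\in\{y_0,\dots,y_m\}\}$, and uses shift invariance of $\til\T$ (\Cref{lem:shift_invariance}) to turn this into an expression involving $\til\T[0,\infty)$ and the avoidance event for $\til\T(-\infty,0)$. The $\B_n$ indicator arises because after shift invariance the decomposition by last-exit index corresponds to a decomposition by the last $i$ with $x_i=x_j$, which is exactly $\1{x_j\notin\{x_{j+1},\dots,x_m\}}$. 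Then---and this is the other ingredient you are missing entirely---shift invariance of $\T$ together with the fact that $(\xiln\mid\xiln+\xirn=m)\sim\Unif{\{0,\dots,m\}}$ is used to sum over the shifted windows, after which the constant $1$ falls out because $0\in\{x_0-x_k,\dots,x_m-x_k\}$ trivially for every $k$. Neither the shift invariance of $\T$ nor the geometric/uniform structure of $(\xiln,\xirn)$ appears in your plan, and both are essential.
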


The proof is analogous to the proof of~\cite[Theorem 2.2]{RWs_random_sets} and presented below for completeness.

\begin{proof}
Let $y_0,...,y_m\in\Z^d$ and let $\til{\T}$ be a branching random walk on $\Z^d$. Then by considering $\sup\{\ell\ge0:\:\til{\T}(-\ell)\in\{y_0,...,y_m\}\}$, using that a.s.\ $\til{\T}$ intersects $\{y_0,...,y_m\}$ only finitely many times, and using the shift invariance of $\til{\T}$, we get the following last exit decomposition formula.
\begin{align}
\nonumber
1\quad\ge&\quad\pr{\til{\T}(-\infty,0]\cap\{y_0,...,y_m\}\ne\emptyset}\\
\nonumber
=&\quad\sum_{\ell=0}^{\infty}\sum_{i=0}^{m}\1{y_i\not\in\{y_{i+1},..,y_m\}}\pr{\til{\T}(-\ell)=y_i,\:\til{\T}(-\infty,-\ell)\cap\{y_0,..,y_m\}=\emptyset}\\
\nonumber
=&\quad\sum_{\ell=0}^{\infty}\sum_{i=0}^{m}\1{y_i\not\in\{y_{i+1},..,y_m\}}\pr{\til{\T}(\ell)=-y_i,\:\til{\T}(-\infty,0)\cap\{y_0-y_i,..,y_m-y_i\}=\emptyset}\\
\label{eq:last_exit_decomp}
=&\quad\sum_{i=0}^{m}\1{y_i\not\in\{y_{i+1},..,y_m\}}\E{\#\left((-y_i)\text{ in }\til{\T}[0,\infty)\right)\1{\til{\T}_{-}\cap\{y_0-y_i,..,y_m-y_i\}=\emptyset}}.
\end{align}

By the definition of $\A_n$, $\B_n$ and $\Ell_n$ we have
\begin{align*}
&\E{\1{\A_n}\1{\B_n}\Ell_n}\quad\\
&\qquad=\quad\sum_{m=0}^{\infty} \sum_{\substack{x_0,...,x_m\\ \text{with }x_0=0}}\sum_{j=0}^{m}\pr{\xiln=m-j,\xirn=j,\T[-m+j,j]=(x_0-x_j,...,x_m-x_j)}\\
&\qquad\qquad\cdot\1{x_j\not\in\{x_{j+1},...,x_m\}}\E{\sum_{k=0}^{m}\#\left((x_k-x_j)\text{ in }\til{\T}[0,\infty)\right)\1{\til{\T}_{-}\cap\{x_0-x_j,..,x_m-x_j\}=\emptyset}}.
\end{align*}
Using the shift invariance of $\T$, and that $\left(\xiln\mid\xiln+\xirn=m\right)\sim\Unif{\{0,1,...,m\}}$, we get that this is
\begin{align*}
&=\quad\sum_{m=0}^{\infty}\sum_{\substack{x_0,...,x_m\\ \text{with }x_0=0}}\sum_{j=0}^{m}\frac{1}{m+1}\pr{\xiln+\xirn=m}\pr{\T[0,m]=(x_0,...,x_m)}\\
&\hspace{3cm}\cdot\1{x_j\not\in\{x_{j+1},...,x_m\}}\E{\sum_{k=0}^{m}\#\left((x_k-x_j)\text{ in }\til{\T}[0,\infty)\right)\1{\til{\T}_{-}\cap\{x_0-x_j,...,x_m-x_j\}=\emptyset}}.
\end{align*}
Then we use~\eqref{eq:last_exit_decomp} for $x_0-x_k,...,x_m-x_k$ in place of $y_0,...,y_m$ to get that this is
\begin{align*}
=\quad&\sum_{m=0}^{\infty}\sum_{\substack{x_0,...,x_m\\ \text{with }x_0=0}}\frac{1}{m+1}\pr{\xiln+\xirn=m}\pr{\T[0,m]=(x_0,...,x_m)}\\
&\hspace{3cm}\cdot\sum_{k=0}^{m}\pr{\til{\T}(-\infty,0]\cap\{x_0-x_k,...,x_m-x_k\}\ne\emptyset}.
\end{align*}
Note that $0\in\til{\T}(-\infty,0]\cap\{x_0-x_k,...,x_m-x_k\}$ for each $k$, hence\\ $\pr{\til{\T}(-\infty,0]\cap\{x_0-x_k,...,x_m-x_k\}\ne\emptyset}=1$. From this we get that the above sum is 1. This finishes the proof.
\end{proof}

We can simplify the formula in \Cref{lem:magic_formula} by removing some of the randomness of $\Ell_n$ as follows.

\begin{corollary}\label{cor:magic_formula}
We have
\begin{align}\label{eq:magic_formula}
\E{\1{\A_n}\1{\B_n}U_n}\quad=\quad1.
\end{align}
\end{corollary}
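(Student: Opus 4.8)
The plan is to derive \Cref{cor:magic_formula} from \Cref{lem:magic_formula} by showing that the simpler expression $U_n$ has the same conditional expectation as $\Ell_n$ on the event $\A_n\cap\B_n$, once we condition on $\T$ (including $\xiln,\xirn$) and on the spine of $\Ttil$. The point is that $\Ell_n=\sum_{k=-\xiln}^{\xirn}\Ell^+(\T_k)$ counts, with multiplicity, how many times the future of $\Ttil$ visits the vertices of $\T[-\xiln,\xirn]$, whereas $U_n$ replaces each such count by a one-step Green's-function expression anchored at the spine of $\Ttil$. So first I would condition on the pair $(\T,\xiln,\xirn)$ and on $\Xtil=(\Xtil_i)_{i\ge0}$ together with the offspring numbers $\til d_i^+$; under this conditioning the finite $\GW(\mu)$ trees hanging off the right side of the spine of $\Ttil$ are still independent $\GW(\mu)$ trees carrying independent SRW increments, and the future range $\Ttil[0,\infty)$ decomposes as the union over spine vertices $i\ge0$ of $\til d_i^+$ independent tree-indexed walks, each started from $\Xtil_i$ and each distributed as a walk indexed by a $\GW(\mu)$ tree.

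Next I would compute $\econd{\Ell^+(y)}{\T,\Xtil,(\til d_i^+)}$. Since the future range of $\Ttil$ is the disjoint (in index) union of $\sum_{i\ge0}\til d_i^+$ independent $\GW$-indexed walks, the expected number of visits to $y$ equals $\sum_{i\ge0}\til d_i^+\,\econd{\#(y\text{ in one }\GW\text{-indexed walk from }\Xtil_i)}{\cdot}$. The Green's function of a walk indexed by a critical $\GW(\mu)$ tree is exactly $g$, the SRW Green's function (this identity is recalled in \Cref{sec:prelim}: $g$ is also "the Green's function of a walk indexed by a critical Galton--Watson tree"), because summing $\pr{S_v=y-x}$ over all vertices $v$ of a $\GW(\mu)$ tree gives $\sum_{m\ge0}\E{Z_m}\pr{X_m=y-x}=\sum_{m\ge0}\pr{X_m=y-x}=g(y-x)$ using $\E{Z_m}=1$ — but one must be careful about whether the root of the $\GW$ tree contributes a zero-length increment or not. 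Accounting for the root carefully, $\econd{\Ell^+(y)}{\cdot}=\sum_{i\ge0}\til d_i^+\big(g(\Xtil_i,y)-\1{\Xtil_i=y}\big)$ plus a correction, or some such; I would pin down the exact bookkeeping so that summing over $y=\T_j$, $j\in[-\xiln,\xirn]$, reproduces precisely $\econd{U_n}{\T,\Xtil,(\til d_i^+)}$, i.e.\ $U_n=\sum_{i\ge0}\sum_{j=-\xiln}^{\xirn}\til d_i^+ g(\Xtil_i,\T_j)-\sum_{i\ge0}\sum_{j=-\xiln}^{\xirn}\til d_i^+\1{\Xtil_i=\T_j}$.

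The key subtlety is that in \Cref{lem:magic_formula} the quantity $\Ell_n$ is multiplied by $\1{\A_n}\1{\B_n}$, and $\A_n=\{\Ttil(-\infty,0)\cap\T[-\xiln,\xirn]=\emptyset\}$ depends on the past of $\Ttil$, which is correlated with the future of $\Ttil$ only through the spine walk $\Xtil$ and the spine offspring counts $(\til d_i^-,\til d_i^+)$. Since $\1{\A_n}$ and $\1{\B_n}$ are measurable with respect to $\T$, $\xiln$, $\xirn$, $\Xtil$, $(\til d_i^\pm)$, and the past-side $\GW$ trees of $\Ttil$ — all of which are independent of the future-side $\GW$ trees of $\Ttil$ given the spine data — we may pull $\1{\A_n}\1{\B_n}$ outside the conditional expectation over the future-side trees. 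Concretely, $\E{\1{\A_n}\1{\B_n}\Ell_n}=\E{\1{\A_n}\1{\B_n}\,\econd{\Ell_n}{\T,\Xtil,(\til d_i^\pm),\text{past trees}}}=\E{\1{\A_n}\1{\B_n}\,\econd{\Ell_n}{\T,\Xtil,(\til d_i^+)}}=\E{\1{\A_n}\1{\B_n}U_n}$, where the middle equality uses that $\econd{\Ell_n}{\cdot}$ only depends on the spine data $(\T,\Xtil,(\til d_i^+))$ anyway. Combined with \Cref{lem:magic_formula} this gives $\E{\1{\A_n}\1{\B_n}U_n}=1$.

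I expect the main obstacle to be the careful handling of the root convention in the $\GW$-indexed walk and the resulting $\1{\Xtil_i=\T_j}$ correction terms: one must make sure that the "$-\sum\til d_i^+\1{\Xtil_i=\T_j}$" in the definition of $U_n$ matches exactly the off-by-one that arises because, in \Cref{def:RW_on_tree}, the walk value at a $\GW$-tree root attached at spine vertex $u_i$ is $\Xtil_i$ itself (one increment beyond $\Xtil_i$ is applied only at its children), so the root of each such subtree contributes a deterministic visit to $\Xtil_i$ rather than a $g$-distributed one. Getting this bookkeeping exactly right — and confirming it is consistent with the conventions in \Cref{def:RW_on_tree} and in the statement of \Cref{lem:magic_formula} — is the only delicate point; everything else is the standard decomposition of a tree-indexed walk into independent subtree-indexed walks plus the identity $\sum_v\pr{S_v=z}=g(z)$ for critical $\GW$ trees.
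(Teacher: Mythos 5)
Your overall strategy matches the paper's: write $U_n$ as a conditional expectation of $\Ell_n$ given a $\sigma$-algebra $\mathcal F$ containing $\1{\A_n}$ and $\1{\B_n}$, and apply the tower property to \Cref{lem:magic_formula}. Your extra step of conditioning on the past-side GW trees of $\Ttil$ and then observing that $\econd{\Ell_n}{\cdot}$ does not depend on them is in fact necessary (and is a refinement over the literal statement one might write down): the event $\A_n$ involves $\Ttil(-\infty,0)$, which is determined by $\Xtil$, $(\til d_i^-)$ \emph{and} the left-side $\GW$ subtrees of $\Ttil$, so $\A_n$ is not $\sigma\big(\T[-\xiln,\xirn],\Scal[-\xiln,\xirn],\Xtil,(\til d_i^+)\big)$-measurable, and your two-stage conditioning is the clean way to handle this.

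The bookkeeping you flag as "to be pinned down", however, is a genuine gap, and when one carries it through it does not close as you expect. The future range $\Ttil[0,\infty)$ consists of the spine vertices $(\til u_i)_{i\ge0}$ at positions $\Xtil_i$ \emph{together with} the right-side subtrees. For the subtree hanging off a right-child of $\til u_i$ your computation is right: it is a $\GW(\mu)$-indexed walk rooted at $\Xtil_i+D$ for a uniform step $D$, so its expected number of visits to $y$ is $\E{g(\Xtil_i+D,y)}=g(\Xtil_i,y)-\1{\Xtil_i=y}$, and summing over $\til d_i^+$ subtrees and all $i$ reproduces exactly the $j$-summand of $U_n$. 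But the spine itself contributes $\sum_{i\ge0}\1{\Xtil_i=y}$ to $\Ell^+(y)$, and this term has no counterpart in $U_n$ as defined. That is,
\begin{align*}
\econd{\Ell_n}{\T[-\xiln,\xirn],\Scal[-\xiln,\xirn],\Xtil,(\til d_i^+)}\;=\;U_n\;+\;\sum_{i\ge0}\sum_{j=-\xiln}^{\xirn}\1{\Xtil_i=\T_j},
\end{align*}
and the extra sum is not zero on $\A_n\cap\B_n$ (indeed $\A_n$ constrains only $\Ttil(-\infty,0)$, not the spine of $\Ttil$). You can cross-check this discrepancy against \Cref{lem:Greens_function}: taking a further conditional expectation over $\Xtil$ and $(\til d_i^+)$, one gets $\econd{U_n}{\T,\Scal}=\sum_j\bigl(G(\Scal_j)-g(\Scal_j)\bigr)$ rather than $G_n=\sum_j G(\Scal_j)$, the missing $g(\Scal_j)$ being exactly the spine contribution to $G$. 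So before you can conclude you need either to show this extra term vanishes (it doesn't) or to reconcile it with the definition of $U_n$ -- for instance the identity $U_n=\econd{\Ell_n}{\cdot}$ would hold if the second sum in the definition of $U_n$ were $\sum_{i,j}(\til d_i^+-1)\1{\Xtil_i=\T_j}$ rather than $\sum_{i,j}\til d_i^+\1{\Xtil_i=\T_j}$. This is precisely the kind of off-by-one you anticipated, and it must be resolved explicitly rather than deferred.
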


\begin{proof}
Use that $U_n=\econd{\Ell_n}{\T[-\xiln,\xirn],\Xtil,(\til{d}^+_i),\til{\T}(-\infty,0)}$, while $\A_n$ and $\B_n$ are measurable with respect to the $\sigma$-algebra in the conditioning.
\end{proof}

\subsection{Depth-first queue process}\label{sec:DFQP}

A useful tool for some of the later calculations is to encode trees via their depth-first queue process (DFQP). We start by recalling the definition and some important properties, and afterwards we present the notation we will use regarding this.

\subsubsection*{Definition and properties}

Given a rooted plane tree $t$ with $n$ edges, let $v_j$ ($j\in\{0,1,...,n\}$) be the $j$th new vertex encountered when going around the contour from the root anticlockwise. Let $\xi_j$ be the number of offsprings of $v_j$.

Then the depth-first queue process of $t$ is $W:\{0,1,...,n+1\}\to\Z$ such that $W_k=\sum_{j=0}^{k-1}(\xi_j-1)$. Note that in case of a $\GW(\mu)$ tree this is a random walk with iid $\eqdist(\mu-1)$ increments.

We say that $j\in[a,b]$ is a right minimum of $W$ on $[a,b]$ if $W_j=\min_{i\in[j,b]}W_i$, and we say that $j\in[a,b]$ is a record of $W$ on $[a,b]$ if $W_j=\max_{i\in[a,j]}W_i$. (If $[a,b]$ is not specified, we take $[a,b]=[0,j]$.)

Then the graph distance of $v_k$ from the root $v_0$ is the number of right minima of $W$ on the interval $[0,k]$, not counting the right minimum at $k$. The ancestors of $v_k$ are exactly the vertices corresponding to these right minima. (See \Cref{fig:DFQP_right_min} for an illustration.)

Note that we have $(W_j)_{j\in[0,k]}\eqdist(W_k-W_{k-j})_{j\in[0,k]}$, and $j$ being a right minimum on $[a,b]$ in the former process corresponds to $(k-j)$ being a record on $[k-b,k-a]$ in the latter one.

By flipping $W$ on $[0,k]$ we will mean considering the process $(W_k-W_{k-j})_{j\in[0,k]}$ instead of $(W_j)_{j\in[0,k]}$.

Given a walk $W$, the $j$th ladder width and ladder height are $\LW_j=R_j-R_{j-1}$ and $\LH_j=W_{R_j}-W_{R_{j-1}}$, respectively, where $R_j$ denotes the $j$th record. (See \Cref{fig:flipped_walk_LW_LH} for an illustration.) Note that for a walk $W$ with iid increments, the parts $(W_i)_{i\in[R_{j-1},R_j]}$ are iid, and in particular the pairs $(\LW_j,\LH_j)$ are iid.

\begin{figure}
\centering
\begin{subfigure}{0.6\linewidth}
\includegraphics[width=0.9\linewidth]{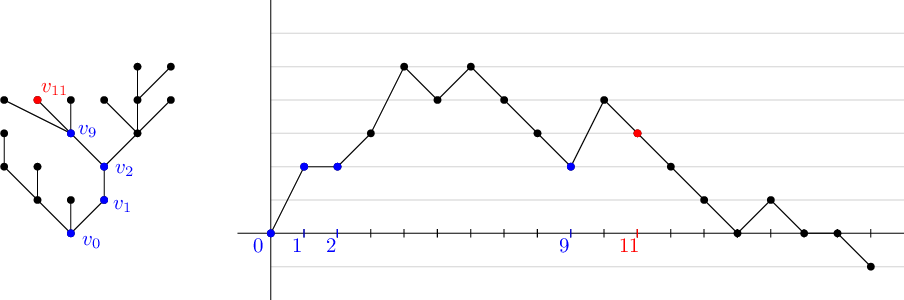}
\caption{A tree and its DFQP. The ancestors of vertex $v_{11}$ correspond to the right minima on [0,11]. \\}
\label{fig:DFQP_right_min}
\end{subfigure}
\begin{subfigure}{0.01\linewidth}
\end{subfigure}
\begin{subfigure}{0.38\linewidth}
\includegraphics[width=0.8\linewidth]{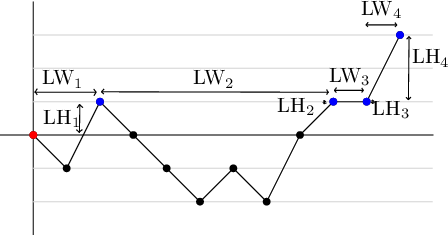}
\caption{The previous walk flipped on [0,11]. The right minima on [0,11] become records. The ladder widths and ladder heights are also marked.}
\label{fig:flipped_walk_LW_LH}
\end{subfigure}
\caption{}
\end{figure}

Also note that if $W$ is a random walk with iid $\eqdist(\mu-1)$ increments, then the parts $(W_{i}+k)_{i\in[\sigma_{-k},\sigma_{-k-1})}$ of the walk (for $k\ge0$, where $\sigma_{-k}=\min\{n:\:W_n=-k\}$) define independent $\GW(\mu)$ trees.

\subsubsection*{Notation}

In what follows we let $W$ denote a random walk with iid $\sim(\mu-1)$ increments, and we denote its ladder widths and ladder heights as $(\LW_i)$ and $(\LH_i)$ respectively. We sometimes write $(\LW,\LH)$ for a pair of random variables distributed as $(\LW_1,\LH_1)$. We also sometimes write $\GW$ to denote a $\GW(\mu)$ tree or $(\GW_i)$ to denote a sequence of independent $\GW(\mu)$ trees. We write $|\GW|$ to mean the number of vertices in $\GW$.

The later calculations will often feature quantities like $\E{\lambda^{\LW}}$ or $\E{\lambda^{|\GW|}}$, we will introduce notation for these the first time they are used.

\subsection{Other notation}\label{sec:other_notation}

For functions $a,b:\Znonneg\to[0,\infty)$ we say that $a(n)\lesssim b(n)$ as $n\to\infty$ if there exists constant $C>0$ such that for all sufficiently large $n$ we have $a(n)\le Cb(n)$. We say that $a(n)\ll b(n)$ as $n\to\infty$ if for any constant $c>0$, for all sufficiently large $n$ (in terms of $c$) we have $a(n)\le cb(n)$. We define $\gtrsim$ and $\gg$ analogously. We say that $a(n)\asymp b(n)$ as $n\to\infty$ if we have $a(n)\lesssim b(n)\lesssim a(n)$ as $n\to\infty$. We say that $a(n)\sim b(n)$ as $n\to\infty$ if we have $|a(n)-b(n)|\ll a(n)$. 
Unless specified otherwise, these relations will be as $n\to\infty$.

We will sometimes write things like $\lesssim_{x,y}$ to emphasise that the implicit constant in $\lesssim$ is a function of parameters $x$ and $y$. Almost all constants will depend on $\sigma^2$, so we will drop this from the notation.

\section{Proof of the concentration of $U_n$}\label{sec:E_and_Var_of_U}

Recall the definition of $U_n$ from \Cref{sec:shift_invariance_magic_formula}. The goal of this section is to establish the following results.

\begin{proposition}\label{pro:E_Un}In $d=8$ dimensions we have
\[\E{U_n}\quad\sim\quad\frac{\log n}{c_8},\]
where $c_8$ is a positive constant depending only on $\sigma^2$.
\end{proposition}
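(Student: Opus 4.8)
The plan is to compute $\E{U_n}$ directly from its definition. Recall
\[
U_n=\sum_{i=0}^{\infty}\sum_{j=-\xiln}^{\xirn}\til d_i^+\,g(\Xtil_i,\T_j)-\sum_{i=0}^{\infty}\sum_{j=-\xiln}^{\xirn}\til d_i^+\,\1{\Xtil_i=\T_j},
\]
and since $g(x)=\sum_{k\ge 0}\pr{X_k=x-\text{origin}}$ counts the walk starting at the point, the second sum is exactly the $k=0$ term of $g$, so $U_n$ is really $\sum_i\sum_j\til d_i^+\,\bar g(\Xtil_i,\T_j)$ with $\bar g(x,y)=\sum_{k\ge 1}\pr{\text{SRW from }x\text{ hits }y\text{ at time }k}$. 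First I would use independence of $\Ttil$ and $\T$ together with the structure of $\Ttil$ along its spine: conditionally on the spine walk $\Xtil$, the offspring counts $\til d_i^+$ are (for $i\ge 1$) governed by $\pr{\til d_i^+=k,\til d_i^-=\ell}=\mu(k+\ell+1)$, so $\E{\til d_i^+\mid \Xtil}=\tfrac{\sigma^2}{2}$ for $i\ge 1$ (and $\E{d_0^+}=1$). Hence, taking expectations,
\[
\E{U_n}=\sum_{i=0}^{\infty}\E{\til d_i^+}\sum_{y\in\Z^8}\pr{\Xtil_i=y}\,\E{\#\{j\in[-\xiln,\xirn]:\T_j=y\}}\ \text{(with the }k=0\text{ term removed)}.
\]
Writing $N_n(y)=\E{\#\{j\in[-\xiln,\xirn]:\T_j=y\}}$ for the expected occupation measure of $\T[-\xiln,\xirn]$ at $y$, and noting $\sum_i\E{\til d_i^+}\pr{\Xtil_i=y}$ is, up to the root correction, $\tfrac{\sigma^2}{2}$ times $\sum_{i\ge 1}\pr{\Xtil_i=y}=\sum_{i\ge 1}\pr{X_i=y}$, the main quantity becomes a convolution of $N_n$ against (essentially) the full Green's function $g$ of the SRW, with the diagonal $k=0$ term subtracted.

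The next step is to identify $N_n$. By shift-invariance of $\T$ (\Cref{lem:shift_invariance}), $N_n(y)$ is a function of $y$ obtained by summing $\pr{\T_j=y}$ over a geometrically-weighted window; this is where the depth-first queue process enters. The idea is that $\T[-\xiln,\xirn]$, read off from the DFQP, consists of the spine walk $X$ together with $\GW(\mu)$ trees hung off the spine vertices, and $\xiln,\xirn\sim\Geomnonneg{\lambda}$ with $\lambda=1-\tfrac1n$ truncate how much of the tree we keep. Using the DFQP encoding and the iid ladder-width/ladder-height decomposition (as in \Cref{lem:GW_sums}), one reinterprets the expected occupation measure as a sum over SRW paths run for a geometric number of steps: roughly, $N_n(y)\approx c\cdot g_{\alpha_n}\!\star g\,(y)$-type object, where $g_{\alpha_n}$ is the Green's function of a SRW killed at a $\Geomnonneg{\alpha_n}$ time with $\alpha_n\asymp \tfrac1{\sqrt n}$ or similar (this exponent is dictated by the fact that the first $\asymp\sqrt n$ spine vertices carry $\asymp n$ total tree mass). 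Feeding this into the displayed expression, $\E{U_n}$ becomes, up to constants, $\sum_y (g_{\alpha_n}\star g\star g)(y)\cdot g(y)$ minus lower-order diagonal corrections, i.e.\ essentially $\sum_y g_{\alpha_n}\star G\,(y)\cdot g(y)$ using $G=\tfrac{\sigma^2}{2}g\star g+O(g)$ from \Cref{lem:Greens_function}. In $d=8$, $g(y)\asymp\|y\|^{-6}$ and $(G\star g)(y)\asymp\|y\|^{-2}$, so the sum $\sum_y G\star g(y)\,g(y)\asymp\sum_y \|y\|^{-8}$ diverges logarithmically; the geometric killing at scale $\alpha_n$ effectively caps the sum at $\|y\|\lesssim \alpha_n^{-1/2}\asymp n^{1/4}$, producing $\sum_{\|y\|\lesssim n^{1/4}}\|y\|^{-8}\asymp \log(n^{1/4})\asymp \tfrac14\log n$. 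Tracking all the constant prefactors (the $\sigma^2/2$'s, the constants $c_g,c_G,c_{G\star g}$, and the geometric-parameter scaling) yields $\E{U_n}\sim \tfrac{\log n}{c_8}$ for an explicit $c_8=c_8(\sigma^2)$.

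To make this rigorous I would: (i) justify interchanging the infinite sums and expectation (dominated convergence / Tonelli, since everything is nonnegative once the $k=0$ term is handled, and the transience in $d=8$ gives summability away from the truncation scale); (ii) prove the DFQP identity expressing $N_n$ in terms of geometrically-killed SRW Green's functions, which is the content of the $\GW$-sum lemmas — this is the technical heart; and (iii) prove the approximation $g_\alpha(x)\approx g(x)\1{\|x\|\lesssim \alpha^{-1/2}}$ quantitatively, with error bounds, via the local CLT and the cited \Cref{lem:g_alpha_x_bound}, then do the resulting elementary lattice sum asymptotics using \eqref{eq:asymp_g}, \eqref{eq:asymp_G} and \eqref{eq:asymp_G_star_g}. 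The main obstacle I expect is step (ii): correctly bookkeeping how the two-sided geometric window $[-\xiln,\xirn]$ interacts with the DFQP — in particular that the ``past'' and ``future'' pieces combine to give the clean $g_\alpha\star G$ structure rather than something with awkward boundary terms — and getting the constant $c_8$ right rather than merely $\asymp \log n$. A secondary subtlety is handling the root vertex separately (its offspring law is $\mu$, not the size-biased split) and checking that this contributes only lower-order terms, and similarly that subtracting the diagonal $k=0$ term only changes things at order $O(1)$, not $O(\log n)$.
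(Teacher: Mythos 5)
Your outline is essentially the paper's proof: reduce $\E{U_n}$ to a sum of the expected occupation measure of $\T[-\xiln,\xirn]$ tested against the BRW Green's function, express that occupation measure via the DFQP/ladder-variable machinery as a geometrically-killed convolution (cutoff scale $n^{1/4}$ since $1-\theta_1\asymp n^{-1/2}$), and evaluate the resulting lattice sum. The one structural difference is at the first step: the paper conditions on $(\T,\Scal)$ and uses $\E{U_n}=\E{G_n}$ with $G_n=\sum_j\1{j\in[-\xiln,\xirn]}G(\Scal_j)$, getting the known Green's function $G$ of the BRW future in one line, whereas you integrate out the $\Ttil$ side first and reconstruct the $\tfrac{\sigma^2}{2}g\star g$ structure by hand from $\sum_i\E{\til d_i^+}\pr{\Xtil_i=y}$; both routes end up at $\sum_x g_{\alpha}(x)(g\star G)(x)$, but the paper's is cleaner and skips the need to re-derive $G\approx\tfrac{\sigma^2}{2}g\star g$ at this stage. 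One small slip worth fixing: your displayed identity $\E{U_n}=\sum_i\E{\til d_i^+}\sum_y\pr{\Xtil_i=y}\E{\#\{j:\T_j=y\}}$ is missing the $\bar g$ convolution between $\Xtil_i$ and $\T_j$ — as written it would only count spine-to-$\T$ coincidences rather than (offspring-subtree)-to-$\T$ intersections; your surrounding prose makes it clear you know this, but the formula should read $\sum_i\E{\til d_i^+}\sum_{y,z}\pr{\Xtil_i=y}\,\bar g(y,z)\,N_n(z)$. The hard content you correctly identify as step (ii) is carried out in the paper's \Cref{lem:GW_sums} (specifically~\eqref{eq:GW_one_sum_fwd}) and the computation in Section~\ref{sec:E_Gn}, with the root's non-size-biased offspring law handled by the separate $\theta_0$ term in~\eqref{eq:Gplus_decomposition} and shown to contribute $O((G\star g)(0))=O(1)$.
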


\begin{proposition}\label{pro:Var_Un}In $d=8$ dimensions we have
\[\Var{U_n}\quad\lesssim\quad\log n,\]
where the implicit constants in $\lesssim$ only depend on $\sigma^2$.
\end{proposition}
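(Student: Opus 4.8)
\textbf{Proof proposal for \Cref{pro:Var_Un}.}

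The plan is to bound the second moment $\E{U_n^2}$ and subtract $\left(\E{U_n}\right)^2$, which by \Cref{pro:E_Un} is $\asymp(\log n)^2$, so the goal is to show $\E{U_n^2}=\left(\E{U_n}\right)^2+O(\log n)$. First I would replace $U_n$ by the slightly cruder quantity $\til U_n:=\sum_{i\ge0}\sum_{-\xiln\le j\le\xirn}\til d_i^+\,g(\Xtil_i,\T_j)$ (dropping the negative correction term, which is dominated by the first sum pointwise), since $0\le U_n\le\til U_n$ and it suffices to control $\Var{\til U_n}$ up to the error we can afford; alternatively one keeps both terms and expands the square into four pieces treated identically. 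Expanding $\til U_n^2$ gives
\[
\E{\til U_n^2}=\sum_{i,i'\ge0}\E{\til d_i^+\til d_{i'}^+\sum_{-\xiln\le j\le\xirn}\sum_{-\xiln\le j'\le\xirn}g(\Xtil_i,\T_j)\,g(\Xtil_{i'},\T_{j'})}.
\]
Conditioning on the spine walk $\Xtil$ and the offspring counts $(\til d_i^+)$ of $\til\T$, the $\T$-expectation factorizes as an expected product over pairs of vertices $(\T_j,\T_{j'})$ with $j,j'$ ranging over a doubly-geometric window, and the $\Xtil$-expectation is an expected product over pairs $(\Xtil_i,\Xtil_{i'})$. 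So I would reduce the whole quantity to (i) the two-point function of $\T$, i.e.\ $\E{\#\{(j,j'):\ -\xiln\le j,j'\le\xirn,\ \T_j=x,\ \T_{j'}=y\}}$, and (ii) the corresponding two-point quantity for the spine of $\Ttil$ weighted by $\til d_i^+\til d_{i'}^+$. These are exactly the "expected number of pairs at given distances" objects the overview says are computed in \Cref{sec:GW_sums} via the depth-first queue process, so I would invoke those lemmas (in the spirit of \Cref{lem:GW_sums}) to get that the $\T$ two-point function is, up to constants, a sum of convolutions of $g$'s cut off at scale $n^{1/4}$, and similarly for the $\Ttil$ side.

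The main term comes from the "diagonal-block" contribution where the pair $(\T_j,\T_{j'})$ lies in two essentially independent subtrees and likewise $(\Xtil_i,\Xtil_{i'})$ are far apart on the spine; there the two-point functions approximately factorize, one recovers $\left(\E{\til U_n}\right)^2$, and the task is to show the factorization error is $O(\log n)$. The genuinely new contributions, which must be shown to be $O(\log n)$, are the "off-diagonal" ones: (a) $j$ and $j'$ in the same or nested subtrees of $\T$ (sharing a common ancestor deep in a Galton--Watson component), (b) $i$ and $i'$ close on the spine of $\Ttil$, and (c) the cross terms. After reducing via the DFQP these become sums of the schematic form
\[
\sum_{x,y,w}\ g_\alpha^{(\mathrm{something})}(w)\,g(w,x)\,g(w,y)\,g(x,y)\qquad\text{(plus lower-complexity variants)},
\]
i.e.\ multi-point integrals of Green's functions over a box of radius $\asymp n^{1/4}$ in $\Z^8$. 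The point is that each such sum has one fewer "free summation at infinity" than the product $\left(\E{\til U_n}\right)^2$: in $d=8$ a single $g$ has range $\|x\|^{-6}$, a convolution $g\star g$ has range $\|x\|^{-4}$, $g\star g\star g$ has range $\|x\|^{-2}$ (all summable against a box only marginally, producing a $\log$), and the extra internal vertex forced by the off-diagonal constraint costs a factor $g(\cdot)$ which is summable and kills one logarithm, leaving $O(\log n)$ rather than $O((\log n)^2)$. I would organize these as a finite list of cases by how the indices coincide/nest, and in each case use the power-counting from \Cref{lem:Greens_function} together with the cutoff estimate \Cref{lem:g_alpha_x_bound} to bound the sum over the box of radius $n^{1/4}$.

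The main obstacle is exactly case (a): because a BRW has no Markov property, controlling $\E{\#\{(j,j'):\T_j=x,\ \T_{j'}=y\}}$ when $j$ and $j'$ may lie in the same Galton--Watson subtree hanging off the spine is not a matter of iterating a transition kernel. This is where the depth-first queue process does the work — decomposing according to the most recent common ancestor of $\T_j$ and $\T_{j'}$, the positions $\T_j$, $\T_{j'}$ become a common random-walk bridge from the root to the common ancestor followed by two conditionally independent walks, and the relevant "number of such pairs within the geometric window" turns into the kind of ladder-width/ladder-height sum handled in \Cref{sec:GW_sums}. Once that combinatorial identity is in hand, everything is reduced to the Green's-function power counting above, which in $d=8$ comfortably yields the $\lesssim\log n$ bound; the implicit constants depend only on $\sigma^2$ since all the inputs do.
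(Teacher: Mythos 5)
Your high-level plan — reduce the second moment to two-point functions via the DFQP machinery of \Cref{lem:GW_sums}, then close with Green's-function power counting in $d=8$ using the $n^{1/4}$ cutoff from \Cref{lem:g_alpha_x_bound} — matches the spirit of the paper's proof, and you correctly identify cases (a) and (b) (nested subtrees, nearby spine indices of $\Ttil$) as genuinely new terms handled by a lost power of $g$. The paper organizes the same calculation via the law of total variance, writing $\Var{U_n}=\Var{G_n}+\E{\varcond{U_n}{\T,\Scal}}$ and then decomposing $G_n^\pm=\sum_{k,\ell}Y_n^\pm(k,\ell)$ over subtrees off the spine, which is cleaner but not fundamentally different from your direct expansion.

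However, there is a genuine gap in the step you dismiss as "the two-point functions approximately factorize, one recovers $\left(\E{\til U_n}\right)^2$, and the task is to show the factorization error is $O(\log n)$." That factorization error — the covariance between two subtrees hanging off distant spine vertices $k$ and $k+m$, which are coupled only through the shared spine increment $X_k$ — is not removable by the power-counting you describe. After summing over $m$ it reduces to $\sum_k\theta_1^k\Cov{(G\star g_{1-\theta})(X_k)}{(G\star g_{1-\theta}\star g_{1-\theta_1})(X_k)}$, and if you bound this by expanding the covariance and power-counting each of the two terms separately, each is of order $(\log n)^2$; the desired $\lesssim\log n$ relies on a genuine cancellation (the paper's \Cref{lem:Cov_functions_of_Xk} compares against a discrete-gradient expression $\sum_{x,y}a(x)g^{(2d)}_{1-\theta_1}((x,y))(b(x)-b(y))$ and exploits that $b$ is slowly varying). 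Your schematic integral $\sum_{x,y,w}g_\alpha(w)g(w,x)g(w,y)g(x,y)$ captures the off-diagonal cases (a)–(c) correctly, but the spine-covariance term does not take that form and is the hardest piece of the variance bound; without an explicit argument for it the proof does not close. A secondary, minor issue: "$0\le U_n\le\til U_n$" does not imply $\Var{U_n}\lesssim\Var{\til U_n}$ (variance is not monotone); you need e.g.\ $\Var{U_n}\le2\Var{\til U_n}+2\Var{V_n}$ and a separate bound on the correction term $V_n$, which your parenthetical "alternatively one keeps both terms" alludes to but does not carry out.
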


Instead of computing the expectation and variance of $U_n$ directly, it is helpful to first consider the following quantities.
\begin{align*}
G_n:=\quad\sum_{j=-\infty}^{\infty}\1{j\in[-\xiln,\xirn]}G\left(\T_j\right),\qquad G_n^{+}:=\quad\sum_{j=1}^{\infty}\1{j\in[1,\xirn]}G\left(\T_j\right),\qquad G_n^{-}:=G_n-G_n^{+}.
\end{align*}

Note that $G_n=\econd{U_n}{\T[-\xiln,\xirn]}$, hence we have
\begin{align*}
\E{U_n}\quad=&\quad\E{G_n},\\
\Var{U_n}\quad=&\quad\Var{G_n}+\E{\varcond{U_n}{\T}},
\end{align*}
where we dropped $\xiln$ and $\xirn$ from the notation, as we often will in what follows.

For proving Propositions~\ref{pro:E_Un} and~\ref{pro:Var_Un}, it will be sufficient to show the following.
\begin{lemma}
In $d=8$ dimensions we have
\begin{align}
\label{eq:E_Gn}
\E{G_n}\quad\sim\quad\frac{\log n}{c_8},\\
\label{eq:Var_Gn}
\Var{G_n}\quad\lesssim\quad\log n,\\
\label{eq:E_Varcond_Un}
\E{\varcond{U_n}{\T}}\quad\lesssim\quad\log n,
\end{align}
where $c_8$ is a positive constant depending only on $\sigma^2$.
\end{lemma}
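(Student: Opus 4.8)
The three estimates in the lemma should be handled by reducing everything to sums of Green's functions of simple random walks (and their convolutions) weighted by the geometry of the Galton--Watson forest hanging off the spine, and then evaluating those sums in $d=8$. I would proceed as follows.

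\emph{Step 1: reformulate in terms of distances along the spine.} For a fixed realisation of the spine walk $X$, the random variable $G_n$ is a sum of $G(\Scal_j)$ over the tree vertices $\T_j$ with index in $[-\xiln,\xirn]$. Grouping the vertices by which spine vertex $u_i$ is the closest spine ancestor, and using the DFQP description from \Cref{sec:DFQP} together with the independence of the $\GW(\mu)$ trees attached on each side, one can write $\E{G_n\mid X}$ as a sum over spine indices $i$ of (the number of spine steps before a geometric clock, roughly) times the expected total of $G(X_i + S_v)$ over a single $\GW(\mu)$ tree $T$ with its own walk $S$. The quantity $\E{\sum_{v\in T} G(x+S_v)}$ is, up to the root issue, $\sum_{k\ge 0}\E{|\{v\in T\text{ at depth }k\}|}\,\pr{\text{walk reaches }x\text{ after }k\text{ steps}}$-type expression; since a critical GW tree has $\E{|\{v:\mathrm{depth}(v)=k\}|}=1$ for all $k$, this collapses to something like $(G\star g)(x)$ up to lower-order corrections. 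Then averaging over $X$ and over $i$ with the geometric weights gives $\E{G_n}$ as essentially $\sum_i \lambda^{i}\cdot$(stuff) which by the reasoning sketched in the Overview is $\sum_x (\text{number of }x\text{ visited by the spine before }\Geomnonneg\lambda\text{ steps})\cdot(G\star g)(x)$. Using the approximation $g_\alpha(x)\approx g(x)\1{\|x\|\le C/\sqrt\alpha}$ (the paper's \Cref{lem:g_alpha_x_bound}), this becomes $\asymp \sum_{\|x\|\lesssim \sqrt n} g(x)\,(G\star g)(x)$, and by the asymptotics $g(x)\asymp \|x\|^{-6}$ and $(G\star g)(x)\asymp \|x\|^{-2}$ from \Cref{lem:Greens_function} in $d=8$, the summand is $\asymp \|x\|^{-8}$, whose sum over the ball of radius $\sqrt n$ is $\asymp \log n$. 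Tracking the constants through (rather than just orders) yields \eqref{eq:E_Gn} with an explicit $c_8$; this is the content that gets offloaded to \Cref{lem:GW_sums} and \Cref{sec:GW_sums}.

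\emph{Step 2: the variance of $G_n$.} Write $\Var{G_n}=\E{G_n^2}-(\E{G_n})^2$; since $\E{G_n}\asymp\log n$ we need $\E{G_n^2}=(\E{G_n})^2+O(\log n)$. Expanding $G_n^2$ gives a double sum over pairs of tree vertices $\T_j,\T_{j'}$ of $G(\Scal_j)G(\Scal_{j'})$. Split according to whether the two closest spine ancestors $u_i,u_{i'}$ coincide or not, and (in the equal case) whether $\T_j,\T_{j'}$ lie in the same attached $\GW$ tree. The ``far apart'' contributions should, by the branching property and the same DFQP bookkeeping, essentially factorise and reproduce $(\E{G_n})^2$ up to a manageable error; the ``diagonal-ish'' contributions — two vertices sharing a spine vertex or lying in the same GW tree — are where the extra variance lives, and they need to be shown to be $\lesssim\log n$. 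For a pair in the same GW tree at depths $k,k'$ from their common spine vertex $u_i$ at position $X_i$, the relevant sum looks like $\sum_x (G(x))^2\cdot(\text{reachability of }x\text{ from }X_i)$, i.e.\ after the usual reductions $\asymp\sum_{\|x\|\lesssim\sqrt n}(g\star G\star\cdots)$-type quantities; in $d=8$ these converge or grow only logarithmically, giving the bound. The key structural point is that, unlike for SRW, there is no strong Markov property, so the factorisation in the ``far'' case must be argued via the tree's branching structure and shift-invariance rather than by restarting a walk; I expect this to be the technically heaviest part of \eqref{eq:Var_Gn}.

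\emph{Step 3: the conditional variance of $U_n$ given $(\T,\Scal)$.} Recall $U_n=\sum_i \til d_i^+\big(g(\Xtil_i,\cdot)\text{-sum}-\1{\Xtil_i=\cdot}\text{-sum}\big)$, which is a functional of $\til X$ and the $\til d_i^+$ only (the rest of $\Ttil$ has been integrated out to get $U_n$ from $\Ell_n$). Fixing $(\T,\Scal)$, write $U_n=\sum_i \til d_i^+ F(\Xtil_i)$ where $F(y)=\sum_{j\in[-\xiln,\xirn]}\big(g(y,\Scal_j)-\1{y=\Scal_j}\big)$, and $(\til d_i^+)$ are iid (for $i\ge1$; the $i=0$ and $i=1$ terms are handled separately, as are the correlations between $\til d_i^+$ and the spine walk $\til X$ coming from the joint law in \Cref{def:infinite_tree}). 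Then $\varcond{U_n}{\T,\Scal}$ is bounded by $\sum_i\E{(\til d_i^+)^2}\,\varcond{F(\Xtil_i)}{\cdots}$ plus covariance terms along the walk $\til X$; since $\E{(\til d_i^+)^2}<\infty$ (finite variance of $\mu$) and $F$ is, for typical $(\T,\Scal)$, a bounded-ish function supported near the range of $\Scal$, taking $\E{\cdot}$ over $(\T,\Scal)$ and using the Step-1/Step-2 machinery (sums of $g\star g$ and $g\star G$ type over a ball of radius $\sqrt n$, which are $\lesssim\log n$ in $d=8$) gives \eqref{eq:E_Varcond_Un}. The subtraction of $\1{\Xtil_i=\Scal_j}$ inside $F$ is exactly what makes $F$ have mean zero in the relevant sense and keeps the second moment from blowing up.

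\emph{Main obstacle.} The crux throughout is replacing the missing Markov/strong-Markov property of BRWs: every place where, for SRW, one would restart the walk and factorise, we instead have to decompose the tree via its DFQP, exploit that the attached $\GW(\mu)$ trees are iid and that a critical GW tree has exactly one expected vertex per generation, and invoke shift-invariance (\Cref{lem:shift_invariance}). Turning these combinatorial identities into the clean statements ``$\E{G_n}\sim\log n/c_8$'' and ``$\Var{G_n}\lesssim\log n$'' — in particular identifying the exact constant $c_8$ and controlling the error terms uniformly — is the real work, and is precisely what \Cref{lem:GW_sums} and \Cref{lem:g_alpha_x_bound} are set up to package; I would prove those two auxiliary results first and then assemble \eqref{eq:E_Gn}--\eqref{eq:E_Varcond_Un} from them.
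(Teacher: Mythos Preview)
Your plan is broadly the paper's, but three points deserve correction or sharpening.

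\textbf{Step 1.} The geometric clock $\xirn$ runs on \emph{tree} vertices, not spine vertices. After the DFQP bookkeeping, the weight on the $k$th spine vertex is $\theta_1^k$ with $1-\theta_1\asymp n^{-1/2}$ (\Cref{lem:theta0_theta_1}), so the spine Green's function $g_{1-\theta_1}$ cuts off at $\|x\|\asymp n^{1/4}$, not $\sqrt n$. The sum is then $\sum_x g_{1-\theta_1}(x)(G\star g)(x)$, evaluated as you say. This affects the value of $c_8$ but not the order.

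\textbf{Step 2.} Your diagonal/off-diagonal split is what the paper does, but you underestimate the off-diagonal part. After all the $\GW$ manipulations the cross terms do \emph{not} simply factorise: what remains is, up to lower order,
\[
\sum_{k\ge1}\theta_1^k\,\Cov{(G\star g_{1-\theta})(X_k)}{(G\star g_{1-\theta}\star g_{1-\theta_1})(X_k)},
\]
and showing this is $\lesssim\log n$ is a separate nontrivial lemma (\Cref{lem:Cov_functions_of_Xk}), proved by passing to a SRW on $\Z^{2d}$ and a careful cancellation argument. The diagonal terms are, as you expect, the easier half.

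\textbf{Step 3.} The paper takes a shorter route than yours. Since $U_n=\econd{\Ell_n}{\T,\Scal,\til X,(\til d_i^+)}$, conditional variance reduction gives $\varcond{U_n}{\T,\Scal}\le\varcond{\Ell_n}{\T,\Scal}$; writing $\Ell_n=\sum_x G_{\Scal}(x)G_{\til\Scal}(x)$ in terms of local times then yields $\E{\varcond{\Ell_n}{\T,\Scal}}=\sum_{x,y}\E{G_{\Scal}(x)G_{\Scal}(y)}\Cov{G_{\til\Scal}(x)}{G_{\til\Scal}(y)}$, which is bounded by the Step~2 machinery with $\delta_x,\delta_y$ in place of $G$. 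Your direct expansion $U_n=\sum_i\til d_i^+ F(\Xtil_i)$ could be made to work but is messier, and your remark that the $-\1{\Xtil_i=\Scal_j}$ subtraction ``makes $F$ mean zero'' is incorrect: that term is just the $k=0$ correction from conditioning $\Ell_n$ on the spine and plays no cancellation role in the variance bound.
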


\subsection{Estimates for one $\GW$-tree}\label{sec:GW_sums}

First we compute some expectations regarding only one $\GW$-tree. We will make use of these in expressing the expectation and variance of $G_n^+$ and $G_n^-$.

We let $\lambda=1-\frac1n$ as in \Cref{sec:further_notation_BRW}, define $\GW$, $\LW$ and $\LH$ as in \Cref{sec:DFQP}, and let
\begin{align}\label{eq:theta_def}
\theta:=\E{\lambda^{\LW}},\quad\til{\theta}:=\E{\lambda^{|\GW|}},\quad\what{\theta}:=\E{\lambda^{\LW}\til{\theta}^{\LH}},\quad\dot{\theta}:=\E{\til{\theta}^{\LH}}.
\end{align}

\begin{lemma}\label{lem:GW_sums}
Let $T=\GW$ be a $\GW(\mu)$ tree with its vertices indexed by $0$, $1$, ... $|\GW|-1$ in the depth-first order, and let $S$ be a random walk on $\Z^d$ indexed by $T$. Let $\lambda=1-\frac1n$, and let $h$ and $h_2$ be some functions $\Z^d\to[0,\infty]$ and $\Z^d\times\Z^d\to[0,\infty]$ respectively. Then we have
\begin{align}
\label{eq:GW_one_sum_fwd}
\E{\sum_{i=0}^{|\GW|-1}\lambda^{i}h(S_i)}\quad=&\quad \sum_{j\ge0}\E{h(X_j)}\theta^j,\\
\label{eq:GW_one_sum_bwd}
\E{\sum_{i=0}^{|\GW|-1}\lambda^{|\GW|-i}h(S_i)}\quad=&\quad \sum_{j\ge0}\E{h(X_j)}\dot{\theta}^j\til{\theta},\\
\label{eq:GW_one_sum_all}
\E{\lambda^{|\GW|}\sum_{i=0}^{|\GW|-1}h(S_i)}\quad=&\quad \sum_{j\ge0}\E{h(X_j)}\what{\theta}^j\til{\theta},
\end{align}
\begin{align}
\label{eq:GW_two_sums_fwd}
&\E{\sum_{0\le i<j<|\GW|} \lambda^{j}h_2(S_i,S_j)}\\
\nonumber
&\qquad\quad=\quad \sum_{k\ge1}\sum_{r\ge1}\sum_{\ell\ge0}\E{h_2(X_{\ell}+\til{X}_{r},X_{\ell}+\what{X}_{k})}{\theta}^{k}{\theta}^{\ell}\what{\theta}^{r}\frac{\til{\theta}(\theta-\what{\theta})}{\theta\what{\theta}(1-\til{\theta})}\\
\nonumber
&\qquad\quad\quad+\quad\sum_{k\ge1}\sum_{\ell\ge0}\E{h_2(X_{\ell},X_{\ell}+\what{X}_{k})}{\theta}^{k}{\theta}^{\ell},\\
\label{eq:GW_two_sums_bwd}
&\E{\sum_{0\le i<j<|\GW|} \lambda^{|\GW|-i}h_2(S_i,S_j)}\\
\nonumber
&\qquad\quad=\quad
\sum_{k\ge1}\sum_{r\ge1}\sum_{\ell\ge0}\E{h_2(X_{\ell}+\til{X}_{r},X_{\ell}+\what{X}_{k})}\what{\theta}^{k-1}\dot{\theta}^{\ell+r-1}\til{\theta}\E{\til{\theta}^{\LH}\LH}\\
\nonumber
&\qquad\qquad+\quad\sum_{k\ge1}\sum_{\ell\ge0}\E{h_2(X_{\ell},X_{\ell}+\what{X}_{k})}\what{\theta}^{k}\dot{\theta}^{\ell}\til{\theta}.
\end{align}
where $X$, $\til{X}$ and $\what{X}$ are independent SRWs on $\Z^d$, and $\theta$, $\til{\theta}$, $\what{\theta}$ and $\dot{\theta}$ are defined as in~\eqref{eq:theta_def}.
\end{lemma}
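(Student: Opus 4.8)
The strategy is to encode the $\GW(\mu)$ tree $T$ via its depth-first queue process $W$, which is a random walk with iid $\sim(\mu-1)$ increments run until its first hitting time of $-1$, and to translate every sum over vertices of $T$ into a sum over time indices of $W$, keeping track of the graph distance (= number of right minima) via the ladder structure of the flipped walk. Concretely, for the single-sum identities \eqref{eq:GW_one_sum_fwd}--\eqref{eq:GW_one_sum_all}, I would fix a vertex $v_i$ of $T$ at graph distance $\ell$ from the root and decompose $S_{v_i}$ as a sum of $\ell$ iid $\Unif{}$ steps, one along each edge of the ancestral line; thus $S_{v_i}\eqdist X_\ell$ for an independent SRW $X$. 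It then remains to compute the generating function, with the appropriate weight ($\lambda^i$, $\lambda^{|\GW|-i}$, or $\lambda^{|\GW|}$ times $1$), of the number of vertices at distance $\ell$. For \eqref{eq:GW_one_sum_fwd} one flips $W$ on $[0,i]$: the prefix $[0,i]$ becomes a walk whose records encode the $\ell$ ancestors of $v_i$, and summing $\lambda^i$ over all vertices factorizes over the $\ell$ iid ladder-width blocks, giving $\theta^\ell$; then $\E{h(X_\ell)}$ is summed against $\theta^\ell$. For \eqref{eq:GW_one_sum_bwd} and \eqref{eq:GW_one_sum_all}, the weight $\lambda^{|\GW|-i}$ (resp. $\lambda^{|\GW|}$) splits as $\lambda$ raised to the sizes of the subtrees hanging off the ancestral line plus the contribution of the spine edges; each off-spine subtree is an independent $\GW$, contributing $\til\theta$, and assembling a ladder block with its attached $\GW$-subtrees produces the weights $\dot\theta$ (resp. $\what\theta$) per ancestor and an overall $\til\theta$ for the bottom block — this is exactly the bookkeeping in the definitions \eqref{eq:theta_def}.

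For the double sums \eqref{eq:GW_two_sums_fwd}--\eqref{eq:GW_two_sums_bwd}, I would condition on the most recent common ancestor $v_m$ of $v_i$ and $v_j$, at graph distance $\ell$ from the root, and write $S_{v_i}=S_{v_m}+(\text{increments on the }v_m\to v_i\text{ line})$ and similarly for $S_{v_j}$; these three stretches are independent, of lengths $\ell$, $r$, $k$ say, giving $\E{h_1(X_\ell+\til X_r)h_2(X_\ell+\what X_k)}$ with independent SRWs. The combinatorial heart is to compute, with the relevant $\lambda$-weight, the generating function for the number of ordered pairs $(v_i,v_j)$ with $i<j$, common ancestor at depth $\ell$, and subtree-depths $r$ and $k$. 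Here one must distinguish the case $r\ge 1$ (genuine branching at $v_m$, so $v_i$ is in a strictly earlier child-subtree of $v_m$ than $v_j$ in depth-first order) from the degenerate case $v_m=v_i$ (i.e. $v_i$ is itself an ancestor of $v_j$), which produces the second, simpler sum in each identity. In the branching case, flipping $W$ on the appropriate interval turns the $v_m\to v_i$ stretch into a sub-walk sitting strictly inside one ladder block of the $v_m$-to-root chain, which is where the factor $\frac{\til\theta(\theta-\what\theta)}{\theta\what\theta(1-\til\theta)}$ in \eqref{eq:GW_two_sums_fwd} (resp. $\til\theta\,\E{\til\theta^{\LH}\LH}$ in \eqref{eq:GW_two_sums_bwd}) comes from: it is the expected $\lambda$-weighted number of ``intermediate'' positions in a single ladder block from which a $\GW$-subtree containing $v_i$ can branch off, computed by differentiating/expanding the ladder-block generating function. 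I would carry this out by writing the block weight as a sum over the internal height at which the branch occurs and summing the geometric-type series.

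The main obstacle will be Step for the double sums: correctly identifying, in depth-first order, which sub-walk of $W$ corresponds to the ``between'' stretch $v_m\to v_i$ versus $v_m\to v_j$, and assigning the right per-edge weights ($\theta$ vs. $\what\theta$ vs. $\dot\theta$) to the spine edges above and below the branch point — a single misassigned $\lambda$-exponent changes the constants $\frac{\til\theta(\theta-\what\theta)}{\theta\what\theta(1-\til\theta)}$ etc. Everything else is a matter of organizing independent blocks and summing geometric series, but this decomposition requires care with the orientation conventions (flipping on $[0,k]$, records vs. right minima) set up in \Cref{sec:DFQP}. I would double-check the resulting formulas in the trivial case $h\equiv h_1\equiv h_2\equiv 1$, $\lambda=1$, where \eqref{eq:GW_one_sum_fwd} must reduce to $\E{|\GW|}=1$ and the double-sum identities to $\E{\binom{|\GW|}{2}}$, to catch any bookkeeping error.
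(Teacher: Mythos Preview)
Your proposal is correct and follows essentially the same route as the paper: encode $T$ via its DFQP $W$, flip $W$ on the relevant interval to convert right minima (ancestors) into records, and factorize the $\lambda$-weight over the resulting iid ladder blocks; for the double sums, the paper likewise flips on $[0,j]$, locates the point corresponding to $v_i$ inside a single ladder block of the flipped walk, and computes the block-level generating function that produces the constant $\frac{\til\theta(\theta-\what\theta)}{\theta\what\theta(1-\til\theta)}$ exactly as you describe. One small slip in your sanity check: for a critical tree $\E{|\GW|}=\infty$, not $1$ (cf.\ \Cref{lem:GW_tail_bound}), so at $\lambda=1$, $h\equiv1$ both sides of \eqref{eq:GW_one_sum_fwd} are infinite --- consistent, but not informative; a better check is $\lambda<1$, $h\equiv1$, which should give $\frac{1}{1-\theta}=\E{\sum_{i\ge0}\lambda^i\1{i<|\GW|}}=\frac{1-\til\theta}{1-\lambda}$.
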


\begin{proof}[Proof of~\eqref{eq:GW_one_sum_fwd}, \eqref{eq:GW_one_sum_bwd} and \eqref{eq:GW_one_sum_all}] First we present the proof of \eqref{eq:GW_one_sum_fwd}.

Note that the value of $S_i$ is distributed as $X_{d(T_i,T_0)}$, where $X$ is a SRW independently of $T$, and $d(T_i,T_0)$ is the graph distance between the $i$th vertex and the root of $T$. (If $i\ge|\GW|$ then both sides are undefined.)

Let $W$ be a random walk with iid $\eqdist(\mu-1)$ increments and let $X$ be a SRW on $\Z^d$, independent of it.
Then by interpreting $W$ as the DFQP of $T$, expressing $d(T_i,T_0)$ in terms of $W$ and using the above observation we get that
\begin{align*}
&\E{\sum_{i=0}^{|\GW|-1}\lambda^{i}h(S_i)}\quad=\quad\sum_{i\ge0}\lambda^i\E{\1{i<|\GW|}h(S_i)}\\
&=\quad\sum_{i\ge0}\lambda^i\E{\1{\:0\text{ is a right min of }W\text{ on }[0,i]}\:h(X_{\#\left(\text{right min of }W\text{ on }[0,i]\text{, excl. }i\right)})}.
\end{align*}

Recall the definition of flipping and the correspondence between right minima and records from \Cref{sec:DFQP}.
Flipping $W$ on $[0,i]$, and letting $i=\sum_{s=1}^{j}\LW_s$ be the $j$th record we get that the above sum is
\begin{align*}
&=\quad\sum_{i\ge0}\lambda^i\E{\1{\:i\text{ is a record of }W}\: h(X_{\#\left(\text{records of }W\text{ on }[0,i]\text{, excl. }0\right)})}\\
&=\quad\sum_{j\ge0}\E{h(X_{j})}\E{\lambda^{\sum_{s=1}^{j}\LW_s}}\quad=\quad\sum_{j\ge0}\E{h(X_{j})}\E{\lambda^{\LW}}^j.
\end{align*}

The proofs of \eqref{eq:GW_one_sum_bwd} and \eqref{eq:GW_one_sum_all} are analogous to the proof of \eqref{eq:GW_one_sum_fwd}, the only new observation needed is the following. Given $(W_s)_{s\in[0,i]}$ such that $0$ is a right minimum on $[0,i]$, the number of remaining vertices in the tree after the $i$th one is distributed as $\left(|\GW|-i\mid(W_s)_{s\in[0,i]}\right)\eqdist\left(\sigma_{-1}-i\mid(W_s)_{s\in[0,i]}\right)\eqdist\sum_{j=1}^{W_i+1}|\GW_j|$.

Using this, we can write
\begin{align*}
&\E{\sum_{i=0}^{|\GW|-1}\lambda^{|\GW|-i}h(S_i)}\quad=\quad\sum_{i\ge0}\E{\1{i<|\GW|}\lambda^{|\GW|-i}h(S_i)}\\
&=\quad\sum_{i\ge0}\E{\1{\:0\text{ is a right min of }W\text{ on }[0,i]}\:h(X_{\#\left(\text{right min of }W\text{ on }[0,i]\text{, excl. }i\right)})\:\til{\theta}^{W_i+1}}\\
&=\quad\sum_{i\ge0}\E{\1{\:i\text{ is a record of }W}\:h(X_{\#\left(\text{records of }W\text{ on }[0,i]\text{, excl. }0\right)})\:\til{\theta}^{W_i+1}}\\
&=\quad\sum_{j\ge0}\:\E{h(X_j)}\E{\til{\theta}^{\LH}}^j\til{\theta}
\end{align*}
which concludes the proof of \eqref{eq:GW_one_sum_bwd}.
The proof of \eqref{eq:GW_one_sum_all} is analogous, but the exponential term is $\lambda^{|\GW|}$ in the first line, $\lambda^i\:\til{\theta}^{W_i+1}$ in the second and third line, and $\E{\lambda^{\LW}\til{\theta}^{\LH}}\til{\theta}$ in the last line.
\end{proof}

\begin{proof}[Proof of~\eqref{eq:GW_two_sums_fwd} and~\eqref{eq:GW_two_sums_bwd}] We start by presenting the proof of \eqref{eq:GW_two_sums_fwd}.
As above, let $W$ be a random walk with iid $\eqdist(\mu-1)$ increments. The records, etc below will all refer to $W$. Let $X$, $\til{X}$ and $\what{X}$ be SRWs on $\Z^d$, independent of each other and $W$.

Note that we have $(S_i,S_j)\eqdist(X_{d(T_i\wedge T_j,T_0)}+\til{X}_{d(T_i\wedge T_j,T_i)},X_{d(T_i\wedge T_j,T_0)}+\what{X}_{d(T_i\wedge T_j,T_j)})$, where $X$, $\til{X}$ and $\what{X}$ are independent of $T$, and $T_i\wedge T_j$ denotes the most recent common ancestor of $T_i$ and $T_j$.

Then by interpreting $W$ as the DFQP of $T$, using the above observation and expressing the graph distances in terms of $W$, we get that
\begin{align*}
&\E{\sum_{0\le i<j<|\GW|} \lambda^{j}h_2(S_i,\:S_j)}\quad=\quad\sum_{i\ge0}\sum_{k\ge1}\lambda^{i+k}\E{\1{i+k<|\GW|}h_2(S_i,\:S_{i+k})}\\
&=\quad\sum_{i\ge0}\sum_{k\ge1}\lambda^{i+k}\mathbb{E}\Bigg[\1{\:0\text{ right min. on }[0,i+k]}\cdot
h_2\Bigg(X_{\substack{\#(\text{right min on both }[0,i]\\ \text{ and }[0,i+k]\text{, excl. }0)}}
+\til{X}_{\substack{\#(\text{right min on }[0,i]\\ \text{ but not }[0,i+k])}}\:,\\
&\hspace{7cm} X_{\substack{\#(\text{right min on both }[0,i]\\ \text{ and }[0,i+k]\text{, excl. }0)}}
+\what{X}_{\substack{\#(\text{right min on }[0,i+k]\\ \text{ but not }[0,i])}}\Bigg)\Bigg].
\end{align*}
Flipping $W$ on $[0,i+k]$ we get that this is
\begin{align*}
&=\quad\sum_{i\ge0}\sum_{k\ge1}\lambda^{i+k}\mathbb{E}\Bigg[\1{\:i+k\text{ is a record}}\cdot
h_2\Bigg(X_{\substack{\#(\text{records on both }[k,i+k]\\ \text{ and }[0,i+k],\text{ excl. }i+k)}}
+\til{X}_{\substack{\#(\text{records on }[k,i+k]\\ \text{ but not }[0,i+k])}}\:,\\
&\hspace{7cm}X_{\substack{\#(\text{records on both }[k,i+k]\\ \text{ and }[0,i+k],\text{ excl. }i+k)}}
+\what{X}_{\substack{\#(\text{records on }[0,i+k]\\ \text{ but not }[k,i+k])}}\Bigg)\Bigg].
\end{align*}
Considering $j$ and $\ell$ so that $k\in(R_j,R_{j+1}]$, $i+k=R_{j+\ell}$ (recall that $R_m=\sum_{s=1}^{m}\LW_s$ denotes the $m$th record) we get that the above sum is
\begin{align} \label{eq:GW_two_fwd_sums_rephrased}
&=\:\sum_{j\ge0}\sum_{\ell\ge1}\E{\lambda^{R_{j+\ell}}\sum_{k=R_j+1}^{R_{j+1}}h_2\left(X_{\ell-1}+\til{X}_{\#(\text{records in }[k,R_{j+1}-1])},\:X_{\ell-1}+\what{X}_{j+1}\right)}\\
\nonumber
&=\:\sum_{j\ge0}\sum_{\ell\ge1}\E{\lambda^{\LW}}^{j+\ell-1} \E{\lambda^{\LW_1}\sum_{k=1}^{\LW_1}h_2\left(X_{\ell-1}+\til{X}_{\#(\text{records in }[k,\LW_1-1])},\:X_{\ell-1}+\what{X}_{j+1}\right)}.
\end{align}
(See \Cref{fig:GW_sum_proof} for an illustration of the above steps.)

\begin{figure}
\centering
\begin{subfigure}{0.18\linewidth}
\includegraphics[width=\linewidth]{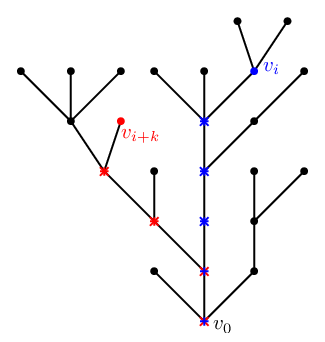}
\label{fig:GW_sum_proof_GW}
\caption{Example of a GW tree with $v_i$ and $v_{i+k}$ and their ancestors marked. \\ \\}
\end{subfigure}
\begin{subfigure}{0.01\linewidth}
\end{subfigure}
\begin{subfigure}{0.01\linewidth}
\end{subfigure}
\begin{subfigure}{0.43\linewidth}
\includegraphics[width=\linewidth]{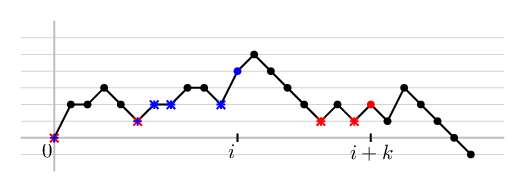}
\label{fig:GW_sum_proof_DFQP}
\caption{The DFQP corresponding to this tree. The ancestors of $v_i$ and $v_{i+k}$ correspond to the right minima on $[0,i]$ and $[0,i+k]$ respectively. \\ \\ \\}
\end{subfigure}
\begin{subfigure}{0.01\linewidth}
\end{subfigure}
\begin{subfigure}{0.01\linewidth}
\end{subfigure}
\begin{subfigure}{0.34\linewidth}
\includegraphics[width=\linewidth]{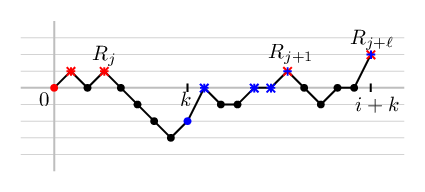}
\label{fig:GW_sum_proof_flipped_walk}
\caption{The DFQP flipped on $[0,i+k]$. The ancestors of $v_{i+k}$ and $v_i$ correspond to the records on $[0,i+k]$ and $[k,i+k]$ respectively. We consider $j$ and $\ell$ so that $i+k$ is the $(j+\ell)$th record, and $k$ falls between the $j$th and $(j+1)$th record.}
\end{subfigure}
\caption{}
\label{fig:GW_sum_proof}
\end{figure}

To deal with the second expectation, we note the following. For $r\ge1$ we have
\begin{align}
\nonumber
&\E{\lambda^{\LW_1}\#\left(k\in[1,\LW_1]:\text{exactly }r\text{ records in }[k,\LW_1-1]\right)}\\
\nonumber
&\quad=\quad\sum_{\substack{s,u,(w_0,...,w_{s+u}):\\ \LW_1(w)=s+u \\ \text{exactly }r\text{ records in }[s,s+u-1]}}
\nonumber
\lambda^{s+u}\cdot\pr{W\text{ starts as }(w_0,...,w_{s+u})}\\
\nonumber
&\quad=\quad\sum_{\substack{L_1,...,L_r,\\ H_1,...,H_r}} \pr{\LW_i=L_i,\LH_i=H_i\text{ for }i=1,...,r}\lambda^{\sum_{i=1}^{r}L_i}\\
\label{eq:count_k_with_r_records}
&\hspace{3cm}\cdot\sum_{\substack{s,(w_0,...,w_{s}):\\ w_1,...,w_s<0 \\ w_s\in\left[-\sum_{i=1}^{r}H_i,-\sum_{i=1}^{r-1}H_i-1\right]}}
\lambda^{s}\cdot\pr{W\text{ starts as }(w_0,...,w_{s})}.
\end{align}

By flipping $w$ on $[0,s]$, we can rephrase the second sum in~\eqref{eq:count_k_with_r_records} as
\begin{align*}
\sum_{\substack{s,(w_0,...,w_{s}):\\ s\text{ a strict min on }[0,s] \\ w_s\in\left[-\sum_{i=1}^{r}H_i,-\sum_{i=1}^{r-1}H_i-1\right]}}
\lambda^{s}\cdot\pr{W\text{ starts as }(w_0,...,w_{s})}.
\end{align*}
Then using that $W$ hits each negative integer as a strict minimum exactly once, and it takes independent $\eqdist|\GW|$ number of steps between them, we get that this is
\begin{align*}
=\quad\sum_{m=\sum_{i=1}^{r-1}H_i+1}^{\sum_{i=1}^{r}H_i}\E{\lambda^{|\GW|}}^m\quad=\quad\til{\theta}^{H_1+...+H_{r-1}+1}\cdot\frac{1-\til{\theta}^{H_r}}{1-\til{\theta}}.
\end{align*}

(See \Cref{fig:GW_sum_proof_2} for an illustration of the above steps.)

\begin{figure}
\centering
\begin{subfigure}{0.57\linewidth}
\includegraphics[width=\linewidth]{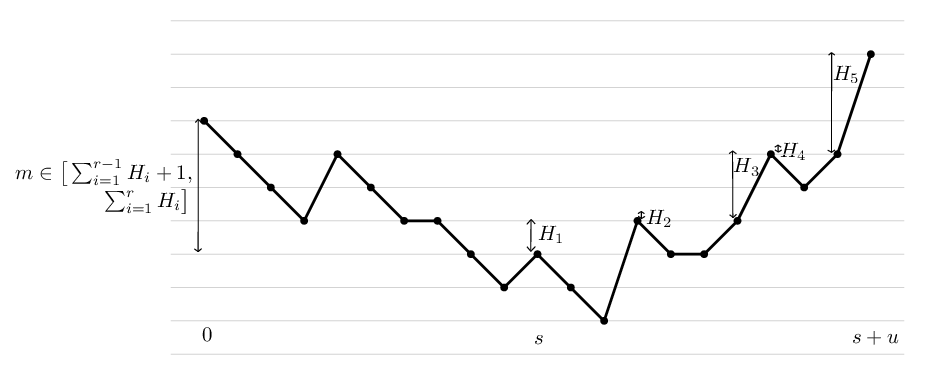}
\caption{An illustration of $(w_0,...,w_{s+u})$ with $\LW_1=s+u$ and $(w_s,...,w_{s+u})$ having exactly $r=5$ records. If these $r$ records are of heights $H_1$, ..., $H_r$ then $(w_0,...,w_s)$ should end at a height $-m\in\left[-\sum_{i=1}^{r-1}H_i+1,-\sum_{i=1}^{r}H_i\right]$, and it should have 0 as a strict maximum.}
\label{fig:GW_sum_proof_k_in_valley}
\end{subfigure}
\begin{subfigure}{0.1\linewidth}
\end{subfigure}
\begin{subfigure}{0.1\linewidth}
\end{subfigure}
\begin{subfigure}{0.4\linewidth}
\includegraphics[width=\linewidth]{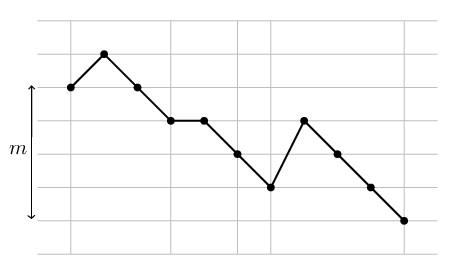}
\caption{Flipping $w$ on $[0,s]$ we get a walk that ends at $-m$ and has $-m$ as a strict minimum. This walk decomposes into $m$ DFQPs.\\ }
\label{fig:GW_sum_proof_descending_walk}
\end{subfigure}
\caption{}
\label{fig:GW_sum_proof_2}
\end{figure}

This shows that~\eqref{eq:count_k_with_r_records} is
\begin{align*}
=\quad\frac{\til{\theta}}{1-\til{\theta}}\:\E{\lambda^{\LW}\til{\theta}^{\LH}}^{r-1}\left(\E{\lambda^{\LW}}-\E{\lambda^{\LW}\til{\theta}^{\LH}}\right).
\end{align*}

This way we obtain the coefficient of $\E{h_2\left(X_{\ell-1}+\til{X}_r,\:X_{\ell-1}+\what{X}_{j+1}\right)}$ in~\eqref{eq:GW_two_fwd_sums_rephrased} for $r\ge1$.
We find the coefficient for $r=0$ by noting that for any realisation of $(W_s)_{s\le\LW_1}$, the only $k\in[1,\LW_1]$ such that there are no records in $[k,\LW_1-1]$ is $k=\LW_1$.

This finishes the proof of \eqref{eq:GW_two_sums_fwd}. The proof of \eqref{eq:GW_two_sums_bwd} is similar, we present the main steps below.

Firstly, we express the sum in terms of $W$ and we use that given $(W_s)_{s\le i+k}$ such that 0 is a right minimum on $[0,i+k]$, the number of vertices of $\GW$ after the $(i+k)$th one is $\eqdist\sum_{s=1}^{W_{i+k}+1}|\GW_s|$. We see that
\begin{align*}
&\E{\sum_{0\le i<j<|\GW|} \lambda^{|\GW|-i}h_2(S_i,S_j)}\\
&=\quad\sum_{i\ge0}\sum_{k\ge1}\mathbb{E}\Bigg[\1{\:0\text{ right min. on }[0,i+k]}\cdot\lambda^k\cdot\E{\lambda^{|\GW|}}^{W_{i+k}+1}\cdot\\
&h_2\Bigg(X_{\substack{\#(\text{right min on both }[0,i]\\ \text{ and }[0,i+k]\text{, excl. }0)}}
+\til{X}_{\substack{\#(\text{right min on }[0,i]\\ \text{ but not }[0,i+k])}},X_{\substack{\#(\text{right min on both }[0,i]\\ \text{ and }[0,i+k]\text{, excl. }0)}}
+\what{X}_{\substack{\#(\text{right min on }[0,i+k]\\ \text{ but not }[0,i])}}\Bigg)\Bigg].
\end{align*}
Flipping $W$ on $[0,i+k]$ we get that this is
\begin{align*}
&=\quad\til{\theta}\:\sum_{i\ge0}\sum_{k\ge1}\mathbb{E}\Bigg[\1{\:i+k\text{ is a record}}\cdot\lambda^{k}\til{\theta}^{W_{i+k}}\cdot
h_2\Bigg(X_{\substack{\#(\text{records on both }[k,i+k]\\ \text{ and }[0,i+k],\text{ excl. }i+k)}}
+\til{X}_{\substack{\#(\text{records on }[k,i+k]\\ \text{ but not }[0,i+k])}}\:,\\
&\hspace{8.2cm}X_{\substack{\#(\text{records on both }[k,i+k]\\ \text{ and }[0,i+k],\text{ excl. }i+k)}}
+\what{X}_{\substack{\#(\text{records on }[0,i+k]\\ \text{ but not }[k,i+k])}}\Bigg)\Bigg].
\end{align*}
As in the proof of \eqref{eq:GW_two_sums_fwd} we consider $j$ and $\ell$ such that $k\in(R_j,R_{j+1}]$, $i+k=R_{j+\ell}$ to get that the above sum is
\begin{align}\label{eq:GW_two_bwd_sums_rephrased}
&=\:\til{\theta}\:\sum_{j\ge0}\sum_{\ell\ge1}\E{\til{\theta}^{W_{R_{j+\ell}}}\sum_{k=R_j+1}^{R_{j+1}}\lambda^kh_2\left(X_{\ell-1}+\til{X}_{\#(\text{records in }[k,R_{j+1}-1])},\:X_{\ell-1}+\what{X}_{j+1}\right)}\\
\nonumber
&=\:\til{\theta}\:\sum_{j\ge0}\sum_{\ell\ge1}\E{\til{\theta}^{\LH}\lambda^{\LW}}^{j}\E{\til{\theta}^{\LH}}^{\ell-1}\\
\nonumber
&\hspace{3cm}\cdot\E{\til{\theta}^{\LH_1}\sum_{k=1}^{\LW_1}\lambda^kh_2\left(X_{\ell-1}+\til{X}_{\#(\text{records in }[k,\LW_1-1])},\:X_{\ell-1}+\what{X}_{j+1}\right)}.
\end{align}
Analogously to before, for each $r\ge0$ we compute the contribution to the last expectation from terms with exactly $r$ records in $[k,\LW_1-1]$. For $r\ge1$ we use a representation analogous to \eqref{eq:count_k_with_r_records}. We write
\begin{align}
\nonumber
&\E{\til{\theta}^{\LH_1}\sum_{k=1}^{\LW_1}\lambda^k\1{\#\left(\text{records in }[k,\LW_1-1]\right)=r}}\\
\nonumber
&\quad=\quad\sum_{\substack{s,u,(w_0,...,w_{s+u}):\\ \LW_1(w)=s+u \\ \text{exactly }r\text{ records in }[s,s+u-1]}}
\nonumber
\lambda^{s}\til{\theta}^{w_{s+u}}\cdot\pr{W\text{ starts as }(w_0,...,w_{s+u})}\\
\nonumber
&\quad=\quad\sum_{H_1,...,H_r} \pr{\LH_i=H_i\text{ for }i=1,...,r}\\
\label{eq:terms_with_r_records}
&\hspace{2cm}\cdot\sum_{\substack{s,(w_0,...,w_{s}):\\ w_1,...,w_s<0 \\ w_s\in\left[-\sum_{i=1}^{r}H_i,-\sum_{i=1}^{r-1}H_i-1\right]}}
\lambda^{s}\til{\theta}^{w_s+\sum_{i=1}^{r}H_i}\cdot\pr{W\text{ starts as }(w_0,...,w_{s})}.
\end{align}
By flipping $w$ on $[0,s]$ and using that $W$ hits each nonnegative integer exactly once as a strict minimum, we get that the last sum equals
\begin{align*}
&\sum_{\substack{s,(w_0,...,w_{s}):\\ s\text{ a strict min on }[0,s] \\ w_s\in\left[-\sum_{i=1}^{r}H_i,-\sum_{i=1}^{r-1}H_i-1\right]}}
\lambda^{s}\til{\theta}^{w_s+\sum_{i=1}^{r}H_i}\cdot\pr{W\text{ starts as }(w_0,...,w_{s})}\\
&=\quad\sum_{m=\sum_{i=1}^{r-1}H_i+1}^{\sum_{i=1}^{r}H_i} \E{\lambda^{|\GW|}}^m\til{\theta}^{-m}\til{\theta}^{\sum_{i=1}^{r}H_i}\quad=\quad H_r\:\til{\theta}^{\sum_{i=1}^{r}H_i},
\end{align*}
hence \eqref{eq:terms_with_r_records} is equal to $\E{\til{\theta}^{\LH}}^{r-1}\E{\til{\theta}^{\LH}\LH}$. Plugging this back to \eqref{eq:GW_two_bwd_sums_rephrased} we find the coefficient of $\E{h_2\left(X_{\ell-1}+\til{X}_{r},\:X_{\ell-1}+\what{X}_{j+1}\right)}$ for $r\ge1$. We can find the coefficient for $r=0$ by noting that in this case the only contribution to \eqref{eq:terms_with_r_records} is from $k=\LW_1$. This finishes the proof of \eqref{eq:GW_two_sums_bwd}.
\end{proof}

\subsection{Expectation of $G_n^+$ and $G_n^-$}\label{sec:E_Gn}

In this section we prove~\eqref{eq:E_Gn}.

Let us partition the vertices of $\T(0,\infty)$ as follows. Let $\T^+_{k,\ell}$ be the tree consisting of the $\ell$th offspring on the future side from the $k$th spine vertex, and its descendants. (If $d^+_k<\ell$ then we consider this tree to be empty. Note that given $(d^+_k)$, the trees that are non-empty are independent $\GW$ trees.) Let
\[
Y^{+}_n(k,\ell):=\quad\sum_{j=1}^{\infty}\1{j\le\xirn}\1{j\in\T^+_{k,\ell}}G(\T_j)
\]
be the part of the sum in $G^+_n$ corresponding to the vertices in $\T^+_{k,\ell}$. Then we can write
\[
G^+_n\quad=\quad\sum_{k=0}^{\infty}\sum_{\ell=1}^{\infty}Y^{+}_n(k,\ell).
\]

Using the independence of the offspring numbers and the memoryless property of $\xirn$, we get that
\begin{align*}
\E{Y^{+}_n(k,\ell)}\:=\:\pr{d^+_k\ge\ell}\pr{\xirn\ge\sum_{i=1}^{\substack{d^+_0+...+d^+_{k-1}\\\qquad+\ell-1}}|\GW_i|+1}\E{\sum_{i=0}^{|\GW|-1}\lambda^iG(X_{k+1}+T_i)}
\end{align*}
where $T$ is a random walk indexed by $\GW$, and $X$ is an independent SRW. By~\eqref{eq:GW_one_sum_fwd} we know that this is
\[
=\quad
\begin{cases}
\pr{d^+_1\ge\ell}\theta_0\theta_1^{k-1}\til{\theta}^{\ell-1}\lambda\sum\limits_{j\ge0}\E{G(X_{j+k+1})}\theta^j&\text{ if }k\ge1\\
\pr{d^+_0\ge\ell}\til{\theta}^{\ell-1}\lambda\sum\limits_{j\ge0}\E{G(X_{j+1})}\theta^j&\text{ if }k=0,
\end{cases}
\]
where $\til{\theta}$ is as in \Cref{sec:GW_sums}, and
\begin{align}\label{eq:theta0_def}
\theta_0:=\E{\til{\theta}^{d_0^{+}}},\quad\theta_1:=\E{\til{\theta}^{d_1^{+}}}.
\end{align}

Summing this over all $k$ and $\ell$, and using that $\sum_{\ell\ge1}\pr{d^{+}_k\ge\ell}\til{\theta}^{\ell-1}=\E{1+\til{\theta}+...+\til{\theta}^{d^{+}_k-1}}=\frac{1-\E{\til{\theta}^{d^{+}_k}}}{1-\til{\theta}}$, we get that
\begin{align}
\label{eq:Gplus_decomposition}
\E{G^+_n}\quad=\quad\sum_{j\ge0}\lambda\theta^j\E{G(X_{j+1})}\frac{1-\theta_0}{1-\til{\theta}}\quad+\quad\sum_{j\ge0}\sum_{k\ge1}\lambda\theta_1^{k-1}\E{G(X_{j+k+1})}\theta_0\frac{1-\theta_1}{1-\til{\theta}}.
\end{align}

Using \Cref{lem:theta0_theta_1}, the first term in~\eqref{eq:Gplus_decomposition} is $\lesssim\sum_{j\ge0}\E{G(X_j)}=(G\star g)(0)\asymp1$.

Using \Cref{cor:theta_thetatil_thetadot} and the notation $g_{\alpha}$ from the end of \Cref{sec:Greens_functions}, the second term in~\eqref{eq:Gplus_decomposition} is
\begin{align}\label{eq:E_Gplus_sum}
\sim\quad\frac{1-\theta_1}{1-\til{\theta}}\sum_{x}\sum_{y}g_{1-\theta_1}(x)g(y)G(x+y)\quad=\quad\frac{1-\theta_1}{1-\til{\theta}}\sum_{x}g_{1-\theta_1}(x)(g\star G)(x).
\end{align}

From \Cref{lem:theta0_theta_1} we know that $1-\theta_1\asymp\frac{1}{\sqrt{n}}$, and from \Cref{lem:theta_fraction} we know that\\ $\frac{1-\theta_1}{1-\til{\theta}}\sim\frac{\sigma^2}{2}$. Writing $\alpha=1-\theta_1$, we can upper bound the sum in~\eqref{eq:E_Gplus_sum} as
\begin{align*}
\sum_{x}g_{\alpha}(x)(g\star G)(x)\:\le \sum_{x:\:||x||\le\frac{1}{\sqrt{\alpha}}\log\left(\frac{1}{\sqrt{\alpha}}\right)}g(x)(g\star G)(x)+ \sum_{x:\:||x||>\frac{1}{\sqrt{\alpha}}\log\left(\frac{1}{\sqrt{\alpha}}\right)}g(x)(g\star G)(x).
\end{align*}

Here
\begin{align*}
&\sum_{x:\:||x||\le\frac{1}{\sqrt{\alpha}}\log\left(\frac{1}{\sqrt{\alpha}}\right)}g(x)(g\star G)(x)\quad\sim\sum_{x:\:||x||\le\frac{1}{\sqrt{\alpha}}\log\left(\frac{1}{\sqrt{\alpha}}\right)}\frac{c_gc_{G\star g}}{||x||^8}\quad\\
&\qquad\sim\quad c_{g}c_{G\star g}\cdot\frac{2\pi^{\frac{d}{2}}}{\Gamma\left(\frac{d}{2}\right)}\cdot\log\left(\frac{1}{\sqrt{\alpha}}\log\left(\frac{1}{\sqrt{\alpha}}\right)\right)\quad\sim\quad c_{g}c_{G\star g}\cdot\frac{2\pi^{\frac{d}{2}}}{\Gamma\left(\frac{d}{2}\right)}\cdot\frac14\log n,
\end{align*}
and by \Cref{lem:g_alpha_x_bound} we can bound the second sum as
\begin{align*}
\sum_{x:\:||x||>\frac{1}{\sqrt{\alpha}}\log\left(\frac{1}{\sqrt{\alpha}}\right)}g(x)(g\star G)(x)\quad\lesssim\quad
\sum_{x:\:||x||>\frac{1}{\sqrt{\alpha}}\log\left(\frac{1}{\sqrt{\alpha}}\right)}\frac{e^{-c||x||\sqrt{\alpha}}}{||x||^8}\quad\lesssim\quad1.
\end{align*}

Also, we can lower bound as
\begin{align*}
&\sum_{x}g_{\alpha}(x)(g\star G)(x)\quad\\
&\quad\ge \sum_{x:\:||x||\le\frac{1}{\sqrt{\alpha}}\log\left(\frac{1}{\sqrt{\alpha}}\right)^{-1}}g(x)(g\star G)(x)- \sum_{x:\:||x||\le\frac{1}{\sqrt{\alpha}}\log\left(\frac{1}{\sqrt{\alpha}}\right)^{-1}}(g(x)-g_{\alpha}(x))(g\star G)(x).
\end{align*}

Here the first sum is $\sim c_{g}c_{G\star g}\cdot\frac{2\pi^{\frac{d}{2}}}{\Gamma\left(\frac{d}{2}\right)}\cdot\frac14\log n$, while from \Cref{lem:g_alpha_x_bound} we know that the second sum is $\ll\log n$.

So overall we get that
\begin{align*}
\E{G_n^+}\quad\sim\quad\left(c_{g}\cdot c_{G\star g}\cdot\frac{\sigma^2}{2}\cdot\frac{\pi^{\frac{d}{2}}}{2\Gamma\left(\frac{d}{2}\right)}\right)\log n.
\end{align*}
An analogous calculation shows that we also have
$\E{G_n^-}\sim\left(c_{g}\cdot c_{G\star g}\cdot\frac{\sigma^2}{2}\cdot\frac{\pi^{\frac{d}{2}}}{2\Gamma\left(\frac{d}{2}\right)}\right)\log n.$

So overall we proved that
\begin{align}
\E{G_n}\quad\sim\quad\left(c_{g}\cdot c_{G\star g}\cdot\frac{\sigma^2}{2}\cdot\frac{\pi^{\frac{d}{2}}}{\Gamma\left(\frac{d}{2}\right)}\right)\log n,
\end{align}
finishing the proof of~\eqref{eq:E_Gn}, and hence the proof of \Cref{pro:E_Un}.

\subsection{Variance of $G_n^+$ and $G_n^-$}\label{sec:Var_Gn}

In this section we prove~\eqref{eq:Var_Gn}.

We can write
\begin{align*}
\Var{G_n^{+}}\quad\le\quad\sum_{k\ge0}\E{Y^{+}_n(k)^2}+2\sum_{k\ge0}\sum_{m\ge1}\Cov{Y^{+}_n(k)}{Y^{+}_n(k+m)},
\end{align*}
where $Y^{+}_n(k)=\sum_{\ell\ge1}Y^{+}_n(k,\ell)$, and $Y^{+}_n(k,\ell)$ are as in \Cref{sec:E_Gn}.

\subsubsection*{Diagonal terms in the variance of $G_n^+$}

In what follows we prove that
\begin{align}\label{eq:sum_Y_squared}
\sum_{k\ge0}\E{Y_n^+(k)^2}\quad\lesssim\quad\log n\:.
\end{align}

We can write
\begin{align}
\label{eq:E_Ysquared}
&\E{Y^{+}_n(k,\ell)^2}\quad=\quad\pr{d_k^{+}\ge\ell}\E{\til{\theta}^{d_0^{+}+...+d_{k-1}^{+}+\ell-1}}\lambda\:\cdot\\
\nonumber
&\quad\cdot\left(\E{\sum_{i=0}^{|\GW|-1}\lambda^i G(X_{k+1}+T_i)^2}+ 2\E{\sum_{0\le i<j<|\GW|}\lambda^j G(X_{k+1}+T_i)G(X_{k+1}+T_j)}\right),\\
\label{eq:E_Y_cross_term}
&\E{Y^{+}_n(k,\ell)Y^{+}_n(k,\ell+r)}\quad=\quad\pr{d_k^{+}\ge\ell+r}\E{\til{\theta}^{d_0^{+}+...+d_{k-1}^{+}+\ell+r-2}}\lambda\:\cdot\\
\nonumber
&\quad\cdot\E{\econd{\lambda^{|\GW|}\sum_{i=0}^{|\GW|-1}G(X_{k}+\til{X}_1+T_i)}{X_k}\econd{\sum_{j=0}^{|\GW|-1}\lambda^jG(X_{k}+\what{X}_1+T_j)}{X_k}}.
\end{align}

Summing the first term in~\eqref{eq:E_Ysquared} over $k\ge0$ and $\ell\ge1$, we get an expression that is\\ $\lesssim\sum_{k\ge0}\sum_{j\ge0}\theta_1^k\theta^j\E{G(X_{k+j+1})^2}$. This is then $\lesssim\sum_{x}\sum_{y}g_{1-\theta_1}(x)g_{1-\theta}(y)G(x+y)$\\ $\lesssim\sum_{y}g_{1-\theta}(y)(g\star G)(y)$. Using \Cref{cor:theta_thetatil_thetadot} we can upper bound this analogously to~\eqref{eq:E_Gplus_sum}, and get that it is $\lesssim\log n$.

Summing the second term in~\eqref{eq:E_Ysquared} over $k\ge0$ and $\ell\ge1$, we get an expression that is \[\lesssim\quad\sum_{x,y,z,w}g_{1-\theta_1}(x)g_{1-\theta}(y)g_{1-\theta}(z)g_{1-\what{\theta}}(w)G(x+y+z)G(x+y+w)\quad\lesssim\quad\log n.\]

Summing~\eqref{eq:E_Y_cross_term} over $k\ge0$, $\ell\ge1$, $r\ge1$ we get an expression that is
\begin{align*}
\lesssim\quad\sum_{x,y,z}g_{1-\theta_1}(x)g_{1-\what{\theta}}(y)g_{1-\theta}(z)G(x+y)G(x+z)\quad\lesssim\quad1.
\end{align*}

This finishes the proof of~\eqref{eq:sum_Y_squared}.

\subsubsection*{Covariance terms in the variance of $G_n^+$}

In what follows we prove that
\begin{align}\label{eq:Cov_Y}
\sum_{k\ge0}\sum_{m\ge1}\Cov{Y^{+}_n(k)}{Y^{+}_n(k+m)}\quad\lesssim\quad\log n.
\end{align}

For $k\ge1$, $m\ge1$ we have
\begin{align*}
&\E{Y_n^{+}(k)Y_n^{+}(k+m)}\quad=\quad\sum_{\ell\ge1}\sum_{r\ge1}\E{Y_n^{+}(k,\ell)Y_n^{+}(k+m,r)}\\
&=\quad\sum_{\ell\ge1}\sum_{r\ge1}\mathbb{E}\Bigg[\1{d^{+}_{k}\ge\ell}\1{d^{+}_{k+m}\ge r}\E{\lambda^{|\GW|}}^{d^{+}_0+...+d^{+}_{k+m-1}+r-2}\cdot\\
&\hspace{2cm}\cdot\econd{\lambda^{|\GW|}\sum_{i=0}^{|\GW|-1}G(X_k+\til{X}_1+T_i)}{X_k}\econd{\sum_{i=0}^{|\GW|-1}\lambda^{i}G(X_{k+m}+\what{X}_1+T_i)}{X_k}\Bigg].
\end{align*}
Using the definition of $\theta_0$ and $\theta_1$ in~\eqref{eq:theta0_def}, and using~\eqref{eq:GW_one_sum_all} and~\eqref{eq:GW_one_sum_fwd}, we get that this is
\begin{align*}
&=\quad\theta_0\theta_1^{k+m-2}\left(\sum_{\ell\ge1}\E{\1{d^{+}_k\ge\ell}\til{\theta}^{d^{+}_k}}\right)\left(\sum_{r\ge1}\pr{d^{+}_{k+m}\ge r}\til{\theta}^{r-2}\right)\cdot\\
&\hspace{6cm}\cdot\sum_{i\ge0}\sum_{j\ge0}\E{G(X_k+\til{X}_{i+1})G(X_{k+m}+\what{X}_{j+1})}\what{\theta}^i\til{\theta}\theta^j.
\end{align*}
After some further simplifications, we get that
\begin{align}\label{eq:EYY}
&\E{Y_n^{+}(k)Y_n^{+}(k+m)}\\
\nonumber
&\quad=\quad\theta_0\theta_1^{k+m-2}\E{d^+_1\til{\theta}^{d^+_1}}\frac{1-\theta_1}{1-\til{\theta}}\sum_{i\ge0}\sum_{j\ge0}\E{G(X_k+\til{X}_{i+1})G(X_{k}+\what{X}_{m+j+1})}\what{\theta}^i\theta^j.
\end{align}
Meanwhile,
\begin{align*}
&\E{Y_n^{+}(k)}\E{Y_n^{+}(k+m)}\quad=\quad\sum_{\ell\ge1}\sum_{r\ge1}\E{Y_n^{+}(k,\ell)}\E{Y_n^{+}(k+m,r)}\\
&=\left(\sum_{\ell\ge1}\pr{d^{+}_{k}\ge\ell}\E{\E{\lambda^{|\GW|}}^{d^{+}_0+...+d^{+}_{k-1}+\ell-1}}\E{\sum_{i=0}^{|\GW|-1}\lambda^{i}G(X_{k}+\til{X}_1+T_i)}\right)\cdot\\
&\hspace{0.6cm}\cdot\left(\sum_{r\ge1}\pr{d^{+}_{k+m}\ge r}\E{\E{\lambda^{|\GW|}}^{d^{+}_0+...+d^{+}_{k+m-1}+r-1}}\E{\sum_{i=0}^{|\GW|-1}\lambda^{i}G(X_{k+m}+\what{X}_1+T_i)}\right).
\end{align*}
Again, using the definition of $\theta_0$ and $\theta_1$ in~\eqref{eq:theta0_def}, and using~\eqref{eq:GW_one_sum_fwd}, we get that this is
\begin{align*}
&=\quad\theta_0^2\theta_1^{2k+m-2}\left(\sum_{\ell\ge1}\pr{d^{+}_{k}\ge \ell}\til{\theta}^{\ell-1}\right)\left(\sum_{r\ge1}\pr{d^{+}_{k+m}\ge r}\til{\theta}^{r-1}\right)\cdot\\
&\hspace{5cm}\cdot\sum_{i\ge0}\sum_{j\ge0}\E{G(X_k+\til{X}_{i+1})}\E{G(X_{k+m}+\what{X}_{j+1})}\theta^i\theta^j,
\end{align*}
and after some further simplifications we get that
\begin{align}\label{eq:EYEY}
&\E{Y_n^{+}(k)}\E{Y_n^{+}(k+m)}\\
\nonumber
&\quad=\quad\theta_0^2\theta_1^{2k+m-2}\left(\frac{1-\theta_1}{1-\til{\theta}}\right)^2\sum_{i\ge0}\sum_{j\ge0}\E{G(X_k+\til{X}_{i+1})}\E{G(X_{k}+\what{X}_{m+j+1})}\theta^i\theta^j.
\end{align}
Taking the difference between~\eqref{eq:EYY} and~\eqref{eq:EYEY}, and using that $\theta_0,\theta_1=1-O\left(\frac{1}{\sqrt{n}}\right)$ (by \Cref{lem:theta0_theta_1}), $\frac{1-\theta_1}{1-\til{\theta}}\asymp1$ (by \Cref{lem:theta_fraction}), $\theta_1^k=1-O\left(\frac{k}{\sqrt{n}}\right)$, $\E{d^+_1\til{\theta}^{d^+_1}}\le\frac{1-\theta_1}{1-\til{\theta}}$, and $\what{\theta}\le\theta$, we get that
\begin{align}\nonumber
&\Cov{Y_n^{+}(k)}{Y_n^{+}(k+m)}\\
\nonumber
&\lesssim\quad\theta_1^{k+m}\sum_{i\ge0}\sum_{j\ge0}\theta^i\theta^j\Big(\E{G(X_k+\til{X}_{i+1})G(X_{k}+\what{X}_{m+j+1})}\\
\nonumber
&\hspace{5cm}-\E{G(X_k+\til{X}_{i+1})}\E{G(X_{k}+\what{X}_{m+j+1})}\Big)\\
\nonumber
&+\quad\frac{1}{\sqrt{n}}\cdot\E{Y_n^{+}(k)}\E{Y_n^{+}(k+m)}\\
\label{eq:Cov_Y_upper_bound}
&+\quad\frac{k}{\sqrt{n}}\cdot\theta_1^{k+m}\sum_{i\ge0}\sum_{j\ge0}\theta^i\theta^j\E{G(X_k+\til{X}_{i+1})}\E{G(X_{k}+\what{X}_{m+j+1})}.
\end{align}

We know that $\sum_{k,m\ge1}\E{Y_n^{+}(k)}\E{Y_n^{+}(k+m)}\le\E{G_n^+}^2\lesssim(\log n)^2$, so the contribution from the second term in~\eqref{eq:Cov_Y_upper_bound} is $\lesssim\log n$ as required.

We also have
\begin{align*}
&\sum_{k,m\ge1}\sum_{i,j\ge0}k\theta_1^{k}\theta_1^{m}\theta^i\theta^j\E{G(X_k+\til{X}_{i+1})}\E{G(X_{k}+\what{X}_{m+j+1})}\\
&\le\quad\sum_{k\ge1}k\theta_1^{k}\E{(G\star g_{1-\theta})(X_k)}\E{(G\star g_{1-\theta}\star g_{1-\theta_1})(X_k)}.
\end{align*}

Using that $(G\star g_{1-\theta})(x)\le(G\star g)(x)\asymp\frac{1}{||x||^2}$ and $(G\star g_{1-\theta}\star g_{1-\theta_1})(y)\lesssim\log n$, using \Cref{lem:galpha_weighted}, and the bound $\frac{1}{||x||^{12}+||y||^{12}}\lesssim\frac{1}{||x||^5}\frac{1}{||y||^{7}}$, we get that this is
\begin{align*}
&\lesssim\quad
\left(\sum_{x:\:||x||\le Cn^{\frac14}}\frac{1}{||x||^5}\frac{1}{||x||^2}+ \sum_{x:\:||x||\ge Cn^{\frac14}}e^{-c||x||n^{-\frac14}}\frac{1}{||x||^5}\frac{1}{||x||^2}\right)\\
&\hspace{4cm}\cdot\left(\sum_{y:\:||y||\le Cn^{\frac14}}\frac{1}{||y||^7}\log n
+\sum_{y:\:||y||\ge Cn^{\frac14}}e^{-c||y||n^{-\frac14}}\frac{1}{||y||^7}\log n\right).
\end{align*}
This is $\asymp\sqrt{n}\log n$, so the overall contribution from the third term in~\eqref{eq:Cov_Y_upper_bound} is also $\lesssim\log n$.

Finally, note that the first sum in~\eqref{eq:Cov_Y_upper_bound} is
\begin{align*}
&\le\quad\theta^{-2}\theta_1^{k+m}\Cov{(G\star g_{1-\theta})(X_k)}{(G\star g_{1-\theta})(X_k+\what{X}_m)}\\
&\qquad+\:\sum_{j\ge0}\theta^j\E{G(X_k)}\E{G(X_k+\what{X}_{m+j})}\:+ \:\sum_{i\ge0}\theta^i\E{G(X_k+\til{X}_{i})}\E{G(X_k+\what{X}_{m})}.
\end{align*}
Summing the second and third term over $k\ge1$, $m\ge1$, we get an expression that is $\lesssim\log n$.

Summing the first term over $m\ge0$, we get $\theta^{-2}\theta_1^{k}\Cov{(G\star g_{1-\theta})(X_k)}{(G\star g_{1-\theta}\star g_{1-\theta_1})(X_k)}$. The contribution from the $m=0$ case is $\lesssim\log n$, and we have $\theta^{-2}\asymp1$. We conclude bounding the contribution from the first term in~\eqref{eq:Cov_Y_upper_bound} by using the following result.

\begin{lemma}\label{lem:Cov_functions_of_Xk}
We have
\begin{align}\label{eq:sum_Cov_g_conv}
\sum_{k\ge1}\theta_1^k\cdot\Cov{(G\star g_{1-\theta})(X_k)}{(G\star g_{1-\theta}\star g_{1-\theta_1})(X_k)}\quad\lesssim\quad\log n.
\end{align}
\end{lemma}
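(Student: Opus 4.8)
The plan is to reduce this covariance of two \emph{different} functions evaluated at the common time $k$ to a covariance of a \emph{single} function evaluated at two different times along one walk, and then to run the standard martingale-difference estimate. Note that the naive bound $\Cov{\cdot}{\cdot}\le\E{(G\star g_{1-\theta})(X_k)(G\star g_{1-\theta}\star g_{1-\theta_1})(X_k)}$ summed against $\theta_1^k$ only gives $\sum_z g_{1-\theta_1}(z)(G\star g_{1-\theta})(z)(G\star g_{1-\theta}\star g_{1-\theta_1})(z)\asymp(\log n)^2$, so exploiting the cancellation in the covariance is essential.

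\textbf{Step 1 (reduction to one walk).} Write $A:=G\star g_{1-\theta}$, so that the second function is $A\star g_{1-\theta_1}=\sum_{m\ge0}\theta_1^m P_mA$ with $(P_mA)(x)=\E{A(x+X'_m)}$ for an independent SRW $X'$. Expanding the covariance and using that $(X_k,X_k+X'_m)\eqdist(Y_k,Y_{k+m})$ for a single SRW $Y$ (and $X_k+X'_m\eqdist Y_{k+m}$), one obtains
\[
\Cov{A(X_k)}{(A\star g_{1-\theta_1})(X_k)}\ =\ \sum_{m\ge0}\theta_1^m\Cov{A(Y_k)}{A(Y_{k+m})},
\]
hence, writing $k'=k+m$,
\[
\sum_{k\ge1}\theta_1^k\Cov{A(X_k)}{(A\star g_{1-\theta_1})(X_k)}\ =\ \sum_{k'\ge1}\theta_1^{k'}\sum_{k=1}^{k'}\Cov{A(Y_k)}{A(Y_{k'})}.
\]

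\textbf{Step 2 (martingale-difference bound).} For $k\le k'$, decomposing $A(Y_k)-\E{A(Y_k)}$ and $A(Y_{k'})-\E{A(Y_{k'})}$ into Doob-martingale increments and using orthogonality of increments at distinct times leaves only the diagonal:
\[
\Cov{A(Y_k)}{A(Y_{k'})}\ =\ \sum_{j=1}^{k}\E{\Delta_j^{(k)}\,\Delta_j^{(k')}},\qquad \Delta_j^{(\ell)}:=(P_{\ell-j}A)(Y_j)-(P_{\ell-j+1}A)(Y_{j-1}).
\]
Each $\Delta_j^{(\ell)}$ is a sum of one spatial and one temporal one-step difference of $P_mA$. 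From \Cref{lem:Greens_function} one has $A(x)\lesssim(1+\|x\|)^{-2}$, $|\nabla A(x)|\lesssim(1+\|x\|)^{-3}$, and \Cref{lem:g_alpha_x_bound} supplies the decay of $g_{1-\theta}$ beyond scale $n^{1/4}$; combined with the standard heat-kernel smoothing/derivative estimates this gives $|\nabla (P_mA)(x)|\lesssim(1+\|x\|\vee\sqrt m)^{-3}$, while the temporal difference $|(P_mA-P_{m+1}A)(x)|$, computed via the resolvent identity $(I-\theta P_1)g_{1-\theta}=\delta_0$, is $\lesssim(1+\|x\|\vee\sqrt m)^{-4}+n^{-1/2}(1+\|x\|\vee\sqrt m)^{-2}$ and contributes only lower-order terms. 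Hence effectively $|\Delta_j^{(\ell)}|\lesssim(1+\|Y_{j-1}\|\vee\sqrt{\ell-j})^{-3}$, and
\[
\Cov{A(Y_k)}{A(Y_{k'})}\ \lesssim\ \sum_{j=1}^{k}\E{(1+\|Y_{j-1}\|\vee\sqrt{k-j})^{-3}\,(1+\|Y_{j-1}\|\vee\sqrt{k'-j})^{-3}}\ +\ (\text{lower order}).
\]

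\textbf{Step 3 (the sum).} Using the local-CLT bound $\E{(1+\|Y_{j-1}\|\vee r_1)^{-3}(1+\|Y_{j-1}\|\vee r_2)^{-3}}\lesssim(1+\sqrt j\vee r_1)^{-3}(1+\sqrt j\vee r_2)^{-3}$ together with the elementary fact that $\max(j,k-j)\asymp k$ on $1\le j\le k$, so that $(1+\sqrt j\vee\sqrt{k-j})^{-3}\asymp k^{-3/2}$ uniformly in $j$, one gets $\Cov{A(Y_k)}{A(Y_{k'})}\lesssim k^{-1/2}(k')^{-3/2}$ for $k\le k'/2$ and $\lesssim(k')^{-2}$ for $k'/2<k\le k'$, whence $\sum_{k=1}^{k'}\Cov{A(Y_k)}{A(Y_{k'})}\lesssim(1+k')^{-1}$. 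Finally $\sum_{k'\ge1}\theta_1^{k'}(1+k')^{-1}\asymp\log n$ since $1-\theta_1\asymp n^{-1/2}$ (by \Cref{lem:theta0_theta_1}); the tail $k'\gg n^{1/2}\log n$ is harmless since there $\theta_1^{k'}$ is super-polynomially small and $\Cov{A(Y_k)}{A(Y_{k'})}\le\|A\|_\infty^2\lesssim1$.

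\textbf{Main obstacle.} Steps~1 and~3 are routine; the real work is Step~2, i.e.\ obtaining the gradient and time-increment estimates for $P_m(G\star g_{1-\theta})$ with the correct $(1+\|x\|\vee\sqrt m)^{-3}$ decay uniformly over $m\lesssim\sqrt n$. This requires combining the asymptotics of \Cref{lem:Greens_function} with the truncation of \Cref{lem:g_alpha_x_bound}, and being careful near the origin, where $A=G\star g_{1-\theta}$ is merely bounded rather than singular.
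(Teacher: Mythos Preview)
Your argument is correct in outline and takes a genuinely different route from the paper. The paper replaces $(G\star g_{1-\theta})$ and $(G\star g_{1-\theta}\star g_{1-\theta_1})$ by explicit truncated radial approximants $a(x)=c_a\|x\|^{-2}\1{\|x\|\le n^{1/4}}$ and $b(x)=c_b\big(\log(2n^{1/4}-\|x\|)+\log(2n^{1/4}+\|x\|)-2\log\|x\|\big)\1{\|x\|\le n^{1/4}}$, controls the approximation errors separately, and then expresses $\sum_k\theta_1^k\Cov{a(X_k)}{b(X_k)}$ via the Green's function $g^{(2d)}_{1-\theta_1}((x,y))$ of the coupled walk on $\Z^{2d}$, splitting the resulting double sum into six pieces according to the three logarithmic terms in $b$ and which of $x,y$ dominates. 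Your approach instead unwinds the outer convolution $g_{1-\theta_1}$ to reduce to the autocorrelation $\sum_{1\le k\le k'}\theta_1^{k'}\Cov{A(Y_k)}{A(Y_{k'})}$ of a single function $A=G\star g_{1-\theta}$, and then bounds each covariance pointwise by a Doob martingale-difference argument driven by the decay of $\nabla(P_mA)$.

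What each buys: the paper's route avoids any regularity estimate on $P_mA$ and stays entirely within the Green's function calculus already developed in \Cref{app:SRW}; its price is a somewhat ad hoc six-fold case split. Your route is more structural and the final summation in Step~3 is clean, but it shifts the work to proving the uniform bounds $|\nabla(P_mA)(x)|\lesssim(1+\|x\|\vee\sqrt m)^{-3}$ and the corresponding time-increment estimate. You correctly flag this as the main obstacle; note in particular that since $A$ depends on $n$ through the cutoff scale in $g_{1-\theta}$, you need these bounds to hold uniformly in $n$, and some care is required near $\|x\|\sim n^{1/4}$ where $A$ transitions from the $(G\star g)$ profile to exponential decay. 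Your resolvent identity $(I-P_1)A=\theta^{-1}(G-(1-\theta)A)$ handles the time increment neatly; for the spatial gradient it is cleanest to put the derivative on the heat kernel when $m\ge\|x\|^2$ and on $A$ (via $\nabla A=G\star\nabla g_{1-\theta}$ or $\nabla G\star g_{1-\theta}$) when $m\le\|x\|^2$.
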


\begin{proof}[Sketch proof.]
Firstly, we use \Cref{lem:g_alpha_x_bound} to compare $(G\star g_{1-\theta})(x)$ and $(G\star g_{1-\theta}\star g_{1-\theta_1})(x)$ to $a(x):=\frac{c_a}{||x||^2}\1{||x||\le n^{\frac14}}$ and\\ $b(x):=c_b\left(\log\left(2n^{\frac14}-||x||\right)+\log\left(2n^{\frac14}+||x||\right)-2\log\left(||x||\right)\right)\1{||x||\le n^{\frac14}}$, respectively,\\ where $c_a$ and $c_b$ are positive constants. We show that the contribution to the sum in~\eqref{eq:sum_Cov_g_conv} from the error terms is $\lesssim\log n$.

Then we compare $\sum_{k\ge1}\theta_1^k\cdot\Cov{a(X_k)}{b(X_k)}$ to \begin{align}\label{eq:sum_Cov_g_conv_approx}
\sum_{x:\:||x||\le n^{\frac14}}a(x)\sum_{y:\:||y||\le n^{\frac14}}g^{(2d)}_{1-\theta_1}\left((x,y)\right)\left(b(x)-b(y)\right),
\end{align}
where $g^{(2d)}_{1-\theta_1}$ denotes the Green's function for a SRW on $\Z^{2d}$ induced by the pairs in $\Z^d\times\Z^d$ with an even sum of coordinates. Note that $g^{(2d)}_{1-\theta_1}\left((x,y)\right)\lesssim\frac{1}{\left(||x||^2+||y||^2\right)^7}$, which is $\lesssim\frac{1}{||x||^k}\frac{1}{||y||^{14-k}}$ for any $k\in\{0,1,....,14\}$.

We split the sum~\eqref{eq:sum_Cov_g_conv_approx} based on the three different log terms in $b$, and for each of them we split based on whether the contribution from $x$ or $y$ is larger. Finally, for each of the six sums we upper bound its absolute value by using $g^{(2d)}_{1-\theta_1}\left((x,y)\right)\lesssim\frac{1}{||x||^k}\frac{1}{||y||^{14-k}}$ for a suitable choice of $k$.
\end{proof}

Finally, we use $\eqref{eq:GW_one_sum_all}$ and bounds analogous to the ones in \Cref{sec:E_Gn} to show that
\[\sum_{m\ge1}\Cov{Y_n^+(0)}{Y_n^+(m)}\quad\le\quad\sum_{m\ge1}\E{Y_n^+(0)Y_n^+(m)}\quad\lesssim\quad\log n.\]

This finishes the proof of~\eqref{eq:Cov_Y}.

\subsection*{Conclusion}

Combining~\eqref{eq:E_Ysquared} and~\eqref{eq:Cov_Y}, we get that $\Var{G_n^+}\lesssim\log n$. The diagonal terms and covariance terms in $\Var{G_n^-}$ are handled analogously, so we also get $\Var{G_n^-}\lesssim\log n$. Then by Cauchy-Schwarz we have $\Var{\econd{U_n}{\T}}=\Var{G_n}\lesssim\log n$. This finishes the proof of~\eqref{eq:Var_Gn}.

\subsection{Bounding $\E{\varcond{U_n}{\T}}$}

In this section we sketch the proof of~\eqref{eq:E_Varcond_Un}.

Let us write $G_{\T}(x)=\sum_{i=-\xiln}^{\xirn}\1{\T_i=x}$, $G_{\til{\T}}(x)=\sum_{i=0}^{\infty}\1{\til{\T}_i=x}$. Then we can write
\begin{align*}
\E{\varcond{U_n}{\T}}\quad&\le\quad\E{\varcond{\Ell_n}{\T}}\quad=\quad\E{\varcond{\sum_{x}G_{\T}(x)G_{\til{\T}}(x)}{\T}}\\
&=\quad\E{\sum_{x,y}\covcond{G_{\T}(x)G_{\til{\T}}(x)}{G_{\T}(y)G_{\til{\T}}(y)}{\T}}\\
&=\quad\E{\sum_{x,y}G_{\T}(x)G_{\T}(y)\Cov{G_{\til{\T}}(x)}{G_{\til{\T}}(y)}}\\
&=\quad\sum_{x,y}\E{G_{\T}(x)G_{\T}(y)}\Cov{G_{\til{\T}}(x)}{G_{\til{\T}}(y)}.
\end{align*}

Then for each $x$, $y$ we can bound $\E{G_{\T}(x)G_{\T}(y)}$ and $\Cov{G_{\til{\T}}(x)}{G_{\til{\T}}(y)}$ by estimates similar to the ones in \Cref{sec:Var_Gn}, but using functions $\delta_x$ and $\delta_y$ instead of $G$. The details of these calculations are omitted.

This concludes the proof of~\eqref{eq:E_Varcond_Un}. Combining~\eqref{eq:Var_Gn} and~\eqref{eq:E_Varcond_Un}, we can conclude \Cref{pro:Var_Un}.

\section{Proof of Theorems~\ref{thm:prob_nocap_nozero} and~\ref{thm:prob_nocap_nozero_n}}\label{sec:prob_nocap_nozero}

In this section we prove \Cref{thm:prob_nocap_nozero}, i.e.\ we show that $\E{\1{\A_n}\1{\B_n}}\sim\frac{c_8}{\log n}$, where $\A_n$ and $\B_n$ are defined as in \Cref{sec:shift_invariance_magic_formula}.

Once we have this, the proof of \Cref{thm:prob_nocap_nozero_n} is very quick.

\begin{proof}[Proof of \Cref{thm:prob_nocap_nozero_n}]
Note that $\pr{\til{\T}(-\infty,0)\cap\T[-n,n]=\emptyset,\:0\not\in\T(0,n]}\ge$\\  $\pr{\til{\T}(-\infty,0)\cap\T[-\xi^{\ell}_{n(\log n)^2},\xi^{r}_{n(\log n)^2}]=\emptyset,\: 0\not\in\T(0,\xi^{r}_{n(\log n)^2}]}-2\pr{\xi^{\ell}_{n(\log n)^2}<n}$, then use  \Cref{thm:prob_nocap_nozero} with $n(\log n)^2$ instead of $n$ and use the concentration of $\xi^{\ell}_{n(\log n)^2}$.

The upper bound is analogous, using \Cref{thm:prob_nocap_nozero} with $\frac{n}{(\log n)^2}$.
\end{proof}

Working towards the proof of \Cref{thm:prob_nocap_nozero}, we will prove the following two results.

\begin{lemma}\label{lem:prob_AnBn_upper_bound}
We have
\begin{align*}
\E{\1{\A_n}\1{\B_n}}\quad\lesssim\quad\frac{1}{\log n}.
\end{align*}
\end{lemma}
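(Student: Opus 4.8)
The plan is to deduce \Cref{lem:prob_AnBn_upper_bound} directly from the magic formula \eqref{eq:magic_formula} together with the concentration of $U_n$ established in Propositions~\ref{pro:E_Un} and~\ref{pro:Var_Un}. The key point is that $\E{\1{\A_n}\1{\B_n}U_n}=1$ and $\E{U_n}\asymp\log n$, so if $U_n$ were deterministic we would immediately get $\E{\1{\A_n}\1{\B_n}}=1/\E{U_n}\asymp1/\log n$; the concentration estimate $\Var{U_n}\lesssim\log n=o\bigl((\E{U_n})^2\bigr)$ is exactly what lets us make this rigorous for the upper bound.

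Concretely, I would fix a threshold, say restrict to the event $\{U_n\ge\tfrac12\E{U_n}\}$. On the complementary event $\{U_n<\tfrac12\E{U_n}\}$ Chebyshev gives
\begin{align*}
\pr{U_n<\tfrac12\E{U_n}}\quad\le\quad\pr{\left|U_n-\E{U_n}\right|\ge\tfrac12\E{U_n}}\quad\le\quad\frac{4\Var{U_n}}{\E{U_n}^2}\quad\lesssim\quad\frac{\log n}{(\log n)^2}\quad=\quad\frac{1}{\log n}.
\end{align*}
Then I would split $\E{\1{\A_n}\1{\B_n}}$ according to this event. On $\{U_n\ge\tfrac12\E{U_n}\}$ we have $\1{\A_n}\1{\B_n}\le\frac{2}{\E{U_n}}\1{\A_n}\1{\B_n}U_n$ (using that $U_n\ge0$, which holds because $U_n$ is a conditional expectation of the nonnegative quantity $\Ell_n$, as noted after \Cref{cor:magic_formula}), so
\begin{align*}
\E{\1{\A_n}\1{\B_n}\1{U_n\ge\frac12\E{U_n}}}\quad\le\quad\frac{2}{\E{U_n}}\E{\1{\A_n}\1{\B_n}U_n}\quad=\quad\frac{2}{\E{U_n}}\quad\lesssim\quad\frac{1}{\log n},
\end{align*}
using \Cref{cor:magic_formula} and \Cref{pro:E_Un}. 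On $\{U_n<\tfrac12\E{U_n}\}$ we simply bound $\1{\A_n}\1{\B_n}\le1$ and use the Chebyshev bound above. Adding the two contributions yields $\E{\1{\A_n}\1{\B_n}}\lesssim1/\log n$.

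There is essentially no serious obstacle here once Propositions~\ref{pro:E_Un} and~\ref{pro:Var_Un} are in hand --- this is the ``quick'' direction alluded to in the overview (cf.\ \eqref{eq:prob_upper_bound}). The only points requiring a little care are: confirming $U_n\ge0$ almost surely so that the truncation argument is valid (this follows since $U_n=\econd{\Ell_n}{\cdots}$ with $\Ell_n\ge0$); and ensuring the $o$-estimates are applied in the right regime, i.e.\ that ``for all sufficiently large $n$'' in the definition of $\lesssim$ matches the hypotheses of the cited propositions. The genuinely hard work --- the matching lower bound and the sharpened upper bound needed for \Cref{thm:prob_nocap_nozero} --- is deferred to the decorrelation arguments of Step 2 and is not needed for this lemma.
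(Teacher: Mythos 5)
Your proposal is correct and follows essentially the same route as the paper: you split on whether $U_n$ is above or below $\tfrac12\E{U_n}$, control the bad event by Chebyshev via \Cref{pro:Var_Un} and \Cref{pro:E_Un}, and control the good event by dominating $\1{\A_n}\1{\B_n}$ by $\frac{2}{\E{U_n}}\1{\A_n}\1{\B_n}U_n$ and invoking the magic formula \Cref{cor:magic_formula}. The only cosmetic difference is that the paper partitions on the two-sided deviation event $\Ecal_n=\{|U_n-\E{U_n}|\ge\tfrac12\E{U_n}\}$ rather than the one-sided event $\{U_n<\tfrac12\E{U_n}\}$, and the paper leaves implicit the observation you spell out, that $U_n\ge0$ is needed to drop the restriction when applying the magic formula.
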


\begin{proposition}\label{pro:limsup_prob_ABD}
Let $\eps$ be any positive constant and let
\begin{align*}
D_{\eps}:=\{\left|U_n-\E{U_n}\right|>\eps\E{U_n}\}.    
\end{align*}
Then we have
\begin{align*}
\limsup\limits_{n\to\infty}\:(\log n)\pr{\A_n,\B_n,D_{\eps}}=0.
\end{align*}
\end{proposition}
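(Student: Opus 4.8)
The plan is to use the magic formula \eqref{eq:magic_formula} to control the event $D_\eps$ and then a Cauchy--Schwarz / second-moment argument to decouple the indicators $\1{\A_n}\1{\B_n}$ from the deviation event. The starting observation is that on $\A_n\cap\B_n$ the quantity $\1{\A_n}\1{\B_n}U_n$ has expectation $1$, so $U_n$ cannot be too large too often on this event; more precisely, from \Cref{cor:magic_formula} we get $\E{\1{\A_n}\1{\B_n}U_n\1{U_n>(1+\eps)\E{U_n}}}\le 1$, which already gives $\pr{\A_n,\B_n,U_n>(1+\eps)\E{U_n}}\le \frac{1}{(1+\eps)\E{U_n}}\asymp\frac{1}{\log n}$. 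This bound alone is not enough — we need the probability to be $o(1/\log n)$ — so one must extract the extra decay from the concentration in \Cref{pro:E_Un} and \Cref{pro:Var_Un}. The idea is to split $D_\eps$ according to the size of $U_n$: on the upper-deviation part intersected with $\{U_n\le (1+\eps/2)\E{U_n}\}$, say, we are off the deviation event, so the only genuinely relevant contributions are $\{U_n>(1+\eps/2)\E{U_n}\}$ and $\{U_n<(1-\eps)\E{U_n}\}$.

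For the lower-deviation part $\{U_n<(1-\eps)\E{U_n}\}$, I would argue directly from Chebyshev: by \Cref{pro:E_Un} and \Cref{pro:Var_Un}, $\pr{U_n<(1-\eps)\E{U_n}}\le \frac{\Var{U_n}}{\eps^2\E{U_n}^2}\lesssim \frac{\log n}{(\log n)^2}=\frac{1}{\log n}$, but this is again only $O(1/\log n)$, not $o(1/\log n)$. To get the required $o(1/\log n)$ I would instead combine the magic formula with this variance bound via Cauchy--Schwarz on the event $\A_n\cap\B_n\cap\{U_n<(1-\eps)\E{U_n}\}$. Writing $p_n:=\pr{\A_n,\B_n}$ and noting $\E{\1{\A_n}\1{\B_n}}\lesssim 1/\log n$ already from \Cref{lem:prob_AnBn_upper_bound}, the point is that the ``mass'' of $\1{\A_n}\1{\B_n}$ sitting on the small-$U_n$ region is suppressed: using $1=\E{\1{\A_n}\1{\B_n}U_n}\ge \E{\1{\A_n}\1{\B_n}U_n\1{U_n\ge(1-\eps)\E{U_n}}}\ge (1-\eps)\E{U_n}\,\pr{\A_n,\B_n,U_n\ge(1-\eps)\E{U_n}}$, we learn $\pr{\A_n,\B_n,U_n\ge(1-\eps)\E{U_n}}\le \frac{1}{(1-\eps)\E{U_n}}$. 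Subtracting this from $\pr{\A_n,\B_n}$ gives exactly $\pr{\A_n,\B_n,U_n<(1-\eps)\E{U_n}}$, so it suffices to show $\pr{\A_n,\B_n}\le \frac{1}{(1-\eps)\E{U_n}}+o(1/\log n)$; in other words, we need the matching upper bound $\pr{\A_n,\B_n}\le (1+o(1))\frac{1}{\E{U_n}}$. Equivalently, the whole proposition reduces to sharpening \Cref{lem:prob_AnBn_upper_bound} to an asymptotically tight form.

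To get that sharpening, I would run the standard second-moment / truncated-expectation scheme: on $\A_n\cap\B_n$, split $U_n = U_n\1{U_n\le (1+\eps)\E{U_n}} + U_n\1{U_n>(1+\eps)\E{U_n}}$. The first piece contributes at most $(1+\eps)\E{U_n}\,\pr{\A_n,\B_n,U_n\le(1+\eps)\E{U_n}}$, and the second is controlled by a Cauchy--Schwarz bound $\E{\1{\A_n}\1{\B_n}U_n\1{U_n>(1+\eps)\E{U_n}}}\le \pr{U_n>(1+\eps)\E{U_n}}^{1/2}\E{U_n^2\1{\A_n}\1{\B_n}}^{1/2}$, where $\pr{U_n>(1+\eps)\E{U_n}}\lesssim \Var{U_n}/\E{U_n}^2\to 0$ and $\E{U_n^2\1{\A_n}\1{\B_n}}$ needs a bound of order $(\log n)^2\cdot p_n \asymp \log n$ — this is the place where I expect to need an auxiliary estimate of the form $\E{\1{\A_n}\1{\B_n}U_n^2}\lesssim \E{U_n}$, obtained by a last-exit / magic-formula computation with two counting terms analogous to \Cref{lem:magic_formula} (or by the concentration of $U_n$ given $\A_n,\B_n$). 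Combining, $1 = \E{\1{\A_n}\1{\B_n}U_n}\le (1+\eps)\E{U_n}p_n + o(1)\cdot(\log n)^{1/2}\cdot(\log n)^{1/2}/\E{U_n}^{1/2}\cdot\dots$, and after chasing constants this yields $p_n\le (1+2\eps)/\E{U_n}$ for large $n$, hence (letting $\eps\downarrow 0$ at the end, but here $\eps$ is fixed so one keeps the $\eps$-dependence) $\pr{\A_n,\B_n,D_\eps}\le o(1/\log n)$. The main obstacle I anticipate is precisely controlling $\E{\1{\A_n}\1{\B_n}U_n^2}$: unlike for SRWs there is no Markov property, so one cannot simply re-root and iterate, and I expect to have to push through a magic-formula-style identity with a second counting factor, or alternatively leverage the conditional-variance bound $\E{\varcond{U_n}{\T,\Scal}}\lesssim\log n$ from \eqref{eq:E_Varcond_Un} together with the $\Var{G_n}\lesssim\log n$ estimate, to bound $\Var{U_n\mid \A_n\cap\B_n}$ and thereby $\E{U_n^2\1{\A_n}\1{\B_n}}$.
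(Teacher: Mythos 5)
Your proposal does not reach the statement, and the central obstacle is a circularity that the paper's argument is specifically designed to avoid.

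You correctly observe that the magic formula gives $\pr{\A_n,\B_n,\,U_n>(1+\eps)\E{U_n}}\le\frac{1}{(1+\eps)\E{U_n}}$ and Chebyshev gives $\pr{U_n<(1-\eps)\E{U_n}}\lesssim\frac{1}{\log n}$, and that both are only $O(1/\log n)$, whereas $o(1/\log n)$ is needed. Your attempted remedy, however, reduces the problem to showing $\pr{\A_n,\B_n}\le(1+o(1))\frac{1}{\E{U_n}}$, i.e.\ to a sharp version of \Cref{lem:prob_AnBn_upper_bound}. But this sharp upper bound is essentially the content of \Cref{thm:prob_nocap_nozero}, whose proof in \Cref{sec:prob_nocap_nozero} \emph{uses} \Cref{pro:limsup_prob_ABD} as an input. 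The step is therefore circular. (There is also a smaller sign issue: from $\pr{\A_n,\B_n,\,U_n\ge(1-\eps)\E{U_n}}\le\frac{1}{(1-\eps)\E{U_n}}$ and $\pr{\A_n,\B_n}\le\frac{1}{(1-\eps)\E{U_n}}+o(1/\log n)$ you cannot conclude a bound on the difference $\pr{\A_n,\B_n,\,U_n<(1-\eps)\E{U_n}}$ without a matching \emph{lower} bound on the first quantity, which you do not have.)

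Your suggested fallback — proving $\E{\1{\A_n}\1{\B_n}U_n^2}\lesssim\E{U_n}$ by a magic-formula identity with a second counting factor — is exactly where the real difficulty lies, and it is not a mere technicality. A last-exit decomposition with two counting factors would re-introduce $\pr{\A_n,\B_n}$-type quantities and would again need the very concentration you are trying to prove; and even if one had $\E{\1{\A_n}\1{\B_n}U_n^2}\lesssim\E{U_n}$, the Cauchy--Schwarz/Chebyshev conclusion would still only be $O(1/\log n)$ unless one also knew the matching cancellation $\E{\1{\A_n}\1{\B_n}U_n^2}=(1+o(1))\E{U_n}$, which is again of the same strength as the target. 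The paper's route is genuinely different: it splits $\T$ and $\Ttil$ into parts near the origin (indexed $\vee$) and far from the origin (indexed $\wedge$), decomposes $U_n=\sum_{a,b\in\{\wedge,\vee\}}U_n(a,b)$ and $D_\eps\subseteq\bigcup D_\eps(a,b)$, then (\Cref{lem:decorrelate_D}) decouples the restricted non-intersection event from $D_\eps(a,b)$ using the Harnack-type estimate of \Cref{lem:Harnack} on the spine hitting points, obtaining a \emph{product} of probabilities; the key gain is that the restricted non-intersection probabilities (Lemmas~\ref{lem:Ttillarge_probs_bound} and~\ref{lem:Ttilsmall_probs_bound}) can be made $\le\alpha/\log n$ or $\le\alpha$ by choosing $\til{C},C_1,C_2$ large, while $\pr{D_\eps(a,b)}\lesssim 1/(\eps^2\log n)$ by \Cref{lem:bound_Dab_prob}. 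Multiplying and letting $\alpha\downarrow0$ gives the $o(1/\log n)$ bound. This near/far decoupling is the missing idea in your proposal.
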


The proof of \Cref{lem:prob_AnBn_upper_bound} is analogous to the proof of the first part of~\cite[Lemma 3.2]{BCap_of_RW_range}, and is presented here for completeness.

\begin{proof}[Proof of \Cref{lem:prob_AnBn_upper_bound}]
Let $\Ecal_n=\left\{\left|U_n-\E{U_n}\right|\ge\frac12\E{U_n}\right\}$. Then we can write
\begin{align*}
\E{\1{\A_n}\1{\B_n}}\quad\le\quad\pr{\Ecal_n}+\quad\E{\1{\A_n}\1{\B_n}\1{\Ecal_n^c}}.
\end{align*}
Using \Cref{pro:E_Un} and \Cref{pro:Var_Un} and Chebyshev's inequality we get that $\pr{\Ecal_n}\lesssim\frac{1}{\log n}$.

Note that on $\Ecal_n^c$ we have $U_n\ge\frac12\E{U_n}$, hence $\E{\1{\A_n}\1{\B_n}\1{\Ecal_n^c}}\le\frac{2\E{\1{\A_n}\1{\B_n}U_n}}{\E{U_n}}$. By \Cref{cor:magic_formula} and \Cref{pro:E_Un} this is also $\lesssim\frac{1}{\log n}$.
\end{proof}

Once we have \Cref{lem:prob_AnBn_upper_bound} and \Cref{pro:limsup_prob_ABD}, the proof of \Cref{thm:prob_nocap_nozero} can be completed as follows. This is similar to the proof of~\cite[Proposition 3.5]{BCap_of_RW_range}.

\begin{proof}[Proof of \Cref{thm:prob_nocap_nozero}]
By \Cref{cor:magic_formula} we can write
\[
\E{\1{\A_n}\1{\B_n}}\quad=\quad\frac{1}{\E{U_n}}-\frac{\E{\1{\A_n}\1{\B_n}\left(U_n-\E{U_n}\right)}}{\E{U_n}}.
\]
From \Cref{pro:E_Un} we know that $\frac{1}{\E{U_n}}\sim\frac{c_8}{\log n}$. We can bound the second term as follows.

Note that
\begin{align*}
\E{\1{\A_n}\1{\B_n}\left|U_n-\E{U_n}\right|}\quad&\le\quad \eps\E{\1{\A_n}\1{\B_n}}\E{U_n}+\E{\1{\A_n}\1{\B_n}\1{D_{\eps}}\left|U_n-\E{U_n}\right|}.
\end{align*}
By \Cref{lem:prob_AnBn_upper_bound} and \Cref{pro:E_Un} we know that $\E{\1{\A_n}\1{\B_n}}\lesssim\frac{1}{\log n}$ and $\E{U_n}\asymp\log n$, hence $\limsup_{n\to\infty}\eps\E{\1{\A_n}\1{\B_n}}\E{U_n}\to0$ as $\eps\to 0$. Also,
\begin{align*}
\E{\1{\A_n}\1{\B_n}\1{D_{\eps}}\left|U_n-\E{U_n}\right|}\:&\le\: \pr{\A_n,\B_n,D_{\eps}}^{\frac12}\Var{U_n}^{\frac12}\:\lesssim\:\pr{\A_n,\B_n,D_{\eps}}^{\frac12}(\log n)^{\frac12},
\end{align*}
and by \Cref{pro:limsup_prob_ABD} we know that $\limsup\limits_{n\to\infty}\:(\log n)\pr{\A_n,\B_n,D_{\eps}}=0$ for any $\eps$.

Combining these, we get that $\E{\1{\A_n}\1{\B_n}\left|U_n-\E{U_n}\right|}\ll1$, hence $\E{\1{\A_n}\1{\B_n}}\sim\frac{1}{\E{U_n}}\sim\frac{c_8}{\log n}$ as required.
\end{proof}

The rest of this section is devoted to the proof of \Cref{pro:limsup_prob_ABD}.

\subsection{Splitting the event $\A_n\cap\B_n\cap D_{\eps}$ into four cases}

In this section we upper bound $\pr{\A_n\cap\B_n\cap D_{\eps}}$ by probabilities of events that only depend on certain parts of the trees $\til{\T}$ and $\T$.

For ease of notation we will often drop the subscript $n$, and unless specified otherwise, we write $\T$ to denote $\T\left[-\xiln,\xirn\right]$. We also write
\begin{align*}
\T_{-}:=\T[-\xiln,0),\qquad\T_{+}:=\T(0,\xirn],\qquad\til{\T}_{-}:=\til{\T}(-\infty,0),\qquad\til{\T}_{+}:=\til{\T}(0,\infty).
\end{align*}
For a given constant $\til{C}$, we write $\til{\T}(\le\til{C})$ to denote the part of $\til{\T}$ until the random walk $\Xtil$ on its spine first hits $\partial B(0,\til{C})$ (i.e.\ we keep the spine vertices until the first one where the label is at distance $\ge\til{C}$ from $0$ in $\Z^d$, and also keep the trees hanging off these spine vertices). Similarly, we write $\til{\T}(\ge\til{C})$ to denote the part of $\til{\T}$ after the random walk $\Xtil$ on its spine first hits $\partial B(0,\til{C})$. We define $\T(\le C)$, $\T(\ge C)$, $\til{\T}_{+}(\le\til{C})$, etc analogously. (Recall that $\T$ stands for $\T\left[-\xiln,\xirn\right]$, so $\T(\ge C)$ might be empty in case $\xiln$ and $\xirn$ are small.)

In what follows $\til{C}$, $C_1$ and $C_2$ will be constants, with values to be specified later. We let
\begin{align*}
U_n(\vee,\wedge):=\econd{\#\text{ intersections of }\til{\T}_+(\le\til{C})\text{ and }\T(>C_1)}{\T,\til{X}, (\til{d}_i^{\pm})},
\end{align*}
and we define $U_n(\vee,\vee)$ analogously, with $\til{\T}_+(\le\til{C})$ and $\T(\le C_1)$. We also define $U_n(\wedge,\wedge)$ and $U_n(\wedge,\vee)$ analogously, but with $C_2$ instead of $C_1$. \footnote{In this notation $\vee$ refers to the part of the tree near the root, $\wedge$ refers to the part further from the root, the first argument refers to $\til{\T}$, and the second argument refers to $\T$.}

Note that $U_n=\sum_{a,b\in\{\wedge,\vee\}}U_n(a,b)$, hence if $D_{\eps}$ holds, then\\ $D_{\eps}(a,b):=\left\{\left|U_n(a,b)-\E{U_n(a,b)}\right|>\frac14\eps\E{U_n}\right\}$ holds for some $a,b\in\{\wedge,\vee\}$.

To facilitate some of the later calculations, we will also consider the following events. Let us say that a set $\Gamma$ does not disconnect a point $x\in\partial B(0,r)$ from $0$ in $B(0,r)$ if there exists a time $t$ and a path $\gamma[0,t]$ from $0$ to $x$ in $B(0,r)$ such that $\gamma(0,t]\cap\Gamma=\emptyset$. Note that if $\til{\T}_{-}\cap\T=\emptyset$ then the following events all hold.
\begin{align*}
A(\wedge,\wedge):=\quad\Big\{&\T\left(\ge4C_1\right)\text{ does not disconnect }\Xtil_{\til{\tau}_{4\til{C}}}\text{ from }0\text{ in }B(0,4\til{C}),\\
&\til{\T}\left(\ge4\til{C}\right)\text{ does not disconnect }X_{\tau_{4C_1}}\text{ from }0\text{ in }B(0,4C_1)\Big\},\\
A(\wedge,\vee):=\quad\Bigg\{&\T\left(\le\frac14C_1\right)\text{ does not disconnect }\Xtil_{\til{\tau}_{4\til{C}}}\text{ from }0\text{ in }B\left(0,4\til{C}\right)\Bigg\},\\
A(\vee,\wedge):=\quad\Big\{&\til{\T}_{-}\left(\le\frac14\til{C}\right)\text{ does not disconnect }0\text{ and }X_{\tau_{4C_2}}\text{ in }B\left(0,4C_2\right)\Big\},\\
A(\vee,\vee):=\quad\Big\{&\text{an event that always holds, defined for notational convenience}\Big\}.
\end{align*}

So we can write
\begin{align}
&\pr{\A_n,\B_n,D_{\eps}}\quad\le\quad\sum_{a,b\in\{\wedge,\vee\}}\notag \pr{\til{\T}_{-}\cap\T=\emptyset,\:0\not\in\T_{+},\:D_{\eps}(a,b)}\\
&\label{eq:vv} \le\quad
\pr{\til{\T}_{-}\left(\ge4\til{C}\right)\cap\T\left(\ge4C_1\right)=\emptyset,\:0\not\in\T_{+}\left(\ge4C_1\right),\:A(\wedge,\wedge),\:D_{\eps}(\vee,\vee)}\\
&\label{eq:va} +\quad\pr{\til{\T}_{-}\left(\ge4\til{C}\right)\cap\T\left(\le\frac14C_1\right)=\emptyset,\:0\not\in\T_{+}\left(\le\frac14C_1\right),\:A(\wedge,\vee),\:D_{\eps}(\vee,\wedge)}\\
&\label{eq:av} +\quad\pr{\til{\T}_{-}\left(\le\frac14\til{C}\right)\cap\T\left(\ge4C_2\right)=\emptyset,\:0\not\in\T_{+}(\ge4C_2),\:A(\vee,\wedge),\:D_{\eps}(\wedge,\vee)}\\
&\label{eq:aa} +\quad\pr{\til{\T}_{-}\left(\le\frac14\til{C}\right)\cap\T\left(\le\frac14C_2\right)=\emptyset,\:0\not\in\T_{+}\left(\le\frac14C_2\right),\:A(\vee,\vee),\:D_{\eps}(\wedge,\wedge)}.
\end{align}

\subsection{Pulling out the $D_\eps$ terms and bounding their probabilities}\label{sec:decorrelate_Deps}

In this section we explain how to decorrelate the $D_{\eps}(a,b)$ terms from the rest of the events in the probabilities~\eqref{eq:vv}-\eqref{eq:aa}, and how to bound the probabilities $\pr{D_{\eps}(a,b)}$. More precisely, we show the following.
\begin{lemma}\label{lem:decorrelate_D}
We have
\begin{align*}
\eqref{eq:vv}\:\lesssim\:\pr{\til{\T}_{-}\left(\ge4\til{C}\right)\cap\T\left(\ge4C_1\right)=\emptyset,\:0\not\in\T_{+}\left(\ge4C_1\right),\:A(\wedge,\wedge)}\pr{D_{\eps}(\vee,\vee)} + o\left(\frac{1}{\log n}\right),
\end{align*}
where the implicit constant in the $\lesssim$ does not depend on $\til{C}$, $C_1$ and $C_2$.

We also have analogous bounds for~\eqref{eq:va},~\eqref{eq:av} and~\eqref{eq:aa}.
\end{lemma}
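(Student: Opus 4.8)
The plan is to prove the bound for \eqref{eq:vv} in detail and note that the other three cases follow by the same scheme with the roles of $\T$ and $\Ttil$ (and the constants $C_1$, $C_2$, $\til C$) suitably permuted. The starting observation is that the event inside \eqref{eq:vv} is a conjunction of two kinds of events: on the one hand $D_{\eps}(\vee,\vee)$, which by definition is measurable with respect to $\left(\Ttil_+(\le\til C),\T(\le C_1)\right)$ together with the spine data needed to define the conditional expectation $U_n(\vee,\vee)$; on the other hand the three events $\left\{\Ttil_-(\ge4\til C)\cap\T(\ge4C_1)=\emptyset\right\}$, $\left\{0\not\in\T_+(\ge4C_1)\right\}$ and $A(\wedge,\wedge)$, which are measurable with respect to $\left(\Ttil_-(\ge4\til C),\T(\ge4C_1)\right)$ plus the spine data of the ``near the root'' parts that is needed to say \emph{where} those far parts attach. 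The key point, which I would isolate as a preliminary lemma, is that conditionally on the spine random walks $X$ and $\Xtil$ up to the relevant stopping times $\tau_{4C_1}$, $\til\tau_{4\til C}$ (and on the number of offspring of those boundary spine vertices), the pair $\left(\Ttil_+(\le\til C),\T(\le C_1)\right)$ and the pair $\left(\Ttil_-(\ge4\til C),\T(\ge4C_1)\right)$ are conditionally independent --- this is exactly where the tree/spine decomposition of \Cref{def:infinite_tree} is used, since the trees hanging off distinct spine vertices are independent given the offspring counts. I would phrase this via a deterministic skeleton: fix the spine paths of $\T$ and $\Ttil$ up to the boundary times and the offspring numbers there, call this $\sigma$-algebra $\mathcal G$, and observe that the two groups of events above are conditionally independent given $\mathcal G$.

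With this in hand, the estimate for \eqref{eq:vv} proceeds in three steps. \textbf{Step 1 (conditioning on the skeleton).} Condition on $\mathcal G$ and use conditional independence to factor \eqref{eq:vv} as $\E{\prcond{\text{far events}}{\mathcal G}\,\prcond{D_{\eps}(\vee,\vee)}{\mathcal G}}$. \textbf{Step 2 (removing the $\mathcal G$-dependence of the far factor via the $A(\wedge,\wedge)$ event).} Here is the main subtlety: the far factor $\prcond{\Ttil_-(\ge4\til C)\cap\T(\ge4C_1)=\emptyset,\,0\not\in\T_+(\ge4C_1)}{\mathcal G}$ genuinely depends on $\mathcal G$ --- it depends on the locations $X_{\tau_{4C_1}}$, $\Xtil_{\til\tau_{4\til C}}$ and on how much of $B(0,4C_1)$, $B(0,4\til C)$ is covered by the near parts. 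This is precisely what the events $A(\wedge,\wedge)$, and the switch from radius $\til C$ to $4\til C$ and from $C_1$ to $4C_1$, are designed to control: on $A(\wedge,\wedge)$ the near parts do not disconnect the relevant boundary points from $0$, so the far parts, which start from the boundary sphere, can be coupled to honest (unconditioned) BRWs started afresh, up to bounded multiplicative errors depending only on $C_1$, $C_2$, $\til C$. I would use a Harnack-type / last-exit argument (in the spirit of \cite[Section 3.3]{BCap_of_RW_range}) to bound $\prcond{\text{far events}}{\mathcal G}\1{A(\wedge,\wedge)}$ above by a constant (independent of the skeleton but possibly depending on the fixed constants $C_1$, $C_2$, $\til C$) times $\pr{\Ttil_-(\ge4\til C)\cap\T(\ge4C_1)=\emptyset,\,0\not\in\T_+(\ge4C_1),\,A(\wedge,\wedge)}$; crucially the implicit constant must \emph{not} depend on $C_1,C_2,\til C$ for the final statement, so I would instead keep the far factor as is and only use $A(\wedge,\wedge)$ to guarantee that conditioning on $\mathcal G$ can only increase it by a bounded factor absorbed into $\lesssim$, re-integrating to recover $\pr{\text{far events},A(\wedge,\wedge)}$. \textbf{Step 3 (the $D_{\eps}(\vee,\vee)$ factor).} It remains to handle $\E{\text{(far factor)}\,\prcond{D_{\eps}(\vee,\vee)}{\mathcal G}}$; since the near parts are defined on bounded balls, $\prcond{D_{\eps}(\vee,\vee)}{\mathcal G}$ is close to $\pr{D_{\eps}(\vee,\vee)}$ up to a skeleton-dependent error, and I would split: the contribution where $\prcond{D_{\eps}(\vee,\vee)}{\mathcal G}\le \pr{D_{\eps}(\vee,\vee)}$ plus a controlled excess gives the main term $\pr{\text{far},A(\wedge,\wedge)}\pr{D_{\eps}(\vee,\vee)}$, while the excess contributes $o(1/\log n)$ because $\pr{D_{\eps}(\vee,\vee)}\to0$ (from concentration of $U_n$, Propositions \ref{pro:E_Un}--\ref{pro:Var_Un}) and $\pr{\text{far},A(\wedge,\wedge)}\lesssim 1/\log n$ (an upper bound of the same flavour as \Cref{lem:prob_AnBn_upper_bound}, applied to the subtree). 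Strictly: $\pr{D_{\eps}(\vee,\vee)}$ need only go to $0$; multiplied by $\pr{\text{far}}\lesssim 1/\log n$ this already gives a main term of size $o(1/\log n)$ if one is sloppy, so I would keep the product form $\pr{\text{far},A(\wedge,\wedge)}\pr{D_{\eps}(\vee,\vee)}$ intact (as the statement demands) and only throw the genuine cross-error into $o(1/\log n)$.

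For the analogous bounds on \eqref{eq:va}, \eqref{eq:av}, \eqref{eq:aa}: \eqref{eq:aa} is symmetric to \eqref{eq:vv} with near/far exchanged; \eqref{eq:va} and \eqref{eq:av} mix a near part of one tree with a far part of the other, and here the events $A(\wedge,\vee)$ and $A(\vee,\vee)$ (and the trivial $A(\vee,\wedge)$) play the role of $A(\wedge,\wedge)$. The only extra care needed --- and this is the place the overview flags, ``when e.g.\ $\T(\le\frac14C_1)$ covers most vertices around $0$'' --- is that in \eqref{eq:va} the near part $\T(\le\frac14C_1)$ could be so large that $\Ttil$ has no room to avoid it, which is why the comparison is to $\T(\le\frac14 C_1)$ (radius $C_1/4$) while the non-intersection is demanded against $\T(\le\frac14 C_1)$ only, and the far part $\Ttil_-(\ge4\til C)$ starts from radius $4\til C\gg C_1/4$ for $\til C$ large; I would make a short deterministic geometric argument that on $A(\wedge,\vee)$ the boundary point is reachable, so the far factor again compares to an unconditioned quantity.

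The main obstacle I expect is Step 2: making the conditional-independence-plus-disconnection argument quantitative while keeping the implicit constant in $\lesssim$ \emph{independent} of $C_1$, $C_2$, $\til C$ (it may, and will, depend on $\sigma^2$). The natural Harnack/ball-crossing estimates produce constants that degrade with the radii, so I would instead argue that conditioning on the skeleton $\mathcal G$ (which only fixes finitely much data on bounded balls) changes the probability of the far event by at most a universal multiplicative factor --- using that the far parts of $\T$ and $\Ttil$ are, given $\mathcal G$ and on the disconnection events $A(\cdot,\cdot)$, fresh BRWs started from the boundary spheres whose laws do not reference the near data at all --- so the only constant that enters is the one comparing ``BRW started at a fixed boundary point'' uniformly over boundary points, which is universal by translation invariance and a standard coupling. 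This is the step that will require the most care to write rigorously.
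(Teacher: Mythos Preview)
Your high-level picture (condition on spine data to obtain conditional independence, then decouple via a Harnack-type estimate) is correct, but the execution has two concrete gaps and one structural misidentification.

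\textbf{The role of $A(\wedge,\wedge)$.} You treat $A(\wedge,\wedge)$ as the tool that lets you remove the skeleton-dependence of the far factor. In the paper's proof it plays no such role: $A(\wedge,\wedge)$ is simply carried along as one of the ``far'' events (it is measurable with respect to $(\Ttil(\ge4\til C),\T(\ge4C_1))$ together with the boundary hitting points). The decorrelation is achieved purely by the Harnack inequality on the spine SRWs, not by any disconnection argument. The disconnection events $A(\cdot,\cdot)$ are used only later, in \Cref{lem:Ttillarge_probs_bound} and \Cref{lem:Ttilsmall_probs_bound}, to compare the far non-intersection probabilities back to $\pr{\A_n,\B_n}$.

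\textbf{Condition on hitting points, not the whole spine.} Your $\mathcal G$ contains the full spine paths up to the boundary times; this forces you into your Step~2, where you then struggle to argue the constant is universal. The paper conditions only on the four points $X_{\tau_{C_1}}=x$, $X_{\tau_{4C_1}}=y$, $\Xtil_{\til\tau_{\til C}}=\til x$, $\Xtil_{\til\tau_{4\til C}}=\til y$. Given these (and the event $\Omega_{4C_1}$ below), $\T(<C_1)$ is independent of $\T(\ge4C_1)$ and likewise for $\Ttil$, which already yields the factorisation. The remaining coupling between the two factors sits entirely in the joint law of $(x,y)$ and $(\til x,\til y)$, and this is exactly what \Cref{lem:Harnack} handles: $\prcond{X_{\tau_{4C_1}}=y}{X_{\tau_{C_1}}=x}\lesssim\pr{X_{\tau_{4C_1}}=y}$ with an implicit constant independent of $C_1$ (this is why the radii are chosen with ratio $4$). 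Summing over $x,y,\til x,\til y$ then gives the product $\pr{\text{far events},A(\wedge,\wedge)}\cdot\pr{D_{\eps}(\vee,\vee)}$ directly, with a universal constant. Your Step~3 is not needed.

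\textbf{The $o(1/\log n)$ term.} You do not identify its source. The near and far parts of $\T$ are \emph{not} independent given the hitting points alone, because they are also coupled through $\xiln,\xirn$: if $\xiln$ or $\xirn$ is small, $\T[-\xiln,\xirn]$ may not even reach the spine vertex at distance $4C_1$. The paper introduces the event $\Omega_{4C_1}=\{\T[-\xiln,0)\supsetneq\T_-(\le4C_1),\,\T(0,\xirn]\supsetneq\T_+(\le4C_1)\}$, shows $1-\pr{\Omega_{4C_1}}\ll\frac{1}{\log n}$ (for fixed $C_1$, as $n\to\infty$), and works on $\Omega_{4C_1}$ where the memoryless property of the geometric variables restores conditional independence. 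The complement contributes the additive $o(1/\log n)$. The $C_1$-dependent constants that appear here (e.g.\ $\pr{\Omega_{C_1,x}}\asymp_{C_1}1$) are absorbed into the $o(\cdot)$, not into the $\lesssim$, which is why the implicit constant in the statement remains universal.
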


\begin{lemma}\label{lem:bound_Dab_prob}
For every $a,b\in\{\wedge,\vee\}$ we have
\begin{align*}
\limsup_{n\to\infty}\:(\log n)\:\pr{D_{\eps}(a,b)}\quad\lesssim\quad\frac{1}{\eps^2}\:,
\end{align*}
where the implicit constants in the $\lesssim$'s do not depend on $\til{C}$, $C_1$ and $C_2$.
\end{lemma}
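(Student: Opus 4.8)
The plan is to use Chebyshev's inequality and reduce to a variance estimate. Since $D_{\eps}(a,b)=\{|U_n(a,b)-\E{U_n(a,b)}|>\tfrac14\eps\E{U_n}\}$, Chebyshev gives
\[
\pr{D_{\eps}(a,b)}\quad\le\quad\frac{16\,\Var{U_n(a,b)}}{\eps^{2}\,\E{U_n}^{2}},
\]
and by \Cref{pro:E_Un} we have $\E{U_n}\asymp\log n$, with a constant not depending on $\til C,C_1,C_2$ (none of which enter $U_n$). So it suffices to show $\Var{U_n(a,b)}\lesssim\log n$ with a constant uniform in $\til C,C_1,C_2$.

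To bound $\Var{U_n(a,b)}$ I would rerun the proof of \Cref{pro:Var_Un} (that is, of \eqref{eq:Var_Gn} and \eqref{eq:E_Varcond_Un}) with two changes. First, conditioning on $(\T,\Scal)$, one identifies $\econd{U_n(a,b)}{\T,\Scal}$ as the restricted analogue of $G_n^{\pm}$ in which the Green's function $G$ of the future of $\til\T$ is replaced by the Green's function $G_{\le\til C}$, respectively $G_{\ge\til C}$, of the part of that future before, respectively after, the spine walk $\Xtil$ first exits $B(0,\til C)$; and the outer sum over the vertices of $\T$ is restricted to $\T(\le C_i)$, respectively $\T(>C_i)$. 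Since $G_{\le\til C},G_{\ge\til C}\le G$ pointwise, and since every upper bound in \Cref{sec:Var_Gn} and in the bound on $\E{\varcond{U_n}{\T,\Scal}}$ relies only on the decay estimates of \Cref{lem:Greens_function} (such as $G(x)\lesssim\|x\|^{-(d-4)}$, $(G\star g)(x)\lesssim\|x\|^{-(d-6)}$, $(G\star g\star g)(x)\lesssim\|x\|^{-(d-8)}$) together with the $\GW$-sum identities of \Cref{lem:GW_sums}, every such bound remains valid for the restricted quantities, with constants that do not see $\til C,C_1,C_2$.

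The genuinely new point is the restriction of the spine-vertex decomposition $U_n=\sum_k Y_n(k)$ of \Cref{sec:Var_Gn} to a sub-range of spine vertices. When the truncation retains the part beyond the ball (the $\wedge$'s), one writes $X_k=X_{\tau_{C_i}}+(X_k-X_{\tau_{C_i}})$, conditions on $\bigl(\tau_{C_i},X[0,\tau_{C_i}]\bigr)$, and runs the estimates of \Cref{sec:Var_Gn} for the SRW $(X_k-X_{\tau_{C_i}})_{k>\tau_{C_i}}$; as $\|X_{\tau_{C_i}}\|\le C_i$ is a fixed bound, these estimates are affected only by an additive $O_{\til C,C_1,C_2}(1)$ error, coming from the finitely many $x$ within distance $O(C_i)$ of the origin. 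When the truncation retains the part near the root (the $\vee$'s), the truncated object is a BRW run only until its spine leaves a fixed ball, whose expected local-time profile decays like $\|x\|^{-(d-2)}$ for large $\|x\|$; the lattice sums controlling the second moment of $U_n(a,b)$ are then convergent sums of the shape $\sum_x\|x\|^{-(d-2)}\|x\|^{-(d-4)}<\infty$ for $d=8$, so $\Var{U_n(a,b)}=O_{\til C,C_1,C_2}(1)$. In all cases, therefore, $\Var{U_n(a,b)}\le C_0\log n+O_{\til C,C_1,C_2}(1)$ with $C_0$ the universal constant from \Cref{pro:Var_Un}, which is $\lesssim\log n$ with a constant independent of $\til C,C_1,C_2$; combined with the Chebyshev bound and $\E{U_n}\asymp\log n$ this gives the lemma. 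I expect the main obstacle to be the bookkeeping required to check that these restrictions and the re-centring by $X_{\tau_{C_i}}$ do not spoil the delicate near-cancellations in the covariance estimates of \Cref{sec:Var_Gn} (in particular \Cref{lem:Cov_functions_of_Xk}), and that no hidden dependence of the constants on $\til C,C_1,C_2$ creeps in.
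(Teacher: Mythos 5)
Your proposal matches the paper's proof, which is stated there in two sentences: prove $\Var{U_n(a,b)}\lesssim\log n$ analogously to the proof of $\Var{U_n}\lesssim\log n$, then combine with $\E{U_n}\asymp\log n$ and Chebyshev. You have correctly filled in what ``analogously'' means---the pointwise domination $G_{\le\til C},G_{\ge\til C}\le G$, the re-centring at $X_{\tau_{C_i}}$ for the $\wedge$ pieces, the absolute boundedness of the $\vee$ pieces---and your reading of the uniformity claim (the dependence on $\til C,C_1,C_2$ only enters as an additive $O_{\til C,C_1,C_2}(1)$, which is absorbed into $\lesssim\log n$ for large $n$) is the correct one.
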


\begin{proof}[Proof of \Cref{lem:decorrelate_D}]
We will focus on~\eqref{eq:vv}, as the proofs for the other three probabilities work analogously.

Note that the event $D_{\eps}(\vee,\vee)$ depends on $\left(\til{\T}_{-}\left(<\til{C}\right),\T\left(<C_1\right)\right)$, while the rest of the events in probability~\eqref{eq:vv} depend on $\left(\til{\T}_{-}\left(\ge4\til{C}\right),\T\left(\ge4C_1\right)\right)$. The only dependence between these different parts of the trees is via the points $\til{X}_{\til{\tau}_{\til{C}}}$, $\til{X}_{\til{\tau}_{4\til{C}}}$, $X_{\tau_{C_1}}$, $X_{\tau_{4C_1}}$ where the spines hit given distances from 0, and via $\xiln$ and $\xirn$. We will use \Cref{lem:Harnack} to remove the dependence via the hitting points on the spines, and we will remove the dependence via $\xiln$, $\xirn$ by noting that $\xiln$, $\xirn$ are very unlikely to terminate $\T$ before its spine reaches distance $4C_1$ from 0.

For $x\in\Z^d$, $r>0$ let us write
\begin{align*}
&\til{\Omega}_{r,x}:=\left\{\til{X}_{\til{\tau}_{r}}=x\right\},\quad
\Omega_{r,x}:=\left\{X_{\tau_{r}}=x\right\},\quad\\
&\Omega_r:=\left\{\T\left[-\xiln,0\right)\supsetneq\T_{-}(\le r),\: \T\left(0,\xirn\right]\supsetneq\T_{+}(\le r)\right\}.
\end{align*}
Note that for any $x,y\in\Z^d$, conditional on the event $\til{\Omega}_{\til{C},x}\cap\til{\Omega}_{4\til{C},y}$, the parts $\til{\T}\left(<\til{C}\right)$ and $\til{\T}\left(\ge4\til{C}\right)$ are independent. Also, conditional on the event $\Omega_{C_1,x}\cap\Omega_{4C_1,y}\cap\Omega_{4C_1}$, the parts $\T\left(<C_1\right)$ and $\T\left(\ge4C_1\right)$ are independent.

Also, note that
\begin{align*}
1-\pr{\Omega_{4C_1}}\:\le\:2\left(1-\E{\til{\theta}^{d_0^++...+d_{\tau_{4C_1}}^++\tau_{4C_1}}}\right)\:\lesssim\:
\pr{\tau_{4C_1}>n^{\frac14}}+\left(1-\theta_1^{n^{\frac14}}\til{\theta}^{n^{\frac14}}\right),
\end{align*}
and by \Cref{lem:hitting_time_tail_bounds} and \Cref{cor:theta_thetatil_thetadot} this is $\ll\frac{1}{\log n}$ as $n\to\infty$.

Therefore we have
\begin{align*}
\eqref{eq:vv}\le&\sum_{x,y,\til{x},\til{y}}
\pr{\til{\Omega}_{\til{C},\til{x}}}\prcond{\til{\Omega}_{4\til{C},\til{y}}}{\til{\Omega}_{\til{C},\til{x}}}\pr{\Omega_{C_1,x}}\prcond{\Omega_{4C_1,y}}{\Omega_{C_1,x}}\prcond{\Omega_{4C_1}}{\Omega_{C_1,x},\Omega_{4C_1,y}}\\
&\cdot\prcond{\til{\T}_{-}\left(\ge4\til{C}\right)\cap{\T}\left(\ge4C_1\right)=\emptyset,\:0\not\in{\T}_{+}\left(\ge4C_1\right),\:{A}(\wedge,\wedge)}{\til{\Omega}_{4\til{C},\til{y}},\:\Omega_{4C_1,y},\:\Omega_{4C_1}}\\
&\cdot\prcond{{D}_{\eps}(\vee,\vee)}{\til{\Omega}_{\til{C},\til{x}},\:\Omega_{C_1,x},\:\Omega_{C_1}} \quad+o\left(\frac{1}{\log n}\right).
\end{align*}
By \Cref{lem:Harnack} we have
\begin{align*}
\prcond{\til{\Omega}_{4\til{C},\til{y}}}{\til{\Omega}_{\til{C},\til{x}}}\prcond{\Omega_{4C_1,y}}{\Omega_{C_1,x}}\quad
\lesssim\quad\pr{\til{\Omega}_{4\til{C},\til{y}}}\pr{\Omega_{4C_1,y}}.
\end{align*}
Also, note that for any $x$ with $\pr{\Omega_{C_1,x}}>0$ we have 
$\pr{\Omega_{C_1,x}}\asymp_{C_1}1$, hence $1-\prcond{\Omega_{C_1}}{\Omega_{C_1,x}}\le\frac{1-\pr{\Omega_{C_1}}}{\pr{\Omega_{C_1,x}}}\ll1$. Similarly, $1-\prcond{\Omega_{4C_1}}{\Omega_{4C_1,y}}\ll1$, hence we have
\begin{align*}
\prcond{\Omega_{4C_1}}{\Omega_{C_1,x},\Omega_{4C_1,y}}\quad\le\quad1\quad\lesssim\quad \prcond{\Omega_{C_1}}{\Omega_{C_1,x}}\prcond{\Omega_{4C_1}}{\Omega_{4C_1,y}}.
\end{align*}
Using these bounds and summing over $x$, $y$, $\til{x}$ and $\til{y}$, we get that~\eqref{eq:vv} is
\begin{align*}
&\lesssim\:\prcond{\til{\T}_{-}\left(\ge4\til{C}\right)\cap{\T}\left(\ge4C_1\right)=\emptyset,\:0\not\in{\T}_{+}\left(\ge4C_1\right),\:{A}(\wedge,\wedge)}{\Omega_{4C_1}}\prcond{{D}_{\eps}(\vee,\vee)}{\Omega_{C_1}}\\
&\hspace{12.3cm} +o\left(\frac{1}{\log n}\right)\\
&=\:\pr{\til{\T}_{-}\left(\ge4\til{C}\right)\cap{\T}\left(\ge4C_1\right)=\emptyset,\:0\not\in{\T}_{+}\left(\ge4C_1\right),\:{A}(\wedge,\wedge)}\pr{{D}_{\eps}(\vee,\vee)} +o\left(\frac{1}{\log n}\right),
\end{align*}
as required.
\end{proof}

\begin{proof}[Proof of \Cref{lem:bound_Dab_prob}]
It is sufficient to prove that $\limsup_{n\to\infty}\frac{\Var{U_n(a,b)}}{\log n}\lesssim1$ for each $(a,b)$. Then using $\E{U_n}\asymp\log n$ and the Chebyshev inequality we get the result.

Using very crude bounds we see that for any $C$ we have
\begin{align*}
&\Var{G\left(\T(\le C)\right)}\quad\le\quad \E{G\left(\T(\le C)\right)^2}\quad\\
&\lesssim\sum_{x,y\in B(0,C)}g(x)g(x,y)\E{G(x+T)}\E{G(y+T)}+ \sum_{x\in B(0,C)}g(x)\E{G(x+T)^2}\:\lesssim_C\:1.
\end{align*}
Here $T$ denotes a simple random walk indexed by a Galton-Watson tree and we used \eqref{eq:GW_two_sums_fwd} as $n\to\infty$ to estimate $\E{\#(x\text{ in }T)^2}$.

The above bound also implies $\Var{G\left(\T(\ge C)\right)}\lesssim\Var{G\left(\T\right)}+\Var{G\left(\T(\le C)\right)}\lesssim\log n+O_C(1)$, hence $\Var{\econd{U_n(a,b)}{\T}}\lesssim\log n+O_{C_1,C_2}(1)$ for any $a,b\in\{\wedge,\vee\}$.

Now we explain how to bound $\E{\varcond{U_n(a,b)}{\T}}$. A quick calculation shows that
\begin{align*}
&\E{G_{\til{\T}(\le C)}(x)G_{\til{\T}(\le C)}(y)}\quad\lesssim_{ C}\quad \E{\#(x\text{ in }T)\#(y\text{ in }T)}+\E{\#(x\text{ in }T)}\E{\#(y\text{ in }T)}\\
&\hspace{2cm}\lesssim\quad\sum_{z}g(z)g(z-x)g(z-y)+g(x)g(y-x)+g(y)g(y-x)+g(x)g(y),\\
&\E{G_{\til{\T}}(x)G_{\til{\T}}(y)}\quad\lesssim\quad\sum_{z}g(z)g(z-x)(g\star g)(z-y)+\sum_{z}g(z)(g\star g)(z-x)g(z-y)\\
&\hspace{2cm}+\sum_{z}(g\star g)(z)g(z-x)g(z-y)+(g\star g)(x)g(x-y)+(g\star g)(y)g(x-y).
\end{align*}
Using these, we find that for any $C$ we have
\begin{align*}
\sum_{x,y}\E{G_{\til{\T}}(x)G_{\til{\T}}(y)}\E{G_{\til{\T}(\le C)}(x)G_{\til{\T}(\le C)}(y)}\quad\lesssim_{C}\quad1.
\end{align*}
For each of $(\vee,\vee)$, $(\vee,\wedge)$ and $(\wedge,\vee)$, the expectation $\E{\varcond{U_n(a,b)}{\T}}$ can be upper bounded by a sum of the above form. Using this and that $\E{\varcond{U_n}{\T}}\lesssim\log n$, we get that $\E{\varcond{U_n(a,b)}{\T}}$ $\lesssim\log n +O_{C_1,C_2,\til{C}}(1)$ for all $a,b\in\{\wedge,\vee\}$.

Overall these show that $\Var{U_n(a,b)}\lesssim\log n +O_{C_1,C_2,\til{C}}(1)$ for all $a,b\in\{\wedge,\vee\}$, which finishes the proof.
\end{proof}

\subsection{Bounding the non-intersection probabilities}

In this section we explain how to bound the probabilities of the non-intersection events in \Cref{lem:decorrelate_D}. We prove the following bounds.

\begin{lemma}\label{lem:Ttillarge_probs_bound}
There exists a positive constant $a(\til{C},C_1)$ depending on $\til{C}$, $C_1$ and $\sigma^2$, and a positive constant $b(\til{C})$ depending on $\til{C}$ and $\sigma^2$ such that we have
\begin{align}
\label{eq:Ttillarge_Tlarge}
\pr{\til{\T}_{-}\left(\ge4\til{C}\right)\cap\T\left(\ge4C_1\right)=\emptyset,\:0\not\in\T_{+}\left(\ge4C_1\right),\:A(\wedge,\wedge)}\quad&\le\quad\frac{a(\til{C},C_1)}{\log n},\\
\label{eq:Ttillarge_Tsmall}
\limsup_{n\to\infty}\:\pr{\til{\T}_{-}\left(\ge4\til{C}\right)\cap\T\left(\le\frac14C_1\right)=\emptyset,\:0\not\in\T_{+}\left(\le\frac14C_1\right),\:A(\wedge,\vee)}\quad&\le\quad\frac{b(\til{C})}{\log C_1}.
\end{align}
\end{lemma}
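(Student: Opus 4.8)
The plan is to treat the two bounds separately, but in both cases the core idea is to compare the non-intersection probability of the ``large'' parts to the already-established estimate $\E{\1{\A_n}\1{\B_n}}\asymp\frac{1}{\log n}$ from \Cref{thm:prob_nocap_nozero}, while carefully handling the behaviour near the root where the small parts of the trees live. For \eqref{eq:Ttillarge_Tlarge}, I would first condition on the hitting points $\til{X}_{\til\tau_{4\til C}}$ and $X_{\tau_{4C_1}}$ on the two spines, and on the (high-probability) event that $\xiln,\xirn$ do not terminate $\T$ before its spine leaves $B(0,4C_1)$. By shift-invariance (\Cref{lem:shift_invariance}) the pieces $\T(\ge 4C_1)$ and $\Ttil_-(\ge 4\til C)$, viewed from their respective hitting points, are again (pieces of) branching random walks, now started at distance $\ge 4C_1$ resp.\ $\ge 4\til C$ from $0$. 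The event $A(\wedge,\wedge)$ — that neither large piece disconnects the relevant hitting point from $0$ inside the corresponding ball — lets me insert a ``bridge'': a path from $0$ out to the hitting point avoiding the far piece. This reduces the event to something comparable to $\A_n\cap\B_n$ for the full trees (up to the cost of the bridge and a Harnack-type comparison of hitting distributions, \Cref{lem:Harnack}), whence the bound $\frac{a(\til C,C_1)}{\log n}$ with a constant that is allowed to blow up as $\til C,C_1\to\infty$.

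For \eqref{eq:Ttillarge_Tsmall}, the roles are asymmetric: $\T(\le\frac14 C_1)$ is a ``small'' piece (the whole of $\T$ up to its spine leaving $B(0,\frac14 C_1)$), so it is of bounded size, while $\Ttil_-(\ge 4\til C)$ is the unbounded tail of $\Ttil$'s past started from distance $\ge4\til C$. The plan is to condition on $\T(\le\frac14 C_1)$ together with its spine hitting point, and to estimate the conditional probability that $\Ttil_-(\ge4\til C)$ avoids this finite random set. Using \Cref{lem:shift_invariance}, $\Ttil_-(\ge4\til C)$ started from its hitting point $\til x$ is a past-BRW started at distance $\ge 4\til C$, so the probability it avoids $\T(\le\frac14 C_1)\subseteq B(0,\frac14 C_1)$ is, by a last-exit / Green's-function bound using \Cref{lem:Greens_function} (the BRW Green's function decays like $\|z\|^{-(d-4)}$), governed by the branching capacity of $\T(\le\frac14 C_1)$ and the escape-to-distance-$C_1$ scale. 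The key point is that for a past-BRW started at distance $\ge 4\til C$ the probability of ever reaching $\partial B(0,4\til C)$ and then getting near $0$ decays in the macroscopic scale like $\frac{1}{\log C_1}$ (this is the analogue of the $\frac{1}{\log n}$ asymptotics but with the ``outer cutoff'' at the $n$-scale replaced by the fixed scale $C_1$), and the event $A(\wedge,\vee)$ (the small piece $\T(\le\frac14 C_1)$ does not disconnect $\til X_{\til\tau_{4\til C}}$ from $0$) again lets me install a connecting path so the comparison goes through. Taking $\limsup_{n\to\infty}$ removes the $n$-dependence and yields $\frac{b(\til C)}{\log C_1}$.

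The main obstacle I expect is the bookkeeping around the root: when the small piece $\T(\le\frac14 C_1)$ (or $\Ttil_-(\le\frac14\til C)$, in the analogous cases for \eqref{eq:va}--\eqref{eq:aa}) happens to cover most of a ball around $0$, the naive non-intersection estimate breaks down, which is precisely why the ``does not disconnect'' events $A(\cdot,\cdot)$ were introduced. The delicate step is showing that on these disconnection-free events one can always produce a connecting path of controlled length whose removal costs only a constant factor (depending on $\til C$, $C_1$, $C_2$) in probability, so that the resulting bound still decays like $\frac{1}{\log n}$ resp.\ $\frac{1}{\log C_1}$. This is the BRW analogue of the last-exit-decomposition arguments in \cite{BCap_of_RW_range}, with the added difficulty that there is no strong Markov property for $\T$; I would circumvent this using shift-invariance to re-root at the relevant spine hitting points and the Harnack estimate of \Cref{lem:Harnack} to decouple the hitting-point distributions, exactly as in the proof of \Cref{lem:decorrelate_D}.
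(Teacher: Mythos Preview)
Your plan for \eqref{eq:Ttillarge_Tlarge} is essentially the paper's argument: on the event $A(\wedge,\wedge)$ one can force the spine of $\Ttil$ (resp.\ $\T$) to follow a prescribed path from $0$ to $\Xtil_{\til\tau_{4\til C}}$ (resp.\ $X_{\tau_{4C_1}}$) avoiding the other tree's far piece, at a cost that is $\asymp_{\til C,C_1}1$, thereby reinserting the small pieces and reducing to $\pr{\A_n,\B_n}\lesssim\frac{1}{\log n}$. Two minor remarks: (i) you do not need \Cref{lem:Harnack} here --- for a fixed radius $r$, every point $x\in\partial B(0,r)$ with $\pr{X_{\tau_r}=x}>0$ satisfies $\pr{X_{\tau_r}=x}\asymp_r 1$, which is all that is used; (ii) you should invoke \Cref{lem:prob_AnBn_upper_bound} rather than \Cref{thm:prob_nocap_nozero}, since the latter is proved via \Cref{pro:limsup_prob_ABD}, which in turn relies on the present lemma.

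For \eqref{eq:Ttillarge_Tsmall} your description contains a genuine gap. The mechanism you propose --- estimating the probability that $\Ttil_-(\ge4\til C)$ avoids the finite set $\T(\le\tfrac14 C_1)$ via Green's-function decay $G(z)\asymp\|z\|^{-(d-4)}$ and the branching capacity of $\T(\le\tfrac14 C_1)$ --- does not by itself produce a $\frac{1}{\log C_1}$ factor: the set $\T(\le\tfrac14 C_1)$ has of order $C_1^4$ vertices at distance of order $C_1$, so the naive first-moment count of intersections is of order $1$, not $o(1)$. (And invoking $\BCap{\T[0,m]}\asymp m/\log m$ would be circular for the same reason as above.) The sentence about ``the probability of ever reaching $\partial B(0,4\til C)$ and then getting near $0$ decaying like $\frac{1}{\log C_1}$'' is not right either: that probability does not depend on $C_1$. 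What does work --- and what you yourself write in parentheses at the end of that paragraph --- is exactly the paper's approach: reuse the bound you just obtained for \eqref{eq:Ttillarge_Tlarge} (more precisely, the intermediate one-sided version with only $\Ttil_-(\ge4\til C)$ replaced and the full $\T$ kept) at scale $n=\tfrac14 C_1$, observe that $\T[-\xiln,\xirn]$ agrees with $\T(\le\tfrac14 C_1)$ with probability $1-o(1)$ as $n\to\infty$, and take the $\limsup$. So the correct route is the one you mention only as an afterthought; the first-moment/branching-capacity route you lead with should be dropped.
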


\begin{lemma}\label{lem:Ttilsmall_probs_bound}
For all $\alpha>0$ there exist $C_0,\til{C}_0>0$ such that for all $C_2\ge C_0$ and $\til{C}\ge\til{C}_0$ we have
\begin{align}\label{eq:Ttilsmall_Tsmall}
\limsup_{n\to\infty}\:\pr{\til{\T}_{-}\left(\le\frac14\til{C}\right)\cap\T\left(\le \frac14C_2\right)=\emptyset,\:0\not\in\T_{+}\left(\le \frac14C_2\right),\:A(\vee,\vee)}\quad\le\quad\alpha.
\end{align}

For all $\alpha>0$, $C_0, \til{C}_0>0$ there exist $C_2\ge C_0$, $\til{C}\ge\til{C}_0$ such that
\begin{align}\label{eq:Ttilsmall_Tlarge}
\limsup_{n\to\infty}\:\pr{\til{\T}_{-}\left(\le\frac14\til{C}\right)\cap\T\left(\ge 4C_2\right)=\emptyset,\:0\not\in\T_{+}\left(\ge 4C_2\right),\:A(\vee,\wedge)}\quad\le\quad\alpha.
\end{align}
\end{lemma}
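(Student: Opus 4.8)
The plan is to deduce both \eqref{eq:Ttilsmall_Tsmall} and \eqref{eq:Ttilsmall_Tlarge} from two ingredients: monotonicity of the relevant non-intersection probability in the ``size'' parameters $C_2,\til{C}$, and the fact that the past range of a branching random walk a.s.\ meets the range of an independent \emph{full} (untruncated) branching random walk in an unbounded set. Write $\T^{\infty}$ for the branching random walk indexed by the whole infinite invariant tree, so that $\T=\T[-\xiln,\xirn]$ is obtained from $\T^{\infty}$ by discarding the vertices with index outside $[-\xiln,\xirn]$. First I would drop the auxiliary conditions: since $\{0\not\in\T_{+}(\cdots)\}$, $A(\vee,\vee)$ and $A(\vee,\wedge)$ only increase the probabilities, it suffices to bound $\pr{\Ttil_{-}(\le\frac14\til{C})\cap\T(\le\frac14C_2)=\emptyset}$ and $\pr{\Ttil_{-}(\le\frac14\til{C})\cap\T(\ge4C_2)=\emptyset}$. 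Next I would let $n\to\infty$: the sets $\Ttil_{-}(\le\frac14\til{C})$ and $\T^{\infty}(\le\frac14C_2)$ are a.s.\ finite (the spine is transient and the attached Galton--Watson trees are a.s.\ finite), and $\T^{\infty}(\ge4C_2)$ has a.s.\ finitely many vertices inside any fixed ball (its two-sided Green's function is summable in $d=8$); since all these finitely many vertices have fixed indices, the probability that they lie in $[-\xiln,\xirn]$ tends to $1$. Hence, with probability $1-o(1)$, $\T(\le\frac14C_2)$ agrees with $\T^{\infty}(\le\frac14C_2)$ and $\Ttil_{-}(\le\frac14\til{C})\cap\T(\ge4C_2)$ with $\Ttil_{-}(\le\frac14\til{C})\cap\T^{\infty}(\ge4C_2)$, so the $\limsup_{n\to\infty}$ in \eqref{eq:Ttilsmall_Tsmall} is at most $q(\til{C},C_2):=\pr{\Ttil_{-}(\le\frac14\til{C})\cap\T^{\infty}(\le\frac14C_2)=\emptyset}$ and that in \eqref{eq:Ttilsmall_Tlarge} is at most $q'(\til{C},C_2):=\pr{\Ttil_{-}(\le\frac14\til{C})\cap\T^{\infty}(\ge4C_2)=\emptyset}$.

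Both $q$ and $q'$ are non-increasing in $\til{C}$, and $q$ is non-increasing in $C_2$ too. As $\til{C},C_2\to\infty$ we have $\Ttil_{-}(\le\frac14\til{C})\uparrow\Ttil(-\infty,0)$ and $\T^{\infty}(\le\frac14C_2)\uparrow\T^{\infty}$ (the spine exits every ball), so by downward continuity of measure $q(\til{C},C_2)\downarrow\pr{\Ttil(-\infty,0)\cap\T^{\infty}=\emptyset}$, which equals $0$ by the key fact below; this gives \eqref{eq:Ttilsmall_Tsmall}. For \eqref{eq:Ttilsmall_Tlarge} I would fix $C_2:=C_0$ and let $\til{C}\to\infty$, so that $q'(\til{C},C_0)\downarrow\pr{\Ttil(-\infty,0)\cap\T^{\infty}(\ge4C_0)=\emptyset}$; since $\T^{\infty}(\le4C_0)$ is a.s.\ finite, hence bounded, while $\Ttil(-\infty,0)\cap\T^{\infty}$ is a.s.\ unbounded, this intersection a.s.\ contains a point outside the range of $\T^{\infty}(\le4C_0)$, hence in the range of $\T^{\infty}(\ge4C_0)$, so the limit is again $0$. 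In each case, taking $\til{C}$ (and, for \eqref{eq:Ttilsmall_Tsmall}, also $C_2$) large enough makes the limiting probability at most $\alpha$, as required.

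It remains to establish the key fact: $\Ttil(-\infty,0)\cap\T^{\infty}$ is a.s.\ infinite and unbounded (in particular $\pr{\Ttil(-\infty,0)\cap\T^{\infty}=\emptyset}=0$). A short computation as in \Cref{lem:Greens_function} gives, for the Green's function of the past range $G^{-}(x):=\sum_{j<0}\pr{\Ttil_j=x}$, the expansion $G^{-}(x)=\frac{\sigma^2}{2}(g\star g)(x)+O(g(x))\asymp||x||^{-4}$ in $d=8$ (the left Galton--Watson subtrees along the spine contribute the same leading term as the right ones), and likewise $G^{\infty}(x):=\sum_{k\in\Z}\pr{\T^{\infty}_k=x}\asymp||x||^{-4}$. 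Hence the pair count $N_m:=\#\{(j,k):\ j<0,\ |k|\le m,\ \Ttil_j=\T^{\infty}_k\}$ satisfies $\E{N_m}=\sum_x G^{-}(x)\sum_{|k|\le m}\pr{\T^{\infty}_k=x}\uparrow\sum_x G^{-}(x)G^{\infty}(x)\asymp\sum_{x\in\Z^8}||x||^{-8}=\infty$, while re-running the second-moment analysis of \Cref{sec:E_and_Var_of_U} with $\Ttil(-\infty,0)$ in place of $\Ttil[0,\infty)$ and with the hard cut-off $\T^{\infty}[-m,m]$ in place of $\T[-\xiln,\xirn]$ yields $\Var{N_m}=o(\E{N_m}^2)$; by Chebyshev $N_m\to\infty$ in probability, and since $N_m$ is non-decreasing in $m$, also a.s. Finally, each fixed value of $\Z^8$ is a.s.\ attained finitely often by $\Ttil(-\infty,0)$ and by $\T^{\infty}$ (finiteness of the respective Green's functions), so an infinite pair count forces infinitely many distinct common values, i.e.\ an unbounded intersection set.

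The reductions in the first two paragraphs are routine monotonicity and measure-theoretic bookkeeping; I expect the real work to lie in the last paragraph, namely in re-running the concentration estimates of \Cref{sec:E_and_Var_of_U} for the past range and for a hard cut-off to obtain $\Var{N_m}=o(\E{N_m}^2)$ — this is precisely where the criticality of $d=8$ is used, making $\E{N_m}\asymp\log m\to\infty$ while keeping the variance of order $\log m$.
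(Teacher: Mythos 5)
Your proposal is correct in spirit but takes a genuinely different route from the paper, and the key ingredient you invoke is both stronger than what the paper uses and not established there.

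The paper's proof of \eqref{eq:Ttilsmall_Tsmall} never drops the condition $0\not\in\T_+(\cdot)$: it keeps the event $\{\Ttil_{-}\cap\T=\emptyset,\ 0\not\in\T_{+}\}$ (whose probability is $\lesssim 1/\log n$ by \Cref{lem:prob_AnBn_upper_bound}) and then shows that replacing $\Ttil_{-}$ by $\Ttil_{-}(\le n^9)$ and $\T$ by $\T(\le n)$ costs only terms that vanish: $\pr{\Ttil_{-}(>n^9)\cap B(0,n^4)\ne\emptyset}$, $\pr{\T(\le n)\not\subseteq B(0,n^4)}$, and $\pr{\T\ne\T(\le n)}$. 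Similarly \eqref{eq:Ttilsmall_Tlarge} is reduced to the already-known $1/\log n$ bound by an error estimate on $\pr{\Ttil_{-}(\ge n^5)\cap\T\ne\emptyset}$ and then the spine-extension argument of \Cref{lem:Ttillarge_probs_bound}. No statement about a.s.\ unboundedness of $\Ttil(-\infty,0)\cap\T^{\infty}$ is ever used or proved. You instead drop the auxiliary conditions, pass to the infinite tree, and reduce everything to $\pr{\Ttil(-\infty,0)\cap\T^{\infty}=\emptyset}=0$ (and for \eqref{eq:Ttilsmall_Tlarge}, to unboundedness). This is cleaner once you have the key fact, but the key fact is a nontrivial \emph{extra} ingredient. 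What you do buy: you need not keep track of the $\{0\not\in\T_+\}$ condition or the $A(\cdot,\cdot)$ events at all, and the two halves of the lemma become essentially the same argument. What the paper buys: it only needs the upper bound on the non-intersection probability and crude bounds on the error terms, all of which are already available at that point.

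Two caveats that would need to be addressed to tighten your sketch. First, the variance computation in \Cref{sec:E_and_Var_of_U} is carried out for the \emph{future} of $\Ttil$ paired with the \emph{geometric} cut-off of $\T$; the memoryless structure of the geometric is used heavily (e.g.\ in \Cref{lem:GW_sums} and the ensuing manipulations), so the assertion that one can ``re-run the analysis with the hard cut-off $\T^{\infty}[-m,m]$'' is not a verbatim adaptation. The cleaner route within your scheme is to keep the geometric cut-off: define $N^{\mathrm{geo}}_n$ analogously to $\Ell_n$ but with $\Ttil(-\infty,0)$ in place of $\Ttil[0,\infty)$, observe $N^{\mathrm{geo}}_n \le \#\{(j,k): j<0,\ k\in\Z,\ \Ttil_j=\T^{\infty}_k\}=:\tilde N$, and use $\E{N^{\mathrm{geo}}_n}\asymp\log n$ with $\Var{N^{\mathrm{geo}}_n}\lesssim\log n$ and Chebyshev to conclude $\tilde N=\infty$ a.s.\ without ever touching the hard cut-off. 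Second, for the past side of $\Ttil$ you do need to check that the relevant Green's function $G^{-}(x)=\sum_{j<0}\pr{\Ttil_j=x}$ has the same leading order $\frac{\sigma^2}{2}(g\star g)(x)$ as $G$, which is true but needs a line of justification (the root has no past-side offspring, but the contribution of later spine vertices dominates). Neither of these is a fatal gap, but both are places where your sketch asserts rather than argues.
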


\begin{proof}[Proof of \Cref{lem:Ttillarge_probs_bound}]
Note that for any $x\in\partial B(0,4\til{C})$ and any possible realisation $\Gamma$ of $\T$ such that there exists a path from $0$ to $x$ in $B(0,4\til{C})$ avoiding $\Gamma$, we have
\begin{align*}
\pr{\Xtil_{\tau_{4\til{C}}}=x}\quad\asymp_{\til{C}}\quad1\quad\asymp_{\til{C}}\quad\pr{\Xtil_{\tau_{4\til{C}}}=x,\:\til{\T}_{-}\left(\le4\til{C}\right)\cap\Gamma=\emptyset}.
\end{align*}

(We can see the second $\asymp_{\til{C}}$ by letting $\Xtil$ be a specific path from 0 to $x$ in $B\left(0,4\til{C}\right)$ avoiding $\Gamma$, and insisting that the spine vertices of $\til{\T}$ up to $\til{\tau}_{4\til{C}}$ have no offsprings on the past side.)

Then we also have
\begin{align*}
\pr{\Xtil_{\til{\tau}_{4\til{C}}}=x,\:\til{\T}_{-}\cap\Gamma=\emptyset}\quad\asymp_{\til{C}}\quad\pr{\Xtil_{\til{\tau}_{4\til{C}}}=x,\:\til{\T}_{-}\left(\ge4\til{C}\right)\cap\Gamma=\emptyset}.
\end{align*}

Using also that $\pr{\til{\T}_{-}\cap\T=\emptyset,\:0\not\in\T_{+}}\lesssim\frac{1}{\log n}$, we get that
\begin{align}\label{eq:Ttillarge_T}
\pr{\til{\T}_{-}\left(\ge4\til{C}\right)\cap\T=\emptyset,\:0\not\in\T_{+},\:\T\text{ does not disconnect }\Xtil_{\til{\tau}_{4\til{C}}}\text{ from }0\text{ in }B\left(0,4\til{C}\right)}\:\lesssim_{\til{C}}\:\frac{1}{\log n}.
\end{align}

Similarly, by considering a given $x\in\partial B(0,4C_1)$ and a realisation $\til{\Gamma}$ of $\til{\T}_{-}(\ge4\til{C})$ not disconnecting $x$ from $0$ in $B(0,4C_1)$, and noting that with positive probability the spine of $\T$ takes a given path to $x$ avoiding $\til{\T}_{-}(\ge4\til{C})$, and the trees hanging off it also avoiding $\til{\T}_{-}(\ge4\til{C})$ \footnote{We don't necessarily have $\pr{d_1^+=d_1^-=0}=\mu(1)>0$, but there exist constants $k\ge1$, $c>0$ depending only on $\sigma^2$ such that $\pr{d_1^++d_1^-\le k}\ge c$. (We can see this by noting that $\pr{d_1^++d_1^-\ge k+1}=\E{Z\1{Z\ge k+2}}\le\sqrt{\E{Z^2}\pr{Z\ge k+2}}\le\frac{\E{Z^2}}{k+2}=\frac{\sigma^2+1}{k+2}$, where $Z\sim\mu$.) Then with positive probability a given spine vertex has $\le k$ non-spine offsprings, these have no further offsprings, and the BRW at these vertices takes the same value as it does at the next spine vertex.}, we get that
\begin{align*}
\mathbb{P}\Big(&\til{\T}_{-}\left(\ge4\til{C}\right)\cap\T\left(\ge4C_1\right)=\emptyset,\:0\not\in\T_{+}\left(\ge4C_1\right),\\
&\T\left(\ge4C_1\right)\text{ does not disconnect }\Xtil_{\til{\tau}_{4\til{C}}}\text{ from }0\text{ in }B(0,4\til{C}),\\
&\til{\T}\left(\ge4\til{C}\right)\text{ does not disconnect }X_{\tau_{4C_1}}\text{ from }0\text{ in }B(0,4C_1)
\Big)\quad\lesssim_{\til{C},\:C_1}\quad\frac{1}{\log n}.
\end{align*}
This is exactly~\eqref{eq:Ttillarge_Tlarge}.

For proving~\eqref{eq:Ttillarge_Tsmall} note that for a given $n$, we have $\pr{\T\ne\T(\le n)}\lesssim\theta_1^n\ll\frac{1}{\log n}$. Using this, the bound~\eqref{eq:Ttillarge_T} with $n=\frac14C_1$, and that
the probability below is decreasing in $n$, we get that
\begin{align*}
\limsup_{n\to\infty}\:\mathbb{P}\Bigg(&\til{\T}_{-}\left(\ge4\til{C}\right)\cap\T\left(\le\frac14C_1\right)=\emptyset,\:0\not\in\T_{+}\left(\le\frac14C_1\right),\\
&\:\T\left(\le\frac14C_1\right)\text{ does not disconnect }\Xtil_{\til{\tau}_{4\til{C}}}\text{ from }0\text{ in }B\left(0,4\til{C}\right)\Bigg)\quad\lesssim_{\til{C}}\quad\frac{1}{\log C_1}.\qedhere
\end{align*}
\end{proof}

\begin{proof}[Proof of \Cref{lem:Ttilsmall_probs_bound}]
To prove~\eqref{eq:Ttilsmall_Tsmall}, firstly, we will show that
\begin{align}\label{eq:Ttilsmall_Tsmall_lim}
\pr{\til{\T}_{-}\left(\le n^{9}\right)\cap\T\left(\le n\right)=\emptyset,\:0\not\in\T_{+}\left(\le n\right)}\to0\quad\text{as }n\to0.
\end{align}

Note that the above probability can be upper bounded as
\begin{align}
\nonumber
\le\quad&\pr{\til{\T}_{-}\cap\T=\emptyset,\:0\not\in\T_{+}}\quad
+\quad\pr{\til{\T}_{-}\left(>n^{9}\right)\cap B\left(0,n^{4}\right)\ne\emptyset}\\
\label{eq:Ttilsmall_Tsmall_bound}
+\quad&\pr{\T\left(\le n\right)\not\subseteq B\left(0,n^{4}\right)}\quad
+\quad\pr{\T\ne\T\left(\le n\right)}.
\end{align}

We already know that $\pr{\til{\T}_{-}\cap\T=\emptyset,\:0\not\in\T_{-}}
\lesssim\frac{1}{\log n}$, which $\to0$ as $n\to\infty$.

Note that the second probability in~\eqref{eq:Ttilsmall_Tsmall_bound} is upper bounded by the expected number of intersections of $\til{\T}_{-}\left(>n^{9}\right)$ and $B\left(0,n^{4}\right)$, which can be upper bounded as
\begin{align*}
\le\quad\sup_{x\in\partial B\left(0,n^{9}\right)}\sum_{y\in B\left(0,n^{4}\right)} G(x,y)\quad\asymp\quad n^{4d}\cdot\frac{1}{n^{9(d-4)}}\quad\to\quad0\qquad\text{as }n\to\infty.
\end{align*}

We can upper bound the third probability in~\eqref{eq:Ttilsmall_Tsmall_bound} as
\begin{align*}
&\pr{\T\left(\le n\right)\not\subseteq B\left(0,n^{4}\right)}\quad\\
&\hspace{2cm}\le\quad\pr{\tau_{n}>n^{3}}+ n^{3}\cdot\E{d^{+}_1+d^{-}_1}\cdot\pr{\GW\text{ survives to generation }\frac12n^{4}}.
\end{align*}
From Lemma~\ref{lem:hitting_time_tail_bounds} we know that $\pr{\tau_{n}>n^{3}}\to0$ as $n\to\infty$. From \cite[Section 9, Theorem 1]{branching_processes} we know that $\pr{\GW\text{ survives to generation }k}\asymp\frac1k$, which also allows to bound the second term.

Finally, note that $\pr{\T\ne\T\left(\le n\right)}\lesssim\theta_1^n\to0$ as $n\to\infty$. This finishes the proof of~\eqref{eq:Ttilsmall_Tsmall_lim}. Using~\eqref{eq:Ttilsmall_Tsmall_lim} and that $\pr{\til{\T}_{-}\left(\le\frac14\til{C}\right)\cap\T\left(\le \frac14C_2\right)=\emptyset,\:0\not\in\T_{+}\left(\le\frac14C_2\right)}$ is decreasing in $n$, $C_2$ and $\til{C}$, we get~\eqref{eq:Ttilsmall_Tsmall}.

Now we proceed to prove~\eqref{eq:Ttilsmall_Tlarge}.
Firstly, note that
\begin{align*}
\pr{\til{\T}_{-}\left(\ge n^5\right)\cap\T\ne\emptyset}\quad\le\quad
\pr{\til{\T}_{-}\left(\ge n^5\right)\cap B\left(0,n^2\right)\ne\emptyset}+\pr{\T\not\subseteq B\left(0,n^2\right)}.
\end{align*}
The expected number of intersections of $\til{\T}_{-}\left(\ge n^5\right)$ and $B\left(0,n^2\right)$ is $\lesssim n^{2d}\cdot\frac{1}{n^{5(d-4)}}\ll\frac{1}{\log n}$. Also, $\pr{\T\not\subseteq B\left(0,n^2\right)}\le\pr{\xiln\ge n^2}+\pr{\xirn\ge n^2}\ll\frac{1}{\log n}$.

Using this and that $\pr{\til{\T}_{-}\cap\T=\emptyset,\:0\not\in\T_{+}}\lesssim\frac{1}{\log n}$ we get that
\begin{align*}
\pr{\til{\T}_{-}\left(\le n^5\right)\cap\T=\emptyset,\:0\not\in\T_{+}}\quad\lesssim\quad\frac{1}{\log n}.
\end{align*}

Then similarly to the proof of Lemma~\ref{lem:Ttillarge_probs_bound} we get that
\begin{align*}
\mathbb{P}\Big(&\til{\T}_{-}\left(\le n^5\right)\cap\T\left(\ge 4C_2\right)=\emptyset,\:0\not\in\T_{+}\left(\ge 4C_2\right),\\
&\:\til{\T}_{-}\left(\le n^5\right)\text{ does not disconnect }0\text{ and }X_{\tau_{4C_2}}\text{ in }B\left(0,4C_2\right)\Big)\quad\lesssim_{C_2}\quad\frac{1}{\log n}.
\end{align*}

Using this and that the probability in~\eqref{eq:Ttilsmall_Tlarge} is decreasing in $n$, we conclude the proof of~\eqref{eq:Ttilsmall_Tlarge}.
\end{proof}

\subsection{Concluding the proof of \Cref{pro:limsup_prob_ABD}}

By Lemmas~\ref{lem:Ttillarge_probs_bound} and~\ref{lem:Ttilsmall_probs_bound}, for any $\alpha>0$ we can choose $C_2$ and $\til{C}$, and then choose $C_1$ so that
\begin{align}\label{eq:limsup_alpha_bound}
\limsup_{n\to\infty}\:\pr{\til{\T}_{-}\left(\ge4\til{C}\right)\cap\T\left(\ge4C_1\right)=\emptyset,\:0\not\in\T_{+}\left(\ge4C_1\right),\:A(\wedge,\wedge)}\quad\le\quad\alpha,
\end{align}
and the same bound holds for the three other analogous probabilities.

Together with Lemmas~\ref{lem:bound_Dab_prob} and~\ref{lem:decorrelate_D} this shows that for the above choice of $\til{C}$, $C_1$ and $C_2$, each of~\eqref{eq:vv}, \eqref{eq:va}, \eqref{eq:av} and \eqref{eq:aa} is $\lesssim\frac{\alpha}{\log n}$, and the implicit constants in the $\lesssim$'s do not depend on $\alpha$.

This then shows that for any $\alpha>0$ we have $\pr{\A_n,\B_n,D_{\eps}}\lesssim\frac{\alpha}{\log n}$, with the implicit constant in the $\lesssim$ not depending on $\alpha$.

This concludes the proof of \Cref{pro:limsup_prob_ABD}.

\section{Proof of Theorem~\ref{thm:Ttiln_Tminusnn_prob}}\label{sec:Ttiln_Tminusnn_prob}

In this section we prove \Cref{thm:Ttiln_Tminusnn_prob}.

From \Cref{thm:prob_nocap_nozero_n} we know that $\pr{\til{\T}(-\infty,0)\cap\T[-n,n]=\emptyset,\:0\not\in\T(0,n]}\sim\frac{c_8}{\log n}$. In what follows, we show that this probability stays the same order if we remove the $0\not\in\T(0,n]$ term, and also if we then replace $\til{\T}_{-}$ by $\til{\T}[-n,0)$.

\subsection{Removing the $0\not\in\T(0,n]$ term}

This section is devoted to the proof of the first part of \Cref{thm:Ttiln_Tminusnn_prob}, i.e.\ showing that
\begin{align}\label{eq:Ttilminus_Tminusnn_prob}
\pr{\til{\T}(-\infty,0)\cap\T[-n,n]=\emptyset}\quad\asymp\quad\frac{1}{\log n}.
\end{align}

The lower bound is immediate from \Cref{thm:prob_nocap_nozero_n}. The proof of the upper bound will rely on the following two results.

Firsly, instead of $\T$ we consider a tree that does not hit 0 at positive times, but is weighted by the number of zeroes at non-positive times. We show the following.

\begin{lemma}\label{lem:Ttilminus_Tminusnn_prob_as_E}
\begin{align*}
&\pr{\til{\T}(-\infty,0)\cap\T[-n,n]=\emptyset}\\
&\hspace{2cm}\sim\:\E{\1{\til{\T}(-\infty,0)\cap\T[-n,n]=\emptyset}\1{0\not\in\T(0,\infty)}\#\left(0\text{ in }\T(-\infty,0]\right)}+o\left(\frac{1}{\log n}\right).
\end{align*}
\end{lemma}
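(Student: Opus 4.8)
The plan is to prove the displayed equivalence by a last-exit decomposition of $\T[0,\infty)$ at its last zero, using the shift invariance of $\T$. Set $P:=\pr{\Ttil(-\infty,0)\cap\T[-n,n]=\emptyset}$ and let $Q$ denote the expectation on the right-hand side (without the $o(1/\log n)$); since $P\gtrsim 1/\log n$ by \Cref{thm:prob_nocap_nozero_n}, it suffices to show $P-Q=o(1/\log n)$. Writing $\#\left(0\text{ in }\T(-\infty,0]\right)=\sum_{j\ge0}\1{\Scal_{-j}=0}$, we get $Q=\sum_{j\ge0}Q_j$ with $Q_j=\E{\1{\Ttil(-\infty,0)\cap\T[-n,n]=\emptyset}\1{0\not\in\T(0,\infty)}\1{\Scal_{-j}=0}}$. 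Fix $j\ge0$ and let $\T'$ be obtained from $\T$ by iterating the re-rooting of \Cref{lem:shift_invariance} $j$ times, so $\T'_i=\T_{i+j}$, $\Scal'_i=\Scal_{i+j}-\Scal_j$, $\T'\eqdist\T$ and $\T'$ is independent of $\Ttil$. As the integrand of $Q_j$ is a fixed function of $(\T,\Ttil)$ and $(\T',\Ttil)\eqdist(\T,\Ttil)$, we have $Q_j=\E{(\text{integrand})(\T',\Ttil)}$, which we re-express through $\T$: since $\Scal'_{-j}=\Scal_0-\Scal_j=-\Scal_j$, the last factor becomes $\1{\Scal_j=0}$; on $\{\Scal_j=0\}$ we have $\T'[-n,n]=\T[j-n,j+n]$, and $\T'(0,\infty)$ avoids $0$ exactly when $\T(j,\infty)$ does; and all three factors vanish off $\{\Scal_j=0\}$. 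Hence
\[
Q_j=\pr{\Scal_j=0,\ 0\not\in\T(j,\infty),\ \Ttil(-\infty,0)\cap\T[j-n,j+n]=\emptyset}.
\]

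For a fixed realisation of $(\T,\Ttil)$ there is at most one $j\ge0$ with $\Scal_j=0$ and $0\not\in\T(j,\infty)$, namely the last zero $L:=\sup\{m\ge0:\Scal_m=0\}$, which is a.s.\ finite since $\sum_m\pr{\Scal_m=0}=G(0)<\infty$ (\Cref{lem:Greens_function}) and Borel--Cantelli. Summing the $Q_j$ therefore gives $Q=\pr{\Ttil(-\infty,0)\cap\T[L-n,L+n]=\emptyset}$. Since $L\ge0$, the ranges $\T[-n,n]$ and $\T[L-n,L+n]$ agree on the indices $[L-n,n]$ whenever $L\le 2n$ and differ only through $\T[-n,L-n)$ and $\T(n,L+n]$, so comparing the two non-intersection events yields
\[
|P-Q|\ \le\ \pr{\Ttil(-\infty,0)\cap\T[-n,L-n)\ne\emptyset}+\pr{\Ttil(-\infty,0)\cap\T(n,L+n]\ne\emptyset}+2\pr{L>2n}.
\]

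It remains to show this bound is $o(1/\log n)$. I would first note $\pr{L>m}\le\sum_{j\ge m}\pr{\Scal_j=0}$, and that $\pr{\Scal_j=0}\lesssim j^{-1-c}$ for some $c>0$, by combining the heat-kernel bound $\pr{X_k=0}\asymp(k+1)^{-4}$ with the tail bounds on the size and height of $\GW$ trees and on the spine hitting times (\Cref{lem:GW_tail_bound}, \Cref{lem:hitting_time_tail_bounds}); in particular $\pr{L>2n}\ll1/\log n$. For the remaining two probabilities, restrict to $\{L\le n^{\delta}\}$ for a small $\delta>0$, the complement costing $\pr{L>n^\delta}\ll1/\log n$. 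On this event $\T(n,L+n]$ and $\T[-n,L-n)$ consist of vertices whose DFS-indices have absolute value in $(n/2,2n]$, and by the same estimates $\min\{d(\T_m,\T_0):\ n/2\le|m|\le2n\}\gtrsim n^{1/2-\eps}$ with probability $1-O(n^{-c})$; a first-moment bound using that the Green's function of $\Ttil(-\infty,0)$ is $\asymp(\|x\|^4+1)^{-1}$ (of the same order as $G$, cf.\ \Cref{lem:Greens_function}) and $\E{(\|X_d\|^4+1)^{-1}}\asymp d^{-2}$ then bounds, on this good event, the expected number of such intersections by $\lesssim n^{\delta}\cdot\left(n^{1/2-\eps}\right)^{-2}=n^{-(1-2\eps-\delta)}\ll1/\log n$. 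Combining everything gives $P-Q=o(1/\log n)$, as required.

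The algebra of the first part is routine once one iterates \Cref{lem:shift_invariance}; the real work, and the main obstacle, is the last step. It needs a genuine polynomial tail bound on the last zero $L$ of $\T[0,\infty)$ and a quantitative ``$\T_m$ is deep in $\T$ for $|m|\asymp n$'' estimate, and one has to control the mild correlation between ``$L$ is large'' and ``the boundary vertices are close to $0$'', while keeping every error term below $1/\log n$.
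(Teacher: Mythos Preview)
Your approach is essentially the same as the paper's: both arguments rest on a last-zero decomposition combined with the shift invariance of $\T$, and your core identity $Q=\pr{\Ttil_{-}\cap\T[L-n,L+n]=\emptyset}$ is correct and is exactly the content of the paper's manipulation of the sum $\sum_{j}\pr{\Ttil_{-}\cap\T[-n,n]=\emptyset,\ \T_j=0,\ 0\notin\T(j,n]}$ via re-rooting. The only real difference is the direction in which the decomposition is applied: the paper starts from $P$, restricts the last zero to $[0,n^{1/4}]$ (paying $\pr{0\in\T[n^{1/4},n]}$ as an error), applies shift invariance term by term, and lands on expectations of the form~\eqref{eq:last_0_upper_bound} and~\eqref{eq:last_0_lower_bound} which are then compared to $Q$; you instead start from $Q$, obtain the clean formula with $L$, and compare directly to $P$ by controlling the symmetric difference of the two windows.

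Your error analysis is where the work hides, and it is only sketched. The tail bound $\pr{L>M}\ll1/\log n$ is exactly $\pr{0\in\T(M,\infty)}\ll1/\log n$, and the paper proves the needed version of this (with $M=n^{1/4}$) via the chain of bounds in~\eqref{eq:prob_far_0_bound}; your pointwise claim $\pr{\Scal_j=0}\lesssim j^{-1-c}$ is stronger than necessary and not immediate from the tools in the paper, though the weaker tail statement you actually use follows by the same argument as~\eqref{eq:prob_far_0_bound}. The claim that vertices with index in $(n,2n]$ have tree-depth $\gtrsim n^{1/2-\eps}$ with probability $1-O(n^{-c})$ is also correct but needs an argument: one route is to note that $d(\T_m,\T_0)\ge k(m)$ (the spine index of $\T_m$), and $\{\exists\,m>n:\ k(m)\le K\}=\{I_{K+1}>n\}$ where $I_{K+1}$ is the index of the $(K{+}1)$st spine vertex, then bound $\pr{I_{K+1}>n}$ via \Cref{lem:sum_GW_sizes}. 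The paper sidesteps this entirely by capping the last zero at $n^{1/4}$ from the outset, so that the window only moves by $n^{1/4}$ and the resulting comparisons between $\T[-n,n-n^{1/4}]$, $\T[-n-n^{1/4},n]$ and $\T[-n,n]$ are simpler. Either packaging works; the paper's is a bit more economical with the error terms.
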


Secondly, we upper bound an expectation similar to the above one, but instead of considering the tree $\T$ up to a given number of vertices in the past and the future, we consider it up to a given number of vertices along the spine. For an interval $I$, we write $\T^I$ to mean the spine vertices of $\T$ with indices in $I$, together with the trees hanging off them. We define $\T_{+}^I$ analogously, but only keeping the parts with positive indices, and $\T_{-}^I$ analogously, only keeping the negative indices. We show the following.

\begin{lemma}\label{lem:E_noncap_count_zeroes}
\begin{align*}
\E{\1{\til{\T}_{-}\cap\T^{[0,2m]}=\emptyset}\1{0\not\in\T_{+}^{[0,2m]}}\:\#\left(0\text{ in }\T_{-}^{[0,m]}\right)}\quad\lesssim\quad\frac{1}{\log m}.
\end{align*}
\end{lemma}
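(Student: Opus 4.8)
The plan is to decompose the counting term $\#(0\text{ in }\T_-^{[0,m]})$ according to which spine vertex the zero lies under. Write $\#(0\text{ in }\T_-^{[0,m]}) = \sum_{k=1}^{m} Z_k$, where $Z_k$ counts the zeroes of $\Scal$ among the non-spine vertices hanging off the $k$th past-spine vertex $u_k$ on the past side (and, for $k$ with a zero exactly at a spine vertex, that contribution too). Conditioning on the spine walk $X$ and on the offspring numbers $(d_i^{\pm})_{i\le k}$, the expected contribution of $Z_k$ is governed by the value $X_k$ of the walk at the $k$th spine vertex and, crucially, by how many earlier spine vertices already sat at that value — because the indicator $\1{\Ttil_-\cap\T^{[0,2m]}=\emptyset}\1{0\notin\T_+^{[0,2m]}}$ forces $\Ttil_-$ to avoid a region that, if $X_k$ has already been visited many times, is already quite full near $X_k$, so the gain from weighting by $Z_k$ is offset by the cost of the non-intersection constraint. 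Concretely, I would split the sum over $k$ into the regime where $X_k$ lies at distance $\le C$ from $0$ (so that $\Ttil_-$ must avoid a neighbourhood of $0$ anyway and one can directly invoke the $\asymp 1/\log m$ bound of \Cref{lem:prob_AnBn_upper_bound} / \Cref{thm:prob_nocap_nozero} scaled to $m$ spine steps), and the regime where $X_k$ is far from $0$, where the zero-weighting is expensive and a first-moment bound on $Z_k$ against the non-intersection probability suffices.

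The key steps, in order, would be: (1) Replace the fixed window $\T^{[0,2m]}$ by one up to a $\Geomnonneg{1-1/m}$ number of spine steps, using the concentration of the geometric stopping and the tail bounds (\Cref{lem:hitting_time_tail_bounds}, \Cref{cor:theta_thetatil_thetadot}) exactly as in the reduction from \Cref{thm:prob_nocap_nozero_n} to \Cref{thm:prob_nocap_nozero}; this makes the memoryless structure along the spine usable. (2) For each past-spine vertex $u_k$, peel off the $\GW$ tree(s) hanging off it: given $X_k = x$, the conditional expectation of $Z_k$ restricted to those hanging trees equals $\E{\sum_i \lambda^{\cdot} \1{x + S_i = 0}}$-type quantities, which are controlled by $g_\alpha(-x)$ and hence by $\sum_k \theta_1^k \E{g_{1-\theta}(X_k)}$-type sums, all of which were already estimated in \Cref{sec:E_Gn} and \Cref{sec:Var_Gn} (with $\delta_0$ or $g$ in place of $G$). (3) Decorrelate the event $\{\Ttil_-\cap\T=\emptyset,\ 0\notin\T_+\}$ from the part of $\T$ hanging off $u_k$: since conditioning on this event primarily constrains $\T$ near $X_k$, and $Z_k$ only depends on that local part, I would use a splitting of $\T$ at the first spine vertex reaching distance $C$ from $X_k$, analogous to the $\T(\le C)$/$\T(\ge C)$ decomposition of \Cref{sec:decorrelate_Deps}, together with a Harnack-type argument (\Cref{lem:Harnack}) to remove the dependence through the spine hitting point. (4) Sum the resulting bounds over $k$: the near-$0$ contribution gives $O(1/\log m)$ directly from the non-intersection probability, while the far contribution is summable because the $Z_k$-weight decays like $g(X_k) \asymp \|X_k\|^{-6}$ and $\|X_k\|$ is typically of order $\sqrt{k}$, so $\sum_k \E{g(X_k)} = (G\star g)(0) \asymp 1$ — losing only a constant factor relative to the pure non-intersection probability.

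The main obstacle will be Step (3): the zero-counting weight $Z_k$ and the non-intersection event are genuinely correlated through the local geometry of $\T$ near $X_k$ — when many of $\T$'s vertices cluster near $X_k$ (which is exactly when $Z_k$ is large), it becomes harder for $\Ttil_-$ to avoid $\T$, so the two factors pull in opposite directions and one must quantify this trade-off rather than just bound the product of the unconditioned expectations. As flagged in the paper's overview, this is where a constant factor is unavoidably lost, so I would not aim for asymptotics here, only the $\lesssim 1/\log m$ bound; the argument should mirror the case analysis in \Cref{lem:Ttilsmall_probs_bound} where one separately handles the event that $\T$ near $X_k$ covers most of a ball (forcing $\Ttil_-$ to route around it) versus the typical case. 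A secondary technical point is handling the boundary contributions from zeroes sitting exactly at spine vertices and from the interval $[m,2m]$ of the avoidance window versus $[0,m]$ of the counting window, but this is routine once the geometric-window reduction in Step (1) is in place.
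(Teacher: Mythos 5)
Your coarse structure is right — decompose the zero count by spine vertex, peel off the Galton--Watson trees, try to decorrelate from the non-intersection event — but the proposal is missing the specific machinery the paper actually needs, and in the place where you yourself flag the main obstacle (your step (3)) the suggested fix does not work.

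First, your step (3) mischaracterises the dependence. The non-intersection event $\{\Ttil_-\cap\T^{[0,2m]}=\emptyset\}$ does not ``primarily constrain $\T$ near $X_k$''; it constrains $\T$ everywhere that $\Ttil_-$ can reach, and in particular it constrains the very subtree hanging off $u_k$ whose zero count $Z_k$ you are weighting by. When $Z_k$ is large, that subtree is large and hence harder for $\Ttil_-$ to avoid — the weight and the indicator are coupled through the \emph{same} random object, the subtree at $u_k$, not merely through the spine hitting point. A $\T(\le C)/\T(\ge C)$ splitting plus Harnack (which in \Cref{sec:decorrelate_Deps} decorrelates two \emph{separate} trees at different scales) does not resolve this within-tree correlation. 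The paper instead uses a remove-and-reinstate argument: drop the constraint on the one subtree at $u_k$, integrate the now-free subtree out to produce $g(X_k)$, and then reinstate the non-intersection constraint on that subtree at a constant-probability cost (choosing the subtree to be essentially trivial with positive probability). Nothing in your proposal plays this role, and a crude Cauchy--Schwarz gives only $(\log m)^{-1/2}$ because $\E{\#(0\text{ in }\T_-^{[0,m]})^2}\asymp1$ does not improve with $m$.

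Second, and independently, your sum over $k$ and near/far split at a fixed distance $C$ from $0$ cannot be made to close because it is blind to how often the spine revisits a given point or a given shell. The paper re-indexes the sum by $(x,j)$ where $\tau^j_x$ is the $j$th visit to $x$, and separately by $(k,j)$ where $\tau^j_k$ is the $j$th visit to $\partial B(0,k)$, and then treats four regimes: $j\gtrsim\log||x||$ (geometric decay in $j$ after cutting out the spine between $\tau_x$ and $\tau^j_x$), $j\le k^4$ (the remove-and-reinstate bound, giving $k^4/k^6\cdot\frac1{\log m}$, summable), $j\lesssim\log||x||$ with anomalously large zero count (tail bound $e^{-c||x||^3}$ on the count), and $j\ge k^4$ with zero count $\le e^{ck^3}$ (a Paley--Zygmund comparison, with an auxiliary tree $\what\T$, converting a zero-weighted probability back into a hitting-plus-non-intersection probability at cost $e^{ck^3}$, beaten by the $e^{-c''k^3}$ cost of $k^4$ returns). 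The last case in particular has no analogue in your plan; without it, the $j\ge k^4$ contribution is uncontrolled, since the remove-and-reinstate trick incurs a factor $c^{-j}$ when applied to $j$ subtrees at once. So the gap is not merely in making step (3) rigorous — the decomposition itself is too coarse to give a bound that sums to $O(1/\log m)$.
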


We start by presenting the proofs of Lemmas~\ref{lem:Ttilminus_Tminusnn_prob_as_E} and~\ref{lem:E_noncap_count_zeroes}, and then explain how to conclude the proof of \eqref{eq:Ttilminus_Tminusnn_prob} from these.

\begin{proof}[Proof of \Cref{lem:Ttilminus_Tminusnn_prob_as_E}]

By considering whether $\T\left[n^{\frac14},n\right]$ hits 0, and if not, considering the last visit of $\T\left[0,n^{\frac14}\right]$ at 0, we can write down an upper bound as follows,
\begin{align}\label{eq:last_0_decomp}
&\pr{\til{\T}_{-}\cap\T[-n,n]=\emptyset}\quad\\
\nonumber
&\le\quad\pr{0\in\T\left[n^{\frac14},n\right]}+
\sum_{j=0}^{n^{\frac14}}\pr{\til{\T}_{-}\cap\T[-n,n]=\emptyset,\:\T_j=0,\:0\not\in\T\left(j,n\right]}.
\end{align}

The first term on the right-hand side of~\eqref{eq:last_0_decomp} can be bounded as
\begin{align}
\nonumber
\pr{0\in\T\left[n^{\frac14},n\right]}\quad\le\quad
&\pr{0\in\T_+\left(\ge\log n\right)}+\pr{\tau_{\log n}\ge (\log n)^4}\\
\label{eq:prob_far_0_bound}
+&\pr{\sum_{i=1}^{(\log n)^4}d_i^+\ge(\log n)^6}+\pr{\sum_{i=1}^{(\log n)^6}|{\GW}_i|>n^{\frac14}}.
\end{align}
Using that $G(x)\asymp\frac{1}{1+||x||^4}$, that $\E{d_i^+}\asymp\Var{d_i^+}\asymp1$, and Lemmas~\ref{lem:sum_GW_sizes} and~\ref{lem:hitting_time_tail_bounds}, we see that each of the above probabilities is $\ll\frac{1}{\log n}$.

Using \Cref{lem:shift_invariance}, the sum in~\eqref{eq:last_0_decomp} can be bounded as follows.
\begin{align}
\nonumber
\sum_{j=0}^{n^{\frac14}}\:&\pr{\til{\T}_{-}\cap\T[-n,n]=\emptyset,\:\T_j=0,\:0\not\in\T\left(j,n\right]}\\
\nonumber
&=\quad\sum_{j=0}^{n^{\frac14}}\pr{\til{\T}_{-}\cap\T[-n-j,n-j]=\emptyset,\:\T_{-j}=0,\:0\not\in\T\left(0,n-j\right]}\\
\nonumber
&\le\quad\sum_{j=0}^{n^{\frac14}}\pr{\til{\T}_{-}\cap\T\left[-n,n-n^{\frac14}\right]=\emptyset,\:\T_{-j}=0,\:0\not\in\T\left(0,n-n^{\frac14}\right]}\\
\label{eq:last_0_upper_bound}
&=\quad\E{\1{\til{\T}_{-}\cap\T\left[-n,n-n^{\frac14}\right]=\emptyset}\1{0\not\in\T\left(0,n-n^{\frac14}\right]}\:\#\left(0\text{ in }\T\left[-n^{\frac14},0\right]\right)}.
\end{align}

Analogously, we get
\begin{align}
\nonumber
\pr{\til{\T}_{-}\cap\T[-n,n]=\emptyset}\quad\ge&\quad\sum_{j=0}^{n^{\frac14}}\pr{\til{\T}_{-}\cap\T[-n,n]=\emptyset,\:\T_j=0,\:0\not\in\T\left(j,n\right]}\\
\label{eq:last_0_lower_bound}
\ge&\quad\E{\1{\til{\T}_{-}\cap\T\left[-n-n^{\frac14},n\right]=\emptyset}\1{0\not\in\T\left(0,\infty\right)}\:\#\left(0\text{ in }\T\left[-n^{\frac14},0\right]\right)}.
\end{align}

Then by simple comparisons we can show that the expectations~\eqref{eq:last_0_upper_bound} and~\eqref{eq:last_0_lower_bound} differ from the expectation in \Cref{lem:Ttilminus_Tminusnn_prob_as_E} by $o\left(\frac{1}{\log n}\right)$. This finishes the proof of \Cref{lem:bound_Dab_prob}.
\end{proof}

\begin{proof}[Proof of \Cref{lem:E_noncap_count_zeroes}]

We will split the counting term $\#\left(0\text{ in }\T_{-}^{[0,m]}\right)$ based on the trees descending from different spine vertices, and use different estimates based on the value of the walk at the given spine vertex and the number of times that value has been visited by earlier spine vertices.

For a given $k\ge1$ let $\tau^j_k$ be the $j$th index $i$ such that $X_i\in\partial B(0,k)$, and for a given $x\in\Z^d$ let $\tau^j_x$ be the $j$th index $i$ such that $X_i=x$. We set $\tau^j_k$ or $\tau^j_x$ to be $\infty$ if such $i$ does not exist. Then for any positive constants $c$ and $C$ we can bound the expectation in \Cref{lem:E_noncap_count_zeroes} as follows.
\begin{align*}
\E{\1{\til{\T}_{-}\cap\T^{[0,2m]}=\emptyset}\1{0\not\in\T_{+}^{[0,2m]}}\:\#\left(0\text{ in }\T_{-}^{[0,m]}\right)}\:=\:\sum_{\ell=0}^{m}\E{\1{\til{\T}_{-}\cap\T^{[0,2m]}=\emptyset}\1{0\not\in\T_{+}^{[0,2m]}}\:\#\left(0\text{ in }\T_{-}^{\ell}\right)}
\end{align*}
\begin{align}
\label{eq:E_numzeroes_x_many_returns}
&\le\quad\sum_{x}&\sum_{j\ge C\log(||x||)} &\E{\1{\tau^j_x\le m}\:\1{\til{\T}_{-}\cap\T^{[0,2m]}=\emptyset}\1{0\not\in\T_{+}^{[0,2m]}}\:\#\left(0\text{ in }\T_{-}^{\tau^j_x}\right)}\\
\label{eq:E_numzeroes_k_few_returns}
&+\quad\sum_{k\ge1}&\sum_{j\le k^4}\quad &\E{\1{\tau^j_k\le m}\:\1{\til{\T}_{-}\cap\T^{[0,2m]}=\emptyset}\1{0\not\in\T_{+}^{[0,2m]}}\:\#\left(0\text{ in }\T_{-}^{\tau^j_k}\right)}\\
\label{eq:E_numzeroes_x_few_returns}
&+\quad\sum_{x}&\sum_{j\le C\log(||x||)} &\E{\1{\tau^j_x\le m}\:\1{\til{\T}_{-}\cap\T^{[0,2m]}=\emptyset}\1{0\not\in\T_{+}^{[0,2m]}}\:\#\left(0\text{ in }\T_{-}^{\tau^j_x}\right)\:\1{\#\left(0\text{ in }\T_{-}^{\tau^j_x}\right)\ge e^{c||x||^3}}}\\
\label{eq:E_numzeroes_k_many_returns}
&+\quad\sum_{k\ge1}&\sum_{j\ge k^4}\quad &\E{\1{\tau^j_k\le m}\:\1{\til{\T}_{-}\cap\T^{[0,2m]}=\emptyset}\1{0\not\in\T_{+}^{[0,2m]}}\:\#\left(0\text{ in }\T_{-}^{\tau^j_k}\right)\:\1{\#\left(0\text{ in }\T_{-}^{\tau^j_k}\right)\le e^{ck^3}}}.
\end{align}

Now we discuss how to bound each of the above sums.

To bound~\eqref{eq:E_numzeroes_k_few_returns}, note that
\begin{align*}
&\E{\1{\tau^j_k\le m} \1{\til{\T}_{-}\cap\T^{[0,2m]}=\emptyset}\1{0\not\in\T_{+}^{[0,2m]}}\:\#\left(0\text{ in }\T_{-}^{\tau^j_k}\right)}\quad\\
&\le\quad
\E{\1{\tau^j_k\le m}\1{\til{\T}_{-}\cap\T^{[0,2m]\setminus\{\tau^j_k\}}=\emptyset}\1{0\not\in\T_{+}^{[0,2m]\setminus\{\tau^j_k\}}}\:\#\left(0\text{ in }\T_{-}^{\tau^j_k}\right)}\\
&\asymp\quad\E{\1{\tau^j_k\le m}\1{\til{\T}_{-}\cap\T^{[0,2m]\setminus\{\tau^j_k\}}=\emptyset}\1{0\not\in\T_{+}^{[0,2m]\setminus\{\tau^j_k\}}}\:g\left(X_{\tau^j_k}\right)}\quad\\
&\lesssim\quad
\frac{1}{k^6}\:\E{\1{\tau^j_k\le m}\1{\til{\T}_{-}\cap\T^{[0,2m]}=\emptyset}\1{0\not\in\T_{+}^{[0,2m]}}}.
\end{align*}
In the last $\lesssim$ we used that with positive probability the spine vertex of index $\tau^j_k$ has a bounded number of offsprings which have no further offsprings, and the BRW at these vertices takes the same value as it does at the next spine vertex.

This gives
\begin{align*}
\eqref{eq:E_numzeroes_k_few_returns}\quad\lesssim\quad\sum_{k\ge1}\frac{k^4}{k^6}\:\E{\1{\tau^j_k\le m}\1{\til{\T}_{-}\cap\T^{[0,2m]}=\emptyset}\1{0\not\in\T_{+}^{[0,2m]}}}\quad\lesssim\quad\sum_{k\ge1}\frac{1}{k^2}\cdot\frac{1}{\log m}\quad\asymp\quad\frac{1}{\log m}.
\end{align*}

To bound~\eqref{eq:E_numzeroes_x_many_returns}, we pull out the term $\E{\#\left(0\text{ in }\T^{\tau^j_x}_{-}\right)}$ as above, and consider the tree obtained by disregarding the part of $\T$ between the spine vertices of indices $\tau_x$ and $\tau^j_x$. This gives
\begin{align*}
\eqref{eq:E_numzeroes_x_many_returns}\quad\lesssim\quad\sum_{x}\sum_{j\ge C\log(||x||)}\prstart{\tau_0<\infty}{0}^{j-1}\pr{\tau_x\le m,\:\til{\T}_{-}\cap\T^{[0,m]}=\emptyset,\:0\not\in\T_{+}^{[0,m]}}g(x)\\
\lesssim\quad\sum_{k\ge1}k^7\cdot \prstart{\tau_0<\infty}{0}^{C\log k}\cdot\frac{1}{\log m}\cdot\frac{1}{k^6}.
\end{align*}
If $C$ is sufficiently large, then $\prstart{\tau_0<\infty}{0}^{C\log k}\lesssim\frac{1}{k^3}$. Then the above sum is $\lesssim\frac{1}{\log m}$.

To bound~\eqref{eq:E_numzeroes_x_few_returns}, we again consider a tree obtained by omitting the part of $\T$ between spine vertices $\tau_x$ and $\tau^j_x$, and we pull out the term $\E{\#\left(0\text{ in }\T^{\tau^j_x}_{-}\right)\1{\#\left(0\text{ in }\T^{\tau^j_x}_{-}\right)\ge e^{c||x||^3}}}$. This expectation can be bounded as
\begin{align*}
\E{\#\left(0\text{ in }\T^{\tau^j_x}_{-}\right)\1{\#\left(0\text{ in }\T^{\tau^j_x}_{-}\right)\ge e^{c||x||^3}}}
\quad&=\quad \sum_{s\ge e^{c||x||^3}}\pr{\#\left(0\text{ in }\T^{\tau^j_x}_{-}\right)\ge s}\\
&\le\quad \sum_{s\ge e^{c||x||^3}}
\frac{\E{\#\left(0\text{ in }\T^{\tau^j_x}_{-}\right)^2}}{s^2}\quad\lesssim\quad e^{-c||x||^3}.
\end{align*}
This then gives
\begin{align*}
\eqref{eq:E_numzeroes_x_few_returns}\:\lesssim\:\sum_{k\ge1} k^7\cdot C(\log k)\cdot \sup_{x\in\partial B(0,k)}\pr{\tau_x\le m,\:\til{\T}_{-}\cap\T^{[0,m]}=\emptyset,\:0\not\in\T_{+}^{[0,m]}}\cdot e^{-c||x||^3}\:
\lesssim\:\frac{1}{\log m}.\quad&
\end{align*}

Finally, we bound~\eqref{eq:E_numzeroes_k_many_returns} as follows.

Let $\what{\T}$ be a BRW on an infinite random tree that is distributed like $\T$, but with the root having the same offspring distribution on the two sides of the spine as the later spine vertices. Let $T_i$ be BRWs on $\GW(\mu)$ trees. Let $e$ and $e_i$ be uniform neighbours of 0 in $\Z^8$. Let all of these variables be independent of each other and $\til{\T}$.

By considering the part of $\T$ after the spine vertex $\tau^j_k$, we get that
\begin{align}
\nonumber
&\sum_{j\ge k^4}\E{\1{\til{\T}_{-}\cap\T^{[0,2m]}=\emptyset}\1{0\not\in\T_{+}^{[0,2m]}}\1{\tau^j_k\le m}\:\#\left(0\text{ in }\T_{-}^{\tau^j_k}\right)\1{\#\left(0\text{ in }\T_{-}^{\tau^j_k}\right)\le e^{ck^3}}}\\
\nonumber
&\lesssim\:\sum_{j\ge k^4}\pr{\tau^j_k\le m}\cdot\\
\nonumber
&\quad\cdot\sup_{x\in\partial B(0,k)}\mathbb{E}\Bigg[\1{x\not\in\til{\T}_{-}}\cdot\prcond{\til{\T}_{-}\cap \left(x+e+\what{\T}^{[0,m]}\right)=\emptyset,\:0\not\in \left(x+e+\what{\T}_{+}^{[0,m]}\right)}{\til{\T}_{-}}\cdot\\
\label{eq:T_from_x}
&\hspace{2cm}\cdot\econd{\sum_{i=1}^{d_1^+}\#\left(0\text{ in }(x+e_i+T_i)\right)\1{\#\left(0\text{ in }(x+e_i+T_i)\right)\le e^{ck^3}}\1{\til{\T}_{-}\cap(x+e_i+T_i)=\emptyset}}{\til{\T}_{-}}\Bigg].
\end{align}
We can see that $\pr{\tau^j_k<m}\le\left(\sup\limits_{x\in\partial B(0,k)}\prstart{\tau_k<\infty}{x}\right)^j$. Using this and \Cref{lem:prob_returning_to_partialBk} we get that
\begin{align*}
\sum_{j\ge k^4}\pr{\tau^j_k<m}\quad\lesssim\quad \left(1-\frac{c'}{k}\right)^{k^4}k\quad\lesssim\quad e^{-c'' k^3},
\end{align*}
where $c'$ and $c''$ are small positive constants.

Now we explain how to bound the expectation on the right-hand side of~\eqref{eq:T_from_x} by comparing it to $\pr{\til{\T}_{-}\cap\T^{[0,m]},\:0\not\in\T^{[0,m]}_{+}}$.

We will use the following observation. Let $t$ be any finite plane tree and let $v$ be any leaf of $t$ at distance $\ell$ from the root. Let $\what{\T}$ be as above and let $T$ denote a (planar) $\GW(\mu)$ tree and a random walk indexed by this tree. Then the following two events have the same probability, $\prod_{u\in t\setminus\{v\}}\mu\left(\deg_t(u)\right)$. (i) The tree $T$ is isomorphic to a tree that is obtained from $t$ by attaching to it a (possibly empty) tree from $v$. (ii) The subtree of $\what{\T}$ formed by the spine vertices of indices $[0,\ell]$ and the trees on the two sides of the spine descending from the spine vertices of indices $[0,\ell-1]$ is isomporphic to $t$, with the $\ell$th spine vertex corresponding to $v$.

By considering the first 0 in $(x+T)$ and using the above property, we can see that
\begin{align*}
&\prcond{0\in(x+T),\:\#\left(0\text{ in }(x+T)\right)\le e^{ck^3},\:\til{\T}_{-}\cap(x+T)=\emptyset}{\til{\T}_{-}}\\
&\:\le\quad\mathbb{P}\Big((x+\what{X})\text{ hits }0,\:0\not\in(x+\what{\T}_{+})^{[0,{\tau}_0-1]},\:\#\left(0\text{ in }(x+\what{\T}_{-})^{[0,{\tau}_0-1]}\right)\le e^{ck^3}-1,\\
&\hspace{8cm}\:\til{\T}_{-}\cap\left((x+\what{\T})^{[0,{\tau}_0-1]}\cup\{0\}\right)=\emptyset\:\Big|\:\til{\T}_{-}\Big)\\
&\:\lesssim\quad\mathbb{P}\Big((x+\what{X})\text{ hits }0,\:0\not\in(x+\what{\T}_{+})^{[0,{\tau}_0]},\:\#\left(0\text{ in }(x+\what{\T}_{-})^{[0,{\tau}_0]}\right)\le e^{ck^3},\\
&\hspace{9cm}\:\til{\T}_{-}\cap(x+\what{\T})^{[0,{\tau}_0]}=\emptyset\:\Big|\:\til{\T}_{-}\Big),
\end{align*}
where $\tau_0$ refers to the first time $(x+\what{X})$ hits 0.
If $\eta=\max\{i\in[0,{\tau}_0]:\:(x+\what{X})(i)=x\}$ then by restricting to $(x+\what{\T})^{[\eta,{\tau}_0]}$ and viewing this tree and BRW from the ${\tau}_0$th spine vertex, we can upper bound the above probability as
\begin{align*}
&\le\quad\mathbb{P}\Big((x+\what{X})\text{ hits }0,\:0\not\in(x+\what{\T}_{+})^{[\eta,{\tau}_0]},\:\#\left(0\text{ in }(x+\what{\T}_{-})^{[\eta,{\tau}_0]}\right)\le e^{ck^3},\\
&\hspace{9.5cm}\:\til{\T}_{-}\cap(x+\what{\T})^{[\eta,{\tau}_0]}=\emptyset\:\Big|\:\til{\T}_{-}\Big)\\
&=\quad\prcond{\what{\tau}_x<\infty,\:0\not\in\what{X}(0,\what{\tau}_x],\:0\not\in\what{\T}_{+}^{[0,\what{\tau}_x]},\:\#\left(0\text{ in }\what{\T}_{-}^{[0,\what{\tau}_x]}\right)\le e^{ck^3},\:\til{\T}_{-}\cap\what{\T}^{[0,\what{\tau}_x]}=\emptyset}{\til{\T}_{-}}\\
&\le\quad\prcond{\what{\tau}_x<\infty,\:0\not\in\what{\T}_{+}^{[0,\what{\tau}_x]},\:\#\left(0\text{ in }\what{\T}_{-}^{[0,\what{\tau}_x]}\right)\le e^{ck^3},\:\til{\T}_{-}\cap\what{\T}^{[0,\what{\tau}_x]}=\emptyset}{\til{\T}_{-}}.
\end{align*}
Also, by the Paley-Zygmund inequality we have that
\begin{align*}
&\prcond{0\in(x+T),\:\#\left(0\text{ in }(x+T)\right)\le e^{c||x||^3},\:\til{\T}_{-}\cap(x+T)=\emptyset}{\til{\T}_{-}}\\
&\quad\ge\quad\frac{\econd{\#\left(0\text{ in }(x+T)\right)\1{\#\left(0\text{ in }(x+T)\right)\le e^{ck^3}}\1{\til{\T}_{-}\cap(x+T)=\emptyset}}{\til{\T}_{-}}^2}{\econd{\#\left(0\text{ in }(x+T)\right)^2\1{\#\left(0\text{ in }(x+T)\right)\le e^{ck^3}}\1{\til{\T}_{-}\cap(x+T)=\emptyset}}{\til{\T}_{-}}}\\
&\quad\ge\quad e^{-ck^3}\:\econd{\#\left(0\text{ in }(x+T)\right)\1{\#\left(0\text{ in }(x+T)\right)\le e^{ck^3}}\1{\til{\T}_{-}\cap(x+T)=\emptyset}}{\til{\T}_{-}}.
\end{align*}
Combining these and using that $\E{d_1^+}\asymp1$, and with positive probability the trees hanging off a given spine vertex do not produce values for the BRW that are not already present at the rest of the tree, we get that
\begin{align*}
&\mathbb{E}\Bigg[\1{x\not\in\til{\T}_{-}}\cdot\prcond{\til{\T}_{-}\cap \left(x+e+\what{\T}^{[0,m]}\right)=\emptyset,\:0\not\in \left(x+e+\what{\T}_{+}^{[0,m]}\right)}{\til{\T}_{-}}\cdot\\
&\hspace{2cm}\cdot\econd{\sum_{i=1}^{d_1^+}\#\left(0\text{ in }(x+e_i+T_i)\right)\1{\#\left(0\text{ in }(x+e_i+T_i)\right)\le e^{ck^3}}\1{\til{\T}_{-}\cap(x+e_i+T_i)=\emptyset}}{\til{\T}_{-}}\Bigg]\\
&\lesssim\quad e^{ck^3}\:\pr{\what{\tau}_x<\infty,\:0\not\in\what{\T}_{+}^{[0,\what{\tau}_x+m]},\:\til{\T}_{-}\cap\what{\T}^{[0,\what{\tau}_x+m]}=\emptyset}\\
&\hspace{4.5cm}\lesssim\quad e^{ck^3}\: \pr{0\not\in{\T}_{+}^{[0,m]},\:\til{\T}_{-}\cap{\T}^{[0,m]}=\emptyset} \quad\lesssim\quad e^{ck^3}\:\frac{1}{\log m}\:.
\end{align*}

Overall this shows that for sufficiently small values of $c$ we have
\begin{align*}
\eqref{eq:E_numzeroes_k_many_returns}\quad\lesssim\quad\sum_{k\ge1} e^{ck^3}\: e^{-c''k^3}\:\frac{1}{\log m}\quad\lesssim\quad\frac{1}{\log m}.
\end{align*}

This finishes the proof of \Cref{lem:E_noncap_count_zeroes}.
\end{proof}

\begin{proof}[Proof of \eqref{eq:Ttilminus_Tminusnn_prob}]
As mentioned earlier, the lower bound is immediate from \Cref{thm:prob_nocap_nozero_n}.

For the upper bound it is sufficient to show that the expectation in \Cref{lem:Ttilminus_Tminusnn_prob_as_E} is $\lesssim\frac{1}{\log n}$.

We can write
\begin{align}
\nonumber
&\E{\1{\til{\T}_{-}\cap\T[-n,n]=\emptyset}\1{0\not\in\T(0,\infty)}\#\left(0\text{ in }\T(-\infty,0]\right)}\\
\label{eq:E_noncap_nu0_bound}
&\le\:\E{\1{\til{\T}_{-}\cap\T^{[0,n^{1/4}]}=\emptyset}\1{0\not\in\T_{+}}\:\#\left(0\text{ in }\T(-\infty,0]\right)}+
\E{\1{\T\left[-n,n\right]\not\supseteq\T^{[0,n^{1/4}]}}\:\#\left(0\text{ in }\T(-\infty,0]\right)}.
\end{align}

Using \Cref{lem:E_noncap_count_zeroes} with $m=n^{\frac14}$, and using that $\E{\#\left(0\text{ in }\T(-\infty,m]\right)}\ll\frac{1}{\log m}$, and $\pr{0\in\T^{[m,\infty)}_{+}}\ll\frac{1}{(\log m)^2}$, we get that the first term in~\eqref{eq:E_noncap_nu0_bound} is $\lesssim\frac{1}{\log n}$.

The second term in~\eqref{eq:E_noncap_nu0_bound} can be bounded by Cauchy-Schwarz as
\begin{align*}
\le \sqrt{\pr{\T\left[-n,n\right]\not\supseteq\T^{[0,n^{1/4}]}}\E{\#\left(0\text{ in }\T\left(-\infty,0\right]\right)^2}}.
\end{align*}
Here the probability is $\ll\frac{1}{(\log n)^3}$, while the expectation is $\asymp1$.

This finishes the proof of \eqref{eq:Ttilminus_Tminusnn_prob}.
\end{proof}

\subsection{Replacing $\til{\T}_{-}$ with $\til{\T}[-n,0)$}

In this section we prove the second part of \Cref{thm:Ttiln_Tminusnn_prob}.

Note that
\begin{align*}
&\pr{\til{\T}(-\infty,0)\cap\T[-n,n]=\emptyset}\:\le\:\pr{\til{\T}[-n,0)\cap\T[-n,n]=\emptyset}\\
&\hspace{2cm}\le\:\pr{\til{\T}[-n,0)\cap\T[-n^{\frac{1}{17}},n^{\frac{1}{17}}]=\emptyset}\\
&\hspace{2cm}\le\:\pr{\til{\T}(-\infty,0)\cap\T[-n^{\frac{1}{17}},n^{\frac{1}{17}}]=\emptyset}+\pr{\til{\T}(-\infty,-n)\cap\T[-n^{\frac{1}{17}},n^{\frac{1}{17}}]\ne\emptyset}.
\end{align*}
By \eqref{eq:Ttilminus_Tminusnn_prob} we have $\pr{\til{\T}(-\infty,0)\cap\T[-n,n]=\emptyset}\asymp\pr{\til{\T}(-\infty,0)\cap\T[-n^{\frac{1}{17}},n^{\frac{1}{17}}]\ne\emptyset}\asymp\frac{1}{\log n}$.
We can bound $\pr{\til{\T}(-\infty,-n)\cap\T[-n^{\frac{1}{17}},n^{\frac{1}{17}}]=\emptyset}$ as follows. Let $m=n^{\frac{1}{17}}$.
Note that $\T[-m,m]\subseteq B(0,m)$, hence
\begin{align*}
&\pr{\til{\T}(-\infty,-m^{17})\cap\T[-m,m]\ne\emptyset}\quad\le\quad\pr{\til{\T}(-\infty,-m^{17})\cap B(0,m)\ne\emptyset}\\
&\le\quad\pr{\til{\T}_{-}\left(\ge m^{3}\right)\cap B(0,m)\ne\emptyset}\:+\: \pr{\tau_{m^{3}}\ge m^{7}}\\
&\hspace{5cm}+\pr{\sum_{i=1}^{m^7}d_i^-\ge m^8}\:+\:\pr{\sum_{i=1}^{m^{8}}(|{\GW}_i|+1)\ge m^{17}}.
\end{align*}
Each of the terms on the right-hand side is $\ll\frac{1}{\log m}$. This finishes the proof.

\section{Proof of Theorem~\ref{thm:BCap_asymp}}\label{sec:BCap_asymp}

Using the definition of $\BCap{\cdot}$, \Cref{lem:shift_invariance}, the property that $\left(\xiln\mid\xiln+\xirn=m\right)\sim\Unif{\left\{0,1,...,m\right\}}$, and \Cref{thm:prob_nocap_nozero}, we can see that
\begin{align*}
&\E{\frac{\BCap{\T\left[0,\xiln+\xirn\right]}}{\xiln+\xirn+1}}\quad=\quad\sum_{m\ge0}\pr{\xiln+\xirn=m}\frac{1}{m+1}\E{\BCap{\T[0,m]}}\\
&=\quad\sum_{m\ge0}\pr{\xiln+\xirn=m}\frac{1}{m+1}\sum_{i=0}^{m}\pr{\T_i\not\in\T(i+1,m],\:\left(\T_i+\til{\T}_{-}\right)\cap\T[0,m]=\emptyset}\\
&=\quad\sum_{m\ge0}\pr{\xiln+\xirn=m}\frac{1}{m+1}\sum_{i=0}^{m}\pr{\T_0\not\in\T(0,m-i],\:\til{\T}_{-}\cap\T[-i,m-i]=\emptyset}\\
&=\quad\pr{0\not\in\T(0,\xirn],\:\til{\T}(-\infty,0)\cap\T[-\xiln,\xirn]=\emptyset}\quad\sim\quad\frac{c_8}{\log n}.
\end{align*}

Using this, we can bound $\E{\BCap{\T[0,n]}}$ as follows.

For notational convenience write $\zeta_k=\xi^\ell_k+\xi^r_k$ and $\B(k)=\E{\BCap{\T[0,k]}}$. By the subadditivity of $\BCap{\cdot}$ and the shift invariance of $\T$ we have $\frac{\B(k\ell)}{k\ell}\le\frac{\B(k)}{k}$ for any $k,\ell\in\Zpos$, hence $\frac{\B(n)}{n}\le\frac{\B(k)}{k}\frac{n+k}{n}$ for any $n,k\in\Zpos$. 

Using this, the simple bound $\B(k)\le k+1$, the concentration of $\zeta_k$, and the already established bound on $\E{\frac{\B(\zeta_k)}{k+1}}$, we can write
\begin{align*}
&\frac{\B(n)}{n}\quad\le\quad\pr{\zeta_{\frac{n}{\log n}}>\frac{n}{\sqrt{\log n}}}\frac{n+1}{n}+ \E{\frac{\B\left(\zeta_{\frac{n}{\log n}}\right)}{\zeta_{\frac{n}{\log n}}}}\frac{n+\frac{n}{\sqrt{\log n}}}{n}\quad\sim\quad\frac{c_8}{\log n}\:,\\
&\frac{c_8}{\log n}\quad\sim\quad\E{\frac{\B\left(\zeta_{n(\log n)^3}\right)}{\zeta_{n(\log n)^3}+1}}\quad\le\quad\pr{\zeta_{n(\log n)^3}\le n\log n}+\frac{\B(n)}{n}\frac{n\log n+n}{n\log n}\\
&\hspace{10.5cm}\sim\quad\frac{\B(n)}{n}+o\left(\frac{1}{\log n}\right).
\end{align*}

This finishes the proof of Theorem~\ref{thm:BCap_asymp}.

\section{A note on the choice of $\mu$}\label{sec:mu}

We would like to emphasise that in Theorems~\ref{thm:Ttiln_Tminusnn_prob}, \ref{thm:prob_nocap_nozero_n}, \ref{thm:BCap_asymp} and~\ref{thm:prob_nocap_nozero} the offspring distribution $\mu$ is the same for all $n$. While the constants $c$, $C$ and $c_8$ only depend on $\sigma^2$, the choice of sufficiently large $n$ in~\Cref{thm:Ttiln_Tminusnn_prob}, the $o(1)$ in Theorems~\ref{thm:prob_nocap_nozero_n} and~\ref{thm:prob_nocap_nozero}, and the convergence in~\Cref{thm:BCap_asymp} can depend on $\mu$.

As an illustration we give an example of a sequence $(\mu_n)$ such that each $\mu_n$ has mean 1 and variance $\sigma^2$, but the statements of Lemmas~\ref{lem:GW_tail_bound} and~\ref{lem:theta_fraction} are not satisfied.

\begin{example}
Let $\sigma^2=2$, $\mu(n+1)=\frac{2}{n(n+1)}$, $\mu(1)=1-\frac{2}{n}$, $\mu(0)=\frac{2}{n+1}$. Then we have
\begin{align*}
\pr{|\GW|\ge n}\quad\ge\quad\pr{W_1=...=W_n=0}\quad=\quad\left(1-\frac{2}{n}\right)^{n}\quad\asymp\quad1,
\end{align*}
so \Cref{lem:GW_tail_bound} does not hold.

Also, in this case $\pr{d_1^+=\ell}=\frac{2}{n(n+1)}$ for $\ell\in\{1,2,...,n\}$ and $\pr{d_1^+=0}=1-\frac{2}{(n+1)}$, hence
\begin{align*}
\E{\sum_{\ell=0}^{d_1^+-1}\til{\theta}^{\ell}}\quad\le\quad\E{d_1^+}-(1-\til{\theta}^{\frac{n}{2}})\frac{n}{2}\pr{d_1^+>\frac{n}{2}}\quad=\quad\E{d_1^+}-\Theta(1),
\end{align*}
so \Cref{lem:theta_fraction} does not hold either.
\end{example}

It might be possible to impose some additional condition on the sequence $(\mu_n)$ (e.g.\ existence of some higher moments) that would be sufficient for Theorems~\ref{thm:Ttiln_Tminusnn_prob}, \ref{thm:prob_nocap_nozero_n}, \ref{thm:BCap_asymp} and~\ref{thm:prob_nocap_nozero} to hold, but we did not investigate this further.

As far as we understand, previous results also concern $\mu$ that is the same for all $n$.

We also note, that while our results are presented with a single offspring distribution $\mu$, they still hold and the proofs work the same way if the offpsring distribution $\til{\mu}$ associated with $\til{\T}$ and $\BCap{\cdot}$ is different from the offspring distribution $\mu$ associated with $\T$. In this case the constants depend on the variance of both $\mu$ and $\til{\mu}$.

\section*{Acknowledgements}

I would like to thank Perla Sousi for the very helpful discussions and feedback.

I was supported by the DPMMS EPSRC DTP.

\appendix

\section{Estimates regarding $|\GW|$, $\LW$ and $\LH$}\label{app:GW_LW_LH}

\begin{lemma}\label{lem:GW_tail_bound} A critical Galton-Watson tree $\GW$ satisfies
\[\pr{|\GW|\ge k}\:\sim\:\frac{c}{\sqrt{k}}\qquad\text{and}\qquad\E{|\GW|\1{|\GW|\le k}}\:\sim\:c\sqrt{k},\]
where $c$ is a positive constant depending only on $\sigma^2$.
\end{lemma}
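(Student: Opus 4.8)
The plan is to work with the depth-first queue process $W$ from \Cref{sec:DFQP}, which is a random walk with iid increments $\eqdist(\mu-1)$; these have mean $0$ and variance $\sigma^2\in(0,\infty)$. The key fact is that $|\GW|$ equals the first hitting time of $-1$ by $W$, i.e.\ $|\GW|\eqdist\sigma_{-1}=\min\{n\ge1:\:W_n=-1\}$, since $W_n=\sum_{j=0}^{n-1}(\xi_j-1)$ and the tree is exhausted exactly when the queue first becomes negative. So the statement reduces to a hitting-time estimate for a centred, finite-variance one-dimensional random walk.

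First I would invoke the classical local limit / hitting time asymptotics for such walks: for an aperiodic (or suitably reduced) mean-zero, variance-$\sigma^2$ random walk, $\pr{\sigma_{-1}>k}\sim \frac{c}{\sqrt{k}}$ as $k\to\infty$, with $c$ depending only on $\sigma^2$. This is standard and can be cited from, e.g., Feller or the fluctuation-theory literature (Kolmogorov's estimate for the survival probability of a critical branching process, or equivalently Kemperman's formula combined with Stirling, gives $\pr{|\GW|=k}\sim \frac{c'}{k^{3/2}}$, and summing the tail gives $\pr{|\GW|\ge k}\sim \frac{2c'}{\sqrt k}$). Indeed the paper already cites \cite[Section 9, Theorem 1]{branching_processes} for the analogous generation-survival estimate $\pr{\GW\text{ survives to generation }k}\asymp\frac1k$, and the total-progeny tail $\pr{|\GW|\ge k}\asymp\frac1{\sqrt k}$ is the companion result; I would cite the corresponding theorem there (or Otter--Dwass / Kolmogorov--Kesten--Ney) for the precise constant. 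Alternatively one can derive the tail directly: by the cycle lemma $\pr{|\GW|=k}=\frac1k\pr{W_k=-1}$ where here $W_k$ is the sum of $k$ iid $\eqdist(\mu-1)$ steps, and the local CLT gives $\pr{W_k=-1}\sim\frac{1}{\sigma\sqrt{2\pi k}}$ (up to the lattice-span constant), so $\pr{|\GW|=k}\sim\frac{1}{\sigma\sqrt{2\pi}\,k^{3/2}}$ and hence $\pr{|\GW|\ge k}\sim\frac{2}{\sigma\sqrt{2\pi}}\cdot\frac{1}{\sqrt k}$, identifying $c=\sqrt{2/\pi}/\sigma$ (modulo periodicity corrections).

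For the second assertion, $\E{|\GW|\1{|\GW|\le k}}\sim c\sqrt k$, I would use summation by parts: $\E{|\GW|\1{|\GW|\le k}}=\sum_{j=1}^{k}\pr{|\GW|\ge j}-k\,\pr{|\GW|>k}+O(1)$, or more cleanly $\E{|\GW|\1{|\GW|\le k}} = \sum_{j=0}^{k-1}\pr{j<|\GW|\le k}$, and then feed in $\pr{|\GW|\ge j}\sim c/\sqrt j$. Since $\sum_{j=1}^{k} j^{-1/2}\sim 2\sqrt k$ and $k\cdot\pr{|\GW|>k}\sim c\sqrt k$, the two contributions combine to give $\E{|\GW|\1{|\GW|\le k}}\sim (2c - c)\sqrt k = c\sqrt k$, with the \emph{same} constant $c$ as in the tail bound — which is exactly what the statement claims. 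This is a short and routine computation once the tail asymptotic is in hand.

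The main obstacle is essentially bookkeeping rather than conceptual: getting the exact constant right (and, relatedly, handling the lattice span/periodicity of $\mu-1$ so that the local CLT applies cleanly — one may need to restrict to the sublattice on which $W$ actually lives, which changes the normalising constant but not the $\sqrt k$ order or the fact that tail and truncated-mean share a constant). Since the paper only ever uses $c$ through ratios and order-of-magnitude comparisons, and explicitly remarks (\Cref{sec:mu}) that constants depend on $\sigma^2$, I would keep the constant abstract, prove the two asymptotics up to the shared constant $c=c(\sigma^2)$, and cite the branching-process literature for the precise value rather than re-deriving it.
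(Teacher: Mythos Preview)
Your proposal is correct and follows essentially the same route as the paper: both use the cycle lemma $\pr{|\GW|=k}=\frac{1}{k}\pr{W_k=-1}$ together with the local CLT for $W_k$ (the paper cites \cite[Theorem 2.3.9]{RW_modern_intro}) to get the tail asymptotic, and then the identical summation-by-parts identity $\E{|\GW|\1{|\GW|\le k}}=\sum_{j=1}^{k}\pr{|\GW|\ge j}-k\pr{|\GW|\ge k+1}$ to deduce the truncated mean with the same constant $c$. Your $(2c-c)\sqrt{k}$ computation makes explicit what the paper leaves to the reader.
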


\begin{proof}
For the first result, first note that \[\pr{|\GW|=k}\quad=\quad\pr{W_k=-1,\:W_1,...,W_{k-1}\ge0}\quad=\quad\frac{1}{k}\pr{W_k=-1}.\]
(The second equality follows e.g.\ from applying \cite[Theorem 1]{ballot_problems} to the increments of $-W$.)
By~\cite[Theorem 2.3.9]{RW_modern_intro} we know that $\pr{W_k=-1}\sim\frac{c'}{\sqrt{k}}$, where $c'$ is a positive constant depending only on $\sigma^2$. This then gives
$\pr{|\GW|\ge k}\sim\sum_{\ell\ge k}\frac{c'}{\ell^{\frac32}}\sim\frac{c}{\sqrt{k}}$ as required.

Writing $\E{|\GW|\1{|\GW|\le k}}=\sum_{j=1}^{k}\pr{|\GW|\ge j}-k\pr{|\GW|\ge k+1}$ and using the first result, we also get the second result.
\end{proof}

\begin{lemma}\label{lem:LW_LH_tail_bound}
The ladder width $\LW$ and ladder height $\LH$ corresponding to a critical walk with steps $\sim(\mu-1)$ satisfy
\begin{align*}\pr{\LW\ge k}\:&\asymp\:\frac{1}{\sqrt{k}}\qquad\text{and}\qquad\E{\LW\1{\LW\le k}}\:\asymp\:\sqrt{k},\\
\pr{\LH\ge k}\:&\lesssim\:\frac{1}{k}\qquad\text{and}\qquad\E{\LH\1{\LH\le k}}\:\asymp\:1,
\end{align*}
where the implicit constants in the $\asymp$'s and the $\lesssim$ only depend on $\sigma^2$.
\end{lemma}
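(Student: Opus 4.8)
The plan is to pin down the law of $\LW$ in terms of the first-passage times of $W$, reduce its tail to that of $|\GW|$ via \Cref{lem:GW_tail_bound}, and to settle the $\LH$ statements by a short fluctuation-theory argument.

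First I would identify $\LW$. Since $R_0=0$ and $W$ is a centred recurrent walk, $\LW_1=R_1$ is the first time after $0$ at which $W$ reaches a weak running maximum; as the steps $\mu-1$ are integer valued and $\ge-1$, this is exactly $\min\{j\ge1:W_j\ge0\}$, so $\pr{\LW\ge k}=\pr{W_1,\dots,W_{k-1}\le-1}$. The increments of $W$ are i.i.d., hence exchangeable, so reversing the finite path $(W_1,\dots,W_{k-1})$ shows this equals the probability that $k-1$ is a strict descending record of $W$, i.e.\ a strict new minimum. Because $W$ is skip-free to the left, its strict new minima occur precisely at the successive first-passage times $\sigma_{-1}<\sigma_{-2}<\cdots$ to $-1,-2,\dots$, whose spacings $\sigma_{-r}-\sigma_{-(r-1)}$ are i.i.d.\ copies of $|\GW|$ (the fact recalled in \Cref{sec:DFQP}). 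Hence $\pr{\LW>n}=\sum_{r\ge1}\pr{\sigma_{-r}=n}$ for $n\ge1$, and with $\phi(s)=\E{s^{|\GW|}}$,
\[
\sum_{n\ge0}s^n\,\pr{\LW>n}\;=\;1+\sum_{r\ge1}\phi(s)^{\,r}\;=\;\frac{1}{1-\phi(s)}\qquad(0\le s<1).
\]

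Now I would bring in \Cref{lem:GW_tail_bound}. From $\pr{|\GW|>k}\sim c\,k^{-1/2}$ and Karamata's Abelian theorem, $1-\phi(s)=(1-s)\sum_{k\ge0}s^k\,\pr{|\GW|>k}\sim c\sqrt{\pi}\,(1-s)^{1/2}$ as $s\uparrow1$, so the power series above is $\sim(c\sqrt{\pi})^{-1}(1-s)^{-1/2}$. Its coefficients $\pr{\LW>n}$ are nonnegative and nonincreasing in $n$, so Karamata's Tauberian theorem applies and yields $\pr{\LW>n}\sim(c\pi)^{-1}n^{-1/2}$, in particular $\pr{\LW\ge k}\asymp k^{-1/2}$. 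For the truncated mean I would then repeat the bookkeeping at the end of the proof of \Cref{lem:GW_tail_bound}: from $\E{\LW\1{\LW\le k}}=\sum_{j=1}^{k}\pr{\LW\ge j}-k\,\pr{\LW\ge k+1}$ and $\pr{\LW\ge j}\sim(c\pi)^{-1}j^{-1/2}$ one gets $\sum_{j\le k}\pr{\LW\ge j}\sim 2(c\pi)^{-1}\sqrt{k}$ and $k\,\pr{\LW\ge k+1}\sim(c\pi)^{-1}\sqrt{k}$, hence $\E{\LW\1{\LW\le k}}\asymp\sqrt{k}$.

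For $\LH$: $\LH_1=W_{R_1}=W_{\min\{j\ge1:W_j\ge0\}}$ is nonnegative and a.s.\ finite (as $W$ is recurrent), which already gives $\pr{\LH\ge k}\le1$ and $\E{\LH\1{\LH\le k}}\le\E{\LH}$; the remaining point is $\E{\LH}\in(0,\infty)$. Positivity is immediate since $\mu$ has mean $1$ and positive variance and therefore puts mass on $\{2,3,\dots\}$, so $\pr{\LH\ge1}\ge\pr{\mu\ge2}>0$, and this also gives $\E{\LH\1{\LH\le k}}\ge\pr{1\le\LH\le k}\to\pr{\LH\ge1}>0$; finiteness is the classical fact that the ascending ladder height of a centred walk has finite expectation as soon as $\E{((\mu-1)^+)^2}<\infty$, which holds here because $(\mu-1)^+\le\mu$ and $\E{\mu^2}=\sigma^2+1<\infty$ (alternatively it can be read off the renewal picture above, $\E{\LH}$ being the finite limit as $N\to\infty$ of $\E{(-W_N)\,\1{W_1,\dots,W_N\le-1}}$, which stabilises because $(\sigma_{-r})_r$ lies in the domain of attraction of the stable law of index $\tfrac12$). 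The only step that needs genuine care is the Tauberian passage for $\pr{\LW>n}$: at the borderline exponent $\tfrac12$ the elementary monotonicity-only renewal bounds are not sufficient, so it is essential that we apply Karamata's theorem to the monotone sequence $\pr{\LW>n}$ itself --- which is exactly what the skip-free reduction to the $|\GW|$-renewal mass function makes available.
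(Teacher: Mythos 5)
Your proof is correct, but it takes a genuinely different route from the paper. The paper handles the upper bound on $\pr{\LW\ge k}$ by bounding it by $\pr{W_t\ne0\text{ for }t\in\{1,\dots,k-1\}}$ and citing a quantitative estimate from~\cite{MCMT}, and then establishes the joint law via the identity
\[
\pr{\LW=k,\:\LH=\ell}\;=\;\pr{v_k\text{ is a child of }v_0;\text{ there are }\ell\text{ later children}},
\]
from which $\pr{\LH=\ell}=\mu(\ge\ell+1)$ and the $\E{\LW\1{\LW\le k}}$ bounds are read off directly by conditioning on the number of offsprings of the root and using \Cref{lem:GW_tail_bound}. You instead reduce $\pr{\LW\ge k}$ to the renewal mass function of $|\GW|$ by time reversal plus the skip-free-to-the-left property, write $\sum_n\pr{\LW>n}s^n=(1-\phi(s))^{-1}$, and run Karamata's Abelian and Tauberian theorems, using the monotonicity of $\pr{\LW>n}$ to justify the Tauberian step. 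This buys you a sharper conclusion --- the exact asymptotic $\pr{\LW\ge k}\sim(c\pi)^{-1}k^{-1/2}$ rather than just $\asymp$ --- and is self-contained modulo Karamata, whereas the paper's route is purely combinatorial and avoids Tauberian machinery at the price of the external citation for the tail upper bound; it also gives the lower bound $\pr{\LW\ge k}\gtrsim k^{-1/2}$ explicitly, which the paper leaves to be deduced from the truncated-mean estimate and monotonicity. Your treatment of $\LH$ is essentially equivalent to the paper's: both rest on $\E{\LH}\in(0,\infty)$ from $\E{\mu^2}<\infty$ (the paper in fact gets $\E{\LH}=\sigma^2/2$ from the explicit law). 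Note that neither your argument nor the paper's actually proves a uniform-in-$k$ lower bound $\pr{\LH\ge k}\gtrsim1$ (and indeed $\pr{\LH\ge k}\to0$ as $k\to\infty$ since $\LH$ is a.s.\ finite); what is both true and used downstream is $\E{\LH\1{\LH\le k}}\asymp1$, which you do establish, so on this point you are exactly aligned with the paper.
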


\begin{proof}
We have
\[
\pr{\LW\ge k}\:=\:\pr{W_t<0\text{ for }t\in\{1,2,...,k-1\}}\:\le\:\pr{W_t\ne0\text{ for }t\in\{1,2,...,k-1\}}.
\]
From~\cite[Theorem 2.26]{MCMT} we know that this is $\le\frac{4\sigma}{\sqrt{k-1}}$.

For the other bounds note that
\begin{align*}
\pr{\LW=k,\:\LH=\ell}\quad&=\quad\pr{\text{the records on }[0,k]\text{ are }0\text{ and }k;\:W_k=\ell}\\
&=\quad\pr{\text{the right minima on }[0,k]\text{ are }0\text{ and }k;\:W_k=\ell}\\
&=\quad\pr{v_k\text{ is a child of }v_0;\text{ there are }\ell\text{ later children}}.
\end{align*}
This means that $\pr{\LH=\ell}=\mu(\ge\ell+1)$, so using that $\mu$ has finite second moment we get $\E{\LH}\asymp 1$ and $\E{\LH\1{\LH\le k}}\asymp 1$ for all $k$. This also immediately implies $\pr{\LH\ge k}\lesssim\frac{1}{k}$.

Finally, note that
\begin{align}
\nonumber
\E{\LW\1{\LW\le n}}\quad&=\quad\sum_{k=1}^{n}k\:\pr{v_k\text{ is a child of }v_0}\\
\nonumber
&=\quad\sum_{\ell\ge1}\E{\1{v_0\text{ has }\ge\ell\text{ children}}\1{\text{index of }\ell\text{th child is }\le k}\left(\text{index of }\ell\text{th child}\right)}\\
\label{eq:E_LW_decomp}
&=\quad\sum_{\ell\ge1}\mu(\ge\ell)\E{\left(\sum_{i=1}^{\ell-1}|\GW_i|+1\right)\1{\sum_{i=1}^{\ell-1}|\GW_i|+1\le k}}.
\end{align}

On one hand this is
\begin{align*}
\le\quad\sum_{\ell\ge1}\mu(\ge\ell)\left((\ell-1)\E{|\GW|\1{|\GW|\le k}}+1\right)\quad\lesssim\quad\left(\sum_{\ell\ge1}\ell\mu(\ge\ell)\right)\cdot\sqrt{k}\quad\lesssim\quad\sqrt{k},
\end{align*}
where we used \Cref{lem:GW_tail_bound} and that $\mu$ has a bounded second moment.

On the other hand \eqref{eq:E_LW_decomp} is
\begin{align*}
\ge\quad\mu(\ge2)\E{|\GW|\1{|\GW|\le k-1}}\quad\asymp\quad\sqrt{k},
\end{align*}
where we used \Cref{lem:GW_tail_bound} and that $\mu$ has mean 1 and variance $\asymp1$, hence $\mu(\ge2)\gtrsim1$.
\end{proof}

\begin{lemma}\label{lem:GW_exp_moment} A critical Galton-Watson tree $\GW$ satisfies
\[1-\E{(1-\alpha)^{|\GW|}}\:\asymp\:\sqrt{\alpha}\qquad\text{as }\alpha\to0,\]
where the implicit constants in $\asymp$ only depend on $\sigma^2$.
\end{lemma}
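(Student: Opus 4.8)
Write $\lambda=1-\alpha$, so the claim is that $1-\E{\lambda^{|\GW|}}\asymp\sqrt{\alpha}$ as $\alpha\to0$. The plan is to sandwich $1-\lambda^{|\GW|}$ between constant multiples of the truncated quantity $1\wedge(\alpha|\GW|)$, and then read off the answer from the two estimates in \Cref{lem:GW_tail_bound}. \textbf{Step 1 (pointwise bounds).} For every integer $k\ge0$ Bernoulli's inequality gives $\lambda^k=(1-\alpha)^k\ge1-k\alpha$, hence $1-\lambda^k\le1\wedge(k\alpha)$. For the matching lower bound, combine $\lambda^k\le e^{-k\alpha}$ with the elementary inequality $1-e^{-x}\ge(1-e^{-1})(1\wedge x)$, valid for all $x\ge0$ (it follows from concavity of $x\mapsto1-e^{-x}$ on $[0,1]$ and from monotonicity for $x\ge1$); this yields $1-\lambda^k\ge1-e^{-k\alpha}\ge(1-e^{-1})(1\wedge k\alpha)$. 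Taking $k=|\GW|$ and expectations gives $1-\E{\lambda^{|\GW|}}\asymp\E{1\wedge(\alpha|\GW|)}$, with absolute implicit constants.

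\textbf{Step 2 (the truncated expectation).} For the lower bound, restrict to the event $\{|\GW|\le1/\alpha\}$, on which $1\wedge(\alpha|\GW|)=\alpha|\GW|$, to get $\E{1\wedge(\alpha|\GW|)}\ge\alpha\,\E{|\GW|\1{|\GW|\le1/\alpha}}$, which by the second part of \Cref{lem:GW_tail_bound} is $\asymp\alpha\cdot\sqrt{1/\alpha}=\sqrt{\alpha}$. For the upper bound, split according to whether $|\GW|\le1/\alpha$: $\E{1\wedge(\alpha|\GW|)}\le\alpha\,\E{|\GW|\1{|\GW|\le1/\alpha}}+\pr{|\GW|>1/\alpha}$, and both terms are $\asymp\sqrt{\alpha}$ by the two parts of \Cref{lem:GW_tail_bound}. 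Combining Steps 1 and 2 gives $1-\E{\lambda^{|\GW|}}\asymp\sqrt{\alpha}$, as required.

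One could instead argue via the generating function $f(s)=\E{s^{|\GW|}}$, which satisfies $f(s)=s\,\phi(f(s))$ with $\phi$ the probability generating function of $\mu$; since $\phi(t)=1+(t-1)+\tfrac{\sigma^2}{2}(t-1)^2+o((t-1)^2)$ near $t=1$ and $f(\lambda)\to1$ as $\lambda\to1$, substituting $u=1-f(\lambda)$ and $\alpha=1-\lambda$ into the functional equation yields $\alpha=\tfrac{\sigma^2}{2}u^2+o(u^2)$, hence even the sharp asymptotic $1-\E{\lambda^{|\GW|}}\sim\sqrt{2/\sigma^2}\,\sqrt{\alpha}$; for the stated $\asymp$, the first argument is shorter. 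There is no real obstacle here: the only points requiring minor care are that the elementary inequalities in Step 1 hold uniformly in $k$ with absolute constants, and that the asymptotics of \Cref{lem:GW_tail_bound} are being applied at $k\asymp1/\alpha$, so that floor/ceiling issues are harmless (being absorbed into $\asymp$).
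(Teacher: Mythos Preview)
Your proof is correct and takes essentially the same approach as the paper: both sandwich $1-(1-\alpha)^{|\GW|}$ between constant multiples of $1\wedge(\alpha|\GW|)$ via Bernoulli/exponential inequalities, truncate at $|\GW|\le1/\alpha$, and invoke \Cref{lem:GW_tail_bound}. Your packaging via the single intermediate $\E{1\wedge(\alpha|\GW|)}$ is slightly cleaner, but the argument is the same.
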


\begin{proof}
Let $n=\frac{1}{\alpha}$. Then we have
\begin{align*}
\E{(1-\alpha)^{|\GW|}}\quad&\le\quad\pr{|\GW|>n}+\E{(1-\alpha)^{|\GW|}\1{|\GW|\le n}}\\
&\le\quad\pr{|\GW|>n}+\E{\left(1-\frac{\alpha}{2}|\GW|\right)\1{|\GW|\le n}}\quad\\
&=\quad 1-\frac{\alpha}{2}\E{|\GW|\1{|\GW|\le n}},\\
\E{(1-\alpha)^{|\GW|}}\quad&\ge\quad\E{(1-\alpha)^{|\GW|}\1{|\GW|\le n}}\\
&\ge\quad\E{\left(1-\alpha|\GW|\right)\1{|\GW|\le n}}\quad\\
&=\quad 1- \pr{|\GW|>n}-\alpha\E{|\GW|\1{|\GW|\le n}}.
\end{align*}
From \Cref{lem:GW_tail_bound} we know that $\E{|\GW|\1{|\GW|\le n}}\asymp\sqrt{n}\asymp\frac{1}{\sqrt{\alpha}}$, and $\pr{|\GW|>n}\asymp\frac{1}{\sqrt{n}}\asymp\alpha$, which finishes the proof.
\end{proof}

\begin{lemma}\label{lem:LW_LH_exp_moment}
The ladder width $\LW$ and ladder height $\LH$ corresponding to a critical walk with steps $\sim(\mu-1)$ satisfy
\begin{align*}
1-\E{(1-\alpha)^{\LW}}\:&\asymp\:\sqrt{\alpha}\qquad\text{as }\alpha\to0,\\
1-\E{(1-\alpha)^{\LH}}\:&\asymp\:\alpha\qquad\text{as }\alpha\to0,
\end{align*}
where the implicit constants in $\asymp$ only depend on $\sigma^2$.
\end{lemma}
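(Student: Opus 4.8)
The plan is to mimic the proof of \Cref{lem:GW_exp_moment}, replacing $|\GW|$ by $\LW$ (resp.\ $\LH$) and feeding in the moment estimates of \Cref{lem:LW_LH_tail_bound} in place of those of \Cref{lem:GW_tail_bound}. The engine is a general two‑sided estimate valid for any $\Znonneg$-valued random variable $Z$ and any $\alpha\in(0,1)$: writing $n=\frac1\alpha$, on the one hand $(1-\alpha)^k\le e^{-\alpha k}$ together with the concavity bound $1-e^{-t}\ge\frac12 t$ for $t\in[0,1]$ gives
\begin{align*}
1-\E{(1-\alpha)^Z}\quad\ge\quad\E{\left(1-(1-\alpha)^Z\right)\1{Z\le n}}\quad\ge\quad\frac{\alpha}{2}\,\E{Z\1{Z\le n}},
\end{align*}
and on the other hand Bernoulli's inequality $(1-\alpha)^k\ge1-\alpha k$ on the event $\{Z\le n\}$, together with $1-(1-\alpha)^Z\le1$ on $\{Z>n\}$, gives
\begin{align*}
1-\E{(1-\alpha)^Z}\quad\le\quad\alpha\,\E{Z\1{Z\le n}}+\pr{Z>n}.
\end{align*}

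First I would take $Z=\LW$. By \Cref{lem:LW_LH_tail_bound} we have $\E{\LW\1{\LW\le n}}\asymp\sqrt n$ and $\pr{\LW>n}\asymp 1/\sqrt n$, so the lower bound yields $1-\E{(1-\alpha)^{\LW}}\gtrsim\alpha\sqrt n\asymp\sqrt\alpha$, while the upper bound yields $1-\E{(1-\alpha)^{\LW}}\lesssim\alpha\sqrt n+1/\sqrt n\asymp\sqrt\alpha$; this is the first assertion.

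Next I would take $Z=\LH$. By \Cref{lem:LW_LH_tail_bound} we have $\E{\LH\1{\LH\le n}}\asymp1$, and since $\LH$ has finite mean (again by \Cref{lem:LW_LH_tail_bound}, letting $k\to\infty$), Markov's inequality gives $\pr{\LH>n}\le\alpha\,\E{\LH}\lesssim\alpha$. Hence the lower bound gives $1-\E{(1-\alpha)^{\LH}}\gtrsim\alpha$ and the upper bound gives $1-\E{(1-\alpha)^{\LH}}\lesssim\alpha+\alpha\asymp\alpha$, which is the second assertion. There is no real obstacle here; the only point meriting attention is that we assume only $\sigma^2<\infty$, so $\LH$ need not have a finite second moment, and the tail term $\pr{\LH>n}$ must be controlled through the first moment via Markov rather than through a second‑moment bound — this is precisely what suffices for the $\asymp\alpha$ rate.
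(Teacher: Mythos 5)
Your argument is correct and is essentially the same as the paper's: the paper's proof of \Cref{lem:LW_LH_exp_moment} is a one-liner (``analogous to the proof of \Cref{lem:GW_exp_moment}, using \Cref{lem:LW_LH_tail_bound}''), and you have simply carried out that analogy explicitly, using the same truncation at $n=1/\alpha$, the same elementary inequalities $1-(1-\alpha)^k\ge\tfrac{\alpha k}{2}$ and $(1-\alpha)^k\ge1-\alpha k$ on $\{k\le n\}$, and the moment estimates of \Cref{lem:LW_LH_tail_bound}. Your one genuine addition is the observation that for $\LH$ the required tail bound $\pr{\LH>n}\lesssim\alpha$ is not stated verbatim in \Cref{lem:LW_LH_tail_bound} (whose displayed tail assertion for $\LH$ is only $\pr{\LH\ge k}\asymp1$, which is uninformative in the regime $k\to\infty$), and must instead be extracted from $\E{\LH}\asymp1$ via Markov's inequality; this is exactly the right fix, requires only $\sigma^2<\infty$, and makes explicit a step the paper leaves implicit.
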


\begin{proof}
Analogous to the proof of \Cref{lem:GW_exp_moment}, using \Cref{lem:LW_LH_tail_bound}.
\end{proof}

\begin{corollary}\label{cor:theta_thetatil_thetadot}
For $\theta$, $\til{\theta}$ and $\dot{\theta}$ as in~\eqref{eq:theta_def} we have
\[1-\theta\quad\asymp\quad1-\til{\theta}\quad\asymp\quad1-\dot{\theta} \quad\asymp\quad\frac{1}{\sqrt{n}},\]
where the implicit constants in $\asymp$ only depend on $\sigma^2$.
\end{corollary}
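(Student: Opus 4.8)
The plan is to reduce each of the three quantities to an expression of the form $1-\E{(1-\alpha)^{Z}}$ with $\alpha = 1-\lambda = \frac1n$, and then invoke the exponential-moment estimates already proved in the appendix. Recall that $\theta = \E{\lambda^{\LW}}$, $\til\theta = \E{\lambda^{|\GW|}}$ and $\dot\theta = \E{\til\theta^{\LH}}$, where $\lambda = 1-\frac1n$.

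First I would handle $\til\theta$: by definition $1-\til\theta = 1-\E{(1-\tfrac1n)^{|\GW|}}$, so \Cref{lem:GW_exp_moment} with $\alpha = \frac1n$ gives immediately $1-\til\theta \asymp \frac{1}{\sqrt n}$. Next, for $\theta$ I would use $1-\theta = 1-\E{(1-\tfrac1n)^{\LW}}$, so the first bound in \Cref{lem:LW_LH_exp_moment} with $\alpha = \frac1n$ gives $1-\theta \asymp \frac{1}{\sqrt n}$.

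The remaining quantity $\dot\theta = \E{\til\theta^{\LH}}$ requires one extra step, since the base is $\til\theta$ rather than $\lambda$. Writing $\til\theta = 1-\beta$ where, by the previous paragraph, $\beta \asymp \frac{1}{\sqrt n}\to 0$, we have $1-\dot\theta = 1-\E{(1-\beta)^{\LH}}$, and the second bound in \Cref{lem:LW_LH_exp_moment} applied with $\alpha = \beta$ gives $1-\dot\theta \asymp \beta \asymp \frac{1}{\sqrt n}$. (One should note that the implicit constants in \Cref{lem:LW_LH_exp_moment} are uniform in $\alpha$ as $\alpha\to0$, so this substitution is legitimate; and $\beta$ stays bounded away from $1$, say $\beta \le \frac12$ for $n$ large, so $(1-\beta)^{\LH}$ is well-behaved.) Combining the three estimates yields the claim, with all implicit constants depending only on $\sigma^2$ since that is the only parameter entering \Cref{lem:GW_exp_moment} and \Cref{lem:LW_LH_exp_moment}.

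There is essentially no obstacle here: the only point requiring the slightest care is the composition in the bound for $\dot\theta$, namely that one applies \Cref{lem:LW_LH_exp_moment} not at $\alpha = \frac1n$ but at $\alpha = 1-\til\theta$, and must check this quantity indeed tends to $0$ (which is exactly the $\til\theta$ estimate proved first). Everything else is a direct quotation of the appendix lemmas.
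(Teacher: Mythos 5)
Your proof is correct and follows exactly the route the paper intends: the paper's own proof is simply the one-liner ``Immediate from Lemmas~\ref{lem:GW_exp_moment} and~\ref{lem:LW_LH_exp_moment},'' and you have filled in the details, including the one non-trivial point that $\dot\theta$ requires applying the $\LH$ bound at $\alpha = 1-\til\theta$ rather than at $\alpha = \frac1n$. No gap.
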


\begin{proof}
Immediate from Lemmas~\ref{lem:GW_exp_moment} and~\ref{lem:LW_LH_exp_moment}.
\end{proof}

\begin{lemma}\label{lem:theta0_theta_1}
For $\theta_0$ and $\theta_1$ as in~\eqref{eq:theta0_def} we have
\[1-\theta_0\quad\asymp\quad1-\theta_1\quad\asymp\quad\frac{1}{\sqrt{n}},\]
where the implicit constants in $\asymp$ only depend on $\sigma^2$.
\end{lemma}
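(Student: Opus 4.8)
The plan is to reduce everything to \Cref{lem:GW_exp_moment}, which already gives $1-\til\theta\asymp n^{-1/2}$ (with constants depending only on $\sigma^2$, via $\til\theta=\E{(1-\tfrac1n)^{|\GW|}}$), and then to control the Laplace-transform-type quantities $\theta_i=\E{\til\theta^{\,d_i^+}}$ through the laws of $d_0^+$ and $d_1^+$. Writing $x:=1-\til\theta$, so that $x\asymp n^{-1/2}$ and $1-\theta_i=\E{1-(1-x)^{d_i^+}}$, I would first record the two relevant distributions: $d_0^+\sim\mu$, while $\pr{d_1^+=k}=\sum_{\ell\ge0}\mu(k+\ell+1)=\pr{Z\ge k+1}$ for $Z\sim\mu$. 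A one-line summation then gives $\E{d_0^+}=1$ and $\E{d_1^+}=\tfrac12\bigl(\E{Z^2}-\E{Z}\bigr)=\tfrac{\sigma^2}{2}$.

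For the upper bounds I would invoke Bernoulli's inequality $(1-x)^k\ge1-kx$, which yields $1-(1-x)^k\le kx$ and hence, after taking expectations, $1-\theta_0\le x\,\E{d_0^+}=x$ and $1-\theta_1\le x\,\E{d_1^+}=\tfrac{\sigma^2}{2}x$, both $\lesssim n^{-1/2}$.

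For the lower bound on $\theta_0$ the argument is short: on $\{d_0^+\ge1\}$ we have $1-(1-x)^{d_0^+}\ge x$, and Cauchy--Schwarz applied to $1=\E{Z}=\E{Z\1{Z\ge1}}\le\sqrt{\E{Z^2}\pr{Z\ge1}}$ gives $\pr{d_0^+\ge1}=\pr{Z\ge1}\ge\tfrac{1}{\sigma^2+1}$, so $1-\theta_0\ge\pr{d_0^+\ge1}\,x\ge\tfrac{x}{\sigma^2+1}\asymp n^{-1/2}$. The main obstacle is the lower bound on $\theta_1$: the naive estimate $1-\theta_1\ge\pr{d_1^+\ge1}\,x$ is not enough, because $\pr{d_1^+\ge1}=\mu(0)$ can be made arbitrarily small among offspring laws of a fixed variance, so the resulting constant would depend on $\mu$ rather than only on $\sigma^2$. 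Instead I would use that $d_1^+$ is moderately large with non-negligible probability: from $1-(1-x)^k\ge1-e^{-kx}\ge(1-e^{-1})kx$ whenever $kx\le1$ (concavity of $t\mapsto1-e^{-t}$), restrict to $\{d_1^+\le M\}$ for a cutoff $M=M(\mu)$ chosen so that $\E{d_1^+\1{d_1^+\le M}}\ge\tfrac{\sigma^2}{4}$ — possible since this increases to $\tfrac{\sigma^2}{2}$ — and conclude that for all $n$ large enough that $Mx\le1$,
\[
1-\theta_1\ \ge\ \E{\bigl(1-(1-x)^{d_1^+}\bigr)\1{d_1^+\le M}}\ \ge\ (1-e^{-1})\,x\,\E{d_1^+\1{d_1^+\le M}}\ \ge\ (1-e^{-1})\tfrac{\sigma^2}{4}\,x\ \asymp\ n^{-1/2}.
\]
The dependence of $M$ on $\mu$ (only through the tail of $d_1^+$) is precisely the kind of $\mu$-dependence in "sufficiently large $n$" flagged in \Cref{sec:mu}, whereas the leading constants depend only on $\sigma^2$; combining the four displayed bounds finishes the proof.
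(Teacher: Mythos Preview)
Your argument is correct, but the paper proceeds quite differently: instead of estimating $1-\theta_i=\E{1-(1-x)^{d_i^+}}$ analytically, it derives two exact identities. By decomposing $|\GW|$ according to the root's offspring it gets $\til\theta=\lambda\,\E{\til\theta^{d_0^+}}=\lambda\theta_0$, so $1-\theta_0=\frac{(1-\til\theta)-(1-\lambda)}{\lambda}\asymp n^{-1/2}$ directly from \Cref{lem:GW_exp_moment}. Then summing $\sum_k\til\theta^k\pr{d_1^+=k}=\sum_m\mu(m)\frac{1-\til\theta^m}{1-\til\theta}$ gives $\theta_1=\frac{1-\theta_0}{1-\til\theta}$, hence $1-\theta_1=\frac{\theta_0-\til\theta}{1-\til\theta}=\frac{\lambda^{-1}\til\theta(1-\lambda)}{1-\til\theta}\asymp\frac{1/n}{1/\sqrt n}=n^{-1/2}$. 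Thus the paper gets both bounds at once from a closed-form expression, with no truncation needed; your approach is more hands-on and makes the $\mu$-dependent threshold for ``sufficiently large $n$'' explicit through the cutoff $M$, whereas in the paper that dependence is already absorbed into \Cref{lem:GW_exp_moment}. Either route works; the identity-based one is shorter and also feeds directly into \Cref{lem:theta_fraction}.
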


\begin{proof}
By considering the number of offsprings of the root in a $\GW$ tree, we see that \[\til{\theta}\quad=\quad\E{\lambda^{|\GW|}}\quad=\quad\E{\lambda\:\E{\lambda^{|\GW|}}^{d_0^+}}\quad=\quad\lambda\:\E{\til{\theta}^{d_0^+}},\]
i.e. $\theta_0=\lambda^{-1}\til{\theta}$. Using \Cref{cor:theta_thetatil_thetadot} and that $\lambda=1-\frac1n$, we get that $1-\theta_0\asymp\frac{1}{\sqrt{n}}$.

To get the estimate for $\theta_1$, note that
\begin{align*}
\E{\til{\theta}^{d_1^+}}\quad&=\quad\sum_{k\ge0}\til{\theta}^k\:\pr{d_1^+=k}\quad=\quad\sum_{k\ge0}\sum_{\ell\ge0}\til{\theta}^k\:\mu(k+\ell+1)\quad=\quad\sum_{m\ge1}\mu(m)\sum_{k=0}^{m-1}\til{\theta}^k\quad\\
&=\quad\sum_{m\ge1}\mu(m)\frac{1-\til{\theta}^m}{1-\til{\theta}}\quad=\quad\sum_{m\ge0}\mu(m)\frac{1-\til{\theta}^m}{1-\til{\theta}}\quad=\quad\frac{1-\theta_0}{1-\til{\theta}},
\end{align*}
hence $1-\theta_1=\frac{\theta_0-\til{\theta}}{1-\til{\theta}}=\frac{\lambda^{-1}\til{\theta}(1-\lambda)}{1-\til{\theta}}\asymp\frac{1}{\sqrt{n}}$ as required.
\end{proof}

\begin{lemma}\label{lem:theta_fraction}
We have
\[\frac{1-\theta_1}{1-\til{\theta}}\quad\sim\quad\frac{\sigma^2}{2}\:.\]
\end{lemma}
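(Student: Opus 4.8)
The plan is to reduce the statement to a sharp second-order estimate for $\til{\theta} = \E{\lambda^{|\GW|}}$, which I obtain from the functional equation satisfied by the generating function of the total progeny of $\GW$.

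First I would record the two identities already derived in the proof of \Cref{lem:theta0_theta_1}, namely $\theta_0 = \lambda^{-1}\til{\theta}$ and
\[
1-\theta_1\quad=\quad\frac{\theta_0-\til{\theta}}{1-\til{\theta}}\quad=\quad\frac{\lambda^{-1}\til{\theta}\,(1-\lambda)}{1-\til{\theta}},
\]
so that
\[
\frac{1-\theta_1}{1-\til{\theta}}\quad=\quad\frac{\lambda^{-1}\,\til{\theta}\,(1-\lambda)}{(1-\til{\theta})^2}.
\]
Since $\lambda = 1-\frac1n$ we have $\lambda^{-1}\to1$, $1-\lambda = \frac1n$, and $\til{\theta}\to1$ (by dominated convergence, or \Cref{cor:theta_thetatil_thetadot}), so it suffices to prove that $n(1-\til{\theta})^2\to\frac{2}{\sigma^2}$, i.e.\ that $1-\til{\theta}\sim\sqrt{\frac{2}{\sigma^2 n}}$.

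To establish this, let $f(s)=\E{s^Z}$ with $Z\sim\mu$, so that $f(1)=f'(1)=1$ and $f''(1^-) = \E{Z(Z-1)} = \sigma^2$, and let $\phi(s)=\E{s^{|\GW|}}$ be the generating function of the total progeny (so $\til{\theta}=\phi(\lambda)$). Decomposing $\GW$ into its root together with the independent copies of $\GW$ rooted at the children of the root gives the standard functional equation $\phi(s)=s\,f\big(\phi(s)\big)$ for $s\in[0,1]$. Evaluating at $s=\lambda$ and writing $\delta := 1-\til{\theta} = 1-\phi(\lambda)$ yields $1-\delta = (1-\tfrac1n)\,f(1-\delta)$. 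Using only the finite second moment of $\mu$, I would justify $f(1-\delta) = 1-\delta+\frac{\sigma^2}{2}\delta^2+o(\delta^2)$ as $\delta\to0$ through the integral form of Taylor's theorem, $f(1-\delta) = 1-\delta+\int_0^\delta(\delta-t)f''(1-t)\,dt$, together with $0\le f''(1-t)\le\sigma^2$ on $[0,1]$ and $f''(1-t)\uparrow\sigma^2$ as $t\downarrow0$ (monotone convergence), which give $\int_0^\delta(\delta-t)f''(1-t)\,dt = \frac{\sigma^2}{2}\delta^2+o(\delta^2)$. Substituting into $1-\delta = (1-\tfrac1n)f(1-\delta)$ and cancelling the $1-\delta$ terms leaves $\frac1n = \frac{\sigma^2}{2}\delta^2 + \frac1n\delta + o(\delta^2)$; since $\delta\to0$ this rearranges to $\frac1n(1-\delta) = \delta^2\big(\frac{\sigma^2}{2}+o(1)\big)$, whence $\delta^2\sim\frac{2}{\sigma^2 n}$. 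Plugging this back into the displayed formula gives $\frac{1-\theta_1}{1-\til{\theta}}\to\frac{\sigma^2}{2}$, as claimed.

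The only delicate point is the error control in the expansion of $f$ near $1$: we assume nothing beyond $\sigma^2<\infty$, so there is no third moment available to bound a remainder, and $f''$ is only controlled as a monotone left-limit at $1$ — which is precisely what the monotone-convergence estimate above handles. (This generating-function route also avoids the periodicity issues that a local-limit-theorem approach via \Cref{lem:GW_tail_bound} would face, since the increments of the encoding walk need not be aperiodic.) Everything else is the functional equation and elementary algebra.
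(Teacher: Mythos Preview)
Your proof is correct, but it takes a genuinely different route from the paper's.

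The paper avoids any second-order analysis of $\til{\theta}$: it writes
\[
\frac{1-\theta_1}{1-\til{\theta}}
\;=\;\frac{1-\E{\til{\theta}^{\,d_1^+}}}{1-\til{\theta}}
\;=\;\E{\sum_{\ell=0}^{d_1^+-1}\til{\theta}^{\,\ell}},
\]
notes that this sum is bounded above by $\E{d_1^+}=\frac{\sigma^2}{2}$, and then gets the matching lower bound by the truncation $\sum_{\ell=0}^{d_1^+-1}\til{\theta}^{\,\ell}\ge\til{\theta}^L\sum_{\ell=0}^{L-1}\1{d_1^+\ge\ell+1}$, sending first $n\to\infty$ (so $\til{\theta}^L\to1$) and then $L\to\infty$. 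This is a two-line dominated-convergence argument and uses only $\E{d_1^+}<\infty$.

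Your approach instead reduces (via the identity $1-\theta_1=\lambda^{-1}\til{\theta}(1-\lambda)/(1-\til{\theta})$ from \Cref{lem:theta0_theta_1}) to proving the sharp asymptotic $1-\til{\theta}\sim\sqrt{2/(\sigma^2 n)}$, which you obtain from the total-progeny functional equation $\phi(s)=s\,f(\phi(s))$ and a careful Taylor expansion of $f$ at $1$ using only the finite second moment. This is longer but yields strictly more: it upgrades \Cref{cor:theta_thetatil_thetadot} from $1-\til{\theta}\asymp n^{-1/2}$ to the exact constant. So the paper's proof is shorter and more elementary for the lemma at hand, while yours produces an independently useful refinement of $\til{\theta}$ along the way.
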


\begin{proof} A quick calculation shows that $\E{d_1^+}=\frac{\sigma^2}{2}$. Note that we have
\begin{align*}
\frac{1-\theta_1}{1-\til{\theta}}\quad=\quad\frac{1-\E{\til{\theta}^{d_1^+}}}{1-\til{\theta}}\quad=\quad\E{\sum_{\ell=0}^{d_1^+-1}\til{\theta}^{\ell}}\quad\le\quad\E{d_1^+}.
\end{align*}
For the lower bound note that
\[
\E{\sum_{\ell=0}^{d_1^+-1}\til{\theta}^{\ell}}\quad\ge\quad\sum_{\ell=0}^{L}\til{\theta}^{\ell}\pr{d_1^+\ge\ell+1}\quad\ge\quad \til{\theta}^{L}\sum_{\ell=0}^{L}\pr{d_1^+\ge\ell+1}.
\]
We know that $\sum_{\ell=0}^{L}\pr{d_1^+\ge\ell+1}\uparrow\E{d_1^+}$ as $L\to\infty$, and that for any fixed value of $L$, we have $\til{\theta}^L\uparrow1$ as $n\to\infty$. So choosing $L$ sufficiently large and then $n$ sufficiently large in terms of $L$, we can make the right-hand side arbitrarily close to $\E{d_1^+}$. This finishes the proof.
\end{proof}

\begin{lemma}\label{lem:sum_GW_sizes} For any $s>1$ we have
\[\frac{1}{\sqrt{s}}\quad\lesssim\quad\pr{\sum_{i=1}^n|\GW_i|\ge n^2s}\quad\lesssim\quad\frac{\sqrt{\log s}}{\sqrt{s}},\]
where the implicit constants in $\lesssim$ only depend on $\sigma^2$.
\end{lemma}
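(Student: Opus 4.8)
The plan is to obtain the lower bound from the observation that it suffices for a single tree to be large, and the upper bound from a truncation argument combined with a union bound and Markov's inequality; the only ingredient beyond elementary estimates is \Cref{lem:GW_tail_bound}. Before anything else I would record a uniform-in-$k$ version of that lemma: since $\pr{|\GW|\ge k}\sim c/\sqrt k$ and this probability is positive for every $k$, by monotonicity there are constants $0<c_1\le c_2$ (depending only on $\sigma^2$) with $c_1/\sqrt k\le\pr{|\GW|\ge k}\le c_2/\sqrt k$ for all $k\ge1$, and consequently $\E{\min(|\GW|,k)}=\sum_{m=1}^k\pr{|\GW|\ge m}\asymp\sqrt k$ for all $k\ge1$. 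Having these bounds hold for all $k$ (not just asymptotically) is what will make the final estimates uniform in $n$, including small $n$.

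For the lower bound I would set $p=\pr{|\GW|\ge\lceil n^2s\rceil}$ and use independence of the trees to get
\[
\pr{\textstyle\sum_{i=1}^n|\GW_i|\ge n^2s}\;\ge\;\pr{\max_{i\le n}|\GW_i|\ge n^2s}\;=\;1-(1-p)^n\;\ge\;1-e^{-np}.
\]
Here $np\asymp n/\sqrt{\lceil n^2s\rceil}\asymp1/\sqrt s$ by the uniform bounds above, and then $1-e^{-np}\ge(1-e^{-1})\min(np,1)\asymp1/\sqrt s$, where $s>1$ is used to handle the case in which $np$ is of constant order. This yields the claimed $\gtrsim1/\sqrt s$.

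For the upper bound I would truncate at level $K:=\lceil n^2s\rceil$, via the inclusion $\{\sum_{i=1}^n|\GW_i|\ge n^2s\}\subseteq\{\exists i\le n:|\GW_i|>K\}\cup\{\sum_{i=1}^n\min(|\GW_i|,K)\ge n^2s\}$. The first event has probability at most $n\pr{|\GW|>K}\lesssim n/\sqrt K\asymp1/\sqrt s$ by a union bound; the second has probability at most $n\E{\min(|\GW|,K)}/(n^2s)\lesssim n\cdot n\sqrt s/(n^2s)=1/\sqrt s$ by Markov's inequality together with $\E{\min(|\GW|,K)}\asymp\sqrt K\asymp n\sqrt s$. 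Adding these gives $\pr{\sum_{i=1}^n|\GW_i|\ge n^2s}\lesssim1/\sqrt s$, which is in fact stronger than the claim: since the statement concerns only $s\gtrsim1$, one may take the implicit threshold there large enough that $\log s\ge1$, so that $1/\sqrt s\le\sqrt{\log s}/\sqrt s$.

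I do not expect a genuine obstacle. The points requiring a little care are upgrading \Cref{lem:GW_tail_bound} to bounds valid for all $k\ge1$ (so the argument does not break for small $n$), choosing the truncation level $K\asymp n^2s$ that balances the two error terms, and using $s>1$ in the lower bound when $np$ is not small. It is worth noting that the $\sqrt{\log s}$ in the statement is pure slack — the truncation argument already delivers the cleaner bound $1/\sqrt s$.
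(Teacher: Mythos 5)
Your proposal is correct, and while the lower bound is essentially the paper's argument (a single tree exceeding $n^2s$, then $1-(1-p)^n\gtrsim\min(np,1)$), the upper bound takes a genuinely different and in fact stronger route. The paper softens the hard threshold $n^2s$ by introducing an independent $\Geomnonneg{1-1/m}$ time $\xi_m$ with $m=n^2s/\log s$, observes that the number $\zeta$ of trees fitting entirely before time $\xi_m$ is itself geometric with parameter $1-\til\theta\asymp1/\sqrt m$, and then bounds $\pr{\xi_m\ge n^2s}+\pr{\zeta<n}\lesssim e^{-n^2s/m}+n/\sqrt m$. The two error terms are balanced only up to the $\sqrt{\log s}$ factor, which is why the stated bound carries it. Your truncate-at-$K\asymp n^2s$ argument avoids any softening: the union bound over a single large tree and the first-moment Markov bound on the truncated sum both give $\lesssim1/\sqrt s$ directly, matching the lower bound and showing the $\sqrt{\log s}$ in the statement is indeed slack. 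This is in line with the $1/2$-stable scaling limit of $\sum_i|\GW_i|/n^2$, whose tail is of order $1/\sqrt s$. One small point worth keeping in mind, which affects both proofs equally: upgrading \Cref{lem:GW_tail_bound} to a two-sided bound valid for all $k\ge1$ uses monotonicity, and the resulting constants a priori depend on $\mu$ rather than only on $\sigma^2$; the paper's own auxiliary lemmas (\Cref{lem:GW_exp_moment}, \Cref{cor:theta_thetatil_thetadot}) have the same implicit reliance, and the paper flags this dependence issue in \Cref{sec:mu}, so you are not introducing a new gap.
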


\begin{proof}
Using \Cref{lem:GW_tail_bound} we can lower bound as
\begin{align*}
\pr{\sum_{i=1}^n|\GW_i|\ge n^2s}\quad\ge\quad\pr{|\GW_i|\ge n^2s\text{ for some }i\in\{1,2,...,n\}}\\
=\quad1-\left(1-\pr{|\GW|\ge n^2s}\right)^n\quad\ge\quad 1-\left(1-\frac{c_1}{n\sqrt{s}}\right)^n\quad\gtrsim\quad\frac{1}{\sqrt{s}}\:.
\end{align*}
For the upper bound let $m=\frac{n^2s}{\log s}$, and let $\xi_m\sim\Geomnonneg{\frac{1}{m}}$. Let $(\GW_i)$ be an infinite sequence of independent $\GW$ trees and let us index their vertices by the nonnegative integers, going around anticlockwise on each tree in order. Then the number $\zeta$ of trees with all vertices having index $\le\xi_m$ is distributed as $\Geomnonneg{1-\til{\theta}}$, and from \Cref{lem:GW_exp_moment} we know that $1-\til{\theta}\asymp\frac{1}{\sqrt{m}}$. Using these we get
\begin{align*}
\pr{\sum_{i=1}^n|\GW_i|\ge n^2s}\quad\le\quad\pr{\xi_m\ge n^2s}+\pr{\zeta<n}\quad\lesssim\quad e^{-\frac{n^2s}{m}}+\frac{n}{\sqrt{m}}\quad\asymp\frac{\sqrt{\log s}}{\sqrt{s}}\:.
\end{align*}
This finishes the proof.
\end{proof}

\section{Estimates regarding SRWs}\label{app:SRW}

\begin{lemma}[\cite{RW_modern_intro}, Proposition 2.4.5] \label{lem:hitting_time_tail_bounds}
Let $X$ be a SRW on $\Z^d$, starting from 0, let $R>0$, let $\tau_R:=\inf\{n\ge0:\:||X_n||\ge R\}$ and let $s\ge1$. Then we have
\[
\pr{\tau_R>sR^2}\quad\lesssim\quad e^{-cs}
\]
where $c$ is a positive constant.

Also, for $\alpha\lesssim1$ we have
\[
\pr{\tau_R<\alpha R^2}\quad\lesssim\quad e^{-\frac{\til{c}}{\alpha}},
\]
where $\til{c}$ is a positive constant.
\end{lemma}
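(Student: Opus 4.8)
The statement is a standard fact about simple random walk --- it is essentially \cite[Proposition 2.4.5]{RW_modern_intro} --- and I would prove the two bounds by elementary arguments.

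For the upper bound $\pr{\tau_R>sR^2}\lesssim e^{-cs}$, the plan is to break time into consecutive blocks of length $N:=\lceil R^2\rceil$ and to show that in each block the walk has a chance bounded below, uniformly in $R$, of leaving $B(0,R)$. Set $M:=\lfloor s\rfloor$ and let $\Delta_k:=X_{(k+1)N}-X_{kN}$ for $k=0,\dots,M-1$; these are i.i.d. On the event $\{\tau_R>sR^2\}$ we have $\|X_{kN}\|<R$ for every $k\le M$, so if $\|\Delta_k\|\ge 2R$ for some $k$ then $\|X_{(k+1)N}\|>R$, a contradiction; hence $\{\tau_R>sR^2\}\subseteq\bigcap_{k=0}^{M-1}\{\|\Delta_k\|<2R\}$. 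The key quantitative input is that $\pr{\|\Delta_0\|\ge 2R}\ge p_0$ for some $p_0>0$ independent of $R$: for large $R$ this follows from the central limit theorem (the rescaled increment $\Delta_0/R$ converges to a nondegenerate Gaussian $Z$, and $\{\|Z\|\ge 2\}$ has positive probability), and for $R$ in a bounded range one checks directly that within $N$ steps the walk exits $B(0,R)$ with probability bounded below. By independence of the $\Delta_k$ this yields $\pr{\tau_R>sR^2}\le(1-p_0)^M\le (1-p_0)^{-1}e^{-cs}$ with $c:=-\log(1-p_0)>0$, which gives the claim.

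For the lower bound $\pr{\tau_R<\alpha R^2}\lesssim e^{-\til{c}/\alpha}$, I would first note that $\{\tau_R<\alpha R^2\}$ forces $\max_{n\le T}\|X_n\|\ge R$ with $T:=\lfloor\alpha R^2\rfloor$, so a union bound over coordinates gives $\pr{\tau_R<\alpha R^2}\le\sum_{i=1}^d\pr{\max_{n\le T}|X_n^{(i)}|\ge R/\sqrt d}$, where $X^{(i)}$ denotes the $i$-th coordinate. Each $(X^{(i)}_n)_n$ is a martingale with increments taking values in $\{-1,0,1\}$, so by Hoeffding's lemma the moment generating function of each increment is at most $e^{\lambda^2/2}$, whence $(\exp(\lambda X^{(i)}_n-\lambda^2 n/2))_n$ is a nonnegative supermartingale for every $\lambda>0$. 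Applying the maximal inequality for nonnegative supermartingales and optimising over $\lambda$ gives $\pr{\max_{n\le T}X^{(i)}_n\ge a}\le e^{-a^2/(2T)}$; taking $a=R/\sqrt d$ makes this at most $e^{-1/(2d\alpha)}$, and summing the $2d$ contributions (both signs, all coordinates) yields $\pr{\tau_R<\alpha R^2}\le 2d\,e^{-1/(2d\alpha)}$, which for $\alpha\lesssim 1$ is $\lesssim e^{-\til{c}/\alpha}$ after absorbing the prefactor into the exponential at the cost of a smaller constant $\til{c}>0$.

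The only step that is not completely routine is securing the constant $p_0$ in the first part uniformly in $R$ (together with the clean handling of the ceilings and the bounded-$R$ regime); the second part is a textbook application of the exponential-martingale maximal inequality. Since the statement coincides with a cited proposition, one may alternatively simply invoke \cite[Proposition 2.4.5]{RW_modern_intro}.
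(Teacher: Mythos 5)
The paper gives no proof of this lemma; it is stated with a citation to \cite[Proposition~2.4.5]{RW_modern_intro}, so there is no in-paper argument to compare against. Your proof is correct and uses the standard tools one would expect: a block decomposition together with a CLT-type uniform lower bound on the per-block escape probability for the upper tail, and an exponential supermartingale maximal inequality (via Hoeffding's lemma applied coordinatewise) for the lower tail. One bookkeeping point in the first part: with $M=\lfloor s\rfloor$ and $N=\lceil R^2\rceil$ it is not always true that $MN\le sR^2$ (e.g.\ $s=2$, $R^2=1.5$ gives $MN=4>3=sR^2$), so the event $\{\tau_R>sR^2\}$ only forces $\|X_{kN}\|<R$ for $k\le\lfloor sR^2/N\rfloor$, not for all $k\le M$. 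Defining the block count as $M'=\lfloor sR^2/N\rfloor$ instead fixes this, and $M'\ge s/2-1$ once $R\ge1$ (the case $R<1$ is degenerate since $\tau_R\le1$), so the exponential decay still follows with a smaller constant. This is precisely the kind of detail you flag yourself as routine; the second part is clean as written.
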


\begin{lemma}\label{lem:g_alpha_x_bound}
If $d>2$, $\alpha\ll1$, and $x\in\Z^d$ satisfies $||x||\ge\frac{c_0}{\sqrt{\alpha}}$ for some positive constant $c_0$, then
\begin{align*}
g_{\alpha}(x)\quad\lesssim\quad e^{-c||x||\sqrt{\alpha}}g(x),
\end{align*}
where $c$ is a positive constant.

Also, if $||x||\le\frac{C_0}{\sqrt{\alpha}}$ for some positive constant $C_0$, then
\begin{align*}
g(x)-g_{\alpha}(x)\quad\lesssim\quad\left(x\sqrt{\alpha}\right)^{\frac{2d(d-2)}{d^2+2d-4}}g(x).
\end{align*}
\end{lemma}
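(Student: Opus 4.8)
The plan is to relate $g_\alpha$ to the ordinary Green's function $g$ via the representation $g_\alpha(x)=\sum_{k\ge0}(1-\alpha)^k p_k(x)$, where $p_k(x)=\pr{X_k=x}$, and $g(x)=\sum_{k\ge0}p_k(x)$. For both bounds the key is to split the sum over $k$ at a threshold of order $\|x\|^2$ and use the local central limit theorem together with Gaussian-type tail bounds on $p_k(x)$ for the regime $k\ll\|x\|^2$. Throughout, write $r=\|x\|$ and recall $g(x)\asymp r^{-(d-2)}$ from~\eqref{eq:asymp_g}, and that $p_k(x)\lesssim k^{-d/2}e^{-c r^2/k}$ for $k\gtrsim1$ (a standard sub-Gaussian heat-kernel estimate for the SRW on $\Z^d$, valid on the parity class of $x$; see e.g.~\cite{RW_modern_intro}).

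For the first bound, assume $r\ge c_0/\sqrt\alpha$, i.e.\ $r^2\alpha\ge c_0^2$. Write $g_\alpha(x)=\sum_{k\le r^2}+\sum_{k> r^2}$. In the first sum, $(1-\alpha)^k\le 1$ and each term satisfies $p_k(x)\lesssim k^{-d/2}e^{-cr^2/k}\lesssim r^{-(d-2)}e^{-c'r^2/k}$ after optimizing $k^{-d/2}e^{-cr^2/(2k)}\lesssim r^{-(d-2)}$; summing the geometric-like tail $\sum_{k\le r^2} e^{-c'r^2/k}$ is $\lesssim r^2 e^{-c'}$, but more carefully one gets $\lesssim r^{-(d-2)}$ times a constant, and then one must extract the factor $e^{-cr\sqrt\alpha}$. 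The cleanest route: since $r\sqrt\alpha\asymp r^2\alpha/(r\sqrt\alpha)$ and $r^2\alpha\ge c_0^2$, we have $(1-\alpha)^k\le e^{-\alpha k}$, and for $k$ in the dyadic range $[2^{j}, 2^{j+1}]$ with $2^j\le r^2$ the term is bounded by $e^{-\alpha 2^j} 2^{-jd/2} e^{-cr^2/2^{j+1}}$; summing over $j$ and using $e^{-\alpha 2^j}\le e^{-cr\sqrt\alpha \cdot(\text{something}\ge1)}$ on the relevant range yields the factor $e^{-cr\sqrt\alpha}$ — here one uses that $\min_j(\alpha 2^j + r^2/2^{j+1})\asymp r\sqrt\alpha$ by AM–GM. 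The tail sum $\sum_{k>r^2}(1-\alpha)^k p_k(x)\le \sum_{k>r^2}e^{-\alpha k}p_k(x)$; bounding $p_k(x)\lesssim k^{-d/2}$ and splitting further at $k\asymp 1/\alpha$ gives $\lesssim (1/\alpha)^{-(d-2)/2}\wedge\ldots$, which is $\lesssim e^{-cr\sqrt\alpha} r^{-(d-2)}$ since $r^2\alpha\gtrsim1$ forces $r^{-(d-2)}\gtrsim \alpha^{(d-2)/2}$ up to constants and the exponential is harmless. Combining gives $g_\alpha(x)\lesssim e^{-cr\sqrt\alpha}g(x)$.

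For the second bound, assume $r\le C_0/\sqrt\alpha$ and estimate $g(x)-g_\alpha(x)=\sum_{k\ge0}\bigl(1-(1-\alpha)^k\bigr)p_k(x)$. Use $1-(1-\alpha)^k\le \min(1,\alpha k)$. Split at a free parameter $M$ (to be optimized): for $k\le M$ use $1-(1-\alpha)^k\le \alpha k$, giving $\alpha\sum_{k\le M} k\, p_k(x)$; for $k>M$ use $1-(1-\alpha)^k\le 1$, giving $\sum_{k>M}p_k(x)$. For the second piece, $\sum_{k>M}p_k(x)\lesssim \sum_{k>M}k^{-d/2}e^{-cr^2/k}$. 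If $M\gtrsim r^2$ this is $\lesssim M^{-(d-2)/2}$; if $M\lesssim r^2$ it is dominated by $k\asymp r^2$ and is still $\lesssim r^{-(d-2)}e^{-c}$ but we want it small relative to $g(x)$, so we will take $M$ a suitable power of $r^2$. For the first piece, $\alpha\sum_{k\le M}k\,p_k(x)\lesssim \alpha\sum_{k\le M}k\cdot k^{-d/2}e^{-cr^2/k}$; the summand $k^{1-d/2}e^{-cr^2/k}$ is maximized near $k\asymp r^2$, so if $M\lesssim r^2$ this sum is $\lesssim M\cdot M^{1-d/2}e^{-cr^2/M}=M^{2-d/2}e^{-cr^2/M}$, whence the first piece is $\lesssim \alpha M^{2-d/2}e^{-cr^2/M}$; if $M\gtrsim r^2$ it is $\lesssim \alpha r^{4-d}$ (for $d>4$) or with a log at $d=4$, times constants. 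Optimizing the trade-off between $\alpha M^{2-d/2}$ (or $\alpha r^{4-d}$-type terms) and $M^{-(d-2)/2}$ as functions of $M$, and writing everything as a power of $r\sqrt\alpha$ times $g(x)=\Theta(r^{-(d-2)})$, produces the exponent $\frac{2d(d-2)}{d^2+2d-4}$; I would carry out this optimization explicitly, choosing $M$ to balance the two competing error terms (the balance point is $M\asymp r^{2}(r\sqrt\alpha)^{-2\cdot\frac{?}{?}}$, with the exponent dictated precisely by setting $\alpha M^{2-d/2}\asymp M^{-(d-2)/2}$, i.e.\ $M\asymp \alpha^{-2/(2-d/2+(d-2)/2)}=\alpha^{-2/1}$ — wait, one must be careful; the correct balance involves also the $e^{-cr^2/M}$ factor and the relation $r\lesssim \alpha^{-1/2}$, and tracking these is exactly where the unusual exponent comes from).

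The main obstacle is the bookkeeping in the second bound: getting the precise exponent $\frac{2d(d-2)}{d^2+2d-4}$ rather than a cruder one requires choosing the splitting parameter $M$ optimally as a function of $\alpha$ and $r$ (not just $\alpha$), keeping the Gaussian factor $e^{-cr^2/M}$ in play, and using the hypothesis $r\sqrt\alpha\le C_0$ to convert all error terms into a single power of $r\sqrt\alpha$ times $g(x)$. The first bound is comparatively routine once one invokes the sub-Gaussian heat-kernel estimate and the AM–GM observation $\alpha k + r^2/k \ge 2r\sqrt\alpha$.
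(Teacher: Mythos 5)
Your plan is sound and genuinely different from the paper's proof, but your assessment of the second bound contains a misconception worth flagging.

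For the first bound the paper does not sum the series at all: it writes $g_\alpha(x)\lesssim\pr{\tau_{||x||/2}<\Geomnonneg{\alpha}}g(x)$, splits the event according to whether $\tau_{||x||/2}$ or $\Geomnonneg{\alpha}$ exceeds $||x||/\sqrt\alpha$, and invokes the exit-time small-deviation estimate of \Cref{lem:hitting_time_tail_bounds} together with the geometric tail. Your route — summing $(1-\alpha)^kp_k(x)$ with the sub-Gaussian heat-kernel bound and AM--GM on $\alpha k+cr^2/k$ — is a correct alternative; it extracts the same exponential factor. (A small simplification for your tail sum: $\sum_{k>r^2}(1-\alpha)^kp_k(x)\le(1-\alpha)^{r^2}g(x)\le e^{-\alpha r^2}g(x)\le e^{-c_0||x||\sqrt\alpha}g(x)$, since $||x||\sqrt\alpha\ge c_0$; no further splitting is needed there.)

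For the second bound the paper writes $g(x)-g_\alpha(x)\le\pr{\Geomnonneg{\alpha}\le t}g(x)+\E{g(X_t-x)}$ and bounds $\E{g(X_t-x)}\lesssim g(x)\bigl(\frac{||x||^d+R^d}{t^{d/2}}+\frac{||x||^{d-2}}{R^{d-2}}\bigr)$ using only $\pr{X_t=y}\lesssim t^{-d/2}$ (no Gaussian decay) and a second free radius $R\ge2||x||$; the three-way balance $t\alpha=R^d/t^{d/2}=||x||^{d-2}/R^{d-2}$ is exactly what produces the exponent $\tfrac{2d(d-2)}{d^2+2d-4}$. The ``obstacle'' you identify — choosing $M$ cleverly to reproduce that exponent — does not exist, because your decomposition naturally overshoots it. Since $1-(1-\alpha)^k\le\min(1,\alpha k)$ switches regime at $k\asymp1/\alpha$, the natural choice is $M\asymp1/\alpha$, giving $\alpha\sum_{k\le1/\alpha}kp_k(x)$ plus $\sum_{k>1/\alpha}p_k(x)\lesssim\alpha^{(d-2)/2}$. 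For $d>4$ the first piece is $\asymp\alpha(g\star g)(x)\asymp\alpha||x||^{4-d}$; for $d=4$ it is $\asymp\alpha\log(1/(\alpha||x||^2))$; for $2<d<4$ it is $\asymp\alpha^{(d-2)/2}$. Dividing by $g(x)\asymp||x||^{2-d}$ and using $||x||\sqrt\alpha\le C_0$, you get
\[
\frac{g(x)-g_\alpha(x)}{g(x)}\quad\lesssim\quad\bigl(||x||\sqrt\alpha\bigr)^{\min(d-2,\,2)}
\]
(with a logarithmic correction at $d=4$). Since $\min(d-2,2)>\tfrac{2d(d-2)}{d^2+2d-4}$ for every $d>2$, this is a stronger statement than the lemma in the only relevant regime $||x||\sqrt\alpha\lesssim1$, and the lemma follows a fortiori. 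So your approach works — but be aware that the particular exponent $\tfrac{2d(d-2)}{d^2+2d-4}$ is an artifact of the paper's coarser two-parameter estimate, not the optimal rate, and your method will not and should not reproduce it.
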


\begin{proof}
For the first bound note that $g_{\alpha}(x)\lesssim\pr{\tau_{\frac12||x||}<\Geomnonneg{\alpha}}g(x)$, and\\ $\pr{\tau_{\frac12||x||}<\Geomnonneg{\alpha}}\le\pr{\tau_{\frac12||x||}<\frac{||x||}{\sqrt{\alpha}}}+\pr{\Geomnonneg{\alpha}>\frac{||x||}{\sqrt{\alpha}}}$. The last two probabilities are both $\lesssim e^{-c||x||\sqrt{\alpha}}$.

For the second bound, start by noting that for any $t$ and $R$ with $R\ge2||x||$,  we have
\begin{align*}
&\E{\#\left(\text{visits at }x\text{ after time }(t-1)\right)}\quad=\quad\E{g(X_t,x)}\\
&\lesssim\quad\sum_{y\in B\left(x,\frac12||x||\right)}\pr{X_t=y}g(x,y)+
\sum_{y\in B(0,R)\setminus B\left(x,\frac12||x||\right)}\pr{X_t=y}g(x,y)\\
&\hspace{9cm}+\sum_{y\in B(0,R)^c}\pr{X_t=y}g(x,y)\\
&\lesssim\quad \sum_{y\in B\left(x,\frac12||x||\right)} \frac{1}{t^{\frac{d}{2}}}\frac{1}{||x-y||^{d-2}}\quad+\quad
\frac{R^d}{t^{\frac{d}{2}}}\frac{1}{||x||^{d-2}}\quad+\quad
\frac{1}{R^{d-2}}
\quad\\
&\hspace{8cm}\lesssim\quad g(x)\left(\frac{||x||^d}{t^{\frac{d}{2}}}\:+\:\frac{R^{d}}{t^{\frac{d}{2}}}\:+\:\frac{||x||^{d-2}}{R^{d-2}}\right).
\end{align*}
Then for $||x||\le\frac{C_0}{\sqrt{\alpha}}$ we have
\begin{align*}
&g(x)-g_{\alpha}(x)\quad=\quad\E{\#\left(\text{visits at }x\text{ after time }\Geomnonneg{\alpha}\right)}\\
&\qquad\le\quad \pr{\Geomnonneg{\alpha}\le t}g(x) \quad+\quad\E{\#\left(\text{visits at }x\text{ after time }(t-1)\right)}\\
&\qquad\lesssim\quad g(x)\left(t\alpha\:+\:\frac{||x||^d}{t^{\frac{d}{2}}}\:+\:\frac{R^{d}}{t^{\frac{d}{2}}}\:+\:\frac{||x||^{d-2}}{R^{d-2}}\right).
\end{align*}
Choosing $R$ and $t$ such that $t\alpha\asymp\frac{R^{d}}{t^{\frac{d}{2}}}\asymp\frac{||x||^{d-2}}{R^{d-2}}$ and $R\ge2||x||$, we get that this is $\lesssim\left(x\sqrt{\alpha}\right)^{\frac{2d(d-2)}{d^2+2d-4}}g(x)$ as required.
\end{proof}

\begin{lemma}\label{lem:galpha_weighted}
Let $\alpha\ll1$ and $x\in\Z^d$. Then we have
\begin{align*}
\sum_{k\ge1}k(1-\alpha)^k\pr{X_k=x}\quad\lesssim\quad\sum_{k\ge1}k\pr{X_k=x}\quad\asymp\quad (g\star g)(x).
\end{align*}
In case $d>2$ and $||x||\ge\frac{c_0}{\sqrt{\alpha}}$ for a positive constant $c_0$, we also have
\begin{align*}
\sum_{k\ge1}k(1-\alpha)^k\pr{X_k=x}\quad\lesssim\quad e^{-c_1||x||\sqrt{\alpha}}(g\star g)(x).
\end{align*}
Also, for $d>2$ and $x,y\in\Z^d$, we have
\begin{align*}
\sum_{k\ge1}k(1-\alpha)^k\pr{X_k=x}\pr{X_k=y}\quad&\lesssim\quad\sum_{k\ge1}k\pr{X_k=k}\pr{X_k=y}\quad\\
&\asymp\quad \frac{1}{||x||^{2d-4}+||y||^{2d-4}},
\end{align*}
and in case $||x||+||y||\ge\frac{c_0}{\sqrt{\alpha}}$ for a positive constant $c_0$, we also have
\begin{align*}
\sum_{k\ge1}k(1-\alpha)^k\pr{X_k=x}\pr{X_k=y}\quad\lesssim\quad e^{-c_2(||x||+||y||)\sqrt{\alpha}}\frac{1}{||x||^{2d-4}+||y||^{2d-4}}.
\end{align*}
The positive constants $c_1$ and $c_2$ only depend on $\sigma^2$, while the implicit constants in the $\lesssim$'s and $\asymp$'s only depend on $\sigma^2$ and $c_0$.
\end{lemma}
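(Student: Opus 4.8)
The plan is to reduce the lemma to the elementary convolution identity for random walks, to the Green's‑function asymptotics of \Cref{lem:Greens_function}, and to the pointwise comparison \Cref{lem:g_alpha_x_bound}. The bounds carrying the weight $(1-\alpha)^k$ follow at once from $(1-\alpha)^k\le1$ and the unweighted versions, so the real content is the two $\asymp$ identifications and the two exponentially decaying upper bounds. For $\beta\in(0,1]$ set $g^{\beta}(w):=\sum_{k\ge0}\beta^k\pr{X_k=w}$, so that $g^{1}=g$ and $g^{1-\alpha}=g_{\alpha}$; then Fubini and $X_i+\til X_j\eqdist X_{i+j}$ give the convolution identity
\[(g^{\beta}\star g^{\beta})(w)=\sum_{i,j\ge0}\beta^{i+j}\pr{X_{i+j}=w}=\sum_{k\ge0}(k+1)\beta^k\pr{X_k=w},\]
and the same computation applied to the walk $(X,X')$ on $\Z^d\times\Z^d$ (with $X'$ an independent copy of $X$, so $(X,X')$ has bounded increments and non‑degenerate covariance), with $g^{\beta}_{(2)}((x,y)):=\sum_{k\ge0}\beta^k\pr{X_k=x}\pr{X_k=y}$, gives
\[\big(g^{\beta}_{(2)}\star g^{\beta}_{(2)}\big)\big((x,y)\big)=\sum_{k\ge0}(k+1)\beta^k\pr{X_k=x}\pr{X_k=y}.\]
I would combine these with the standard facts that $g_{(2)}:=g^{1}_{(2)}$ is the Green's function of the genuinely $2d$-dimensional walk $(X,X')$, hence $g_{(2)}((x,y))\asymp(1+\|x\|^{2}+\|y\|^{2})^{-(d-1)}$ by the local CLT, and --- by \eqref{eq:asymp_G} and a standard convolution estimate --- $(g\star g)(w)\asymp(1+\|w\|^{d-4})^{-1}$ for $d>4$ and $(g_{(2)}\star g_{(2)})((x,y))\asymp(1+\|x\|^{2}+\|y\|^{2})^{-(d-2)}\asymp(1+\|x\|^{2d-4}+\|y\|^{2d-4})^{-1}$ for $d>2$. (Tacitly $\|x\|_{1}\equiv\|y\|_{1}\pmod{2}$; otherwise every sum above vanishes identically.)

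The first and third displays of the lemma then follow immediately. Indeed $\sum_{k\ge1}k(1-\alpha)^k\pr{X_k=x}\le\sum_{k\ge1}k\pr{X_k=x}\le(g\star g)(x)$, while conversely $(g\star g)(x)=\sum_{k\ge1}k\pr{X_k=x}+g(x)$ with $g(x)\lesssim\sum_{k\ge1}k\pr{X_k=x}$ --- because for $\|x\|\ge2$ one has $\pr{X_0=x}=\pr{X_1=x}=0$ and $k+1\le2k$ for $k\ge1$, while the remaining finitely many $x$ are immediate; the same argument with $g_{(2)}$ in place of $g$ gives the third display, its lower bound using that $g_{(2)}$ has strictly smaller order than $g_{(2)}\star g_{(2)}$ away from the origin (and a direct check on a bounded set).

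For the second display, assume $\|x\|\ge c_0/\sqrt{\alpha}$. By the identity, $\sum_{k\ge1}k(1-\alpha)^k\pr{X_k=x}\le(g_{\alpha}\star g_{\alpha})(x)=\sum_{z}g_{\alpha}(z)g_{\alpha}(x-z)$, and I would split this sum according to whether $\|z\|\ge\tfrac12\|x\|$ or $\|x-z\|\ge\tfrac12\|x\|$ (at least one holds). On the first part $\|z\|\ge\tfrac12\|x\|\ge\tfrac{c_0}{2\sqrt{\alpha}}$, so \Cref{lem:g_alpha_x_bound} gives $g_{\alpha}(z)\lesssim e^{-c\|z\|\sqrt{\alpha}}g(z)\le e^{-\tfrac12 c\|x\|\sqrt{\alpha}}g(z)$; bounding also $g_{\alpha}(x-z)\le g(x-z)$, this part is $\lesssim e^{-\tfrac12 c\|x\|\sqrt{\alpha}}(g\star g)(x)$, and the second part is symmetric. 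With $(g\star g)(x)\asymp(1+\|x\|^{d-4})^{-1}$ this is the second display. The fourth display is proved in exactly the same way on $\Z^d\times\Z^d$: one bounds $\sum_{k\ge1}k(1-\alpha)^k\pr{X_k=x}\pr{X_k=y}\le\big(g^{1-\alpha}_{(2)}\star g^{1-\alpha}_{(2)}\big)\big((x,y)\big)$, splits the convolution over $\Z^d\times\Z^d$ by which summand has norm $\ge\tfrac12\|(x,y)\|$ (using $\|(x,y)\|^{2}=\|x\|^{2}+\|y\|^{2}\asymp(\|x\|+\|y\|)^{2}$), and on the far part applies the analogue of \Cref{lem:g_alpha_x_bound} for $(X,X')$ --- whose proof is unchanged, since $(X,X')$ has bounded increments and non‑degenerate covariance so that \Cref{lem:hitting_time_tail_bounds} applies to it verbatim --- together with $(g_{(2)}\star g_{(2)})((x,y))\asymp(1+\|x\|^{2d-4}+\|y\|^{2d-4})^{-1}$.

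The routine inequalities and the $\asymp$ statements cost nothing, so the main obstacle is the second and fourth displays. There the crucial input is the pointwise comparison $g_{\alpha}(z)\lesssim e^{-c\|z\|\sqrt{\alpha}}g(z)$ for $\|z\|\gtrsim1/\sqrt{\alpha}$ (\Cref{lem:g_alpha_x_bound}, plus its product‑walk version for the fourth display), and the care needed is to invoke it only for the coordinate that is genuinely at distance $\gtrsim1/\sqrt{\alpha}$ --- which is exactly what $\|x\|\ge c_0/\sqrt{\alpha}$, respectively $\|x\|+\|y\|\ge c_0/\sqrt{\alpha}$, secures. An alternative for these two displays, which avoids the product‑walk version of \Cref{lem:g_alpha_x_bound}, is to split the sum over $k$ at $k\asymp\|x\|/\sqrt{\alpha}$ and use the Gaussian bound $\pr{X_k=x}\lesssim k^{-d/2}e^{-c\|x\|^{2}/k}$ directly, extracting a factor $e^{-\tfrac12 c\|x\|\sqrt{\alpha}}$ from $e^{-c\|x\|^{2}/k}$ when $k\le\|x\|/\sqrt{\alpha}$ and from $(1-\alpha)^k\le e^{-\alpha k}$ when $k>\|x\|/\sqrt{\alpha}$, the remaining factors then summing back to $\asymp(g\star g)(x)$; in either approach the one thing to watch is the bookkeeping of the two competing exponentials at the crossover scale $k\asymp\|x\|/\sqrt{\alpha}$.
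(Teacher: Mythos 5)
Your proposal is correct and takes essentially the same route as the paper: you derive the same convolution identity $\sum_{k\ge1}k(1-\alpha)^k\pr{X_k=x}\le(g_\alpha\star g_\alpha)(x)$, split the convolution by which of $||z||$, $||x-z||$ exceeds $\tfrac12||x||$ and apply \Cref{lem:g_alpha_x_bound}, and handle the pair case by lifting to the product walk $(X,\tilde X)$ on the even sublattice of $\Z^{2d}$. The extra details you supply (the explicit $\asymp$ verification via $g\lesssim\sum_{k\ge1}k\pr{X_k=\cdot}$, the parity caveat, and the alternative Gaussian-bound route) are harmless elaborations on the same argument.
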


\begin{proof}
The $\lesssim$ and $\asymp$ in the first line are immediate. To prove the second $\lesssim$, start by writing
\begin{align*}
&\sum_{k\ge1}k(1-\alpha)^k\pr{X_k=x}\quad=\quad\sum_{k\ge1}\sum_{\ell=1}^{k}(1-\alpha)^k\pr{X_k=x}\quad\\
&=\quad\sum_{\ell\ge1}\sum_{k\ge\ell}(1-\alpha)^k\pr{X_k=x}\quad=\quad\sum_{\ell\ge1}(1-\alpha)^{\ell}\E{g_{\alpha}(x-X_{\ell})}\quad\\
&=\quad\sum_{y}\sum_{\ell\ge1}(1-\alpha)^{\ell}\pr{X_{\ell}=y}g_{\alpha}(x-y)\quad=\quad\sum_{y}g_{\alpha}(y)g_{\alpha}(x-y).
\end{align*}
Note that for any $y$ we have $||y||\ge\frac12||x||$ or $||x-y||\ge\frac12||x||$, so in case $||x||\ge\frac{c_0}{\sqrt{\alpha}}$, by \Cref{lem:g_alpha_x_bound} we have $\sum_{y}g_{\alpha}(y)g_{\alpha}(x-y)\lesssim\sum_{y}e^{-c_1||x||}g(y)g(x-y)$ as required.

Note that $\pr{X_k=x}\pr{X_k=y}=\pr{(X_k,Y_k)=(x,y)}$ where $X$ and $Y$ are two independents SRWs on $\Z^d$. Then $(X,Y)$ is a SRW on a copy of $\Z^{2d}$ induced by the pairs of points in $\Z^d$ where the sum of their coordinates is even. We can finish the proof by using the results from the first part of the lemma for $(X,Y)$.
\end{proof}

\section{Some further auxiliary results}\label{app:aux}

\begin{lemma}[\cite{RW_modern_intro}, Lemma 6.3.7]\label{lem:Harnack}
Given a positive radius $r$, points $x\in\partial B(0,r)$, $y\in\partial B(0,4r)$ in $\Z^d$, and random walk $X$ on $\Z^d$ from 0, we have \[\pr{X_{\tau_r}=x,X_{\tau_{4r}}=y}\quad\asymp\quad\pr{X_{\tau_r}=x}\pr{X_{\tau_{4r}}=y},\]
where the implicit constants in $\asymp$ do not depend on $r$.
\end{lemma}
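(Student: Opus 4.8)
The plan is to reduce the joint identity to a one-point Harnack comparison for the Poisson kernel of $B(0,4r)$, and then invoke the discrete Harnack inequality.

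First, note that for $r\ge1$ the walk starts strictly inside $B(0,r)$, and since a nearest-neighbour step changes the Euclidean norm by at most $1$, the walk cannot skip the shell $\partial B(0,r)$; hence $\tau_r\le\tau_{4r}$ almost surely. Applying the strong Markov property at the stopping time $\tau_r$ gives
\[
\pr{X_{\tau_r}=x,\:X_{\tau_{4r}}=y}\quad=\quad\pr{X_{\tau_r}=x}\cdot\prstart{X_{\tau_{4r}}=y}{x}.
\]
Comparing with the right-hand side of the claimed estimate, it therefore suffices to prove
\[
\prstart{X_{\tau_{4r}}=y}{x}\quad\asymp\quad\prstart{X_{\tau_{4r}}=y}{0}
\]
with constants depending only on $d$. (If $\prstart{X_{\tau_{4r}}=y}{0}=0$ both sides of the original statement vanish, so we may assume positivity.)

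Next, fix $y\in\partial B(0,4r)$ and set $h(z):=\prstart{X_{\tau_{4r}}=y}{z}$ for all $z\in\Z^d$, with the convention that $h(z)=\1{z=y}$ for $z\in\partial B(0,4r)$. For any $z$ with $\|z\|<4r$ a one-step decomposition shows that $h(z)$ equals the average of $h$ over the neighbours of $z$, so $h$ is a nonnegative discrete-harmonic function on $\{z:\|z\|<4r\}$, and by irreducibility $h>0$ there. Both $0$ and $x$ lie in $B(0,2r)$ (since $\|x\|<r+1\le2r$), and $h$ is harmonic on a set containing $B(0,3r)$. The discrete Harnack inequality—the rescaled form of the statement that a nonnegative function harmonic on $B(0,N)$ satisfies $h(z_1)\le C_d\,h(z_2)$ for all $z_1,z_2\in B(0,N/2)$—then yields $h(x)\asymp h(0)$ with a constant depending only on $d$. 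Since the Harnack constant and the radius ratio $4$ are scale-invariant, the implicit constants are independent of $r$; the finitely many small values of $r$ for which $B(0,r)$ is degenerate are handled trivially by adjusting constants.

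The only genuine content is the discrete Harnack inequality itself, which I would cite (it is the companion statement to the Poisson-kernel bound in the same reference) or, if self-containment is wanted, prove by the standard Harnack-chain argument: cover $B(0,2r)$ by a bounded (dimension-dependent) number of balls of radius $\asymp r$ inside $B(0,4r)$, use the uniform lower bound on the heat kernel (equivalently a gambler's-ruin estimate) to get $h(z)\asymp h(z')$ whenever $z,z'$ lie in adjacent balls, and chain these $O_d(1)$ comparisons. I expect no serious obstacle; the only point requiring care is the shell convention for $\partial B$—one must confirm that $h$ is harmonic at \emph{every} point with $\|z\|<4r$, which holds precisely because $\tau_{4r}$ is the hitting time of the shell and the walk is nearest-neighbour.
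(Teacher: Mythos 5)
The paper merely cites this lemma (from \cite{RW_modern_intro}, Lemma 6.3.7) and does not supply a proof, so there is no in-paper argument to compare against. Your proof is correct and is the standard one for this estimate: since a nearest-neighbour walk cannot skip the shell $\partial B(0,r)$ under the paper's floor convention, $\tau_r\le\tau_{4r}$ a.s.\ and the strong Markov property at $\tau_r$ factors the joint law; the remaining comparison of $h(z)=\prstart{X_{\tau_{4r}}=y}{z}$ at $z=0$ versus $z=x$ is then a direct application of the discrete elliptic Harnack inequality for the nonnegative function $h$, which is harmonic on $\{z:\|z\|<4r\}$ and hence on a set safely containing both points. Your treatments of the degenerate case $h\equiv0$ and of small $r$ are also appropriate.
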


\begin{lemma}\label{lem:prob_returning_to_partialBk}
Let $d>2$ and $k\ge1$. Let $X$ be a SRW on $\Z^d$ and let $\tau_k^+=\inf\left\{t\ge1:\:X_t\in\partial B_k\right\}$. Then for any $x\in\partial B_k$ we have
\begin{equation*}
\prstart{\tau_k^+<\infty}{x}\quad=\quad1-\Theta\left(\frac{1}{k}\right),
\end{equation*}
where the implicit constants in $\Theta$ do not depend on $k$ or $x$.
\end{lemma}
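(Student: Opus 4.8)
The plan is to split the return probability according to the first step of the walk, and use the classical estimate for the probability of never returning to a ball. Write $p_k(x) := \prstart{\tau_k^+ < \infty}{x}$. Since $x \in \partial B_k$, after one step the walk is at one of the $2d$ neighbours $x + e_i$ of $x$; each has norm within $1$ of $k$, so each lies in $B(0,k+1) \setminus B(0,k-1)$. Roughly half of these neighbours lie strictly inside $B_k$ (norm $< k$) and roughly half strictly outside (norm $> k$, i.e.\ in $B_k^c$). For a neighbour $y$ that lies strictly inside $B_k$, the walk from $y$ must exit $B_k$ and hence crosses $\partial B_k$ with probability $1$ (the exit is through $\partial B_k$ since we are in $\Z^d$ and the boundary has width $1$); so from such $y$ the return to $\partial B_k$ happens a.s. For a neighbour $y$ strictly outside $B_k$, the walk from $y$ returns to $\partial B_k$ unless it escapes to infinity without ever hitting $\partial B_k$; the probability of this escape is $\asymp \frac{1}{k}$ by the standard transient-random-walk estimate. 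Putting this together,
\begin{align*}
1 - p_k(x) \quad=\quad \frac{1}{2d}\sum_{i=1}^{2d} \big(1 - \prstart{\tau_k^+ < \infty}{x+e_i}\big) \quad=\quad \frac{1}{2d}\sum_{i:\,||x+e_i|| > k} \prstart{X \text{ never hits } \partial B_k}{x+e_i},
\end{align*}
so it suffices to show each such escape probability is $\Theta(1/k)$, uniformly in $x$ and in the choice of exterior neighbour, and that at least one neighbour is exterior (so the sum is nonempty, giving the lower bound).

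For the escape probability, I would use the asymptotics of the Green's function $g$ from \Cref{lem:Greens_function} together with a last-exit / harmonic-measure argument, or more directly the standard fact (see e.g.\ \cite[Proposition~6.4.2]{RW_modern_intro} or the gambler's-ruin-type estimate for $||X_t||$) that for $z$ with $||z|| = k + O(1)$ the probability that a SRW from $z$ reaches $\partial B(0,2k)$ before $\partial B_k$ is $\Theta(1/k)$, and that having reached $\partial B(0,2k)$ it escapes to infinity without returning to $\partial B_k$ with probability bounded away from $0$; conversely the escape probability is at most the probability of reaching distance $2k$ before $\partial B_k$, which is $O(1/k)$. Concretely, comparing $\left(||X_t||^2 - t\right)$-type martingales, or using that $g(z) \asymp k^{2-d}$ and $g$ restricted to an annulus of radii $k, 2k$ has the right scaling, yields that the probability of hitting $\partial B_k$ from a point at distance $\asymp k$, before reaching $\partial B(0,2k)$, is $1 - \Theta(1/k)$. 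The uniformity in $x$ is automatic because all the relevant points $x + e_i$ are at distance $k + O(1)$ from the origin and the estimates depend only on this distance and on $d$.

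The one genuinely delicate point is bookkeeping with the discrete boundary $\partial B_k = \{y : \lfloor ||y|| \rfloor = k\}$: I must check that a neighbour $x + e_i$ with $||x+e_i|| > k$ actually satisfies $||x + e_i|| \ge$ (something bounded away from $k$), so that "escaping without hitting $\partial B_k$" is the only way to avoid $\tau_k^+$, and that from an interior neighbour the walk really does return to $\partial B_k$ almost surely (it leaves $B_k$ a.s.\ by transience, and since steps have length $1$ it must pass through the shell $\partial B_k$). I expect this to be the main obstacle — not deep, but requiring care with the floor function — and it is handled by noting $\partial B_k$ separates $B(0,k-1)$ from $B(0,k)^c$ in $\Z^d$ for $d \ge 2$, plus the observation that $x\in\partial B_k$ has at least one neighbour in each of the two components (for the lower bound on $1-p_k(x)$ we only need one exterior neighbour, which holds since moving radially outward from $x$ increases the norm). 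Once these separation facts are in place, the lemma follows by combining the first-step decomposition above with the $\Theta(1/k)$ escape estimate.
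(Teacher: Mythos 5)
Your first-step decomposition and gambler's-ruin escape estimate are fine for the upper bound $1-p_k(x)=O(1/k)$, but the lower bound rests on the assertion that some neighbour $x+e_i$ lies strictly outside $\partial B_k$, i.e.\ has $||x+e_i||\ge k+1$, and this fails for many $x\in\partial B_k$. Your justification --- that stepping radially outward increases the norm --- only gives $||x+e_i||>||x||\ge k$, which can remain below $k+1$. Concretely, take $x=(5,5,0,\dots,0)\in\Z^8$: then $||x||=\sqrt{50}$, so $x\in\partial B_7$, but its $16$ neighbours have norms $\sqrt{41}<7$ (two of them), $\sqrt{51}\in[7,8)$ (twelve), and $\sqrt{61}\in[7,8)$ (two), all strictly below $8$. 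So every first step lands either in $\partial B_7$ (giving $\tau_7^+=1$) or strictly inside $B(0,7)$, from where the walk a.s.\ re-enters $\partial B_7$ before leaving $B(0,7)$ (the floored norm $\lfloor||X_t||\rfloor$ changes by at most $1$ per step). Hence $\prstart{\tau_7^+<\infty}{x}=1$, and taking $x=(a,a,0,\dots,0)$ with $2a^2\in[k^2,(k+1)^2)$ produces such points for arbitrarily large $k$.

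This shows the gap is not merely a slip in your write-up: a uniform two-sided bound $1-p_k(x)\asymp 1/k$ over $x\in\partial B_k$ cannot hold, because the thick shell $\partial B_k=\{y:\lfloor||y||\rfloor=k\}$ can trap the walk on the first step. The paper's one-line proof --- optional stopping applied to $g(X_{\tau_k^+\wedge t})$ and letting $t\to\infty$ --- does not resolve this either: it yields $g(x)=\estart{g(X_{\tau_k^+})\1{\tau_k^+<\infty}}{x}$ up to a small correction from visits to $0$, but $g$ varies only by a factor $1+O(1/k)$ across $\partial B_k$, so the identity is perfectly consistent with $p_k(x)=1$ at points like the one above. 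What the downstream bound on~\eqref{eq:E_numzeroes_k_many_returns} actually needs is that the number of visits to $\partial B_k$ has a tail of the form $(1-\Theta(1/k))^{\Theta(j)}$, and the clean way to get this is an excursion argument: group the visits into excursions that reach $\partial B(0,2k)$, note that each excursion spends an $O(1)$-expected number of steps in $\partial B_k$, and that after reaching distance $2k$ the walk escapes without returning to $\partial B_k$ with probability $\gtrsim 1/k$. Your escape estimate is precisely the right input for that corrected statement; it just cannot be applied after a single step from $\partial B_k$.
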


The proof of this lemma follows from using the optional stopping theorem for $\estart{g\left(X_{\tau_k^+\wedge t}\right)}{x}$, using the asymptotics for $g$, and taking $t\to\infty$.

\bibliographystyle{plain}
\bibliography{bib_BRW.bib}

\end{document}